 \let\underbrace\LaTeXunderbrace
\DeclareMathOperator*{\colim}{colim}
\begin{document}

\newcommand{\actsonr}{\mathrel{\reflectbox{$\righttoleftarrow$}}}
\newcommand{\actsonl}{\mathrel{\reflectbox{$\lefttorightarrow$}}}

\newcommand{\floor}[1]{\lfloor #1 \rfloor}

\newcommand{\isoeq}{\cong}
\newcommand{\cech}{\vee}
\newcommand{\dsum}{\mathop{\oplus}}

\newcommand{\Tr}{\mathrm{Tr}}
\newcommand{\Gal}{\mathrm{Gal}}
\newcommand{\tr}{\mathrm{tr}}

\newcommand{\Conf}{C}
\newcommand{\Sym}{\mathrm{Sym}}
\newcommand{\End}{\mathrm{End}}

\newcommand{\Var}{\mathrm{Var}}
\newcommand{\van}{\mathrm{van}}
\newcommand{\Loc}{\mathrm{Loc}}
\newcommand{\VHS}{\mathrm{VHS}}
\newcommand{\HS}{\mathrm{HS}}
\newcommand{\MHM}{\mathrm{MHM}}

\newcommand{\calF}{\mathcal{F}}
\newcommand{\calQ}{\mathcal{Q}}
\newcommand{\calL}{\mathcal{L}}
\newcommand{\calO}{\mathcal{O}}
\newcommand{\calM}{\mathcal{M}}
\newcommand{\calG}{\mathcal{G}}

\newcommand{\frakm}{\mathfrak{m}}

\newcommand{\bbN}{\mathbb{N}}
\newcommand{\bbZ}{\mathbb{Z}}
\newcommand{\bbQ}{\mathbb{Q}}
\newcommand{\bbR}{\mathbb{R}}
\newcommand{\bbC}{\mathbb{C}}
\newcommand{\bbP}{\mathbb{P}}
\newcommand{\bbA}{\mathbb{A}}
\newcommand{\bbL}{\mathbb{L}}
\newcommand{\bbF}{\mathbb{F}}

\newcommand{\bc}{\mathbf{c}}

\newcommand{\Spec}{\mathrm{Spec}\,}
\newcommand{\motcomp}{\widehat{\calM_\bbL}} 
\newcommand{\mot}{\calM}

\numberwithin{equation}{subsubsection}
\theoremstyle{plain}

\newtheorem{maintheorem}{Theorem} 
\renewcommand{\themaintheorem}{\Alph{maintheorem}} 
\newtheorem{maincorollary}[maintheorem]{Corollary}
\newtheorem{mainconjecture}[maintheorem]{Conjecture}

\newtheorem*{theorem*}{Theorem}

\newtheorem{theorem}[subsubsection]{Theorem}
\newtheorem{corollary}[subsubsection]{Corollary}
\newtheorem{conjecture}[subsubsection]{Conjecture}
\newtheorem{proposition}[subsubsection]{Proposition}
\newtheorem{lemma}[subsubsection]{Lemma}

\theoremstyle{definition}

\newtheorem{example}[subsubsection]{Example}
\newtheorem{definition}[subsubsection]{Definition}
\newtheorem{remark}[subsubsection]{Remark}
\newtheorem{question}[subsubsection]{Question}

\theoremstyle{remark}

\newtheorem{notation}[subsubsection]{Notation}

\newcommand{\calP}{\mathcal{P}}
\newcommand{\calA}{\mathcal{A}}
\newcommand{\calB}{\mathcal{B}}

\newcommand{\M}{\mathcal{M}}
\newcommand{\LL}{\bbL}
\newcommand{\bbK}{\mathbb{K}}
\renewcommand{\i}{\underline{i}}
\renewcommand{\j}{\underline{j}}
\renewcommand{\k}{\underline{k}}
\newcommand{\s}{\underline{s}}
\renewcommand{\t}{\underline{t}}
\newcommand{\PConf}{\mathrm{PConf}}
\newcommand{\QCoh}{\mathrm{QCoh}}

\newcommand{\Noeth}{\mathrm{Noeth}}
\newcommand{\cons}{\mathrm{cons}}
\newcommand{\Map}{\mathrm{Map}}

\newcommand{\Coh}{\mathrm{Coh}}
\newcommand{\bbV}{\mathbb{V}}
\newcommand{\id}{\mathrm{id}}
\newcommand{\red}{\mathrm{red}}
\newcommand{\Top}{\mathrm{Top}}
\newcommand{\bs}{\backslash}
\newcommand{\RS}{\mathrm{RS}}
\newcommand{\PW}{\mathrm{PW}}

\newcommand{\bbE}{\mathbb{E}}

\newcommand{\Kap}{\mathrm{Kap}}
\newcommand{\Fil}{\mathrm{Fil}}
\newcommand{\supp}{\mathrm{supp}}

\title{Motivic Euler products in motivic statistics}
\author{Margaret Bilu}
\author{Sean Howe}

\maketitle

\begin{abstract} We formulate and prove an analog of Poonen's finite-field Bertini theorem with Taylor conditions that holds in the Grothendieck ring of varieties. This gives a broad generalization of the work of Vakil-Wood, who treated the case of smooth hypersurface sections. In fact, our techniques give analogs in motivic statistics of all known results in arithmetic statistics that have been proven using Poonen's sieve, including work of Bucur-Kedlaya on complete intersections and Erman-Wood on semi-ample Bertini theorems.  A key ingredient is the use of motivic Euler products, as introduced by the first author, to write down candidate motivic probabilities. We also formulate a conjecture on the uniform convergence of zeta functions that unifies motivic and arithmetic statistics for varieties over finite fields. 
\end{abstract}

\tableofcontents
\section{Introduction}

We begin by providing some context and motivation for our results. Readers interested in seeing the statement of our main theorem first may wish to skip immediately to section \ref{subsec.results}. 

\subsection{Motivation} 
\subsubsection{Arithmetic statistics}
The field of arithmetic statistics is concerned with the study of counting problems in arithmetic geometry through a statistical lens. A simple but fundamental example of a problem in arithmetic statistics is the following: what is the probability that a randomly chosen positive integer is square-free? 

This question can be made precise by considering the probability for integers up to a fixed $N$, and then taking the limit as $N \rightarrow \infty$. There is a simple heuristic which provides a candidate answer: an integer $m$ is squarefree if and only if, for every prime number $p$, $p^2$ does not divide $m$. When $N$ is large relative to $p$, the proportion of positive integers $< N$ that are divisible by $p^2$ is approximately $1/p^2$. If we assume these conditions are independent at different primes, we obtain the candidate probability
\[ \prod_{p\,\mathrm{prime}} (1-1/p^2)= \zeta(2)^{-1} \]
where here $\zeta$ denotes the Riemann zeta function. By controlling the error terms, this heuristic can be made precise, and the asymptotic probability that a random positive integer is squarefree is indeed $\zeta(2)^{-1}$. 

\subsubsection{Poonen's Bertini theorem} Arguing through a similar heuristic, Poonen \cite{poonen:bertini} proved the following analog for varieties over a finite field $\bbF_q$:
\begin{theorem*}[Poonen] Let $X \subset \bbP^n_{\bbF_q}$ be a smooth projective variety. The probability that a random hypersurface of degree $d$ intersects $X$ transversely approaches $\zeta_X(\dim X + 1)^{-1}$ as $d \rightarrow \infty$, where $\zeta_X$ is the Hasse-Weil zeta function of $X$.
\end{theorem*}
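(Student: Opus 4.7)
The plan is to follow the squarefree integers heuristic closely, interpreting transversality as a local condition imposed at each closed point of $X$. The key observation is that a degree-$d$ hypersurface $H = V(f)$ fails to meet $X$ transversely at a closed point $P \in X$ precisely when the image of $f$ in the local jet $\calO_{X,P}/\frakm_P^2$ vanishes (after trivializing $\calO(d)$ at $P$); since $X$ is smooth at $P$, this jet has $\bbF_q$-dimension $(\dim X + 1)\deg P$. Treating the conditions at distinct closed points as independent then predicts the probability
\[ \prod_{P \in |X|}\bigl(1 - q^{-(\dim X + 1)\deg P}\bigr) = \zeta_X(\dim X + 1)^{-1}. \]

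First I would make this exact for a finite set $S$ of closed points of $X$. For $d$ sufficiently large relative to $S$, Serre vanishing shows that the restriction map from degree-$d$ homogeneous polynomials on $\bbP^n$ to $\bigoplus_{P \in S}\calO_X/\frakm_P^2$ is surjective, so the proportion of $f$ transverse to $X$ at every $P \in S$ is exactly $\prod_{P \in S}\bigl(1 - q^{-(\dim X+1)\deg P}\bigr)$. Taking $S = \{P : \deg P \leq r\}$ and sending first $d \to \infty$ and then $r \to \infty$ would yield the Euler product, provided one can uniformly bound the contribution of singularities at points of degree greater than $r$.

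The main obstacle is this tail estimate, which I would split into a medium range $r < \deg P \leq d/(\dim X + 1)$ and a high range $\deg P > d/(\dim X + 1)$. In the medium range the codimension estimate still applies, giving failure probability at most $q^{-(\dim X+1)\deg P}$ at each point $P$ of degree $e$; summing against the crude bound $\#X(\bbF_{q^e}) = O(q^{e\dim X})$ gives a tail that tends to $0$ as $r \to \infty$, uniformly in $d$. In the high range the codimension count breaks down because the restriction map to the jet is no longer surjective, and one must resort to Poonen's derivative trick: cover $X$ by affine charts with local coordinates $x_1,\ldots,x_n$, then decompose $f$ using $p$-th powers so that the partial derivatives $\partial f / \partial x_i$ can be independently randomized. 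A Bezout-type bound on the common zero locus of $f$ and sufficiently many of its derivatives shows that the density of $f$ admitting any bad point in this range tends to $0$ as $d \to \infty$.

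Combining the three estimates and passing to the limit then produces the stated formula, with convergence of $\prod_{\deg P \leq r}\bigl(1 - q^{-(\dim X + 1)\deg P}\bigr)$ to $\zeta_X(\dim X + 1)^{-1}$ following from absolute convergence of the Euler product in the region $\mathrm{Re}(s) > \dim X$.
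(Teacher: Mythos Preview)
The paper does not give its own proof of this theorem: it is stated in the introduction as background and attributed to Poonen \cite{poonen:bertini}, with only the heuristic behind the Euler product explained in the surrounding text. The paper's contribution is the \emph{motivic} analog (Theorems~\ref{theorem.taylorconditions} and~\ref{maintheorem.general}), proved by a different mechanism (motivic inclusion-exclusion plus $m$-wise independence and motivic Euler products), not a new proof of the arithmetic statement.

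Your sketch is a faithful outline of Poonen's original argument in \cite{poonen:bertini}: the three-range decomposition into low, medium, and high degree points, surjectivity of the jet map for the low range, the Lang--Weil-type point count for the medium range, and the decoupling-of-partial-derivatives trick for the high range. So there is no discrepancy to report; there is simply no proof in this paper to compare against.
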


\begin{remark}
If the variety $X$ in Poonen's theorem is a curve, then a hypersurface intersects $X$ transversely if its defining equation has only simple zeroes when restricted to $X$. By analogy, if we think of the prime numbers as being the points on a curve and the integers as giving functions on the prime numbers (i.e. we consider the arithmetic curve $\Spec\, \bbZ$), then the statement that an integer is squarefree can be thought of as saying that it has only simple zeroes on the set of primes. 
\end{remark}

The special value of the zeta function in Poonen's theorem arises through a similar heuristic: to say a hypersurface intersects $X$ transversely is a condition that can be checked point by point in $X$, and at each point transversality is a simple condition on the first order Taylor expansion of a defining equation for the hypersurface. The zeta function arises when we take the product of these local contributions under the assumption that they are independent. Controlling the error term in order to verify the heuristic, however, is considerably more difficult in this case; Poonen accomplishes it via a careful sieving argument. 

\subsubsection{Poonen's sieve in arithmetic statistics}
Instead of considering transverse intersections, one could also consider more general conditions on Taylor series: for example, imposing singularities of a prescribed type, or different values at certain points (or, returning to our original example of the integers, one could change the local conditions by instead considering, e.g., cube-free integers, or squarefree integers in a fixed residue class mod 7). The candidate probability is still given by an infinite Euler product of the local densities, however, this product cannot, in general, be related to a special value of a zeta function. Nonetheless, Poonen's sieving technique is flexible enough to show that for a very general class of conditions on the Taylor series, the asymptotic probability that a hypersurface section satisfies them is given by the Euler product predicted by the heuristic.

In this generality, Poonen's sieve has proven to be a versatile tool in arithmetic statistics, and has led to a variety of other results in the field (cf. e.g. \cite{bucur-kedlaya:complete-intersections, erman-wood:semiample-bertini, gunther:random-hypersurfaces, howe:mrv1, howe:mrv2}). 

\subsubsection{From counting points to topology}
The space of hypersurfaces of degree $d$ intersecting $X$ transversely has the structure of an algebraic variety over $\bbF_q$, and Poonen's result is a statement about the number of points on that variety. By the Grothendieck-Lefschetz fixed point formula, the number of points on an algebraic variety over $\bbF_q$ can be computed from the cohomology of that variety. Thus, Poonen's result can be thought of as describing an asymptotic (as $d\rightarrow \infty$) topological property of this sequence of varieties. 

It is then natural to ask, if we work instead over the complex numbers and consider a smooth projective variety $X/\bbC$, can we make any useful asymptotic statements about the topology of the complex manifold parameterizing degree $d$ hypersurfaces that intersect $X$ transversely? This question was studied by Vakil-Wood \cite{vakil-wood:discriminants}, who showed the answer is yes, launching the field of \emph{motivic statistics}. 

\subsubsection{The Grothendieck ring of varieties}

Vakil and Wood work in $K_0(\Var/\bbC)$, the Grothendieck ring of complex algebraic varieties\footnote{In fact, they work over an arbitrary base field $\bbK$. However, there is a gap in their proof in positive characteristic, which we will address below.}. Additively, this is the free abelian group on generators $[Y]$ for $Y/\bbC$ an isomorphism class of variety, modulo the relations $[Y]=[U]+[Z]$ whenever $Z$ is a closed subvariety of $Y$ with open complement $U$. The multiplication is induced by Cartesian product: ${[X][Y]=[X \times Y].}$ 

The main result of Vakil-Wood is a statistical statement in this ring. In order to compare probabilities along a sequence of varieties of increasing dimension, it will be necessary to renormalize (just as in Poonen's theorem, where, rather than speaking of the number of transverse hypersurface sections, we instead speak of the probability that a hypersurface section is transverse). To accomplish this renormalization, we must invert the class $\bbL := [\bbA^1]$ to obtain the ring \[\calM_{\bbC}=K_0(\Var/\bbC)[\bbL^{-1}]. \]
Furthermore, to make an asymptotic statement it is necessary to take a limit, so we also complete $\calM_{\bbC}$ with respect to the dimension topology, where classes of ``very negative dimension", are considered to be small. We write $\widehat{\calM}_\bbC$ for this completion.

The class of a variety $Y/\bbC$ in $\widehat{\calM}_\bbC$ encodes important information about the topology of the complex manifold $Y(\bbC)$. For example, when $Y$ is smooth and projective, all of the rational cohomology groups of $Y$, with their Hodge structures, can be recovered from the class $[Y] \in \widehat{\calM}_\bbC$. 

\begin{remark} Results in $K_0(\Var/\bbC)$ and its variants are often described with the adjective \emph{motivic}. This is after Grothendieck's idea of motives as a universal cohomology theory for algebraic varieties. The Grothendieck ring does not provide a universal cohomology theory, however, the assignment $Y \mapsto [Y]$ can be thought of as the universal compactly supported Euler characteristic. 
\end{remark}

\subsubsection{Vakil-Wood's motivic Bertini theorem}
Given $X \subset \bbP^n_{\bbC}$ a smooth projective variety, Vakil and Wood study the \emph{motivic probability} that a random degree $d$ hypersurface is transverse to $X$. Concretely, let $V_d$ be the space of homogeneous polynomials of degree $d$ in $n+1$ variables, and let $U_d \subset V_d$ be the open subvariety of polynomials corresponding to hypersurfaces in $\bbP^n$ intersecting $X$ transversely. Note that for $N_d:=\binom{d+n}{n}$, $V_d \cong \bbA^{N_d}$, so that we can form the quotient
\[ \frac{[U_d]}{[V_d]}=\frac{[U_d]}{\bbL^{N_d}} \in \widehat{\calM}_\bbC. \]
This quotient is the motivic probability that a random degree $d$ hypersurface intersects $X$ transversely; note that if we were working over a finite field, then replacing the class in the Grothendieck ring with the number of points in this formula would recover the genuine probability appearing in Poonen's theorem.  

In order to formulate a motivic analog of Poonen's theorem, one must also give a natural motivic expression for the zeta value. For this, recall the  following well-known fact: if $X /\bbF_q$ is a variety, then we can compute the power series expansion for the zeta function\footnote{The zeta function $\zeta_X$ appearing earlier is related by the change of variables $\zeta_X(s)=Z_X(q^{-s})$.}
\[ Z_X(t) := \prod_{x \in |X|} \frac{1}{1-t^{\deg x}} = 1 + \#X(\bbF_q) t + \# \Sym^2 X(\bbF_q) t^2 +  \# \Sym^3 X(\bbF_q) t^3 + \ldots \]
where for every $n\geq 1$, $\Sym^nX$ is the $n$-th symmetric power of~$X$. 
Although there is no obvious way to make sense of the Euler product defining $Z_X(t)$ when $X/\bbC$, the power series has a natural motivic analog: for $X/\bbC$ a quasi-projective variety, we define the Kapranov zeta function by
\[ Z_X^\Kap(t) = 1 + [X]t + [\Sym^2 X]t^2 + [\Sym^3 X]t^3 + \ldots \in 1 + t K_0(\Var/\bbC)[[t]]. \]
We can now state one of the main results of \cite{vakil-wood:discriminants}:
\begin{theorem}[Vakil-Wood]\label{theorem.vakil-wood} Let $X \subset \bbP^n_\bbC$ be a smooth projective variety. The motivic probability that a random hypersurface of degree $d$ intersects $X$ transversely is asymptotically equal to $1/Z_X^\Kap\left(\bbL^{-(\dim X + 1)}\right)$ in $\widehat{\calM}_\bbC$. 
\end{theorem}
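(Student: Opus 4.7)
The strategy is to adapt Poonen's sieving argument to the motivic setting, working throughout in $\widehat{\calM}_\bbC$. First observe that transversality is a pointwise condition: a polynomial $f \in V_d$ cuts out a hypersurface transverse to $X$ if and only if, at every closed point $x \in X$, the image of $f$ in the second-order jet space $J_x := \calO_X/\frakm_x^2$ is nonzero. Since $X$ is smooth of dimension $m := \dim X$, the space $J_x$ has dimension $m+1$ and the bad locus (vanishing $2$-jet) is a single point. Consequently the local motivic density of transversality at $x$ is $(\bbL^{m+1} - 1)/\bbL^{m+1} = 1 - \bbL^{-(m+1)}$, which is precisely the factor predicted by the heuristic.

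To assemble the local densities into a global statement, the plan is to run a motivic inclusion-exclusion over configurations of points at which transversality fails. For each $n$, let $Z_{d,n} \subset V_d \times \PConf^n X$ parametrize tuples $(f; x_1, \ldots, x_n)$ with the $x_i$ pairwise distinct and $f$ non-transverse at every $x_i$. Once $d$ is large relative to $n$ (quantitatively, in terms of the Castelnuovo--Mumford regularity of $X \subset \bbP^n_\bbC$), the evaluation map $V_d \to \bigoplus_{i=1}^n J_{x_i}$ is surjective, so $Z_{d,n} \to \PConf^n X$ is a Zariski-locally trivial affine bundle of rank $N_d - n(m+1)$; quotienting by $S_n$ yields $[Z_{d,n}/S_n] = [\Sym^n X]\,\bbL^{N_d - n(m+1)}$. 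Truncated inclusion-exclusion then gives
\[ \frac{[U_d]}{\bbL^{N_d}} \equiv \sum_{n=0}^{r} (-1)^n [\Sym^n X]\, \bbL^{-n(m+1)} \pmod{E(d, r)} \]
for $d$ sufficiently large relative to $r$, where $E(d,r)$ is a motivic error class. The partial sum on the right is the degree-$\leq r$ truncation of the motivic Euler product $\prod_{x \in X}(1 - \bbL^{-(m+1)})$ in the sense introduced by the first author, and the motivic analog of the Euler-product factorization $\prod_x(1 - t^{\deg x}) = 1/Z(t)$ identifies this product with $1/Z_X^\Kap(\bbL^{-(m+1)})$.

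The main obstacle is controlling the error $E(d, r)$, which measures the contribution from polynomials whose restriction to $X$ is non-transverse at more than $r$ distinct points. Over $\bbF_q$, Poonen handles the analogous estimate by splitting $X$ into low-, medium-, and high-degree strata, with the high-degree range requiring a delicate derivative trick. Motivically, the task is to show that the relevant ``many singularities'' locus inside $V_d$ has class of sufficiently negative dimension in $\widehat{\calM}_\bbC$. I would approach this by a Bertini-style codimension estimate together with a dimension count on symmetric powers: the bound $\dim \Sym^n X = nm$ combined with the codimension $n(m+1)$ condition coming from the jet map gives individual tail terms of dimension $-n$, while a parallel estimate on the full ``singular locus of dimension $\geq 1$'' locus in $V_d$ (controlled by standard Bertini for smooth projective $X/\bbC$) shows that the remaining contributions also have dimension tending to $-\infty$. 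A diagonal argument letting $d$ and $r$ both tend to infinity at compatible rates then forces $E(d,r) \to 0$ in the dimension topology, completing the identification $\lim_d [U_d]/\bbL^{N_d} = 1/Z_X^\Kap(\bbL^{-(m+1)})$ in $\widehat{\calM}_\bbC$.
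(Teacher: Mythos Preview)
Your overall strategy---inclusion-exclusion over configurations of bad points, identification of the truncated sum with a motivic Euler product, and control of the tail via dimension estimates---is precisely the route taken both by Vakil--Wood and by the present paper (see Section~\ref{section.motivic-prob-coherent-sheaves}). However, there is a genuine gap in your implementation of the inclusion-exclusion step.

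First, a bookkeeping slip: the quotient $\PConf^n X / \mathfrak{S}_n$ is the open configuration space $\Conf^n X$ of $n$ \emph{distinct} unordered points, not the full symmetric power $\Sym^n X$; they differ along the big diagonal.

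More seriously, the truncated formula you write,
\[
\frac{[U_d]}{\bbL^{N_d}} \equiv \sum_{n=0}^{r} (-1)^n [\Conf^n X]\, \bbL^{-n(m+1)},
\]
is the \emph{naive} inclusion-exclusion valid for finite sets or measures, and it is \emph{false} in the Grothendieck ring of varieties. The correct motivic inclusion-exclusion (Theorems~\ref{theorem:mot-inc-exc-exact} and~\ref{theorem:mot-inc-exc-approx}) runs over \emph{all} labeled configuration spaces $\Conf^{\mu}$ indexed by partitions $\mu \in \calQ$, with sign $(-1)^{||\mu||}$ rather than $(-1)^{|\mu|}$. These extra $\mu$-terms cancel in the finite-set case but not motivically; already at $n=2$ the coefficient of $t^2$ in $\bigl(Z_X^{\Kap}(t)\bigr)^{-1}$ is $[X]^2 - [\Sym^2 X] = [\PConf^2 X] - [\Conf^2 X]$, which is neither $[\Sym^2 X]$ nor $[\Conf^2 X]$. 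Correspondingly your claimed identity $\sum_n (-1)^n [\Sym^n X]\,t^n = \bigl(Z_X^{\Kap}(t)\bigr)^{-1}$ is wrong; the correct expansion is
\[
\bigl(Z_X^{\Kap}(t)\bigr)^{-1} = \sum_{\mu \in \calQ} (-1)^{||\mu||}\,[\Conf^{\mu}(X)]\, t^{|\mu|},
\]
as in Example~\ref{example.kapranov-zeta-inverse}. So your intermediate formula fails on both sides, and the asserted match does not hold as stated.

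Once the $\mu$-indexed motivic inclusion-exclusion is in place, the match with the Euler product $\prod_{x\in X}(1 - \bbL^{-(m+1)}t)$ follows directly from Proposition~\ref{proposition:fundamental-euler-product}, and the tail estimate you sketch (the $|\mu|=k$ contribution lies in $\Fil_{-k}$) then goes through; this is exactly what is carried out in Lemmas~\ref{lemma:M-admissible-euler-product} and~\ref{lemma:admissible-inc-exc-error-terms}.
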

\noindent Concretely, in the notation used above, Theorem \ref{theorem.vakil-wood} says that 
\[ \lim_{d \rightarrow \infty} \frac{[U_d]}{[V_d]}=\left(Z_X^\Kap\left(\bbL^{-(\dim X + 1)}\right)\right)^{-1}. \]

\subsubsection{Motivic statistics}\label{subsubsec:motivic statistics}
While Poonen's Bertini theorem paved the way for many further applications to arithmetic statistics, the applications of Vakil-Wood's method have so far been much more limited. Indeed, it has been extended in essentially only two ways:
\begin{enumerate}
\item In \cite{vakil-wood:discriminants}, Vakil-Wood also compute the asymptotic motivic probability that a hypersurface has exactly $m$ points of non-transverse intersection with $X$.
\item In \cite{howe:mrv2}, the second author computes the motivic statistics of the universal family of transverse hypersurface sections of $X$. The results can be summarized by saying that the universal family is a motivic binomial random variable with parameters 
\[ N=[X],\; p=\frac{\bbL^{\dim X} - 1}{\bbL^{\dim X +1} -1},\]
a statement that is made precise using moment generating functions.   
\end{enumerate}

In hindsight, the common theme of these two generalizations is that the relevant conditions on the Taylor series differ at only a finite set of points from the condition of being smooth (though that finite set of points is allowed to vary, so that  we can ``add up" probabilities across different configurations). In particular, the relevant probabilities in the arithmetic analogs are given by taking a special value of a zeta function multiplied by a polynomial, so that they have natural motivic analogs given by multiplying the Kapranov zeta function by a polynomial (and also ``adding up" over the varying sets of points, which is more complicated in the motivic setting where we must integrate over a configuration space rather than a finite set of configurations). 

\subsubsection{Motivic Euler products and motivic statistics}

One of the main obstructions to extending Vakil-Wood's work to give motivic analogs of other results in arithmetic statistics is the lack of candidate motivic probabilities when the Euler product on the arithmetic side cannot be directly related to a zeta function. 

In this work, we remove that obstruction by using the theory of motivic Euler products developed by the first author in her work on motivic height zeta functions \cite{bilu:thesis, bilu:motiviceulerproducts}. Combining this theory with a careful formalization of the motivic inclusion-exclusion method introduced by Vakil-Wood and some insights ported from probability theory using motivic random variables, we are able to prove motivic Bertini theorems that allow for very general Taylor conditions (cf. Theorems \ref{theorem.taylorconditions} and \ref{maintheorem.general} below).

As applications, we obtain motivic analogs of essentially all known results in arithmetic statistics that have been proven using Poonen's sieve. Moreover, this proliferation of results in motivic statistics complementing the corresponding results in arithmetic statistics leads us to a conjecture on the analytic convergence of zeta functions that unifies motivic and arithmetic statistics over finite fields. 

\begin{remark}
In the second author's work \cite{howe:mrv2} on the motivic statistics of the universal transverse hypersurface section, motivic Euler products already play a key role in the proof of the main theorem. However, they appear only in an ad hoc role, and the proof given in \cite{howe:mrv2} is divorced from any conceptual understanding of the result. The approach of the present work offers a substantial conceptual improvement, and in a sequel \cite{bilu-howe:motivic-random-variables} we will use these insights to clarify and expand the theory of motivic random variables introduced in \cite{howe:mrv1, howe:mrv2}.
\end{remark}

\subsubsection{Significance}
Our work shows that the study of motivic statistics is not merely a curiosity existing in a few special cases, but rather a subject of breadth comparable to that of arithmetic statistics and worthy of study in its own right. We provide strong tools to carry out that study, and examples indicating how they can be used. These tools have already been applied in exciting new ways that we did not predict: for example, in his thesis, Ljungberg \cite{ljungberg:thesis} uses our results to prove some cases of the motivic Tamagawa number conjecture. Finally, our conjecture relating motivic and arithmetic statistics provides a concrete path towards a deeper unity of these two subjects which so far have only been related by analogy.  

\subsection{Results}\label{subsec.results}

\subsubsection{Poonen's Bertini theorem with Taylor conditions.}
Let $X \subset \bbP^n_{\bbF_q}$ be a subvariety. As explained in the previous section, the condition that a hypersurface in $\bbP^n_{\bbF_q}$ intersects $X$ transversely can be described by prescribing the allowable first-order Taylor expansions for a defining equation at each point. It is thus natural to instead consider an arbitrary set of allowable Taylor expansions at each point of $\bbP^n_{\bbF_q}$, and then ask for the probability that a hypersurface (or rather its defining equation) has an allowable Taylor expansion at every point. 

In this more general setup, one can still compute a candidate probability as an infinite product of the local densities, and the general version of Poonen's Bertini theorem \cite[Theorem 1.2]{poonen:bertini} says that, under some reasonable assumptions about the set of allowable Taylor expansions (for example, the infinite product should converge!), the asymptotic probability that a hypersurface of degree $d$ has an allowable Taylor expansion at each point of $\bbP^n_{\bbF_q}$ is indeed equal to this candidate probability.

Our main result is a motivic analog of this more general theorem. To state it, we first introduce some notation. 

\subsubsection{Taylor conditions}
\newcommand{\Taylor}{\mathrm{Taylor}}
Let $V_d := \Gamma(\bbP^n, \calO(d))$ be the space of degree $d$ homogeneous polynomials in $n+1$ variables. Let $\calP^r_d / \bbP^n_{\bbC}$ be the vector bundle of $r$-principal parts of $\calO(d)$: at a point $x \in \bbP^n(\bbC)$, the fiber $\calP^r_d|_x$ is canonically identified with the the set $\calO(d)_x / \mathfrak{m}_x^{r+1}$ of $r$-infinitesimal expansions of sections of $\calO(d)$.  Restriction of global sections to infinitesimal neighborhoods induces a natural Taylor expansion map of vector bundles over $\bbP^n$,
\[ \Taylor^r: V_d \times \bbP^n \rightarrow \calP^r_d. \]

\newcommand{\everywhere}{\mathrm{everywhere}}
\begin{definition}
An \emph{($r$-infinitesimal) Taylor condition (on $\calO(d)$)} is a constructible subset $T \subset \calP^r_d$.  
\end{definition} 
\noindent Given a Taylor condition $T$, we define the subset $V_{d}^{T-\everywhere} \subset V_d$ of equations satisfying $T$ everywhere by
\[ V_{d}^{T-\everywhere} := \{ F \in V_d(\bbC)\, | \, \forall x \in \bbP^n(\bbC),\, \Taylor^r(F,x)\in T\}. \]
An elementary argument with Chevalley's theorem shows that $V_{d}^{T-\everywhere}$ is a constructible subset of $V_d$. In particular, it has a well-defined class 
\[ [V_{d}^{T-\everywhere}] \in K_0(\Var/\bbC). \]
Thus it makes sense to compute the motivic probability that an element of $V_d$ satisifies $T$ at every point as 
\[ \frac{[V_{d}^{T-\everywhere}]}{[V_d]} \in \calM_{\bbC}. \]

\subsubsection{Motivic Euler products}
If we fix an $x \in \bbP^n(\bbC)$, it is easy to see that the motivic probability that $F \in V_d$ satisfies $T$ at $x$ is given by 
\[ \frac{[T]_x}{[\calP^r_d]_x}\, \left( = \frac{[T|_x]}{[\calP^r_d |_x]} \right). \]
However, there is no obvious way in which to form the uncountable product
\[ `` \prod_{x \in \bbP^n(\bbC)} \frac{[T]_x}{[\calP^r_d]_x}. " \]
We overcome this difficulty by using the theory of motivic Euler products, as introduced by the first author \cite{bilu:thesis, bilu:motiviceulerproducts}. Briefly, for $X/\bbC$ a variety and $\{a_i\}_{i \in \bbZ_{\geq 1}}$ a set of classes in the relative Grothendieck ring $K_0(\Var/X)$ (or its localization $\calM_X := K_0(\Var/X)[\bbL^{-1}]$), motivic Euler products give a systematic way to make sense of the expression
\[ \prod_{x \in X} \left(1 + (a_1)_x  t + (a_2)_x t^2 + \ldots \right)\]
as an element of $ 1+ tK_0(\Var/\bbC)[[t]]$ in a way that behaves consistently with the notation -- in other words, one can make most of the formal manipulations one would expect for products.

\begin{definition}\label{def:intro-m-admissible}
A Taylor condition $T$ is $M$-\emph{admissible} if there is a finite subset $S \subset \bbP^n(\bbC)$ with $|S| \leq M$ such that
\[ T|_{\bbP^n \backslash S} \]
has codimension at least $n+1$ in $\calP^r_d|_{\bbP^n \backslash S}$. A Taylor condition $T$ is \emph{admissible} if it is $M$-admissible for some $M \geq 0$.
\end{definition}

The importance of this notion is justified by
\begin{lemma}\label{lemma:intro-admissible-converge} If $T$ is an admissible Taylor condition, then the motivic Euler product 
\begin{equation}\label{eqn.mot-prob-product} \prod_{x \in \bbP^n} \left(1 - \frac{[T]_x}{[\calP^r_d]_x} t \right)\end{equation}
converges at $t=1$ in $\widehat{\calM}_\bbC.$
\end{lemma}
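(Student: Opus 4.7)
By admissibility, first fix a finite subset $S \subset \bbP^n(\bbC)$ with $|S| \leq M$ such that $T|_U$ has codimension at least $n+1$ in $\calP^r_d|_U$, where $U := \bbP^n \setminus S$. Set $c := [T]/[\calP^r_d] \in \calM_{\bbP^n}$, which is well-defined since $[\calP^r_d]_x$ is a (locally constant) power of $\bbL$. The plan is to exploit multiplicativity of motivic Euler products under the stratification $\bbP^n = S \sqcup U$ to write
\[ \prod_{x \in \bbP^n}\left(1 - c_x t\right) \;=\; \left(\prod_{x \in S}\left(1 - c_x t\right)\right)\cdot\left(\prod_{x \in U}\left(1 - c_x t\right)\right), \]
and handle the two factors separately. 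The finite product over $S$ is literally a product of at most $M$ polynomials in $t$ with coefficients in $\calM_\bbC$, so evaluating at $t=1$ raises no convergence issue.

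For the infinite product over $U$, the codimension hypothesis translates directly into a fiberwise dimension bound: at each $x \in U$,
\[ \dim c_x \;=\; \dim [T]_x - \dim [\calP^r_d]_x \;\leq\; -(n+1). \]
Since $\dim U = n$, the class $c|_U$ has fiberwise dimension exceeding $-\dim U$ by at least $1$. I would then invoke the general convergence criterion for motivic Euler products of the shape $\prod_{x \in X}(1 + a_{1,x}t + a_{2,x}t^2 + \cdots)$ developed in the first author's work \cite{bilu:thesis, bilu:motiviceulerproducts}: when the fiberwise dimensions of the $a_i$ are sufficiently negative relative to $\dim X$, the product converges at $t=1$ in $\widehat{\calM}_\bbC$. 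Applied to our class $a_1 = -c|_U$, $a_i = 0$ for $i \geq 2$, the codimension bound from admissibility is exactly calibrated to meet the hypotheses.

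Heuristically, the $k$-th coefficient of $\prod_{x \in U}(1 - c_x t)$ is built by distributing $k$ copies of the class $c$ over configurations of points in $U$; the relevant configuration (or symmetric product) spaces have dimension at most $kn$, while each of the $k$ factors of $c$ contributes fiberwise dimension at most $-(n+1)$. Adding up, the $k$-th coefficient sits in dimension at most $kn - k(n+1) = -k$, which goes to $-\infty$.

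\textbf{Main obstacle.} The delicate step is converting the heuristic dimension count into a genuine bound that covers \emph{every} summand appearing in the formal partition expansion defining a motivic Euler product, not just the top-dimensional contribution from a single symmetric power. This is the content of the convergence machinery set up in \cite{bilu:thesis, bilu:motiviceulerproducts}, which I would quote rather than reprove; the role of the admissibility hypothesis in Definition~\ref{def:intro-m-admissible} is precisely to feed into that machinery, with the finite exceptional set $S$ absorbing the points where the codimension bound fails.
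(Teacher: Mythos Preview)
Your proposal is correct and follows essentially the same route as the paper. The paper's proof (given for the general version, Lemma~\ref{lemma:M-admissible-euler-product}) likewise factors the product over the stratification into the finite exceptional set and its complement via multiplicativity, then on the complement expands using Proposition~\ref{proposition:fundamental-euler-product} and bounds the dimension of each $\mu$-term fiberwise to land in $\Fil_{-k}$ for $|\mu|=k$---exactly your heuristic $kn - k(n+1) = -k$, made rigorous via Lemma~\ref{lemma:dimension-fibration} rather than by citing an external convergence criterion.
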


In particular, suppose $T$ is a Taylor condition such that the complementary condition $T^c := P_{d,s} \backslash T$ is admissible. We then make the nonsense formal manipulation
\begin{align*}
 \left.\prod_{x \in \bbP^n}\left( 1 - \frac{[T^c]_x}{[\calP^r_d]_x} t\right)\right|_{t=1} & ``= \prod_{x \in \bbP^n}\left(1 - \frac{[T^c]_x}{[\calP^r_d]_x} \right) "\\
 & `` = \prod_{x \in \bbP^n} \left(1 - \frac{[\calP^r_d]_x - [T]_x}{[\calP^r_d]_x}\right) " \\
 & ``= \prod_{x \in \bbP^n} \frac{[T]_x}{[\calP^r_d]_x}.  "\end{align*}
Here the quotation marks are placed to indicate that there is no actual mathematical sense to the manipulations because the infinite products without a variable have no meaning in this context. Nonetheless, these naive manipulations combined with Lemma \ref{lemma:intro-admissible-converge} suggest that, for $T$ a Taylor condition with admissible complement, 
\[ \left.\prod_{x \in \bbP^n} \left(1 - \frac{[T^c]_x}{[\calP^r_d]_x} t\right)\right|_{t=1} \]
can be thought of as the product of the local densities $\frac{[T]_x}{[\calP^r_d]_x}$ over all points $x \in \bbP^n$. 

\subsubsection{A motivic Bertini theorem with Taylor conditions}
We can now state our motivic analog of Poonen's finite field Bertini theorem with Taylor conditions. Because there is no canonical way to identify Taylor conditions on $\calO(d)$ for varying~$d$, we state our theorem using asymptotic ``big $O$" notation rather than a limit. 

\begin{maintheorem}\label{theorem.taylorconditions} Fix $r$, $n$, and $M$. Then, as $T$ ranges over all $r$-infinitesimal Taylor conditions with $M$-admissible complement on the line bundles $\calO(d)$ on $\bbP^n$, 
\[  \frac{[V_{d}^{T-\everywhere}]}{[V_d]} = \left.\prod_{x \in \bbP^n} \left(1 - \frac{[T^c]_x}{[\calP^r_d]_x} t\right)\right|_{t=1} + O\left(\bbL^{-\frac{d}{r+1}}\right) \textrm { in } \widehat{\calM}_\bbC. \]
\end{maintheorem}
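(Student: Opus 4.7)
The plan is to adapt Poonen's sieve to the motivic setting, using motivic inclusion-exclusion in $\widehat{\calM}_\bbC$ together with the defining expansion of motivic Euler products as the bridge between the class of $V_{d}^{T-\everywhere}$ and the Euler product evaluated at $t=1$.

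First, I would produce a motivic inclusion-exclusion formula. For each $s \geq 0$ let $W_s \subset V_d \times (\PConf^s(\bbP^n)/\mathfrak{S}_s)$ be the constructible subvariety parameterizing pairs $(F, \{x_1,\ldots,x_s\})$ of an equation together with an unordered configuration of $s$ distinct points at each of which $\Taylor^r(F,\cdot)$ lands in $T^c$. A Vakil--Wood-style motivic inclusion-exclusion then formally yields
$$\frac{[V_{d}^{T-\everywhere}]}{[V_d]} = \sum_{s \geq 0} (-1)^s \frac{[W_s]}{[V_d]} \in \widehat{\calM}_\bbC.$$
In parallel, the defining construction of the motivic Euler product $\prod_{x \in \bbP^n}\bigl(1-[T^c]_x/[\calP^r_d]_x \cdot t\bigr)$ yields, at $t=1$, an alternating sum of symmetric configuration-space classes of exactly the same shape. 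The theorem thus reduces to matching these two alternating sums up to the stated error.

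Second, I would match the two expansions term-by-term up to a cutoff $s_0 := \lfloor(d+1)/(r+1)\rfloor$. For $s \leq s_0$, any $s$-tuple of distinct points of $\bbP^n$ determines a $0$-dimensional subscheme of length $s(r+1) \leq d+1$, so the jet evaluation map $V_d \to \bigoplus_{i=1}^s \calP^r_d|_{x_i}$ is surjective. Consequently, over the configuration space $W_s$ is a fibration in affine spaces of constant codimension $s\binom{n+r}{r}$, and $[W_s]/[V_d]$ equals the relative configuration-space integral of $\prod_{i=1}^s [T^c]_{x_i}/[\calP^r_d]_{x_i}$, which by the defining multiplicativity of motivic Euler products \cite{bilu:motiviceulerproducts} is exactly the $s$-th coefficient of the Euler product evaluated at $t=1$.

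Third, I would bound the tails for $s > s_0$. On the Euler product side, Lemma \ref{lemma:intro-admissible-converge} applied to the admissible $T^c$ immediately yields an $O\bigl(\bbL^{-d/(r+1)}\bigr)$ bound in $\widehat{\calM}_\bbC$, since each factor outside the finite exceptional set $S$ witnessing admissibility contributes codimension at least $n+1$. On the $W_s$ side, where jet evaluation is no longer surjective, I would stratify the configuration space by how many chosen points lie in $S$: for points outside $S$, the admissibility of $T^c$ imposes codimension at least $n+1$ per point, while any loss from the failure of jet evaluation to be surjective is controlled via a uniform fiber-dimension bound in the spirit of Poonen's high-degree sieve; the bounded number of points lying in $S$ contributes only a constant-dimension error absorbed into the dependence on $M$.

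The main obstacle will be the last step: bounding fibers of $W_s \to \PConf^s(\bbP^n)/\mathfrak{S}_s$ uniformly for $s > s_0$, precisely the regime where jet evaluation ceases to be surjective. This is the motivic analog of Poonen's delicate ``medium'' and ``high'' degree sieve, and is the step where the uniformity hypothesis $|S|\leq M$ is essential. The payoff of the Euler product formalism here is conceptual: it provides a clean formal target for the truncation, so that the tail bound reduces to a purely geometric codimension estimate decoupled from the coefficient-by-coefficient matching.
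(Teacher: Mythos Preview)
Your proposal has a genuine gap at the very first step. The inclusion--exclusion identity you write,
\[ \frac{[V_{d}^{T-\everywhere}]}{[V_d]} \;=\; \sum_{s \geq 0} (-1)^s \frac{[W_s]}{[V_d]}, \]
is the naive formula indexed by integers $s$ (i.e.\ only the partitions $\mu=1^s$), and this is \emph{not} what holds motivically. The motivic inclusion--exclusion that Vakil--Wood actually use, formalized here as Theorems~\ref{theorem:mot-inc-exc-exact} and~\ref{theorem:mot-inc-exc-approx}, is indexed by \emph{all} partitions $\mu\in\calQ$, with sign $(-1)^{||\mu||}$ and terms $[\Conf^{\mu}_{/V_d}(\cdot)]$; the collapse of the $\mu$-sum to the $s$-sum that occurs for finite sets fails for varieties (see the examples following Theorem~\ref{theorem:mot-inc-exc-exact}). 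The same issue bites on the Euler-product side: the defining expansion of $\prod_{x\in\bbP^n}(1-a_x t)$ is $\sum_{\mu\in\calQ}(-1)^{||\mu||}\Conf^{\mu}_{\bbP^n}(a)\,t^{|\mu|}$ (Proposition~\ref{proposition:fundamental-euler-product}), not $\sum_s(-1)^s[\Conf^s(a)]t^s$. So your two ``alternating sums of the same shape'' are in fact both misidentified, and your term-by-term matching in step two compares the wrong objects.

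Once one works with the correct $\mu$-indexed sums, the paper's route is structurally close to yours but handles the tail differently. It applies the \emph{approximate} inclusion--exclusion (Theorem~\ref{theorem:mot-inc-exc-approx}), which truncates at $|\mu|<m$ and packages the remainder into explicit error classes $[\Conf^{\mu,\geq m-|\mu|}_{/V_d}(\cdot)]$. These are bounded directly from $M$-admissibility (Lemma~\ref{lemma:admissible-inc-exc-error-terms}); no motivic analog of Poonen's medium/high-degree sieve is needed, because the approximate inclusion--exclusion has already reduced the tail to a finite list of geometrically tractable pieces. The cutoff $m$ is then taken to be the largest integer for which $V_d$ is $r$-infinitesimally $m$-generating (Example~\ref{example:surj-projective-space}), giving $m\sim d/(r+1)$ and the claimed error. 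The step you flagged as the main obstacle is thus dissolved by the structure of the inclusion--exclusion, not by a sieve argument --- but only after the $\mu$-indexing is in place.
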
 

\noindent Concretely, this means that there is some $N$ such that for any $d$ and any $r$-infinitesimal Taylor condition $T$ on $\calO(d)$ with $M$-admissible complement, 
\[ \frac{[V_{d}^{T-\everywhere}]}{[V_d]} \equiv \left.\prod_{x \in \bbP^n}\left( 1 - \frac{[T^c]_x}{[\calP^r_d]_x} t\right)\right|_{t=1} \mod \Fil_{N - \floor{\frac{d}{r+1}}},\]
where $\Fil_k$ denotes the subgroup of $\widehat{\calM}_\bbC$ of elements of dimension $\leq k$. 

\begin{remark}\label{remark:bound-in-big-o} In fact, here one can take $N=M$. \end{remark}

\begin{example}\label{example.poonen-taylor-conditions} Fix a finite set of locally closed smooth subvarieties $X_i \subset \bbP^n$ and a finite $S \subset \bbP^n(\bbC)$. Then, if $T$ is a Taylor condition such that for all $x$ in $\bbP^n(\bbC) \backslash S$, $T_x$ contains the Taylor expansions that are transverse to each of the $X_i$ at $x$, then $T^c$ is $|S|$-admissible. Thus, our theorem applies to the natural analogs of the Taylor conditions considered by Poonen in \cite[Theorem 1.2]{poonen:bertini}, except that we only work at a fixed finite level of the Taylor expansion and require that $T$ be constructible. The condition of constructibility cannot be weakened in our setup, however, one could likely remove the finite level condition by passing to an inverse limit. However, as all of the applications we have in mind can be described at a finite level of the Taylor expansion, we leave such a generalization to the motivated reader. 
\end{example}

\subsubsection{Generalization}
For applications, one often needs a much more general statement: for example, to prove a semi-ample Bertini theorem analogous to the finite field result of \cite{erman-wood:semiample-bertini}, one needs to handle sections of coherent sheaves. To ``add up" over conditions, e.g. for applications to motivic random variables as in \cite{howe:mrv2} or in reproving Vakil-Wood's result on hypersurface sections singular at exactly $m$ points, one must allow an arbitrary base variety $S$. To compare directly with point counting results one must allow for varieties over a field of characteristic $p$. 

To these ends, we prove a more general version of Theorem \ref{theorem.taylorconditions} where
\begin{enumerate}
\item $\bbP^n$ is replaced with an arbitrary projective variety $X$ over an arbitrary base variety $S$ over an arbitrary field $K$.
\item $\calO(d)$ is replaced with $\calF(d):=\calF \otimes \calL^d$ where $\calF$ is a coherent sheaf on $X$ and $\calL$ is a relatively ample line bundle on $X/S$. 
\item The Grothendieck ring is replaced with a better behaved quotient in characteristic $p$ (note that this is already necessary even for Vakil-Wood's result: although the results of \cite{vakil-wood:discriminants} are claimed over any field, there is a gap in the proof in positive characteristic -- cf. Remark \ref{remark:vakil-wood-error} below for more details).
\end{enumerate}

It is not a priori clear how one should formulate such a generalization, because a coherent sheaf is not in general represented by a variety. To overcome this difficulty, we first introduce a formalism that attaches to a coherent sheaf $\calA$ on a variety $Y$ a geometric realization $\bbV(\calA) / Y$, which is a disjoint union of geometric vector bundles over a stratification of $Y$. 

This geometric realization gives a functor from coherent sheaves on $Y$ to the localization of the category of varieties over $Y$ along piecewise isomorphisms ${Y'\rightarrow Y}$. It turns out that this localization is a reasonable category for us to work in: it preserves the notion of constructible sets, points, etc., and passage to the Grothendieck ring of varieties over $Y$ factors through the localization functor.

Letting $\calP^r_{/S} \calF$ denote the sheaf of $r$-principal parts (i.e. $r$-infinitesimal expansions) of $\calF$, we obtain a well-defined notion of an $r$-infinitesimal Taylor condition on $\calF$ as a constructible subset of $\bbV(\calP^r_{/S} \calF)$. We are also able to formulate a good notion of admissibility in this context (Definition \ref{def:m-admissible}). Moreover, denoting by $f:X\to S$ the structural morphism, the space $\bbV(f_* \calF(d))$, a variety over $S$, is a reasonable space of global sections, and the setup is sufficiently robust so that a simple argument with Chevalley's theorem still implies that the subset 
\[ \bbV(f_* \calF(d))^{T-\mathrm{everywhere}} \subset \bbV(f_* \calF(d)) \] 
of sections satisfying the Taylor condition $T$ everywhere is constructible. We obtain

\begin{maintheorem}\label{maintheorem.general}
Fix $f:X \rightarrow S$, a map of varieties over a field $K$, $\calF$ a coherent sheaf on $X$, $\calL$ a relatively ample line bundle on $X$, and $r, M \geq 0$. Then, there is an $\epsilon >0$ such that as $T$ ranges over all $r$-infinitesimal Taylor conditions on $\calF(d)=\calF \otimes \calL^d$ with $M$-admissible complement, 
\[ \frac{[\bbV\left(f_* \calF(d)\right)^{T-\mathrm{everywhere}}]}{[\bbV(f_* \calF(d))]} = \left. \prod_{x \in X/S} \left(1 - \frac{[T^c]_x}{[\bbV(\calP^r_{/S}\calF)]_x}\right)\right|_{t=1} + O(\bbL^{-\epsilon d}) \textrm{ in } \widehat{\widetilde{\calM}}_X.\]
\end{maintheorem}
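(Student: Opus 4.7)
The plan is to adapt the motivic sieve argument behind Theorem \ref{theorem.taylorconditions} to this general setting. The three new features -- a non-trivial base $S$, a coherent sheaf $\calF$ instead of a line bundle, and the modified quotient ring $\widetilde{\calM}_X$ needed in positive characteristic -- are all handled by the geometric realization $\bbV(-)$: after passing to a fine enough stratification of $X$ and $S$, the sheaves $\calF$, $\calP^r_{/S}\calF$, and the pushforward $f_* \calF(d)$ (for $d \gg 0$) all become locally free, so that the relevant $\bbV$'s become honest vector bundles and the Taylor expansion map $\Taylor^r$ becomes a surjective map of vector bundles on each stratum (using relative ampleness of $\calL$ and Serre vanishing). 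This reduces the problem, stratum by stratum, to a setting formally analogous to the projective-space situation treated in Theorem \ref{theorem.taylorconditions}.

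With the reduction in hand, the Poonen-style motivic sieve proceeds as follows. First, by Chevalley, the class $[\bbV(f_*\calF(d))^{T-\everywhere}]$ is well-defined and can be computed by a motivic inclusion-exclusion over finite configurations of ``bad'' points in $X/S$ where the Taylor condition fails, integrated over the relative configuration spaces of $X/S$. Second, the resulting configuration-space sum is assembled into a single motivic Euler product using Bilu's formalism \cite{bilu:thesis, bilu:motiviceulerproducts}, following the strategy of Theorem \ref{theorem.taylorconditions} but now applied relatively over each stratum of $S$. The $M$-admissible hypothesis, together with Lemma \ref{lemma:intro-admissible-converge} suitably generalized to this relative setting, guarantees that the Euler product converges at $t=1$ in $\widehat{\widetilde{\calM}}_X$, so that this ``main term'' makes sense and equals the predicted product.

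The main obstacle is the uniform error bound $O(\bbL^{-\epsilon d})$. What is needed is a motivic analog of Poonen's medium- and high-degree Bezout estimates, now relative to $S$: for a closed subscheme $Z \subset X$ of codimension $\geq c$ in the fibers of $f$, the dimension of the subspace of $\bbV(f_*\calF(d))$ consisting of sections vanishing to the prescribed order on $Z$ should drop by roughly $c$, and in favorable ranges linearly in $d$. This follows from relative Castelnuovo-Mumford regularity applied to the relevant ideal sheaves twisted by $\calL^d$. The $M$-admissibility then controls the sieve error: at all but at most $M$ points of $X$, the bad locus of $T$ has codimension $\geq \dim X + 1$, so all tail contributions to the inclusion-exclusion fall in filtration $\leq -\epsilon d$ for some $\epsilon$ depending only on $M$, $r$, $\dim X$, and the rank data of $\calF$ and $\calP^r_{/S}\calF$ (matching the role of Remark \ref{remark:bound-in-big-o} in the projective case). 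Finally, passing to $\widetilde{\calM}_X$ rather than $\calM_X$ is what makes these stratification and Chevalley manipulations go through in positive characteristic (cf.\ Remark \ref{remark:vakil-wood-error}): universal homeomorphisms become invertible in $\widetilde{\calM}_X$, so the piecewise isomorphisms produced by the geometric realization give genuine equalities of classes in $\widehat{\widetilde{\calM}}_X$, repairing the gap in the Vakil-Wood argument in characteristic $p$.
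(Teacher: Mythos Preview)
Your proposal captures the broad shape --- motivic inclusion-exclusion assembled into an Euler product, with admissibility controlling convergence --- but it misidentifies the mechanism for the error bound, and that is where the proof actually lives.

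The paper does \emph{not} reduce to Theorem~\ref{theorem.taylorconditions} by stratifying until everything is locally free and then running the $\bbP^n$ argument stratum by stratum. Instead it proves a general approximation result (Theorem~\ref{theorem:main-theorem-coherent}) for \emph{any} map $\phi:\calA\to f_*\calB$ of coherent sheaves that is ``$m$-generating'' (jointly surjective onto $\calB$ at any $\le m$ geometric points of a fiber). The key observation is that $m$-generation makes the family of events ``fails $T$ at $x$'' \emph{strongly $m$-wise independent} in the sense of the motivic probability formalism of Section~\ref{section.indepedent-events}; by Theorem~\ref{theorem:strongly-independent-truncation} this forces the first $m$ coefficients of $\prod_x(1-[T^c]_x/[\bbV(\calB)]_x\,t)$ to agree \emph{exactly} with the first $m$ terms of the approximate inclusion-exclusion expansion (Theorem~\ref{theorem:mot-inc-exc-approx}). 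The remaining error terms from truncation at level $m$ are bounded by pure dimension counts on configuration spaces (Lemma~\ref{lemma:admissible-inc-exc-error-terms}), using only $M$-admissibility. No B\'ezout-type or Castelnuovo--Mumford estimate enters.

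The $O(\bbL^{-\epsilon d})$ rate then arises because $f_*\calF(d)$ is $r$-infinitesimally $m$-generating once $d$ is roughly linear in $m$ (Example~\ref{example:inf-m-gen-coh-sheaf}, via very-ampleness of some $\calL^A$); inverting this gives $m \sim d/A(r+1)$, hence accuracy $\bmod\ \Fil_{-m+M}$ with $m$ linear in $d$. Your proposed route through ``relative Castelnuovo--Mumford regularity applied to the relevant ideal sheaves'' is an attempt to port Poonen's medium- and high-degree sieve directly to the motivic setting; it is far from clear this can be made to work (the high-degree decoupling in particular has no obvious motivic analog), and in any case it is not what is done here. The $m$-wise independence framework is both simpler and more robust, and is the conceptual core of the argument you are missing.
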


When $K$ is of characteristic zero, the Grothendieck ring $\widehat{\widetilde{\calM}}_X$ appearing in Theorem \ref{maintheorem.general} is equal to $\widehat{\calM}_X$, the completion of $\calM_X = K_0(\Var/X)[\bbL^{-1}]$ for the dimension filtration. In positive characteristic, it is a modification obtained by first taking the quotient $\widetilde{K}_0(\Var/X)$ of $K_0(\Var/X)$ by radicial surjective maps before inverting $\bbL$ and completing. 

\begin{remark}
If $\calL^A$ is very ample on each fiber, then one can take $\epsilon = \frac{1}{A(r+1)}$ in Theorem \ref{maintheorem.general}. If $\calF(D)$ is globally generated then one can also obtain an explicit bound on the constant appearing in the big O notation in terms of $D$ as in Remark~\ref{remark:bound-in-big-o}. 
\end{remark}

\begin{remark} The main ingredient in the proof of Theorem \ref{maintheorem.general}, Theorem \ref{theorem.general}, can also be applied to more general spaces of global sections; this allows us to treat, e.g., motivic probability problems over varieties that are not proper.
\end{remark}

\subsubsection{Applications}
In section \ref{sect.applications}, we give the following applications of Theorems~\ref{theorem.taylorconditions}, \ref{maintheorem.general}, and~\ref{theorem.general}:
\begin{enumerate}
\item We compute the asymptotic motivic probability that a hypersurface section of a smooth projective variety has exactly $m$ singular points. This result is due to Vakil-Wood \cite{vakil-wood:discriminants}, but our approach significantly clarifies the argument, bringing it much closer to the simple proof of the arithmetic analog, due to Gunther \cite{gunther:random-hypersurfaces}, via Poonen's Bertini theorem with Taylor conditions. 
\item We compute the asymptotic motivic probability that a complete intersection in $\bbP^n$ is smooth, giving a motivic analog of the arithmetic result of Bucur-Kedlaya \cite{bucur-kedlaya:complete-intersections}. In this case, numerical computations of Bucur-Kedlaya suggest that the probability in the arithmetic case cannot be expressed using special values of zeta functions, so that a motivic analog seemed inaccessible before our work.
\item We compute the asymptotic motivic probability that a surface containing a fixed curve in $\bbP^3$ is smooth, giving a motivic analog of the arithmetic result of Gunther \cite{gunther:random-hypersurfaces}.
\item We compute the asymptotic motivic probability that a map from a direct sum of line bundles to a fixed vector bundle on a curve is a surjection. This computation plays an important role in work of Ljungberg \cite{ljungberg:thesis} proving cases of the motivic Tamagawa number conjecture.
\item We prove a motivic semi-ample Bertini theorem, giving a motivic analog of the arithmetic result of \cite{erman-wood:semiample-bertini}.

\end{enumerate}
Our results can be used to prove more general versions of {(1)-(4)}, but in the interest of clarity we have preferred to focus on concrete examples.

Theorem \ref{maintheorem.general} can also be used to give a new proof of the main result on motivic random variables in \cite{howe:mrv2}; this will be carried out in \cite{bilu-howe:motivic-random-variables}, where we also clarify and expand the theory of motivic random variables developed by the second author in \cite{howe:mrv1, howe:mrv2}. Similar to our new proof of Vakil-Wood's theorem on hypersurface sections with exactly $m$ singular points, this new proof is much closer to the simple argument for the arithmetic analog \cite[Theorem C]{howe:mrv2} given by adding up applications of Poonen's Bertini theorem with Taylor conditions. We will also use Theorem~\ref{maintheorem.general} in \cite{bilu-howe:motivic-random-variables} to give new examples of interesting motivic random variables. 

\subsection{Questions}
Our work leads to some natural questions:
\begin{enumerate}
\item After making a suitable modification to the Grothendieck ring (as in Theorem \ref{maintheorem.general}), Theorem \ref{theorem.taylorconditions} also holds over a finite field $\bbF_q$. Thus, over $\bbF_q$ we have two stabilization theorems: Poonen's arithmetic theorem for point-counts and our motivic theorem in the Grothendieck ring. Neither theorem implies the other, however, it is natural to ask if there is a stronger result that implies both. In Section~\ref{sect.conjecture} below, we formulate a conjecture about the analytic convergence of a sequence of zeta functions that would accomplish this unification (in the interest of brevity, we state the conjecture only in the case of smooth hypersurface sections, but it can be generalized).  
\item Can one prove an arithmetic analog of Theorem \ref{maintheorem.general} (even over $S=\Spec \bbF_q$)? It seems likely to the authors that the answer is yes, and doing so could clarify the literature by neatly packaging arguments involving Poonen's sieve, which are often implemented ad hoc. 
\item As indicated in Example \ref{example.poonen-taylor-conditions}, it should be possible to replace the finite level Taylor conditions in Theorem \ref{theorem.taylorconditions} by working carefully with the inverse limit of the bundles of principal parts at finite level. Are there interesting questions to ask in motivic statistics that require this level of generality? Is the formalism used for studying motivic integration on arc-schemes sufficient to carry out such a generalization, or will new tools be needed? A precise formulation could be interesting, even without applications in mind. 
\end{enumerate}

\subsection{Outline}
In section \ref{section.grothendieck-ring} we introduce our notation for Grothendieck rings, along with some convenient intermediary localizations of the category of varieties that can be used to facilitate arguments involving repeated passage to stratifications or computations with geometric points. 

In section \ref{section.configuration-spaces}, we summarize the basic properties and notation we will need for dealing with configuration spaces. We note that this work makes extensive use of labeled configuration spaces, and thus our notation needs to accommodate these concisely and without confusion -- this leads us to some non-standard choices.

In section \ref{section.geometric-realization} we construct our geometric realization functor for coherent sheaves and establish some basic structural results for its image. 

In section \ref{section.taylor-expansions} we recall and extend some basic results on sheaves of principal parts and Taylor expansion maps for coherent sheaves; in particular, we explain how Taylor expansion maps interact with configuration spaces and geometric realizations.

In section \ref{sect.motivic-euler-products} we summarize the construction of and basic results on motivic Euler products following \cite{bilu:motiviceulerproducts}, and provide some complements that will be necessary in the present work. 

In section \ref{sect.motivic-inclusion-exclusion} we formalize and carefully explain the motivic inclusion-exclusion principle first introduced in a special case by Vakil-Wood \cite{vakil-wood:discriminants}. We give two proofs of the principle; one following \cite{vakil-wood:discriminants}, and the other using motivic Euler products. 

In section \ref{section.indepedent-events} we develop some basic results in motivic probability theory for studying $m$-wise (e.g. pairwise, triplewise, \ldots) independent events -- this is the key conceptual input from probability theory used in the proof of Theorems \ref{theorem.taylorconditions}~and~\ref{maintheorem.general}.

In section \ref{section.motivic-prob-coherent-sheaves} we prove our main theorems. To do so, we first give a general formalism for motivic probability problems defined by maps of coherent sheaves, and prove an approximation result (Theorem \ref{theorem:main-theorem-coherent}) for these problems when the defining condition has admissible complement and the sheaf of global sections is sufficiently generating. We then apply this result to Taylor expansion maps to obtain Theorems \ref{theorem.general}, \ref{theorem.taylorconditions}, and  \ref{maintheorem.general}.  

In section \ref{sect.applications} we give specific applications to motivic statistics, and in section~\ref{sect.conjecture} we explain our conjecture on the analytic convergence of zeta functions unifying motivic and arithmetic statistics. 

\subsection{Acknowledgements}
We thank Antoine Chambert-Loir, Ronno Das, Benson Farb,  Ben  Fayyazuddin Ljungberg, Javier Fres\'an, Aaron Landesman, Daniel Litt and Ravi Vakil  for helpful conversations. 

The second author was supported during the preparation of this work by the National Science Foundation under Award No. DMS-1704005 and by the National Science Foundation under Grant No. DMS-1440140 while in residence at the Mathematical Sciences Research Institute in Berkeley, California during Spring 2019. 

\section{The Grothendieck ring of varieties}\label{section.grothendieck-ring}

A \emph{variety} over a field $K$ is a reduced, separated $K$-scheme of finite type (we do not require that it be irreducible or connected). We denote by $\Var/K$ the category of varieties over $K$, and if $S$ is a variety over $K$, we denote by $\Var/S$ the slice category of varieties over $S$, whose objects are morphisms $X\rightarrow S$ in $\Var/K$.

The Grothendieck ring of varieties over $S$, $K_0(\Var/S)$, is defined at the level of groups as the free abelian group on the isomorphism classes $[X/S]$ of $X/S \in \Var/S$, modulo the relations 
\[ [X/S] = [Z/S] + \left[ \left(X\bs Z\right) /S \right] \]
whenever $Z\subset X$ is a closed subvariety. The ring multiplication is induced by
\[ [X_1] \cdot [X_2] = [(X_1 \times_S X_2)_\red].\]

Given an object $X/S \in \Var/S$, it follows that if $X=\bigsqcup X_i$ is a decomposition of~$X$ into locally closed subvarieties (here the disjoint union is of sets), then 
\[ [X/S]=\sum [X_i/S] \in K_0(\Var/S).\]
In particular, in making arguments in the Grothendieck ring, one is often free to pass to an arbitrary decomposition. However, some arguments may involve several steps of passing to finer and finer decompositions while still working with actual varieties before finally passing to the Grothendieck ring where the different decompositions become equivalent. To make this kind of argument cleanly and carefully, it is thus preferable to introduce an intermediate step between $\Var/S$ and $K_0(\Var/S)$. 

This can be accomplished by localizing the category $\Var/S$ at piecewise isomorphisms to obtain a new category $(\Var/S)_{\PW}$, such that the natural localization functor induces an isomorphism  
\[ K_0(\Var/S) = K_0( (\Var/S)_{\PW} ). \] 

\begin{remark} As we will see below, the definition of $K_0((\Var/S)_\PW)$ is simpler than that of $K_0(\Var/S)$: one only needs to quotient by relations coming from disjoint unions in $(\Var/S)_\PW$. \end{remark}

If $K$ is of characteristic zero, then $f: X \rightarrow Y$ is a piecewise isomorphism if and only if $f$ induces a bijection on $L$-points for any algebraically closed $L/K$. If $f$ satisfies the latter property, we also say it is radicial surjective, so that in characteristic zero being a piecewise isomorphism is equivalent to being radicial surjective. In characteristic $p$, however, being radicial surjective is weaker than being a piecewise isomorphism -- for example, the Frobenius map $x \mapsto x^2$ from $\bbA^1_{\bbF_2} \rightarrow \bbA^1_{\bbF_2}$ is radicial surjective but not a piecewise isomorphism. 

It turns out that in characteristic $p$, in order to obtain a Grothendieck ring that is well-behaved for our purposes, it will be preferable to instead localize at all radicial surjective morphisms to obtain the category $(\Var/S)_{\RS}$. We will write \[ \widetilde{K}_0(\Var/S) := K_0( (\Var/S)_{\RS} ). \]
In characteristic zero, $\widetilde{K}_0(\Var/S)=K_0(\Var/S)$, and in general the kernel of the natural map $K_0(\Var/S)\rightarrow \widetilde{K}_0(\Var/S)$ is generated by the relations $[X/S]=[X'/S]$ whenever there is a radicial surjective map $f:X/S \rightarrow X'/S$ in $\Var/S$. In this guise, $\widetilde{K}_0(\Var/S)$ was introduced by Mustata \cite{mustata:zeta} and also studied by Chambert-Loir-Nicaise-Sebag \cite{chambert-loir-et-al:motivic-integration}, who gave an equivalent formulation (cf. Remark \ref{remark:K0-rs-other-definitions} below). 

\begin{remark}
Roughly speaking, the reason we must localize at radicial surjective morphisms is that it allows us to make combinatorial arguments with configuration spaces using only their geometric points, which are easily understood. This is a subtle but serious point -- for example, there is a gap in the proof of \cite[Theorem 1.13]{vakil-wood:discriminants} when $K$ is of characteristic $p$, because the motivic inclusion-exclusion principle can fail in $K_0(\Var)$ in the presence of inseparable extensions of residue fields -- specifically, equation (3.1) of \cite{vakil-wood:discriminants} is not obviously valid. The gap is removed by passing to the quotient $\widetilde{K}_0(\Var/S)$; cf. section \ref{sect.motivic-inclusion-exclusion} below for more details on this point and the failure of motivic inclusion-exclusion in characteristic $p$. 
\end{remark}

In the remainder of the section, we will carefully define $(\Var/S)_\PW$ and $(\Var/S)_\RS$ and study their Grothendieck rings; we point out here that the family of morphisms being inverted is well-behaved in both cases, so that morphisms in the localized category are given by spans. We also introduce a third variant, $(\Var/S)_{S-\PW}$, where we localize only at piecewise isomorphisms induced by decompositions of $S$. This is useful for studying the classes in Grothendieck rings defined by constructible subsets, and will also be used in Section \ref{section.geometric-realization} below when we define a functor $\bbV$ from coherent sheaves on a variety $S$ to $(\Var/S)_{S-\PW}$ generalizing the construction attaching the representing geometric vector bundle to a locally free sheaf. 

\subsection{Piecewise isomorphisms and radicial surjective morphisms}

\begin{definition}\label{def:piecewiseisoandradicial} A morphism $f:X \rightarrow Y$ of varieties over $K$ is
\begin{enumerate} 
\item a \emph{piecewise isomorphism} if there exist decompositions $X=\bigsqcup X_i$ and $Y=\bigsqcup Y_i$  into locally closed subvarieties such that $f$ induces an isomorphism between $X_i$ and $Y_i$ for each $i$.
\item \emph{radicial surjective} if for every algebraically closed field $L/K$, the induced map $f: X(L) \rightarrow Y(L)$ 
is a bijection. 
\end{enumerate}
\end{definition}

\begin{remark}\label{remark:radicial-injective} In (2), if we replace ``bijection'' by ``injection'', we get the notion of radicial morphism. A morphism $f:X\to Y$ is radicial surjective (in the sense of definition \ref{def:piecewiseisoandradicial}) if and only if it is both radicial and surjective. 
\end{remark}

\begin{proposition}\hfill
\begin{enumerate}
\item A piecewise isomorphism is radicial surjective. If ${\mathrm{char} K= 0}$, a radicial surjective morphism is a piecewise isomorphism. 
\item A morphism $f:X \rightarrow Y$ is radicial surjective if and only if it is bijective and for any $x \in X$, the extension of residue fields $\kappa(x)/\kappa(f(x))$ is purely inseparable. 
\end{enumerate}
\end{proposition}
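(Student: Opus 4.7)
The plan is to handle (2) first and then use it in (1).

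For (2), I would start from the dictionary
\[ Z(L) = \bigsqcup_{z \in Z} \mathrm{Hom}_K(\kappa(z), L), \]
under which $f(L)$ restricted to the fiber over $x \in X$ becomes the pre-composition map $\mathrm{Hom}_K(\kappa(x), L) \to \mathrm{Hom}_K(\kappa(f(x)), L)$ along $\kappa(f(x)) \hookrightarrow \kappa(x)$. For $(\Leftarrow)$, bijectivity of $f$ on underlying points together with pure inseparability of each residue extension makes each such pre-composition a bijection: existence of an extension of a given embedding into $L$ is automatic when $L$ is algebraically closed, and uniqueness is exactly the defining property of purely inseparable extensions. For $(\Rightarrow)$, take $L$ algebraically closed and large enough to realize every residue field of $X$ as a subfield; choosing an embedding exhibits each point as an $L$-point, and bijectivity of $f(L)$ then forces bijectivity on topological points as well as the unique-extension property at each point, which characterizes pure inseparability of the residue extension.

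For (1), the forward direction is immediate: a compatible decomposition $X = \bigsqcup X_i$, $Y = \bigsqcup Y_i$ with each $X_i \xrightarrow{\sim} Y_i$ yields $X(L) = \bigsqcup X_i(L) \to \bigsqcup Y_i(L) = Y(L)$, a disjoint union of bijections. The converse in characteristic zero I would prove by Noetherian induction on $Y$: it suffices to produce a nonempty open $V \subset Y$ with $f^{-1}(V) \xrightarrow{\sim} V$, since the restriction of $f$ to the preimage of the closed complement is still radicial surjective and the induction hypothesis applies there.

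To produce $V$, pick a top-dimensional irreducible component $Y_1$ and set $V_0 := Y \setminus \bigcup_{j \neq 1} Y_j$, an open irreducible subvariety of $Y$; it is enough to find a dense open $V \subset V_0$ with $f^{-1}(V) \xrightarrow{\sim} V$. Bijectivity of $f$ over $V_0$ forces $f^{-1}(V_0)$ to be irreducible (a second component would yield a second preimage of the generic point of $V_0$), so after replacing we may assume $X$ and $Y$ are integral. By (2) plus characteristic zero, $f$ induces an isomorphism on residue fields at the generic point, hence is birational. On affine opens $\Spec A \subset Y$ and $\Spec B \subset f^{-1}(\Spec A)$, the rings $A$ and $B$ share a fraction field; clearing denominators of finitely many generators of $B$ over $A$ produces an $s \in A$ with $A[1/s] = B[1/s]$, and bijectivity of $f$ on points then forces $f^{-1}(D(s)) \subset \Spec B$, so $f^{-1}(D(s)) \xrightarrow{\sim} D(s)$ is the required $V$.

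The main obstacle will be the spreading-out step in the irreducible case combined with the care needed to reduce to it while staying within the paper's framework of varieties (reduced, separated, finite type); no single ingredient is deep, but one must avoid tacitly invoking nonreduced schemes in the inductive bookkeeping.
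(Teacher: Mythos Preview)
Your argument is essentially correct and more self-contained than the paper's, which simply cites \cite[Chapter~2, Proposition~1.4.11]{chambert-loir-et-al:motivic-integration} for the converse in (1) and the Stacks Project \cite[Tag 01S2]{stacks-project} for (2). Your hands-on treatment of (2) via the decomposition $Z(L)=\bigsqcup_{z\in Z}\mathrm{Hom}_K(\kappa(z),L)$ is clean; the only imprecision is the phrase ``$L$ large enough to realize every residue field of $X$'': you do not need a single such $L$, only, for each point or pair of points, an algebraically closed $L$ of sufficient transcendence degree over the relevant residue field of $Y$ so that both residue fields upstairs embed compatibly over it.

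There is one genuine slip in your proof of (1). You claim that bijectivity of $f$ over the irreducible open $V_0\subset Y$ forces $f^{-1}(V_0)$ to be irreducible, because ``a second component would yield a second preimage of the generic point of $V_0$''. This is false: a second component of $f^{-1}(V_0)$ has its own generic point, which by injectivity of $f$ maps to a \emph{non-generic} point of $V_0$, not to the generic point. For instance, take $Y=\bbA^1$, $X=(\bbA^1\setminus\{0\})\sqcup\{0\}$, with $f$ the obvious bijection. The fix is immediate: let $W_1$ be the unique component of $f^{-1}(V_0)$ dominating $V_0$; each other component $W_i$ has $f(W_i)$ contained in a proper closed subset $Z_i\subsetneq V_0$. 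Replacing $V_0$ by $V_0\setminus\bigcup_{i\geq 2}Z_i$ ensures $f^{-1}(V_0)\subset W_1$ is irreducible, and your birational spreading-out argument then goes through unchanged.
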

\begin{proof} 
\begin{enumerate} \item The first statement is clear. For the second statement, see \cite[Chapter 2, Proposition 1.4.11]{chambert-loir-et-al:motivic-integration}.
\item Use \cite[\href{https://stacks.math.columbia.edu/tag/01S2}{Tag 01S2}]{stacks-project}, more precisely the equivalence between  (1) and (3) in Lemma 28.10.2, together with Remark \ref{remark:radicial-injective}.
\end{enumerate}

\end{proof}

\begin{remark}\label{remark:rad-surj-factorization} 
Recall that a map of varieties over $K$ is a universal homeomorphism if and only if it is radicial, surjective, and finite (see \cite[Corollaire (18.12.11)]{EGAIVd}). Given a radicial surjective map of varieties $f:X \rightarrow Y$, one can thus find a piecewise isomorphism $Y' \rightarrow Y$ such that the induced map 
\[ X \times_Y Y' \rightarrow Y' \]
is a universal homeomorphism -- indeed, a radicial surjective map of varieties is quasi-finite, so it suffices to choose a stratification $Y'$ along which it becomes finite. 
\end{remark}

The following lemma ensures that morphisms in the localized category have a simple description. Here we follow the notation of  \cite[\href{https://stacks.math.columbia.edu/tag/04VB}{Section 04VB}]{stacks-project}.

\begin{lemma}\label{lemma:calculus-of-fractions}
The classes $\PW$ of piecewise isomorphisms and $\RS$ of radicial surjective morphisms in $\Var/S$ are right multiplicative systems (cf. \cite[Definition 04VC]{stacks-project}).
\end{lemma}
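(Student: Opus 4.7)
The plan is to verify the three multiplicative system axioms of \cite[Tag 04VC]{stacks-project}---closure under identities/compositions, the Ore condition, and right cancellation---for each of $\PW$ and $\RS$ separately, exploiting the fact (from the preceding proposition) that $\PW \subset \RS$ and that both classes are stable under base change in $\Var/S$.

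Closure under identities and composition is routine. For $\RS$, one looks at $L$-points for an algebraically closed $L$ and observes that a composition of bijections is a bijection. For $\PW$, given decompositions witnessing that $f\colon X\to Y$ and $g\colon Y\to Z$ are piecewise isomorphisms, one takes the common refinement of the two decompositions of $Y$ and pulls back/pushes forward through $f$ and $g$ to obtain compatible decompositions of $X$ and $Z$ on which $g\circ f$ restricts to isomorphisms. The main work is in the Ore condition. Given a diagram $X\xrightarrow{f} Y\xleftarrow{s} Y'$ with $s\in \RS$, I would take the variety $W := (X\times_Y Y')_{\mathrm{red}}$: the projection $X\times_Y Y'\to X$ is the base change of $s$, hence radicial surjective, and the nilimmersion $W\to X\times_Y Y'$ is also radicial surjective, so the composite $W\to X$ lies in $\RS$; the other projection gives the required map $W\to Y'$. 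For $s\in\PW$ with compatible stratifications $Y' = \bigsqcup Y'_i$ and $Y = \bigsqcup Y_i$ such that $s|_{Y'_i}\colon Y'_i \xrightarrow{\sim} Y_i$, I would take $W := \bigsqcup_i f^{-1}(Y_i)$ as a literal disjoint union of varieties: the canonical map $W\to X$ is a piecewise isomorphism by construction, and the maps $(s|_{Y'_i})^{-1}\circ f|_{f^{-1}(Y_i)}\colon f^{-1}(Y_i)\to Y'_i$ glue to a map $W\to Y'$ making the square commute.

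Right cancellation is then essentially formal. Suppose $f,g\colon X\to Y$ are morphisms and $s\colon Y\to Z$ lies in $\RS$ (and therefore in the larger class containing $\PW$) with $s\circ f = s\circ g$. Because $s$ is radicial, it is injective on $L$-points for every algebraically closed $L$, so $f$ and $g$ agree on every geometric point of $X$. Since $Y$ is separated, the equalizer of $f$ and $g$ is a closed subscheme of $X$; since its underlying topological space is all of $X$ and $X$ is reduced, this equalizer equals $X$, so $f=g$ on the nose, and one may take $t = \id_X$.

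The only delicate step is the Ore condition for $\RS$: the fibered product in schemes need not be reduced and so is not a priori an object of $\Var/S$, but passing to the reduction is harmless because nilimmersions are themselves radicial surjective. Every other ingredient---stability of ``radicial'' and ``surjective'' under base change, the fact that separated reduced targets detect equality of morphisms on geometric points, and the combinatorics of common refinements of stratifications---is standard, so I expect the whole proof to fit in a short, largely formal argument.
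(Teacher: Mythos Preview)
Your argument is correct. For RMS1 and RMS2 you do essentially what the paper does; in particular, for RMS2 the paper uses the reduced fiber product $(X\times_Z Y)_{\mathrm{red}}$ uniformly for both $\PW$ and $\RS$, whereas you spell out an explicit stratified construction in the $\PW$ case. These amount to the same thing, since over each piece $Y_i$ the fiber product with the isomorphism $Y'_i\xrightarrow{\sim}Y_i$ is canonically the preimage $f^{-1}(Y_i)$.

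The genuine divergence is in RMS3. The paper argues at generic points: since $s$ is radicial surjective it is a homeomorphism, so $f$ and $g$ agree on underlying spaces; then at each generic point $\eta$ of $X$ one uses that $\kappa(f(\eta))/\kappa(s(f(\eta)))$ is purely inseparable, and a purely inseparable extension admits at most one embedding into any overfield, forcing $f^\ast=g^\ast$ on function fields. Your route is more categorical: radicial implies injective on geometric points, so $f$ and $g$ coincide on every $X(L)$; separatedness of $Y$ makes the equalizer a closed subscheme of $X$, and reducedness of $X$ then forces the equalizer to be all of $X$. Both arguments are short and valid. Yours avoids unpacking the field theory of inseparable extensions and uses only the functor-of-points characterization of radicial; the paper's version makes the role of inseparability explicit, which resonates with the surrounding discussion of why $\PW$ and $\RS$ differ in positive characteristic.
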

\begin{proof}
It is clear that isomorphisms are in $\PW$ (resp. $\RS$), and the composition of two morphisms in $\PW$ (resp.~$\RS$) is again in $\PW$ (resp.~$\RS$), so that $\PW$ (resp. $\RS$) satisfies condition RMS1 of  \cite[\href{https://stacks.math.columbia.edu/tag/04VC}{Definition 04VC}]{stacks-project}. 

Condition RMS2 is verified because given $f: X \rightarrow Z$ in $\PW$ (resp.~$\RS$) and an arbitrary morphism $Y \rightarrow Z$, the map $(X \times_Z Y)_\red \rightarrow Y$ is in $\PW$ (resp.~$\RS$). 

Condition RMS3 is verified because given $v: Y \rightarrow Z$ in $\PW$ (resp. $\RS$) and $f, g: X \rightarrow Y$ such that $v \circ g = v \circ f$, we have $f = g$. To show this, it suffices to check the case $v \in \RS$. We first observe that $f=g$ on the underlying topological space since $v$ is a homeomorphism. Thus, if $\eta$ is a generic point of an irreducible component of $X$, $f(\eta)=g(\eta)$, and we write the common images as $\eta'$. Then, to show $f=g$, it suffices to verify the maps $f^*, g^*: \kappa(\eta') \rightarrow \kappa(\eta)$ agree for any such $\eta$. Writing $v^*: \kappa(v(\eta'))\rightarrow \kappa(\eta')$, we are given $f^* \circ v^* = g^* \circ v^*$. The field extension $\kappa(\eta')/\kappa(v(\eta'))$ is purely inseparable (because $v$ is radicial surjective), and any map of fields with domain $\kappa(v(\eta'))$ admits at most one extension to $\kappa(\eta')$, so we conclude $f^*=g^*$. 
\end{proof}

\subsection{The localized categories}

For $W$ one of $\PW$ or $\RS$, we denote by $(\Var/S)_W$ the localized category. Because $W$ has a calculus of right fractions, it admits the following concrete description (\cite[Tag 04VK]{stacks-project}: the objects of $(\Var/S)_W$ are the same as those of $(\Var/S)$, and the maps are given by
\[ \Map_{(\Var/S)_W}(X,Y) = \colim_{f: X' \rightarrow X \textrm { in } W} \Map_{\Var/S}(X', Y). \]
That is, every morphism $X \rightarrow Y$ is represented by a span or ``hat":
\[ \xymatrix{ & X ' \ar[dl]_{f} \ar[dr]^{g} & \\
X & & Y } \]
with $f \in W$ and $g$ arbitrary; the calculus of fractions allows us to compose spans. 

\begin{remark}
When $K$ is perfect, Liu-Sebag have constructed a natural fully faithful embedding $X \mapsto X^\cons$ of $(\Var/K)_\PW$ in the category of zero-dimensional reduced schemes over $K$. The underlying set of $X^\cons$ is the same as $X$, but it is equipped with the constructible topology.
\end{remark}

Because $\PW \subset \RS$, the natural localization functor $\Var/S \rightarrow (\Var/S)_\RS$ factors through $(\Var/S)_\PW \rightarrow (\Var/S)_\RS.$ 

We will also consider the category $(\Var/S)_{S-\PW}$, where we localize only at piecewise isomorphisms induced by decompositions of $S$. Concretely: the objects of $(\Var/S)_{S-\PW}$ are varieties over $S$, and, if we write $S' \sim S$ to denote a decomposition $S' = \bigsqcup S_i$ of $S$ into locally closed subvarieties $S_i$,  
\[ \Map_{(\Var/S)_{S-\PW}}(X, Y) = \colim_{S' \sim S} \Map_{\Var/S}(X_{S'}, Y). \]

\subsection{Constructible subsets}
Let $X/K$ be a variety, and let $C$ be a constructible subset of the underlying topological space of $X$. Then, we can write $C$ as a set-theoretic disjoint union of locally closed subsets $C=\bigsqcup C_i$, and thus we can equip $C$ with the structure of a variety over $X$ by letting each $C_i$ have its induced subvariety structure and letting $C$ be the disjoint union of these varieties.  This structure depends on the chosen decomposition, however, if we consider the resulting object of $(\Var/X)_{X-\PW}$, there is a unique isomorphism between the objects resulting from two different choices. 

Thus, it makes sense to speak of ``the" object of $(\Var/X)_{X-\PW}$ defined by the constructible subset $C$. If $X$ is a variety over $S$, then by composition with the natural map we also obtain an object $C/S$ of $(\Var/S)_W$ where $W=\PW \textrm{ or } \RS$, determined up to canonical isomorphism.  

\begin{remark}[Chevalley's theorem for morphisms in the localized category]
Given varieties $X$ and $Y$ over $K$, a radicial surjective map ${f:X \rightarrow Y}$ induces a bijection $f:|X|\rightarrow |Y|$ of underlying scheme-theoretic sets. Thus, the functor sending a variety to its underlying scheme-theoretic set factors through $(\Var/K)_\RS$. In particular, given a subset $A \subset |X|$, and $f \in \Map_{(\Var/K)_{\RS}}(X,Y)$, it makes sense to speak of $f(A) \subset |Y|$. 

Moreover, Chevalley's theorem holds in this context, so that if $A$ is constructible then $f(A)$ is constructible: Indeed, if we write $X^\cons$ for the underlying set equipped with the constructible topology, then a map $f:X \rightarrow Y$ of varieties over $K$ induces a continuous open map (the latter by Chevalley's theorem) $X^\cons \rightarrow Y^\cons$. If $f$ is radicial surjective then, as indicated above, the map on sets is also bijective, so a radicial surjective map induces a homeomorphism $f:X^\cons \rightarrow Y^\cons$. Thus, 
\[ \Var/K \rightarrow \Top,\; X \mapsto X^\cons \]
factors through $(\Var/K)_{\RS}$. Because for a variety $X$, the constructible subsets of $X$ are the quasi-compact opens of $X^\cons$, we find that Chevalley's theorem holds for an element $f \in \Map_{(\Var/K)_{\RS}}(X,Y)$: such an $f$ induces an open map ${X^\cons \rightarrow Y^\cons}$, and thus sends quasi-compact opens to quasi-compact opens. Similarly, Chevalley's theorem holds for morphisms in $(\Var/K)_{\PW}$, $(\Var/S)_\RS$, et cetera. 
\end{remark} 
\subsection{Pointwise arguments}
\newcommand{\Set}{\mathrm{Set}}

If $f: X\rightarrow Y$ is a piecewise isomorphism, then for any field $L/K$, 
\[ f: X(L) \rightarrow Y(L) \]
is a bijection. Thus, there is a functor of $L$-points
$$\begin{array}{ccc}  (\Var/K)_{\PW}& \rightarrow& \Set\\
 X &\mapsto & X(L). \end{array}$$
Similarly, a radicial surjective $f$ induces a bijection on $L$ points when $L/K$ is algebraically closed, so for $L/K$ algebraically closed, we obtain a functor of $L$-points,
$$\begin{array}{ccc}  (\Var/K)_{\RS}& \rightarrow& \Set\\
 X &\mapsto & X(L). \end{array}$$
factorizing the functor of $L$-points on $(\Var/K)_{\PW}$. 

\subsubsection{Recognition principles}
Almost by definition, isomorphisms in $(\Var/K)_{\RS}$ can be recognized on geometric points (i.e. points in algebraically closed $L/K$):
\begin{lemma}\label{lemma:rec-princ-rs} A map $f \in \Map_{(\Var/K)_\RS}(X,Y)$ is an isomorphism if and only if it induces a bijection on $L$-points for any algebraically closed $L/K$. 
\end{lemma}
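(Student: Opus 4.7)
The forward direction is essentially automatic: the functor of $L$-points $(\Var/K)_{\RS} \to \Set$ constructed earlier in this section sends isomorphisms to isomorphisms (i.e.\ bijections), so any isomorphism in $(\Var/K)_\RS$ induces bijections on $L$-points for every algebraically closed $L/K$.

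For the reverse direction, the plan is to use the explicit span description of morphisms in $(\Var/K)_\RS$ coming from the calculus of right fractions (Lemma \ref{lemma:calculus-of-fractions}). I would represent $f$ by a span
\[ X \xleftarrow{g} X' \xrightarrow{h} Y \]
with $g \in \RS$ and $h$ arbitrary. For any algebraically closed $L/K$, $g_L: X'(L) \to X(L)$ is a bijection (since $g$ is radicial surjective), and by the definition of the functor of $L$-points on the localized category, the induced map $f_L$ factors as $h_L \circ g_L^{-1}$. The hypothesis says $f_L$ is a bijection, and $g_L^{-1}$ is a bijection, so $h_L$ is a bijection. Since this holds for every algebraically closed $L/K$, the morphism $h$ is radicial surjective, i.e.\ $h \in \RS$.

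The candidate inverse is then the flipped span
\[ Y \xleftarrow{h} X' \xrightarrow{g} X, \]
which now represents a well-defined morphism $f^{-1} \in \Map_{(\Var/K)_\RS}(Y,X)$ precisely because $h \in \RS$. To verify that this genuinely inverts $f$, I would compute $f \circ f^{-1}$ and $f^{-1} \circ f$ using the right-fraction composition rule: by filling in the evident square along $\id_{X'}$, the composition $f \circ f^{-1}$ is represented by $Y \xleftarrow{h} X' \xrightarrow{h} Y$, and symmetrically $f^{-1}\circ f$ is represented by $X \xleftarrow{g} X' \xrightarrow{g} X$. Each of these collapses to the identity in the localization, since any span whose two legs are equal (and in $\RS$) is identified with the identity morphism there.

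The main (mild) obstacle is the last bookkeeping step — that a span $Z \xleftarrow{\phi} Z' \xrightarrow{\phi} Z$ with $\phi \in \RS$ represents $\id_Z$ in $(\Var/K)_\RS$ — but this is immediate from the $\colim$ description of $\Map$ in the localized category, since $\phi$ itself exhibits this span as being equivalent to the span $Z \xleftarrow{\id_Z} Z \xrightarrow{\id_Z} Z$. No further input from the theory of the Grothendieck ring is needed; the lemma is really a formal consequence of the right multiplicative system structure of $\RS$ together with the fact that radicial surjectivity is detected on geometric points.
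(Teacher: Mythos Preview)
Your argument is correct. The paper does not actually supply a proof of this lemma: it merely introduces it with the phrase ``Almost by definition,'' so your span-based argument is a faithful and complete unpacking of what the authors leave implicit.
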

For constructible sets, this recognition principle holds already in $(\Var/X)_{X-\PW}$:
\begin{lemma}\label{lemma:rec-princ-cons}
Let $C_1$ and $C_2$ be constructible subsets of $X$. The following are equivalent:
\begin{enumerate}
\item $C_1=C_2$ as subsets of $X$,
\item $C_1 \cong C_2$ in $(\Var/X)_{X-\PW}$, 
\item For every $L/K$, $C_1(L)=C_2(L)$ as subsets of $X(L)$, and
\item For every algebraically closed $L/K$, $C_1(L)=C_2(L)$ as subsets of $X(L)$. 
\end{enumerate}
\end{lemma}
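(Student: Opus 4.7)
The plan is to establish the cycle of implications $(1) \Rightarrow (2) \Rightarrow (3) \Rightarrow (4) \Rightarrow (1)$. Most are formal; the only real content is in $(4) \Rightarrow (1)$, and the only subtle bookkeeping is in $(2) \Rightarrow (3)$, which requires unwinding the definition of morphisms in the localization.

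The implication $(1) \Rightarrow (2)$ is immediate: if $C_1 = C_2$ as subsets of $X$, then any choice of locally closed decomposition realizes them as the same object of $\Var/X$. For $(2) \Rightarrow (3)$, recall that a morphism $f: C_1 \to C_2$ in $(\Var/X)_{X-\PW}$ is represented by a span
\[ C_1 \xleftarrow{p} (C_1)_{X'} \xrightarrow{g} C_2 \]
over $X$, for some decomposition $X' = \bigsqcup X_i \to X$. Taking $L$-points, the map $p$ is a bijection $(C_1)_{X'}(L) \cong C_1(L)$ respecting the inclusion into $X(L)$, and $g$ is a map of varieties over $X$, so on $L$-points it is compatible with the inclusions into $X(L)$. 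Hence the composition sends $C_1(L)$ into $C_2(L)$ inside $X(L)$, i.e., $C_1(L) \subseteq C_2(L)$. If $f$ is an isomorphism, applying the same argument to its inverse gives equality. The implication $(3) \Rightarrow (4)$ is trivial.

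The key step is $(4) \Rightarrow (1)$. Let $x \in C_1 \subset |X|$ be an arbitrary scheme-theoretic point, and let $L$ be an algebraic closure of the residue field $\kappa(x)$. The canonical map $\Spec L \to X$ has set-theoretic image $\{x\}$, and since $x \in C_1$ this map factors through the locally closed piece of any chosen decomposition of $C_1$ containing $x$, hence defines an element of $C_1(L) \subset X(L)$. By hypothesis, this element also lies in $C_2(L)$, so the map $\Spec L \to X$ factors through $C_2$; in particular its image $x$ belongs to $C_2$. Thus $C_1 \subseteq C_2$, and by symmetry $C_1 = C_2$.

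The main obstacle is less a genuine difficulty and more a matter of care in $(2) \Rightarrow (3)$: one must verify that the $L$-points functor on $(\Var/X)_{X-\PW}$, restricted to constructible subsets of $X$, takes values in subsets of $X(L)$ in a way compatible with maps in the localized category. This is forced by the fact that every span representing such a morphism consists of maps over $X$, so no choices enter and the induced map on $L$-points is the identity inclusion into $X(L)$.
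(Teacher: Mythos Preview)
The paper states this lemma without proof, treating it as an elementary recognition principle. Your argument is correct and is exactly the standard proof one would supply: the cycle $(1)\Rightarrow(2)\Rightarrow(3)\Rightarrow(4)\Rightarrow(1)$, with $(4)\Rightarrow(1)$ handled by the usual fact that a scheme-theoretic point is detected by a geometric point over the algebraic closure of its residue field, and $(2)\Rightarrow(3)$ handled by observing that every span representing a morphism in $(\Var/X)_{X-\PW}$ lies over $X$, so on $L$-points it respects the inclusions into $X(L)$.
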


\subsubsection{Useful constructions with geometric points}
The following basic facts about the behavior of geometric points will be helpful to keep in mind for use with the recognition principles of Lemmas \ref{lemma:rec-princ-rs} and \ref{lemma:rec-princ-cons}: 

\begin{lemma}\label{lemma:geometric-points-quotient} Let $X/K$ be a quasi-projective variety and let $G$ be a finite group acting on $X$. Then, the quotient $X/G$ exists, and, for $L/K$ an algebraically closed field, the natural map 
\[ X(L) \rightarrow \left(X/G\right)(L) \]
induces a bijection
\[ X(L)/G \rightarrow \left(X/G\right)(L) \]
\end{lemma}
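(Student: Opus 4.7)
The plan is to first construct $X/G$ by gluing classical invariant-theoretic quotients on $G$-stable affine opens, and then verify the bijection on geometric points using lying-over plus the transitivity of the $G$-action on primes above a given prime.

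First, I would construct $X/G$ locally. Since $X$ is quasi-projective and $G$ is finite, every finite subset (in particular every $G$-orbit) lies in an affine open; intersecting $G$-translates and taking spectra of coordinate rings, $X$ is covered by $G$-stable affine opens $U = \Spec A$. On each, set $U/G := \Spec A^G$. Classical invariant theory gives that $A^G$ is a finitely generated reduced $K$-algebra and $A$ is a finite $A^G$-module (via the integrality relation $\prod_{g \in G}(T - g(a)) \in A^G[T]$). These affine quotients glue along $G$-equivariant intersections to a separated variety $X/G$ equipped with a finite surjective morphism $q: X \to X/G$ satisfying the universal property for $G$-invariant morphisms to separated $K$-schemes.

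Next, the verification of the bijection on $L$-points. The composition $X(L) \to (X/G)(L)$ is manifestly $G$-invariant, so it factors through $X(L)/G$; I need to show that the resulting map is a bijection. Both statements are local on the target, so I may reduce to an affine $G$-stable piece $U = \Spec A$ with $U/G = \Spec A^G$.

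For surjectivity, given $\varphi \in (U/G)(L) = \mathrm{Hom}_{K\text{-alg}}(A^G, L)$ with kernel $\mathfrak{q}$, the lying-over theorem applied to the integral extension $A^G \hookrightarrow A$ produces a prime $\mathfrak{p}$ of $A$ with $\mathfrak{p} \cap A^G = \mathfrak{q}$. Then $A/\mathfrak{p}$ is integral over $A^G/\mathfrak{q}$, so its fraction field is algebraic over $\mathrm{Frac}(A^G/\mathfrak{q}) \hookrightarrow L$, and since $L$ is algebraically closed, this embedding extends to $\mathrm{Frac}(A/\mathfrak{p}) \hookrightarrow L$. The composition $A \twoheadrightarrow A/\mathfrak{p} \hookrightarrow L$ is the desired lift.

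For injectivity, suppose $x, y \in X(L)$ satisfy $q \circ x = q \circ y$. Let $\mathfrak{p}_x, \mathfrak{p}_y$ be their kernels in $A$; then they restrict to the same prime $\mathfrak{q}$ in $A^G$. The main classical input I will invoke is that $G$ acts transitively on the finite set of primes of $A$ lying above $\mathfrak{q}$, so there exists $g \in G$ with $g(\mathfrak{p}_x) = \mathfrak{p}_y$. Replacing $x$ by $g \cdot x$, I may assume $\mathfrak{p}_x = \mathfrak{p}_y =: \mathfrak{p}$. Now $x$ and $y$ become two embeddings $A/\mathfrak{p} \hookrightarrow L$ that agree on the subring $A^G/\mathfrak{q}$; since the extension of fraction fields $\mathrm{Frac}(A/\mathfrak{p}) / \mathrm{Frac}(A^G/\mathfrak{q})$ is a normal algebraic extension, any two such embeddings into the algebraically closed field $L$ differ by an element of $\mathrm{Aut}(\mathrm{Frac}(A/\mathfrak{p}) / \mathrm{Frac}(A^G/\mathfrak{q}))$. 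The classical fact that the decomposition subgroup $G_{\mathfrak{p}} \subset G$ surjects onto this automorphism group (purely inseparable issues being harmless since $L$ is algebraically closed) then produces $h \in G_{\mathfrak{p}}$ with $h \cdot x = y$, showing $x$ and $y$ lie in the same $G$-orbit.

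The main obstacle I anticipate is the injectivity step: the passage from equality on $A^G$ to equality on $A$ up to a $G$-action requires both the transitivity of $G$ on primes above $\mathfrak{q}$ and the decomposition-group-to-Galois-group surjection. These are standard but need to be quoted carefully, especially in positive characteristic where the residue field extensions may have an inseparable part — the saving grace is that $L$ is algebraically closed, so inseparability never obstructs lifting or distinguishes embeddings.
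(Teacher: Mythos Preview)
The paper states this lemma as a standard fact without proof, so there is no argument to compare against. Your proof is correct and follows the classical route: reduce to the affine case $U=\Spec A$ with $U/G=\Spec A^G$, use integrality and lying-over for surjectivity, and use transitivity of $G$ on primes above $\mathfrak{q}$ together with surjectivity of the decomposition group onto $\mathrm{Aut}(\kappa(\mathfrak{p})/\kappa(\mathfrak{q}))$ for injectivity. The only spot worth tightening is the injectivity endgame: you should note explicitly that two $\kappa(\mathfrak{q})$-embeddings of $\kappa(\mathfrak{p})$ into $L$ have the same image because $\kappa(\mathfrak{p})/\kappa(\mathfrak{q})$ is normal and both images lie inside the algebraic closure of $\kappa(\mathfrak{q})$ in $L$; this is what makes $\tau^{-1}\circ\sigma$ a well-defined automorphism. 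With that remark the argument is complete, and your closing observation about the role of algebraic closedness of $L$ in absorbing the inseparable part is exactly right.
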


\begin{lemma}\label{lemma:geometric-points-image} Let $f: X \rightarrow Y$ be a map of $K$-varieties, and consider the constructible set $f(X)$. Then, for any algebraically closed field $L/K$,
\[ f(X)(L) = f(X(L)) \subset Y(L). \]
\end{lemma}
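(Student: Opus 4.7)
The plan is to prove the two inclusions separately. The inclusion $f(X(L)) \subseteq f(X)(L)$ is essentially tautological: given $x \in X(L)$, the composition $\Spec L \xrightarrow{x} X \xrightarrow{f} Y$ has underlying topological image a single point in $f(X) \subset |Y|$. Write $f(X) = \bigsqcup C_i$ as a decomposition into locally closed subvarieties (so that $f(X)$ is a variety over $Y$, well-defined in $(\Var/Y)_{Y-\PW}$); the $L$-point $f(x)$ lands in exactly one $C_i$, and then factors through it scheme-theoretically because the residue field of $f(x(\Spec L))$ in $Y$ equals the one in $C_i$. Hence $f(x) \in f(X)(L)$.

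The substantive direction is $f(X)(L) \subseteq f(X(L))$, and here I would argue as follows. Take $y \in f(X)(L)$. By the same decomposition argument, $y$ corresponds to a morphism $\Spec L \to Y$ whose underlying topological image is a single point $y_0 \in f(X)$. Since $y_0$ lies in the set-theoretic image of $f$, there exists some point $x_0 \in |X|$ with $f(x_0) = y_0$. Now form the fiber product
\[
X_L := X \times_Y \Spec L.
\]
This is a finite-type $L$-scheme. It is nonempty, because the fiber $X_{y_0}$ is nonempty (it contains $x_0$), and $X_L = X_{y_0} \times_{\kappa(y_0)} L$ is obtained from it by base change along $\kappa(y_0) \hookrightarrow L$.

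Since $L$ is algebraically closed, any nonempty finite-type $L$-scheme has an $L$-point (pick a closed point; by Nullstellensatz its residue field is algebraic over $L$, hence equal to $L$). Choose such an $L$-point of $X_L$; this gives an $L$-point $x \in X(L)$ whose image in $Y(L)$ under $f$ is exactly $y$ (by construction of the fiber product). Therefore $y = f(x) \in f(X(L))$, completing the proof.

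The main obstacle, such as it is, is merely bookkeeping: unpacking what ``$L$-point of the constructible set $f(X)$'' means in terms of the chosen locally closed decomposition, and ensuring this is independent of the decomposition (which is guaranteed by Lemma~\ref{lemma:rec-princ-cons}). The genuine mathematical content is just the combination of Chevalley's theorem (so that $f(X)$ is a well-defined constructible subset of $Y$) and the Nullstellensatz (so that nonempty finite-type $L$-schemes have $L$-points), both standard.
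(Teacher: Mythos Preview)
Your proof is correct. The paper states this lemma without proof, treating it as a standard fact, so there is nothing to compare against; your argument via the fiber product and the Nullstellensatz is exactly the expected one.
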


\subsection{Grothendieck rings}
For $W$ one of $\PW$ or $\RS$, we define $K_0\left( (\Var/S)_W \right)$ as follows: additively, it is the free abelian group on isomorphism classes of objects in $(\Var/S)_W$, modulo the relations
\[ [X_1/S \sqcup X_2/S] = [X_1/S] + [X_2/S] \]
where here $\sqcup$ denotes disjoint union (which is the categorical coproduct). The ring structure is again induced by 
\[ [X_1/S]\cdot [X_2/S] = [ (X_1 \times_S X_2)_\red / S ]. \]

Recall that $K_0(\Var/S)$ is the free abelian group on isomorphism classes of varieties over $S$, modulo the relations $[X/S] = [ Z / S] + [X\bs Z / S]$ whenever $Z \subset X$ is a closed subvariety. We define $K_0( (\Var/S)_{S-\PW})$ in the same way, but starting with isomorphism classes in $(\Var/S)_{S-\PW}$. The localization functors induce equalities
\[ K_0(\Var/S)=K_0( (\Var/S)_{S-\PW} ) = K_0( (\Var/S)_{\PW} ). \] 

We will also write $\widetilde{K}_0(\Var/S)$ for $K_0((\Var/S)_\RS)$. The localization functor thus induces a map
\begin{equation}\label{eqn:map-K0-locs} K_0( \Var/S ) \rightarrow \widetilde{K}_0( \Var/S ) 
\end{equation}
which is clearly surjective. 

\begin{lemma}\label{lemma:groth-ring-relations}
The kernel of the morphism (\ref{eqn:map-K0-locs}) is generated by the relations ${[X]=[X']}$ whenever there exists a radicial surjective map $f: X'/S \rightarrow X/S$. In particular, if $K$ is of characteristic zero, then 
\[ K_0( \Var/S ) = \widetilde{K}_0( \Var/S ). \] 
\end{lemma}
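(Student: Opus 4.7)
The plan is to prove the lemma by exhibiting an inverse to the induced map $K_0(\Var/S)/I \to \widetilde{K}_0(\Var/S)$, where $I$ denotes the subgroup generated by differences $[X/S] - [X'/S]$ taken over all radicial surjective morphisms $f: X'/S \to X/S$; this will show simultaneously that $I$ is contained in the kernel and that the kernel is exactly $I$.

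First I would verify the easy containment: if $f: X' \to X$ is radicial surjective, then $f$ is inverted in $(\Var/S)_\RS$, so $[X] = [X']$ as isomorphism classes in $(\Var/S)_\RS$, and hence $[X] - [X']$ lies in the kernel of the natural map to $\widetilde{K}_0(\Var/S)$.

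For the reverse inclusion, the candidate inverse $\Phi: \widetilde{K}_0(\Var/S) \to K_0(\Var/S)/I$ should send the class of $X/S$ in $\widetilde{K}_0(\Var/S)$ to $[X/S] \bmod I$. Since the disjoint union in $(\Var/S)_\RS$ is computed as in $\Var/S$, the defining relations of $K_0((\Var/S)_\RS)$ already hold in $K_0(\Var/S)$, so the only thing I really need to check is that $\Phi$ is well-defined on isomorphism classes in $(\Var/S)_\RS$. The main input is the following claim: if $X \cong Y$ in $(\Var/S)_\RS$, then there exists a $Z/S$ together with radicial surjective maps $Z \to X$ and $Z \to Y$ in $\Var/S$. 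Given such a $Z$, it is immediate that $[X] \equiv [Z] \equiv [Y] \pmod{I}$. To produce $Z$, I would use Lemma \ref{lemma:calculus-of-fractions} to represent the isomorphism in $(\Var/S)_\RS$ as a span $X \xleftarrow{s} Z \xrightarrow{g} Y$ with $s$ radicial surjective, and then argue that $g$ is radicial surjective as well. The hard part will be this last verification: my plan is to invoke the recognition principle of Lemma \ref{lemma:rec-princ-rs}, according to which the span represents an isomorphism in $(\Var/S)_\RS$ iff it induces a bijection on $L$-points for every algebraically closed $L/K$; since $s_L$ is already bijective by radicial surjectivity of $s$, this forces $g_L$ to be bijective for every such $L$, so that $g$ is radicial surjective.

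Finally, for the characteristic-zero statement, I would use the fact recalled in the previous subsection that every radicial surjective morphism is a piecewise isomorphism in characteristic zero; choosing matching decompositions exhibits the relation $[X'] = [X]$ already in $K_0(\Var/S)$, so $I = 0$ and the natural map $K_0(\Var/S) \to \widetilde{K}_0(\Var/S)$ is an isomorphism.
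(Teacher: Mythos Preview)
The paper states this lemma without proof, so there is no argument to compare against. Your proposal is correct and is the natural way to establish the result. A couple of minor remarks on execution:

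For the key step, you do not actually need the full biconditional of Lemma~\ref{lemma:rec-princ-rs}; it suffices to use that the $L$-points functor $(\Var/K)_\RS \to \Set$ exists for algebraically closed $L$, so that any isomorphism in the localized category automatically induces a bijection on $L$-points. Since the span $X \xleftarrow{s} Z \xrightarrow{g} Y$ induces $g_L \circ s_L^{-1}$ on $L$-points and $s_L$ is bijective, $g_L$ is bijective and hence $g$ is radicial surjective. This is exactly what you wrote, just without needing the ``iff''.

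For well-definedness of $\Phi$ on the disjoint-union relation, you might also remark that $I$ is a subgroup (not merely a subset) and that the relation $[X_1 \sqcup X_2] = [X_1] + [X_2]$ holds in $K_0(\Var/S)$ because $X_1$ is closed and open in $X_1 \sqcup X_2$; both points are implicit in what you said but worth making explicit in a written proof.
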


\begin{remark}\label{remark:K0-rs-other-definitions}
The definition of $\widetilde{K}_0( \Var/S )$ as a quotient of $K_0(\Var/S)$ was introduced by Mustata \cite[7.2]{mustata:zeta} when $S=\Spec K$. Combined with Remark \ref{remark:rad-surj-factorization}, we see that it is also equal to the quotient $K^{\mathrm{uh}}_0( \Var/S )$ considered by Chambert-Loir, Nicaise and Sebag \cite[Chapter 2, 4.4]{chambert-loir-et-al:motivic-integration}. 
\end{remark}

\begin{remark}\label{remark:RSvsPW}
The map $\Spec \bbF_2(t^{1/2}) \rightarrow \Spec \bbF_2(t)$ is radicial surjective, but not a piecewise isomorphism, so the natural map 
\[ (\Var/\Spec \bbF_2(t))_\PW \rightarrow (\Var / \Spec \bbF_2(t) )_\RS \]
is not an equivalence. Similar examples can be constructed using the Frobenius over any field of positive characteristic (including perfect fields of positive characteristic). 

However, two varieties can have the same class in the Grothendieck ring without being piecewise isomorphic, and it is an open question to determine, e.g., whether $K_0(\Var/\bbF_2(t))$ is equal to $\widetilde{K}_0(\Var/ \bbF_2(t) )$. To our knowledge, no specific example in any Grothendieck ring in characteristic $p$ involving inseparable extensions of residue fields has been settled: for example, we do not know if  $[ \Spec \bbF_2(t^{1/2}) / \Spec \bbF_2(t) ]$ is equal to $1$ in $K_0(\Var/\Spec \bbF_2(t))$ (cf. \cite[Remark 4.4.8]{chambert-loir-et-al:motivic-integration}). 
\end{remark}

\subsubsection{Fiberwise computation in relative Grothendieck rings}

The following will be helpful when it is more convenient to argue over a field: 

\begin{lemma} \label{lemma:injection-relative-product}
 For $S$ a variety over $K$, the natural maps
\begin{align*}
 K_0( \Var/S  ) & \rightarrow \prod_{s \in S} K_0( \Var/ \kappa(s) ), \textrm{ and} \\ 
 \widetilde{K}_0( \Var/S ) & \rightarrow \prod_{s \in S} \widetilde{K}_0\left( \Var/ \kappa(s) \right) 
\end{align*}
are injective.  
\end{lemma}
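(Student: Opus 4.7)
The plan is to prove both injectivity statements simultaneously by Noetherian induction on $S$; the base case $S=\emptyset$ is trivial. For the inductive step, suppose $\alpha$ in $K_0(\Var/S)$ (resp.\ $\widetilde{K}_0(\Var/S)$) maps to zero in every fiber. First I would choose a generic point $\eta$ of an irreducible component of $S$ and produce a nonempty open neighborhood $U \ni \eta$ with $\alpha|_U = 0$; this is the key spreading-out step, discussed in the next paragraph. Setting $Z := S\setminus U$ with inclusions $i: Z \hookrightarrow S$ and $j: U \hookrightarrow S$, the scissor relation applied to each generator $[X/S]$ yields the decomposition $\alpha = i_*(\alpha|_Z) + j_*(\alpha|_U)$ (and the analogous identity in the $\widetilde{K}_0$ setting, since $i_*$ and $j_*$ visibly preserve radicial surjections), so that $\alpha = i_*\beta$ with $\beta := \alpha|_Z$. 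Since $\beta|_z = \alpha|_z = 0$ for all $z \in Z$, and $Z \subsetneq S$ is a proper closed subvariety (as $\eta \notin Z$), the inductive hypothesis applied to $Z$ forces $\beta = 0$, and hence $\alpha = 0$.

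The nontrivial ingredient is producing the open $U$. By hypothesis $\alpha|_\eta = 0$ in $K_0(\Var/\kappa(\eta))$ (resp.\ in $\widetilde{K}_0(\Var/\kappa(\eta))$), so the plan is to establish the spreading-out identity $K_0(\Var/\kappa(\eta)) = \colim_{V \ni \eta} K_0(\Var/V)$ and likewise for $\widetilde{K}_0$, from which the existence of $U$ is immediate. Concretely: any finite-type variety over $\kappa(\eta)$ is the base change of a finite-type variety over some open neighborhood of $\eta$; any closed immersion of $\kappa(\eta)$-varieties spreads out to a closed immersion over a (possibly smaller) neighborhood; and, in the $\widetilde{K}_0$ case, any radicial surjective morphism of $\kappa(\eta)$-varieties spreads out to a radicial surjective morphism on some open neighborhood (this last step can be verified via Remark \ref{remark:rad-surj-factorization}, which reduces the property to being a universal homeomorphism after a piecewise isomorphism of the base, a condition that spreads out straightforwardly from the generic point). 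Since the relation $\alpha|_\eta = 0$ is witnessed by only finitely many scissor relations (and, for $\widetilde{K}_0$, finitely many radicial surjective identifications), intersecting the corresponding finitely many open neighborhoods of $\eta$ yields the desired $U$ with $\alpha|_U = 0$.

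The main obstacle is carrying out the spreading-out identity cleanly, particularly the spreading out of radicial surjective morphisms in positive characteristic; everything else is essentially formal once this is in hand.
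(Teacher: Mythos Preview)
Your proposal is correct and follows essentially the same approach as the paper: spread out the finitely many relations witnessing $\alpha|_\eta = 0$ to an open neighborhood of a generic point, then conclude by Noetherian induction on the complement. The paper's proof is a one-sentence sketch of precisely this argument, so your version is simply a more detailed execution of the same idea.
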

\begin{proof} If a class $x=\sum_i a_i [X_i/S]$ is in the kernel, then, because simplifying an expression to zero involves only a finite number of relations, we may spread out from generic points and then conclude by Noetherian induction.
\end{proof}

\subsection{Dimension}

Given a map of varieties $f: X \rightarrow S$, and a point $x \in X$, we define
\[ \dim_{/S, x} (X) := \dim_x (X_{f(x)}). \]
and
\[ \dim_{/S} (X) := \max_{x \in X} \dim_{/S,x} (X) = \max_{s \in S} \dim X_s. \]

\begin{lemma}
If $f: X \rightarrow Y$ is a radicial surjective map of varieties over $S$, then $\dim_{/S} X = \dim_{/S} Y$.  
\end{lemma}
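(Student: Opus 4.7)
The plan is to reduce the statement to the following fact about schemes of finite type over a field: a radicial surjective morphism preserves dimension. Applying this fiber-by-fiber over $S$ will then conclude the proof.

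First, I would verify that radicial surjectivity is stable under arbitrary base change. Surjectivity is preserved under base change, and radicialness is equivalent to surjectivity of the diagonal (equivalently, to universal injectivity), which is also preserved. Hence for every $s \in S$, the induced map on fibers $f_s\colon X_s \to Y_s$ is radicial surjective as a map of finite type schemes over $\kappa(s)$. Note that $X_s$ and $Y_s$ need not be reduced, but this is harmless: both the dimension and the property of being radicial surjective depend only on the underlying topological space and the residue fields, so they are insensitive to nilpotents. Since by definition $\dim_{/S} X = \max_{s\in S} \dim X_s$ and similarly for $Y$, it suffices to show $\dim X_s = \dim Y_s$ for each $s$.

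For the core statement over a field, let $g\colon A \to B$ be a radicial surjective morphism with $A, B$ of finite type over a field $k$. I claim $g$ restricts to a bijection between the generic points of the irreducible components of $A$ and those of $B$. Indeed, $g$ is bijective on scheme-theoretic points; by continuity each generic point of $A$ is mapped into some irreducible component of $B$, and conversely the unique preimage $\eta$ of a generic point $\eta'$ of $B$ must itself be generic in $A$ (if $\eta$ were a proper specialization of some $\xi \in A$, then $g(\xi) \in \overline{\{g(\eta)\}} = \overline{\{\eta'\}}$, and maximality of the component forces $g(\xi)=\eta'$, contradicting injectivity). For each corresponding pair $\eta \mapsto \eta'$, the induced residue field extension $\kappa(\eta)/\kappa(\eta')$ is purely inseparable and hence algebraic, so the two fields have the same transcendence degree over $k$. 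Since the dimension of an irreducible component of a finite type $k$-scheme equals the transcendence degree of the residue field at its generic point over $k$, corresponding components have equal dimension, and taking the maximum yields $\dim A = \dim B$.

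Combining the two steps gives $\dim X_s = \dim Y_s$ for each $s \in S$, hence the desired equality of maxima. The argument is largely routine, but one conceptual point is worth emphasizing: unlike a universal homeomorphism, a radicial surjective morphism need not be a topological homeomorphism (cf.\ Remark \ref{remark:rad-surj-factorization}), so one cannot simply transport the topological structure from $A$ to $B$; working at the level of generic points of irreducible components and transcendence degrees of residue fields provides the cleanest extraction of the dimension comparison.
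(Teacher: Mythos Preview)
Your overall strategy---reducing to fibers over $S$ and then comparing transcendence degrees over the residue field---is sound and arguably more direct than the paper's approach. However, there are two issues in the argument over a field.

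First, a small slip: when you write ``if $\eta$ were a proper specialization of some $\xi \in A$, then $g(\xi) \in \overline{\{g(\eta)\}}$'', the inclusion is backwards. Continuity gives $g(\eta) \in \overline{\{g(\xi)\}}$, i.e.\ $\eta' = g(\eta)$ is a specialization of $g(\xi)$; \emph{then} maximality of $\eta'$ forces $g(\xi) = \eta'$, contradicting injectivity. With this correction, the conclusion that $g^{-1}(\eta')$ is generic in $A$ stands.

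Second, and more substantively, the claimed bijection between generic points of $A$ and of $B$ is false in general. Take $B = \bbA^1_k$ and $A = \{0\} \sqcup (\bbA^1_k \setminus \{0\})$ with $g$ the evident map; this is radicial surjective (indeed a piecewise isomorphism), but $A$ has two generic points while $B$ has one, and the generic point of the component $\{0\}$ of $A$ maps to a non-generic point of $B$. Your argument only establishes that each generic point of $B$ pulls back to a generic point of $A$, which yields $\dim B \leq \dim A$. For the reverse inequality you can bypass the bijection entirely: for any $a \in A$ the extension $\kappa(a)/\kappa(g(a))$ is purely inseparable, hence algebraic, so $\mathrm{trdeg}_k\,\kappa(a) = \mathrm{trdeg}_k\,\kappa(g(a)) \leq \dim B$; taking $a$ generic in a top-dimensional component of $A$ gives $\dim A \leq \dim B$.

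For comparison, the paper's proof takes a different route: it invokes Remark~\ref{remark:rad-surj-factorization} to reduce (using that relative dimension is a topological invariant) to the case of a piecewise isomorphism, then passes to a fiber over $S$, assumes $Y$ irreducible, and argues via an open subset where $f$ is an isomorphism together with the observation that specialization chains in $X$ map to specialization chains in $Y$. Your transcendence-degree argument, once patched as above, is more self-contained and avoids the appeal to the stratification of Remark~\ref{remark:rad-surj-factorization}.
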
 
\begin{proof}

Note that relative dimension is a purely topological notion. Thus, invoking \ref{remark:rad-surj-factorization}, we find that it suffices to treat the case where $f$ is a piecewise isomorphism. We may also suppose $S$ is the spectrum of a field, since $f$ induces a piecewise isomorphism on each fiber, and that $Y$ is irreducible. Then, $f$ induces an isomorphism between open subsets of $X$ and $Y$, so $\dim X \geq \dim Y$. On the other hand, it is clear that $\dim X \leq \dim Y$ because any chain of specializations in $X$ is also a chain of specializations in $Y$, and we conclude. 
\end{proof}

In particular, we find that the function $\dim_{/S}$ is well-defined on isomorphism classes in $(\Var/X)_\PW$ and $(\Var/X)_\RS$.

We denote by $\mathcal{M}_S$ the ring $K_0(\Var/S)[\LL^{-1}]$, and $\widetilde{\mathcal{M}}_S$ the ring $\widetilde{K}_0(\Var/S)[\LL^{-1}]$. For every $d\in\bbZ$, we define $\Fil_d\calM_S$ (resp. $\Fil_d \widetilde{\calM}_S$) to be the subgroup of $\M_S$ (resp. $\widetilde{\calM}_S$) generated by elements of the form $[X]\LL^{-n}$ where $X$ is an $S$-variety, $n\in\bbZ$ is an integer and $\dim_{/S} X - n\leq d$.

This gives us an increasing and exhaustive filtration on the ring $\M_S$ (resp. $\widetilde{\calM}_S$). We denote by $\widehat{\M}_S$ (resp. $ \widehat{\widetilde{\calM}}_S$) the completion of $\M_S$ (resp. $\widetilde{\calM}_s$) with respect to this filtration. We will also use the notation $\Fil^d\M_S = \Fil_{-d}\M_S$. 
Define a function
$$\dim_{/S}:\widehat{\M}_S\to \bbZ\cup\{-\infty\}$$
by sending a class $a$ to $\inf\{d\in\bbZ,\ a\in \Fil_d\widehat{\M}_S\}$.

The following lemma will be a useful tool for controlling the position of certain classes in the dimension filtration. 
\begin{lemma}\label{lemma:dimension-fibration}
Let $X \rightarrow Y$, and $f:Y \rightarrow S$, and suppose there is a constructible $g: Y \rightarrow \bbZ$ such that for each $y \in Y$, $\dim X_y \leq g(y) - \dim_y Y_{f(y)}$. Then 
\[ \sum_i \left[ \left(X \times_Y g^{-1}(i)\right)/S \right] \bbL^{-i} \in \Fil^{0} \calM_S.  \]
\end{lemma}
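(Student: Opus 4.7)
The plan is to reduce the claim to a fiberwise dimension bound on each stratum of the constructible function $g$. Since $g$ is constructible on the Noetherian space $Y$, its image is a finite subset of $\bbZ$, so the sum in the statement is a finite sum. It therefore suffices to show that for each $i$ in the image of $g$,
\[ [(X \times_Y g^{-1}(i))/S]\, \bbL^{-i} \in \Fil_0 \calM_S, \]
equivalently $\dim_{/S}(X \times_Y g^{-1}(i)) \leq i$.

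Fix such an $i$ and write $Y_i := g^{-1}(i)$, viewed as a constructible subset of $Y$ (so a well-defined object of $(\Var/Y)_{Y-\PW}$, whose class maps to $K_0(\Var/S)$). For $s \in S$, the fiber of $X \times_Y Y_i \to S$ over $s$ is identified with $X \times_Y (Y_i)_s$, where $(Y_i)_s$ is the fiber of $Y_i \to S$ over $s$. For $\xi \in X \times_Y (Y_i)_s$ with image $y \in (Y_i)_s$, the fiber of the projection $X \times_Y (Y_i)_s \to (Y_i)_s$ over $y$ is $X_y$. The standard fiber dimension inequality then gives
\[ \dim_\xi \bigl(X \times_Y (Y_i)_s\bigr) \;\leq\; \dim X_y \;+\; \dim_y (Y_i)_s. \]
By hypothesis, $\dim X_y \leq g(y) - \dim_y Y_{f(y)} = i - \dim_y Y_s$, and because $Y_i$ is a locally closed subset of $Y$ we have $\dim_y (Y_i)_s \leq \dim_y Y_s$. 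Adding these two inequalities yields $\dim_\xi(X \times_Y (Y_i)_s) \leq i$, and taking the maximum first over $\xi$ and then over $s \in S$ gives $\dim_{/S}(X \times_Y Y_i) \leq i$, as required.

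Summing the resulting containments $[(X \times_Y g^{-1}(i))/S]\,\bbL^{-i} \in \Fil_0\calM_S$ over the finitely many $i$ in the image of $g$ completes the proof. The main obstacle is mostly bookkeeping: one must ensure that, even though $Y_i$ is only constructible, dimension computations make sense for its pullback to $X$ and for its fibers over $S$. This is handled by the fact that $\dim_{/S}$ is a purely topological invariant and behaves well on locally closed strata, together with the basic fiber dimension inequality applied pointwise at each $\xi$.
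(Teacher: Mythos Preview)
Your proof is correct and follows essentially the same strategy as the paper: stratify $Y$ according to $g$, then bound $\dim_{/S}$ of each piece by the fiber dimension inequality. The paper refines further, using \cite[Theorem 13.1.3]{EGAIVc} to pass to irreducible strata $Y_j$ on which both $g$ and the relative fiber dimension over $S$ are constant, and then invokes $\dim_{/S}(X_{Y_j}) \leq \dim_{/Y_j}(X_{Y_j}) + \dim_{/S} Y_j$; your pointwise argument on the coarser strata $g^{-1}(i)$ accomplishes the same thing more directly. One small wording issue: you call $Y_i$ ``locally closed'' when it is only constructible, but since you explicitly flag this at the end and the inequality $\dim_y (Y_i)_s \leq \dim_y Y_s$ holds for constructible subsets (by passing to a locally closed decomposition), there is no actual gap.
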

\begin{proof}
Using \cite[Theorem 13.1.3]{EGAIVc}, we may decompose $Y=\bigsqcup Y_j$ into irreducible subvarieties such that the fibers of $Y_j$ over $S$ are equidimensional of constant dimension over $S$, and then further refine the decomposition so that $g$ is constant with value $k_j$ on each $Y_j$. Then 
\[ \sum_i [X \times_Y g^{-1}(i)/S] \bbL^{-i} = \sum_j [X_{Y_j}] \bbL^{-k_j}. \]
For all $y \in Y_j$ we have that $\dim_y (Y_j)_{f(y)} = \dim_{/S} Y_j $ and $g$ is constant on $Y_j$ with value $k_j$, so 
\begin{align*} \dim_{/Y_j} X_{Y_j} = \max_{y \in Y_j} \dim X_y & \leq \max_{y \in Y_j} \left( g(y) - \dim_y Y_{f(y)} \right) \\ 
& \leq \max_{y \in Y_j} \left( g(y) - \dim_y Y_{j,f(y)} \right)\\
 & \leq k_j - \dim_{/S} Y_j \end{align*}
Thus, 
\[ \dim_{/S} (X_{Y_j}) \leq \dim_{/Y_j}(X_{Y_j}) + \dim_{/S} Y_j \leq k_j, \]
so each term is in $\Fil^0$ and we conclude.  
\end{proof}

\section{Configuration spaces} \label{section.configuration-spaces}
\subsection{Partitions} \label{subsect:partitions}Let $I$ be an abelian semigroup, and $\mathcal{P}(I)$ the free commutative monoid generated by the elements of $I$. Following Vakil-Wood \cite{vakil-wood:discriminants}, elements of $\mathcal{P}(I)$ are called \textit{(generalized) partitions}. Such a partition $\mu$ may be denoted $\mu = (m_i)_{i\in I}$ where $m_i$ is the multiplicity of the element $i$, so that $m_i = 0$ for all $i$ but a finite number.  For $\mu = (m_i)_{i\in I} \in \mathcal{P}(I)$, we define $\sum \mu = \sum im_i$, $|\mu|=\sum m_i$ and $|| \mu || = \# \{i \; | \; m_i > 0 \}$. 

We denote by $\calQ_0$ the set $\mathcal{P}(\bbZ_{>0})$ of partitions in the traditional sense. For non-negative integers $m_1, \ldots, m_k$, we will denote by $(m_1, m_2, \ldots, m_k)$ or $1^{m_1} 2^{m_2} \ldots k^{m_k}$ the partition $(m_1, m_2, \ldots, m_k, 0, 0, \ldots)\in \calQ_0$. 
We denote by $\calQ$ the subset of $\calQ_0$ of partitions $(m_1, m_2, \ldots )$ such that for some non-negative integer $k$, $m_i = 0$ for all $i > k$ and $m_i > 0$ for all $i \leq k$. Such an element $\mu$ of $\calQ$ may be interpreted as an \textit{ordered partition} of the integer $|\mu|$. 

\subsection{Generalized configuration spaces}\label{sect.confspacesdefinition}
Let $S$ be a variety over a field~$K$. All products in this section are taken relatively to~$S$. For a quasi-projective variety $X$ over $S$, we denote by  $\Sym^n_{/S}(X) = X^n/\mathfrak{S}_n$ the symmetric power of $X$ relatively to~$S$. In what follows, to shorten notations, we will often omit $S$ and write simply $\Sym^n(X)$, unless there is some risk of confusion.

For a generalized partition $\mu = (m_i)_{i\in I}$, we may consider the natural quotient morphism $$p: \prod_{i \in I}X^{m_i}\to \prod_{i\in  I}\Sym^{m_i}X,$$
which is finite and surjective.  Let $U := \left(\prod_{i\in I}X^{m_i}\right)_{*,X/S}\subset \prod_{i\in I}X^{m_i}$ be the complement of the diagonal, that is, the open subset of points with all coordinates being distinct. 

\begin{definition}\label{definition:confspace} We denote by $\left(\prod_{i\in I}\Sym^{m_i}X\right)_{*,X/S}$, or $\Conf^{\mu}_{/S}(X)$, the open subset $p(U)$ of the variety $\prod_{i\in I}\Sym^{m_i}X.$ 
\end{definition}
The space $\Conf^{\mu}_{/S}(X)$ parameterizes $\mu$-configurations relative to~$S$, that is, finite collections of distinct points of~$X$ consisting of $m_i$ (unordered) points labeled by $i$ for every $i\in I$ and lying above the same point in~$S$.  In particular, if the partition $\mu$ is of the form $\mu = 1^n$ for some positive integer~$n$, we recover the definition of the usual configuration space $\Conf^n_{/S}(X)$ of~$n$ unordered points on~$X$ relatively to $S$. When $S$ is a (clearly specified) field $K$, we may omit it in the notation and write $\Conf^{\mu}(X)$. 

\begin{notation} Note that in what follows, when $\mu\in\calQ_0$ is a partition, geometric points of $\Conf^{\mu}_{/S}(X)$ will be written simply in the form $(x_1,\ldots,x_{|\mu|})$ for $x_1,\ldots,x_{|\mu|}$ distinct geometric points of $X$ lying above the same point of $S$. In particular, it is implicit in the notation that that $x_1,\ldots,x_{m_1}$ are unordered, $x_{m_1 + 1}, \ldots, x_{m_1 + m_2 -1}$ are unordered, etc. 
\end{notation}


More generally, if $\mathscr{X}=(X_{i})_{i\in I}$, is a family of $X$-varieties and $\mu = (m_{i})_{i\in I}$ is a generalized partition, we may consider the product
$$\prod_{i\in I}\Sym^{m_{i}}X_{i}$$
above $\prod_{i\in I}\Sym^{m_{i}}X$ and restrict it, as above, to the image $$\left(\prod_{i\in I}\Sym^{m_{i}}X\right)_{*,X/S}$$ of the complement of the diagonal. 

\begin{definition} \begin{enumerate} 
\item We denote by 
 $\Conf^{\mu}_{X/S}(\mathscr{X})$ or $\left(\prod_{i\in I}\Sym^{m_{i}}X_{i}\right)_{*,X/S}$ the resulting variety. By construction, it comes with a morphism to $\Conf^{\mu}_{/S}(X)$.
 \item If all the elements~$X_{i}$ of the family $\mathscr{X}$ are equal to the same variety $Y$, we write $\Conf^{\mu}_{X/S}(\mathscr{X}) =: \Conf^{\mu}_{X/S}(Y)$. 

\end{enumerate} 
\end{definition}

\begin{notation} As a general rule, we will use the $(Z)_{*,X/S}$ notation  (or $(Z)_{*}$ if there is no risk of confusion)  for a variety $Z$ which has an obvious morphism to a product of symmetric powers of $X$, to denote the restriction to the image of the complement of the diagonal. 
\end{notation}

\begin{remark}\label{remark:geometric-points-of-confspaces} Let $L$ be an algebraically closed field containing $K$. Then using lemmas \ref{lemma:geometric-points-image} and \ref{lemma:geometric-points-quotient}, we see that  the $L$-points of $\Conf^{\mu}_{X/S}(\mathscr{X})$ are given by families $(c_i)_{i\in I}$ where each $c_i = (x_{i,1},\ldots, x_{i,m_i})$ is a configuration of $m_i$ unordered $L$-points of $X_i$, such that $x_{i,j}$ for all $i,j$ lie above the same $L$-point of $S$. 
\end{remark}

\begin{remark} 
 Note that the definition of $\Conf^{\mu}_{X/S}(\mathscr{X})$ only uses the elements $X_i$ of the family~$\mathscr{X}$ with index $i$ such that $m_i>0$. More precisely, this construction defines a functor 
 \[ \Conf_{X/S}^\mu: (\Var/X)^{||\mu||} \rightarrow \Var/\Conf^{\mu}_{/S}(X).\]
 \end{remark}

\begin{remark} Assume that all of the elements of the family $\mathscr{X}$ are equal to $X$. Then the notations
$\Conf^{\mu}_{X/S}(\mathscr{X})$, $\Conf^{\mu}_{X/S}(X)$ and $\Conf^{\mu}_{/S}(X)$ (from definition \ref{definition:confspace}) refer to the same space and may be used interchangeably. 
\end{remark}



\begin{remark} In \cite{bilu:thesis}, the space $\Conf^{\mu}_{/S}(X)$ (resp. $\Conf^{\mu}_{X/S}(\mathscr{X})$) was denoted $S^{\mu}(X)$ (resp. $S^{\mu}(\mathscr{X})$). We choose to use the notation $\Conf_{X/S}^{\mu}$ here to make it clear that there is a restriction to the complement of the diagonal at the level of the base variety $X$,  that we are working relatively to $S$, and that in special cases we recover usual configuration spaces. On the other hand, for a family $\mathscr{X} = (X_i)_{i\in I}$ of $X$-varieties we will sometimes use here the notation $\Sym^{\mu}_{X/S}(\mathscr{X})$ to denote the product $\prod_{i\in I} \Sym^{m_i}X_i,$ so that $\Conf^{\mu}_{X/S}(\mathscr{X}) = (\Sym^{\mu}_{X/S}(\mathscr{X}))_{*,X/S}.$
\end{remark}


\subsection{Localization}

The functor 
\[ \Conf^{\mu}_{/S}: \Var/S \rightarrow \Var/S \]
induces a functor 
\[ \Conf^{\mu}_{/S}: (\Var/S)_W \rightarrow (\Var/S)_W \]
for $W=\PW,\,\RS,\textrm{ or } S-\PW$. Indeed, to verify this it suffices to check that in each case $\Conf^{\mu}_{/S}$ sends a morphism in $W$ to a morphism in $W$. 

Similarly, for a fixed $X/S$, the labeled configuration space functor
\[ \Conf_{X/S}^\mu: (\Var/X)^{||\mu||} \rightarrow \Var/\Conf^{\mu}_{/S}(X) \]
induces a functor
\[ \Conf^{\mu}_{X/S}: (\Var/X)_{X-\PW}^{||\mu||} \rightarrow (\Var/\Conf^{\mu}_{/S}(X))_{\Conf^{\mu}_{/S}(X)-\PW} \]
and functors
\[ \Conf^{\mu}_{X/S}: (\Var/X)_{W}^{||\mu||} \rightarrow (\Var/\Conf^{\mu}_{/S}(X))_{W} \]
for $W=\PW, \RS$. 

\subsection{Quasi-coherent sheaves on configuration spaces}\label{subsection.configuration-coherent-sheaves-functor}
Consider a partition $\mu \in \calQ$ and a variety $X/S$. We write $n=|\mu|$, $m=||\mu||$, $\mu=(l_1, \ldots, l_m)$, and $\mathfrak{S}_\mu$ for the product of symmetric groups $\mathfrak{S}_{l_1} \times \ldots \times\mathfrak{S}_{l_m}$. Then, denoting by $\PConf^n_{/S}(X)$ the ordered configuration space of $n$ points on~$X$ relatively to~$S$, there is a natural map
\[ \pi_{\mu}: \PConf^n_{/S}(X) \rightarrow \Conf^{\mu}_{/S}(X) \]
which is a finite \'{e}tale Galois $\mathfrak{S}_\mu$-cover. We denote by $\QCoh(X)$ the category of quasi-coherent sheaves on~$X$. Using this cover, we obtain a functor
\[ \Conf^\mu_{X/S}: \QCoh(X)^{||\mu||} \rightarrow  \QCoh(\Conf^{\mu}_{/S}(X)) \]
defined as follows: we write $p_k$ for the $k$th projection map 
\[ p_k: \PConf^n_{/S}(X) \rightarrow X.\] 
Then, given quasi-coherent sheaves $\calF_1, \ldots, \calF_m$ on $X$ we form the $\mathfrak{S}_\mu$-equivariant sheaf 
\[ \bigoplus_{i=1}^m \left( \bigoplus_{k=l_1 + \ldots + l_{i-1} + 1}^{l_1+ \ldots + l_i} p_k^* \calF_i \right). \]
To obtain $\Conf_{X/S}^{\mu}(\calF_1, \ldots, \calF_m)$ we descend via $\pi_\mu$ (i.e. apply ${(\pi_{\mu})}_{*}(\bullet)^{\mathfrak{S}_\mu}$). 

Note that if we restrict to coherent sheaves, we obtain a functor
\[ \Conf^\mu_{X/S}: \Coh(X)^{||\mu||} \rightarrow  \Coh(\Conf^{\mu}_{/S}(X)). \]
Going forward, we will usually want to apply $\Conf^\mu_{X/S}$ with the same argument $\calF$ in each spot; we write this as $\Conf^{\mu}_{X/S}(\calF)$.  

\section{The geometric realization of a coherent sheaf}\label{section.geometric-realization}
\newcommand{\rk}{\mathrm{rk}}

Let $S/K$ be a variety. If $\calF$ is locally free of finite rank on $S$, then it is naturally the sheaf of sections of the geometric vector bundle $\bbV(\calF):=\Spec_{/S} \Sym^\bullet (\calF^*)$, and the assignment sending a locally free sheaf to the representing geometric vector bundle is a functor from locally free sheaves of finite rank on $S$ to $\Var/S$.

In particular, this allows us to define a class $[\bbV(\calF)] \in K_0(\Var/S)$ attached to a locally free sheaf $\calF$ of finite rank; moreover, by choosing a decomposition of $S$ into locally closed subvarieties $S_i$ such that $\calF|_{S_i}$ is free, we find this class is equal to $\bbL^{\rk(\calF)}$ (which makes sense as $\rk(\calF)$ is locally constant). It is not necessary to pass all the way to the Grothendieck ring to obtain this simple form: the same argument shows already that
\begin{equation}\label{equation.structure-theorem-loc-free} \bbV(\calF) \cong \bbA^{\rk(\calF)} \textrm{ in } (\Var/S)_{S-\PW}.\end{equation}

In this section, we explain how extend the functor $\bbV$ on locally free sheaves to
\[ \bbV: \Coh(S) \rightarrow (\Var/S)_{S-\PW}. \]
We then give a structure theorem for $\bbV(\calF)$ analogous to (\ref{equation.structure-theorem-loc-free}), and a structure theorem describing the image under $\bbV$ of a surjective map of coherent sheaves. We also explain how $\bbV$ interacts with the functor
\[ \Conf^\mu_{X/S}: \Coh(X)^{||\mu||} \rightarrow \Coh\left(\Conf_{/S}^\mu(X)\right) \]
defined in \ref{subsection.configuration-coherent-sheaves-functor}. 

\subsection{The canonical flattening decomposition}

While an arbitrary coherent sheaf $\calF$ cannot generally be represented as the sheaf of sections of a map $F \rightarrow S$, the Fitting ideals of $\calF$ can be used (cf. \cite[05P8]{stacks-project}) to construct a canonical decomposition $S_\calF \sim S$ such that $\calF|_{S_{\calF}}$ is locally free of finite rank: more precisely, $S_\calF=\bigsqcup_{i \geq 0} S_{\calF, i}$ is a finite decomposition into locally closed subvarieties such that that $\calF|_{S_{\calF, i}}$ is locally free of rank $i$, and $S_{\calF,i}=Z_i \bs Z_{i-1}$ where $Z_i$ is the closed subvariety determined by the $i$th Fitting ideal. We will call $S_\calF \sim S$ the canonical flattening decomposition for $\calF$.

\subsection{Definition of $\bbV$}
  Thus, we can define the functor $\bbV$ on objects by
 \[ \calF  \mapsto \bbV(\calF|_{S_{\calF}})/S \]
where here the structure map $\bbV(\calF|_{S_{\calF}}) \rightarrow S$ is the composition of
\[ \bbV(\calF|_{S_{\calF}}) \rightarrow S_{\calF} \rightarrow S. \]
Given a morphism $\phi: \calF \rightarrow \calG$, the element
\[\bbV(\phi) \in \Map_{(\Var/S)_{\PW}}\left(\bbV(\calF), \bbV(\calG)\right) \]
is obtained by passing to a common refinement $S'$ of $S_\calF$ and $S_\calG$ and then taking $\bbV(\phi)$ to be the image in 
\[ \Map_{(\Var/S)_{S-\PW}}\left(\bbV(\calF), \bbV(\calG) \right) \]
of the element of 
\[ \Map_{(\Var/S)}\left( \bbV(\calF) \times_{S} {S'}, \bbV(\calG) \right) \]
obtained by composition of the natural maps
\[ \bbV(\calF) \times_{S} {S'} = \bbV(\calF|_{S'}) \xrightarrow {\bbV(\phi|_{S'})} \bbV(\calG|_{S'}) = \bbV(\calG) \times_S S' \rightarrow \bbV(\calG). \]
Here the map $\bbV(\phi|_{S'})$ makes sense because $\phi|_{S'}$ is a map of locally free sheaves, where we already have a functor $\bbV$ defined. 

\subsection{Structure theorems for the image of $\bbV$} Objects in the image of $\bbV$ have simple models in $(\Var/S)_{S-\PW}$: 
Let $g$: $|S| \rightarrow \bbZ_{\geq 0}$ be a function with bounded image such that for each $i$, $g^{-1}(i)$ is constructible. Then, we can equip $g^{-1}(i)$ with the structure of a variety over $S$ by writing it as the disjoint union of locally closed subsets of $S$. Thus, it makes sense to consider 
\[ \bbA^g = \bigsqcup_i \bbA^{i} \times g^{-1}(i) \]
as an object in $(\Var/S)_{S-\PW}$. In particular, for $\calF \in \Coh(S)$ if we let $\dim \calF$ denote the function $s \mapsto \dim_{/\kappa(s)} \calF|_s$ then we obtain directly from the definition:

\begin{theorem}\label{theorem:V-structure} For $\calF \in \Coh(S)$, $\bbV(\calF)$ is isomorphic to $\bbA^{\dim \calF}$ in $(\Var/S)_{S-\PW}.$ 
\end{theorem}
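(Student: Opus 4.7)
The plan is to unwind the definition of $\bbV$ on a general coherent sheaf and then reduce to the locally free case, where the structure theorem (\ref{equation.structure-theorem-loc-free}) already gives what we want.

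First I would recall that $\bbV(\calF)$ is defined via the canonical flattening decomposition $S_\calF = \bigsqcup_{i\geq 0} S_{\calF, i}$ into locally closed subvarieties determined by the Fitting ideals, with the property that $\calF|_{S_{\calF, i}}$ is locally free of rank $i$. Since the fibers of a locally free sheaf of rank $i$ are $i$-dimensional vector spaces over the residue field, this immediately identifies the constructible function $\dim \calF : s \mapsto \dim_{\kappa(s)} \calF \otimes \kappa(s)$ with the function taking the value $i$ on $S_{\calF, i}$. Hence $(\dim \calF)^{-1}(i) = S_{\calF, i}$ as constructible subsets of $S$, and so
\[ \bbA^{\dim \calF} = \bigsqcup_{i\geq 0} \bbA^i \times S_{\calF, i} \quad \text{in } (\Var/S)_{S-\PW}. \]

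Next, by construction, $\bbV(\calF)$ is the variety $\bbV(\calF|_{S_\calF})$ viewed over $S$ via the composite $\bbV(\calF|_{S_\calF}) \to S_\calF \to S$. Since $\bbV$ commutes with the disjoint union decomposition of $S_\calF$, we obtain
\[ \bbV(\calF) = \bigsqcup_{i \geq 0} \bbV(\calF|_{S_{\calF, i}}) \]
as a variety over $S$. On each piece, $\calF|_{S_{\calF, i}}$ is a locally free sheaf of rank $i$, so the structure theorem for locally free sheaves (\ref{equation.structure-theorem-loc-free}) provides an isomorphism $\bbV(\calF|_{S_{\calF, i}}) \cong \bbA^i$ in $(\Var/S_{\calF, i})_{S_{\calF, i}-\PW}$.

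Finally, I would observe that each such isomorphism over $S_{\calF, i}$ yields, by pushforward along the locally closed embedding $S_{\calF, i} \hookrightarrow S$, an isomorphism in $(\Var/S)_{S-\PW}$: indeed, a piecewise isomorphism over $S_{\calF, i}$ induced by a decomposition of $S_{\calF, i}$ is in particular induced by a decomposition of $S$, namely the refinement of $S = \bigsqcup_j S_{\calF, j}$ obtained by further decomposing the piece $S_{\calF, i}$. Taking the disjoint union of these isomorphisms over all $i$ produces the desired isomorphism $\bbV(\calF) \cong \bbA^{\dim \calF}$ in $(\Var/S)_{S-\PW}$. There is no real obstacle here: the entire content lies in choosing the decomposition of $S$ dictated by the Fitting ideals, and then invoking the already-established locally free case stratum by stratum.
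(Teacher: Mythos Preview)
Your proof is correct and takes essentially the same approach as the paper: the paper simply says the theorem is obtained ``directly from the definition,'' and you have spelled out precisely what that entails---using the canonical flattening decomposition to reduce to the locally free case stratum by stratum via (\ref{equation.structure-theorem-loc-free}).
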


We also obtain a simple normal form for the image of a surjection under $\bbV$. This will be an important ingredient in later arguments. 

\begin{theorem}\label{lemma:presentation-of-surjection}
Let $\phi: \calF \rightarrow \calG$ be a surjection in $\Coh(S)$. Then, there exists an isomorphism of arrows in $(\Var/S)_{S-\PW}$
\[\xymatrix{ \bbV( \calF )  \ar[rr] \ar[d]^\sim && \bbV( \calG ) \ar[d]^\sim \\
\bbA^{\dim \calF - \dim \calG} \times_S \bbA^{\dim \calG} \ar[rr]_{(0, \id)} && \bbA^{\dim \calG}. }\] 
\end{theorem}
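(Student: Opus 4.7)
The plan is to reduce to the case where both sheaves are free and the surjection admits a splitting, which is possible after refining $S$ into locally closed pieces. Working in $(\Var/S)_{S-\PW}$ grants us exactly the flexibility to make such refinements.

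First, I would pass to a common refinement $S' \sim S$ of the canonical flattening decompositions $S_\calF$ and $S_\calG$. On each stratum $S'_j$ of $S'$, both $\calF|_{S'_j}$ and $\calG|_{S'_j}$ are locally free of some ranks $a_j$ and $b_j$, with $a_j \geq b_j$ since $\phi$ is surjective. Note that on $S'_j$, the functions $\dim \calF$ and $\dim \calG$ are constantly equal to $a_j$ and $b_j$ respectively. Since $\phi|_{S'_j}$ is a surjection of locally free sheaves, its kernel $\calK_j$ is locally free of rank $a_j - b_j$, fitting into a short exact sequence $0 \to \calK_j \to \calF|_{S'_j} \to \calG|_{S'_j} \to 0$ that is locally split on $S'_j$.

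Next, I would further refine each $S'_j$ into locally closed pieces on which $\calK_j$, $\calF|_{S'_j}$, and $\calG|_{S'_j}$ are all free, and on which the short exact sequence actually splits (this is possible because once we restrict to an affine open where all three sheaves are free, the splitting exists as a map between free modules; then cover $S'_j$ by finitely many such affine opens and pass to a disjoint-union refinement). Call the resulting decomposition $S'' \sim S$. On each piece $S''_k$, choosing a splitting gives an isomorphism $\calF|_{S''_k} \cong \calK|_{S''_k} \oplus \calG|_{S''_k}$ identifying $\phi$ with the projection onto $\calG|_{S''_k}$. Applying $\bbV$ (which is just the sections functor for locally free sheaves, sending direct sums to fiber products over the base) produces a commutative square of $S''_k$-varieties
\[ \xymatrix{ \bbV(\calF|_{S''_k}) \ar[r]^{\bbV(\phi|_{S''_k})} \ar[d]^\sim & \bbV(\calG|_{S''_k}) \ar[d]^\sim \\ \bbA^{a_k - b_k} \times_{S''_k} \bbA^{b_k} \ar[r]_{(0, \id)} & \bbA^{b_k} } \]
where the vertical isomorphisms come from the chosen trivializations.

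Finally, I would assemble these local pieces: the disjoint union of the stratified data gives, in $(\Var/S)_{S-\PW}$, precisely the square in the statement, with the bottom row recognized as $\bbA^{\dim \calF - \dim \calG} \times_S \bbA^{\dim \calG} \to \bbA^{\dim \calG}$ (using Theorem \ref{theorem:V-structure} and the fact that $\dim \calF - \dim \calG$ is constructible and equals $a_k - b_k$ on $S''_k$). The main subtlety, and the only real obstacle, is verifying that the splittings chosen on each stratum really do assemble into a morphism in $(\Var/S)_{S-\PW}$ compatibly with the definition of $\bbV(\phi)$ via the earlier common-refinement construction; this is exactly where localization at $S$-piecewise isomorphisms is essential, since the splittings are not canonical on $S$ itself.
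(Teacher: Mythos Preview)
Your proposal is correct and follows essentially the same approach as the paper: refine $S$ so that $\calF$ and $\calG$ become locally free (hence the kernel is locally free), then refine further so that all three sheaves are trivializable and the short exact sequence splits, and read off the normal form from the resulting trivializations. The paper's proof is terser but organizes the two refinement steps in the same way.
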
 
\begin{proof}
We can find a decomposition $S'\sim S$ such that both $\calF|_{S'}$ and $\calG|_{S'}$ are locally free and trivializable, and such that the sequence
\[ 0\rightarrow \ker\left(\phi|_{S'}\right) \rightarrow \calF|_{S'} \rightarrow \calG|_{S'}\rightarrow 0 \]
is split. Then $\ker (\phi|_{S'})$ is also locally free, and for any further refinement $S''$, $\ker \phi|_{S''} = (\ker (\phi|_{S'}))|_{S''}$. We can thus choose $S''$ so that $\ker (\phi|_{S''})$ is also trivializable, and we obtain the result.  
\end{proof}

If we localize further to $(\Var/S)_\PW$, we obtain the following corollary describing the pre-image of a constructible set under a surjective map of coherent sheaves:
\begin{corollary}\label{corollary:presentation-of-surjection}
Let $\phi: \calF \rightarrow \calG$ be a surjection in $\Coh(S)$ and let $C$ be a constructible subset of $\bbV(\calG)$. Then, in $(\Var/S)_\PW$,
\[ \bbV(\phi)^{-1} (C) \cong \bbA^{\dim \calF - \dim \calG} \times_S C. \]
In particular,
\[ [\bbV(\phi)^{-1}(C)] = [\bbA^{\dim\calF - \dim \calG}] \cdot [C] \in K_0(\Var/ S) \]
and
\[  [\bbV(\phi)^{-1}(C)] = \frac{[\bbV(\calF)]}{[\bbV(\calG)]} \cdot [C]  \in \calM_S. \]
\end{corollary}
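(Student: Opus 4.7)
The plan is to transport the constructible subset $C$ through the isomorphism of arrows furnished by Theorem \ref{lemma:presentation-of-surjection}. First, I would apply that theorem to produce an $S$-piecewise decomposition over which $\bbV(\phi)$ is identified, in $(\Var/S)_{S-\PW}$, with the trivial linear projection
\[ (0, \id) \colon \bbA^{\dim \calF - \dim \calG} \times_S \bbA^{\dim \calG} \longrightarrow \bbA^{\dim \calG}. \]
Once we are in this normal form, the preimage of (the pullback of) $C$ is obtained by a literal fiber product along a projection, yielding $\bbA^{\dim \calF - \dim \calG} \times_S C$. Descending back to $S$ gives the claimed isomorphism in $(\Var/S)_{S-\PW}$, which a fortiori lifts to $(\Var/S)_\PW$.

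The two class identities will then follow by taking classes. For the identity in $K_0(\Var/S)$, I would apply multiplicativity of the class under fiber product together with Theorem \ref{theorem:V-structure}, interpreting $[\bbA^{\dim \calF - \dim \calG}]$ as the sum $\sum_i [g^{-1}(i)] \bbL^i$ over the strata of the constructible function $g = \dim \calF - \dim \calG$. For the identity in $\calM_S$, I would rewrite $[\bbV(\calF)] = [\bbA^{\dim \calF}]$ and $[\bbV(\calG)] = [\bbA^{\dim \calG}]$ via Theorem \ref{theorem:V-structure}, and then use that $\bbL$ is invertible in $\calM_S$ to make sense of the ratio and to check stratumwise that it agrees with $[\bbA^{\dim \calF - \dim \calG}]$.

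The main (minor) obstacle is to give honest meaning to $\bbV(\phi)^{-1}(C)$ when $\bbV(\phi)$ is only a morphism in the localized category $(\Var/S)_{S-\PW}$. My plan is to fix this by passing to a sufficiently fine refinement of $S$ on which $\bbV(\phi)$ is represented by an actual morphism of $S$-varieties, take the set-theoretic preimage of $C$ there, and then verify that the resulting constructible subset is independent of the refinement up to $S$-piecewise isomorphism, using the functoriality of $\bbV$ and the fact that constructible subsets are well-defined in $(\Var/X)_{X-\PW}$ (as noted in the subsection on constructible subsets). Once this interpretive point is settled, the rest is bookkeeping.
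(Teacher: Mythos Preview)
Your proposal is correct and matches the paper's intended argument: the corollary is stated without proof, as it is meant to follow immediately from Theorem~\ref{lemma:presentation-of-surjection} by pulling $C$ back through the normal form $(0,\id)$ and then passing from $(\Var/S)_{S-\PW}$ to $(\Var/S)_\PW$. One small wording quibble: the functor $(\Var/S)_{S-\PW}\to(\Var/S)_\PW$ goes in the direction of inverting more morphisms, so your isomorphism \emph{descends} (or is preserved) rather than ``lifts''; the mathematics is unaffected.
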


\subsection{Configurations and geometric realization}

For any partition $\mu$, geometric realization transforms the functor  
\[ \Conf^{\mu}_{X/S}:  \Coh(X)^{||\mu||} \rightarrow \Coh\left( \Conf^{\mu}_{/S}(X) \right) \]
defined in \ref{subsection.configuration-coherent-sheaves-functor} into a labeled configuration space functor:

\begin{lemma}\label{lemma:compatibility-conf-realization}For any partition $\mu$, there is a natural equivalence
\[ \Conf^{\mu}_{X/S} ( \bbV( \bullet ), \ldots, \bbV(\bullet) ) = \bbV( \Conf^{\mu}_{X/S}(\bullet, \ldots, \bullet)) \]
of functors 
\[ \Coh(X)^{||\mu||} \rightarrow (\Var/\Conf^{\mu}_{/S}(X))_{\Conf^{\mu}_{/S}(X)-\PW}. \]
\end{lemma}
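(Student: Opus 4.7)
The plan is to reduce to the case where each $\calF_i$ is locally free via a common flattening decomposition, and then identify both sides of the claimed equivalence with the $\mathfrak{S}_\mu$-quotient of an equivariant geometric vector bundle on $\PConf^n_{/S}(X)$.

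Given $(\calF_1, \ldots, \calF_m) \in \Coh(X)^{||\mu||}$, I would first pass to a common refinement $X' \sim X$ of the canonical flattening decompositions of the $\calF_i$. This decomposition of $X$ induces a decomposition of $\Conf^{\mu}_{/S}(X)$ according to which piece of $X'$ each labeled point of a configuration falls into. Both the coherent sheaf functor $\Conf^{\mu}_{X/S}$ of Section \ref{subsection.configuration-coherent-sheaves-functor} (which is local on $X$ via its definition on the ordered configuration space) and the variety functor $\Conf^{\mu}_{X/S}$ on $\Var/X$-inputs (which, since $\bbV(\calF_i)$ is a disjoint union of vector bundles along the strata, splits automatically over the induced decomposition) are compatible with this stratification. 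Because we are working in the $\Conf^{\mu}_{/S}(X)\text{-}\PW$ localization of the target category, I may therefore assume each $\calF_i$ is locally free on $X$.

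In the locally free case, set $\widetilde{\calE} := \bigoplus_{i=1}^m \bigoplus_{k=l_1+\cdots+l_{i-1}+1}^{l_1+\cdots+l_i} p_k^*\calF_i$, the $\mathfrak{S}_\mu$-equivariant locally free sheaf on $\PConf^n_{/S}(X)$ whose $\mathfrak{S}_\mu$-invariant pushforward along $\pi_\mu$ is by definition $\Conf^{\mu}_{X/S}(\calF_1, \ldots, \calF_m)$. Since $\bbV$ on locally free sheaves commutes with pullbacks and direct sums, $\bbV(\widetilde{\calE})$ is canonically the fiber product over $\PConf^n_{/S}(X)$ of the pulled-back geometric vector bundles $p_k^* \bbV(\calF_i)$. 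Unwinding the definitions of the labeled configuration space and of the cover $\pi_\mu$, the $\mathfrak{S}_\mu$-quotient of this fiber product is precisely $\Conf^{\mu}_{X/S}(\bbV(\calF_1), \ldots, \bbV(\calF_m))$.

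It remains to identify this quotient with $\bbV(\pi_{\mu,*}(\widetilde{\calE})^{\mathfrak{S}_\mu})$, which is a special case of classical descent of geometric vector bundles along the finite étale Galois cover $\pi_\mu$: for any $\mathfrak{S}_\mu$-equivariant locally free sheaf $\widetilde{\calE}$, one has $\bbV(\pi_{\mu,*}(\widetilde{\calE})^{\mathfrak{S}_\mu}) \cong \bbV(\widetilde{\calE})/\mathfrak{S}_\mu$ canonically. The main obstacle I anticipate is the careful bookkeeping of the two $\mathfrak{S}_\mu$-actions on $\bbV(\widetilde{\calE})$ in this final identification, namely verifying that the action coming from the equivariant structure on $\widetilde{\calE}$ (permutation of summands) agrees with the action coming from the identification of $\bbV(\widetilde{\calE})$ with the fiber product of the $p_k^* \bbV(\calF_i)$ (permutation of the projections $p_k$). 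Once this comparison of actions is established, functoriality of the construction in the $\calF_i$ and compatibility of the various reductions follow mechanically.
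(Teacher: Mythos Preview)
Your proposal is correct and follows essentially the same route as the paper: both arguments identify the two sides with the $\mathfrak{S}_\mu$-quotient of the equivariant object $\prod_k p_k^*\bbV(\calF_i)$ on $\PConf^n_{/S}(X)$, using that $\pi_\mu$ is finite \'etale Galois. The only cosmetic difference is that the paper skips your preliminary reduction to the locally free case: it applies $\bbV$ directly to the canonical equivariant identification $\bigoplus p_k^*\calF_i \cong \pi_\mu^*\Conf^\mu_{X/S}(\calF_1,\ldots,\calF_m)$, using that $\bbV$ (as a functor into the $\PW$-localization) already turns pullbacks into fiber products and direct sums into products for arbitrary coherent sheaves, then quotients by $\mathfrak{S}_\mu$ on both sides. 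Your explicit flattening step is harmless but unnecessary, since those compatibilities of $\bbV$ encode it implicitly.
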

\begin{proof}
We write $|\mu|=n$, $||\mu||=m$, $m(\mu)=(l_1, \ldots, l_m).$ Because $\Conf^{\mu}_{X/S}(\calF_1, \ldots, \calF_m)$ was defined by descent, there is a canonical isomorphism of $\mathfrak{S}_\mu$-equivariant sheaves on $\PConf^n_{/S}(X)$
\[ \bigoplus_{i=1}^m \left( \bigoplus_{k=l_1 + \ldots + l_{i-1} + 1}^{l_1+ \ldots + l_i} p_k^* \calF_i \right) = \pi_\mu^* \Conf^{\mu}_{X/S}(\calF_1, \ldots, \calF_m). \]
Applying $\bbV$ (which turns pullbacks into fiber products), we obtain a canonical $\mathfrak{S}_\mu$-equivariant isomorphism 
\[ \prod_{i=1}^m \bbV(\calF_i)^{l_i}|_{\PConf^n_{/S}(X)} = \bbV\left(\Conf^{\mu}_{X/S}(\calF_1, \ldots, \calF_m)\right) \times_{\Conf^{\mu}_{/S}(X)} \PConf^n_{/S}(X) \]
where the $\mathfrak{S}_{\mu}$-action on the left is the obvious one by permutation of coordinates and the $\mathfrak{S}_{\mu}$-action on the right is only on the factor $\PConf^n_{/S}(X)$. Passing to the quotient by the $\mathfrak{S}_{\mu}$-action then gives the result.
\end{proof}

\section{Taylor expansions}\label{section.taylor-expansions}

We develop here some material on Taylor expansions for global sections of coherent sheaves that will be important in the rest of this work. 

\subsection{The sheaf of principal parts}

We recall here some notation and results on the sheaf of principal parts from \cite[16.7]{EGAIVd}. 

Let $f:X\rightarrow S$ be a map of varieties, and let $\calF$ be a quasi-coherent sheaf on $X$. Let $\Delta^{(k)}$ be the $k$-th infinitesimal neighborhood of the diagonal in $X\times_S X$, and let 
\[ p_i:\Delta^{(k)} \rightarrow X, \; i=1,\,2\]
denote the projection onto the $i$th component of $X \times_S X$, restricted to $\Delta^{(k)}$. For $k>0$, we define the sheaf of $k$-principal parts of $\calF$ (relative to $S$) to be the $\calO_X$-module 
\[ \calP^k_{X/S}(\calF) := {p_1}_*p_2^*\calF.  \]
If $\calF$ is coherent, then so is $\calP^k_{X/S}(\calF)$ \cite[16.7.3]{EGAIVd}.

If $K$ is a field, $x \in X(K)$, $s$ is the image of $x$ in $S(K)$, and $\frakm$ denotes the maximal ideal of the local ring $\calO_{X_s,x}$, then the $K$-vector space $\calP^k_{X/S}(\calF)|_x$ is naturally identified with the $K$-vector space $(\calF|_{X_s})_x/ \frakm^{k+1}$. There is a natural $k$th-order expansion map of $f^{-1}\calO_S$-modules
\[ d^k_{X/S,\calF}: \calF \rightarrow \calP^k_{X/S}(\calF) \] 
such that for $x, s$, as above the composition
\[ (\calF|_{X_s})_{x} \xrightarrow{d^k_{X/S,\calF}} \left(\calP^k_{X/S}(\calF)|_{X_s}\right)_x \rightarrow \calP^k_{X/S}(\calF)|_x = (\calF|_{X_s})_x / \frakm^{k+1} \]
is the natural reduction map. 

\subsection{The Taylor expansion map}
Although the expansion map $d^k_{X/S,\calF}$ is a map of $f^{-1}\calO_S$ modules, it is \emph{not} typically a map of $\calO_X$-modules (because when multiplying a local section by a function, the effect on the infinitesimal expansion involves not just the function but also the higher terms in its own $k$-th order expansion). However, we do obtain a natural Taylor expansion map of $\calO_X$-modules
\[ \Taylor^k_{X/S,\calF}: f^*f_* \calF \rightarrow \calP^k_{X/S}(\calF) \]
by adjunction from the map of $\calO_S$-modules $f_* d^k_{X/S,\calF}$. For a pair $x, s$ as above, the fiber of $\Taylor^k_{X/S, \calF}$ at $x$ computes the $k$th order Taylor expansion at $x$ of a global section (all relative to $S$): it is naturally identified with the composition
\[ f^*f_* \calF |_x \rightarrow \Gamma(X_s, \calF|_{X_s}) \rightarrow (\calF|_{X_s})_x \rightarrow (\calF|_{X_s})_x / \frakm_x^{k+1} = \calP^k_{X/S}(\calF)|_x. \]

The construction $\calP^k_{X/S}$ is functorial from quasi-coherent sheaves on $X$ to quasi-coherent sheaves on $X$, and the Taylor expansion maps assemble to a natural transformation 
\[ \Taylor^k_{X/S}: f^*f_* \rightarrow \calP^k_{X/S}. \]

\subsection{Taylor expansions at multiple points} 
Let $\mu$ be a partition. We will also need to consider the variant where we take Taylor expansions simultaneously along a $\mu$-labeled configuration of points. To do so, we first compose the functor
$\Conf^{\mu}_{X/S}$ with $\Taylor^r_{X/S}$ to obtain for any $\calF$ a map
\[ \Conf^{\mu}_{X/S} (f^* f_* \calF) \rightarrow \calP^{r,\mu}_{X/S}(\calF) := \Conf^{\mu}_{X/S}(\calP^r_{X/S}(\calF)). \]
Writing $f_\mu$ for the structure morphism $\Conf^{\mu}_{/S}(X) \rightarrow S$, we note that there is a natural diagonal map 
\[ f_\mu^* f_*\calF \rightarrow \Conf^{\mu}_{X/S}(f^* f_* \calF) \]
and we write $\Taylor^{r,\mu}_{X/S}$ for the composition of this diagonal map with $\Conf^{\mu}_{X/S}(\Taylor^r_{X/S})$, so that $\Taylor^{r,\mu}_{X/S}$ is a natural transformation from $f_\mu^* f_*$ to $\calP^{r,\mu}_{X/S}$. 

\subsection{Generating sets of global sections}

\begin{definition}
We say that a map of quasi-coherent sheaves $\calG \rightarrow f_*\calF$ is $r$-infinitesimally $m$-generating if the following equivalent conditions hold:
\begin{enumerate}
\item For every $\mu$ with $|\mu|\leq m$, $\Taylor^{r,\mu}_{X/S}(\calF):f^*_{\mu}f_*\calF \to \calP^{r,\mu}_{X/S}(\calF)$ restricts to a surjective map 
\[ f_\mu^* \calG \twoheadrightarrow \calP^{r,\mu}_{X/S}(\calF). \]
\item For every algebraically closed $L$, $s \in S(L)$, and $n\leq m$ distinct points $x_1, \ldots, x_n$ in $X_s(L)$, the composition
\[ \calG|_s \rightarrow \Gamma(X_s, \calF) \rightarrow \bigoplus_{i=1}^{n} 
(\calF|_{X_s})_{x_i} / \frakm_{x_i}^{r+1} \]
is surjective, where $\frakm_{x_i}$ denotes the maximal ideal in the local ring $\calO_{X_s,x_i}$. 
\end{enumerate}
\end{definition}

\begin{remark}\label{remark:tensorbygloballygenerated} If $\calG$ is $r$-infinitesimally $m$-generating, then so is $\calG\otimes \calL$ for any globally generated line bundle $\calL$. Indeed, if we fix $m$ geometric points and a subspace surjecting onto the expansions at one point and zero at the others, then we can multiply it by a non-vanishing section of $\calL$ to get a subspace with the same property.
\end{remark}

\begin{example}\label{example:surj-projective-space} We take $S = K$, $X = \bbP^n_K$ and $\calF = \calO(d)$, so that $f_*\calF = \Gamma(\bbP^n,\calO(d))$. We claim that if 
\[ d \geq r + (m-1)(r+1) = mr + m - 1,\]
then $\Gamma(\bbP^n, \calO(d))$ is $r$-infinitesimally $m$-generating. To see this, we will show that the second criterion holds. So, assume $L/K$ is algebraically closed, and let $x_1, \ldots, x_m$ be distinct points in $\bbP^n(L)$ -- note that since $\bbP^n(L)$ is infinite, this also covers the case of $< m$ points. 

We first observe that for any $x \in \bbP^n(L)$, the reduction map from $V_{r}:=\Gamma(\bbP^n, \calO(r))$ to $\calO_{\bbP^n, x}(r)/\frakm_{x}^{r+1}$ is a bijection -- to see this, we may fix local coordinates so that $\calO_{\bbP^n, x}(r)/\frakm_{x}^{r+1}$ is identified with polynomials of degree $\leq r$ in $n$ variables, then homogenize to establish the bijection. Furthermore, if $f$ is a homogeneous polynomial of degree $a$ that does not vanish at $x$, then the reduction map induces a bijection between $f\cdot V_r \subset \Gamma(\bbP^n, \calO(r+a))$ and $\calO_{\bbP^n, x}(r+a)/\frakm_{x}^{r+1}$ (in suitable bases, the multiplication by $f$ modifies the previous bijection by composition with an upper triangular matrix with a non-zero scalar along the diagonal). 

Thus, if we choose for each $i$ a homogeneous polynomial $f_i$ of degree $d-r$ that vanishes to order $\geq r$ at each $x_j,\, j \neq i$, but not at $x_i$ (this is possible because $d - r \geq (m-1)(r+1)$ -- for example taking for each $j$ a linear form vanishing at $x_j$ but no other $x_i$, raise it to the $r+1$st power, and then multiply these together), then, the reduction map induces a bijection between the subspace $\sum_i f_i V_r \subset \Gamma(\bbP^n, \calO(d))$ and $\bigoplus_{i=1}^{m} \calO_{\bbP^n, x_i}(d)/\frakm_{x_i}^{r+1}$, and we conclude. 
\end{example}

\begin{example}\label{example:surj-ample} Suppose $X/K$ is a projective variety and $\calL$ is an ample line bundle. Fix $A>0$ such that $\calL^A$ is very ample and $B>0$ such that $\calL^b$ is globally generated for all $b\geq B$. Let $X \rightarrow \bbP^n$ be the embedding given by $\calL^A$. Then, using the previous example, we find that for 
\[ d = A \cdot \left( r + (m-1)(r+1)\right), \]
$\Gamma(X, \calL^d)$ is $r$-infinitesimally $m$-generating. Then, since $\calL^b$ is globally generated for all $b \geq B$, by Remark \ref{remark:tensorbygloballygenerated} we find that for 
\[ d \geq B + A \cdot \left( r + (m-1)(r+1) \right), \]
$\Gamma(X, \calL^d)$ is $r$-infinitesimally $m$-generating. 
\end{example}

\begin{lemma}\label{lemma:surj-map-generating}
Let $f:X\to S$ be a map of varieties, $\phi: \calF_1 \rightarrow \calF_2$ a surjective map of quasi-coherent sheaves on~$X$, and suppose $g: \calG \rightarrow f_*\calF_1$ is $r$-infinitesimally $m$-generating. Then 
\[ f_*\phi \circ g: \calG \rightarrow f_*\calF_2 \]
is $r$-infinitesimally $m$-generating.
\end{lemma}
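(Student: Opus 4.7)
The plan is to verify the lemma directly using the second of the two equivalent criteria in the definition of being $r$-infinitesimally $m$-generating, reducing the claim to a simple diagram chase at geometric points.

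First I would fix an algebraically closed field $L/K$, a point $s \in S(L)$, and $n \leq m$ distinct $L$-points $x_1, \ldots, x_n$ of $X_s$. Writing $\frakm_i := \frakm_{x_i}$ for the maximal ideal in $\calO_{X_s, x_i}$, the goal is to show that the composition
\[ \calG|_s \xrightarrow{g|_s} \Gamma(X_s, \calF_1|_{X_s}) \xrightarrow{\Gamma(\phi|_{X_s})} \Gamma(X_s, \calF_2|_{X_s}) \rightarrow \bigoplus_{i=1}^n (\calF_2|_{X_s})_{x_i}/\frakm_i^{r+1} \]
is surjective.

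The key observation is that, by functoriality of taking stalks and quotienting by powers of maximal ideals, this composition factors as
\[ \calG|_s \rightarrow \bigoplus_{i=1}^n (\calF_1|_{X_s})_{x_i}/\frakm_i^{r+1} \xrightarrow{\bigoplus_i \overline{\phi_{x_i}}} \bigoplus_{i=1}^n (\calF_2|_{X_s})_{x_i}/\frakm_i^{r+1}, \]
where the first arrow is surjective by hypothesis on $g$. For the second arrow, I would note that the pullback $\phi|_{X_s}: \calF_1|_{X_s} \to \calF_2|_{X_s}$ remains surjective since pullback of quasi-coherent sheaves is right exact. Passing to stalks at $x_i$ preserves surjectivity, as does the further quotient by $\frakm_i^{r+1}$, so each $\overline{\phi_{x_i}}$ is surjective; the direct sum of surjections is surjective. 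Composing the two surjections gives the desired surjectivity, completing the verification. The argument is essentially a formality once the correct pointwise criterion is adopted, so there is no serious obstacle — the only care needed is to check that the relevant diagram of stalks and global sections commutes, which is immediate from naturality of the reduction maps $\Gamma(X_s, -) \to (-)_{x_i}/\frakm_i^{r+1}$.
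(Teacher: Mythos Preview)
Your argument is correct. You verify the lemma via criterion~(2) in the definition of $r$-infinitesimally $m$-generating, reducing to a pointwise check at geometric fibers; the paper instead uses criterion~(1), setting up the commutative square
\[
\xymatrix{
f_\mu^* \calG \ar[r] \ar@{=}[d] & \calP^{r,\mu}_{X/S}(\calF_1) \ar[d] \\
f_\mu^* \calG \ar[r] & \calP^{r,\mu}_{X/S}(\calF_2)
}
\]
and arguing that the right vertical arrow is surjective because the functor $\calP^{r,\mu}_{X/S}$ is right exact (being a composition of $\calP^r_{X/S}$, which is right exact by \cite[16.7.3]{EGAIVd}, with $\Conf^\mu_{X/S}$, which is built from pullbacks and descent). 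Your approach is more elementary and self-contained, avoiding any appeal to general properties of the principal-parts functor; the paper's approach is slightly more structural and in passing records the useful fact that $\calP^{r,\mu}_{X/S}$ is right exact. Both are short and the underlying content is the same: surjectivity of $\phi$ survives the passage to infinitesimal expansions.
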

\begin{proof}
For any $\mu$, the diagram
\[ \xymatrix{ f^*_{\mu} \calG \ar[rrr]^{\Taylor^{r,\mu}_{X/S}(\calF_1)} \ar[d] & && \calP^{r,\mu}_{X/S}(\calF_1)\ar[d] \\
f^*_{\mu} \calG \ar[rrr]^{\Taylor^{r,\mu}_{X/S}(\calF_2)} &&& \calP^{r,\mu}_{X/S}(\calF_2) } \]
commutes, where the vertical arrows are induced by $\phi$. For $|\mu| \leq m$ the top horizontal arrow is surjective by hypothesis. We claim that the right vertical arrow is surjective for any $\mu$, which will then imply the bottom arrow is also surjective for $|\mu| \leq m$ completing the proof. Indeed, the functor $\calP^{r,\mu}_{X/S}$ is right exact as the composition of two right-exact functors:  $\calP^k_{X/S}$ is right exact by \cite[16.7.3]{EGAIVd} and $\Conf^{\mu}_{X/S}$ is right exact since it is given as a composition of pullbacks and descent.    
\end{proof} 

\begin{example}\label{example:inf-m-gen-coh-sheaf} Suppose $X/K$ is a projective variety, $\calL$ is an ample line bundle on $X$, and $\calF$ is a coherent sheaf on $X$. We can find a surjection $(\calL^{-D})^N \twoheadrightarrow \calF$ by taking $D$ large enough so that $\calL^D \otimes \calF$ is globally generated. Thus, invoking Lemma \ref{lemma:surj-map-generating} and Example \ref{example:surj-ample}, we find that for $A$ such that $\calL^A$ is very ample and $B$ such that $\calL^b$ is globally generated for all $b \geq B$,
\[ d \geq D + B + A \cdot \left( r + (m-1)(r+1)\right), \]
$\Gamma(X, \calF \otimes \calL^d)$ is $r$-infinitesimally $m$-generating. 
\end{example}

\subsection{Geometric realization}
Let $X/S$ be a variety, let $\calF$ be a coherent sheaf on~$X$ and let $\calG$ be a coherent sheaf of global sections of $\calF$. For any partition $\mu$, by composition with $f_\mu^* \calG \rightarrow f_\mu^*f_*\calF$, we obtain a map of coherent sheaves 
\[ \Taylor^{r,\mu}_{X/S}: f_\mu^* \calG \rightarrow \calP^{r,\mu}_{X/S}(\calF). \]
Applying the geometric realization, we obtain a map
\[ \Conf^{\mu}_{/S}(X) \times_S \bbV(\calG) \rightarrow \bbV(\calP^{r,\mu}_{X/S}(\calF)). \]
By Lemma \ref{lemma:compatibility-conf-realization}, this is identified with a map
\[ \Conf^{\mu}_{/S}(X) \times_S \bbV(\calG) \rightarrow \Conf^{\mu}_{X/S}( \bbV(\calP^{r}_{X/S}(\calF)) ). \]
Tracing through the definitions, we find this map has the obvious description on geometric points:
\begin{enumerate}
\item A geometric point on the left is given by taking a geometric point $s$ of $S$, a $\mu$-configuration of geometric points $(x_1, \ldots, x_n)$ in $X_s$, and an element $g$ of the fiber $\calG|_s$. 
\item A geometric point on the right is given by taking a geometric point $s$ of $S$ and a labeled $\mu$-configuration of points $(x_1, \ldots, x_n)$ in $X_s$, where $x_i$ is labeled by an element of the fiber $\calP^{r,\mu}_{X/S}(\calF) |_{x_i} = \left(\calF|_{X_s}\right)_x /\frakm_{x_i}^{r+1}$.
\item The map sends the data $s, (x_1, \ldots, x_n), \; g$ on the left to the configuration where each $x_i$ is labeled by the image of $g$ under the natural $r$-th order expansion map 
\[\calG|_s \rightarrow \Gamma(X_s, \calF|_{X_s}) \rightarrow \left(\calF|_{X_s}\right)_{x_i} /\frakm_{x_i}^{r+1}. \]
\end{enumerate}

\section{Motivic Euler products}\label{sect.motivic-euler-products}
\subsection{Definition}
In \cite{bilu:thesis}, the first author recently defined a notion of \textit{motivic Euler product}, that is, a way to make sense of products of the form
$$\prod_{y\in Y} F_y(\mathbf{t}),$$
where $Y$ is a variety over a field $K$, $(t_i)_{i\in I}$ is a collection of indeterminates indexed by a set $I$, $F\in 1 + (t_i)_{i\in I}K_0(\Var/Y)[[t_i]]_{i \in I}, $
and $F_y(\mathbf{t})$ denotes the power series restricted to $y \in Y$. Here we recall this definition, some properties, and prove some additional properties not contained in \cite{bilu:motiviceulerproducts}. 

\subsubsection{Symmetric powers}
Let $X\rightarrow S$ be a map of quasi-projective varieties over a field~$K$. Recall that we write 
\[ \Sym_{/S}^n X := \underbrace{X \times_S \ldots \times_S X}_{n} / \mathfrak{S}_n \]
for the $n$th symmetric power of $X$ relatively to $S$. Given a variety $Y/X$, we write $\Sym_{X/S}^n Y $ for the element
\[ \Sym_{/S}^n Y \rightarrow \Sym_{/S}^n X \]
of $\Var/\Sym^n_{/S} X$,  which can also be viewed as a $Y$-labeled symmetric power of~$X$. 

We now explain how to extend this construction to elements of $K_0(\Var/X)$, that is, define for any $a\in K_0(\Var/X)$ an element $\Sym^n_{X/S}(a)\in K_0(\Var/\Sym^n_{/S}X)$. To do so, we first observe that the product 
$$K_0(\Var/\Sym_{/S}^{\bullet}X) := \prod_{n\geq 0}K_0(\Var/\Sym_{/S}^n X)$$ has the structure of a graded $K_0(\Var/S)$-algebra. The addition is termwise, and multiplication is given by Cauchy product: for 
\[ a = (a_n)_{n\geq 0},\, b = (b_n)_{n\geq 0}\in \prod_{n\geq 0} K_0(\Var/\Sym_{/S}^\bullet X),\]
the product $ab$ has $n$-th component given by
$$ (ab)_n = \sum_{i=0}^na_i b_{n-i}.$$
Here the product $a_ib_{n-i}$ should be understood as the image of $(a_i,b_{n-i})$ through the composition
\begin{multline*}K_0(\Var/\Sym_{/S}^iX)\times K_0(\Var/\Sym_{/S}^{n-i}X)\xrightarrow{\boxtimes}K_0(\Var/\Sym_{/S}^iX\times \Sym_{/S}^{n-i}X)\\ \to K_0(\Var/\Sym_{/S}^nX)\end{multline*}
where the last map is induced by composition with the natural arrow 
\[ \Sym_{/S}^i X \times \Sym_{/S}^{n-i} X \rightarrow \Sym_{/S}^n X. \]
We denote by 
\[ K_0(\Var/\Sym_{/S}^{\bullet}X)^1\subset K_0(\Var/\Sym_{/S}^{\bullet}X) \]
the subset of families $(a_n)_{n \geq 0}$ with $a_0 = 1.$ It is a subgroup of the multiplicative units in $K_0(\Var/\Sym_{/S}^{\bullet}X).$

\begin{proposition}[\cite{bilu:motiviceulerproducts}, lemma 3.5.1.2 ]\label{prop:sympower} There is a unique group morphism
$$\Sym_{X/S}:K_0(\Var/X) \to K_0(\Var/\Sym_{/S}^{\bullet}X)^1,$$
such for any quasi-projective variety $Y$ over $X$, 
\[ \Sym_{X/S}([Y])= ([\Sym^n_{X/S} Y])_{n\geq 0}. \]
\end{proposition}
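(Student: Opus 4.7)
The plan is to treat uniqueness and existence separately. For uniqueness, observe that every variety over $X$ admits a finite stratification by affine (hence quasi-projective) locally closed subvarieties (proceed by noetherian induction: cover by an affine open and stratify the closed complement). Consequently the classes $[Y]$ for $Y/X$ quasi-projective generate $K_0(\Var/X)$ additively, so prescribing $\Sym_{X/S}$ on such classes determines it uniquely as a group morphism.

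For existence, first extend the prescribed rule $[Y]\mapsto([\Sym^n_{X/S}Y])_{n\geq 0}$ to a group homomorphism from the free abelian group on isomorphism classes of quasi-projective $Y/X$ into $K_0(\Var/\Sym_{/S}^\bullet X)^1$. Since the target is abelian, it suffices to check that the scissor relations $[Y]-[Z]-[Y\setminus Z]$, for $Z\subset Y$ closed with $Y$ quasi-projective, are sent to the identity. Writing $U=Y\setminus Z$ and unpacking the Cauchy product componentwise, the required identity becomes
\[
[\Sym^n_{X/S} Y] \;=\; \sum_{i=0}^{n}[\Sym^i_{X/S}Z]\cdot [\Sym^{n-i}_{X/S}U]
\quad\text{in } K_0(\Var/\Sym^n_{/S}X),
\]
where $[\Sym^i_{X/S}Z]\cdot [\Sym^{n-i}_{X/S}U]$ is the pushforward of the external product $[\Sym^i_{/S}Z\times_S\Sym^{n-i}_{/S}U]$ along the natural finite map $\sigma_{i,n-i}\colon \Sym^i_{/S}X\times_S\Sym^{n-i}_{/S}X\to \Sym^n_{/S}X$. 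To prove this, produce a natural stratification of $\Sym^n_{/S}Y$ by locally closed subvarieties
\[
\Sym^n_{/S}Y \;=\; \bigsqcup_{i=0}^n \Sym^i_{/S}Z\times_S \Sym^{n-i}_{/S}U,
\]
where the stratum indexed by $i$ consists of unordered configurations with exactly $i$ of the points in $Z$; the closed condition comes from $Z$ and the open condition from $U$. One verifies directly that, as a subvariety of $\Sym^n_{/S}Y$, this stratum fits in a commutative diagram with $\sigma_{i,n-i}$, so that its class in $K_0(\Var/\Sym^n_{/S}X)$ matches the summand above. Additivity of classes along the stratification then yields the desired equality.

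The main technical step is constructing that stratification scheme-theoretically and identifying each stratum with the expected relative product $\Sym^i_{/S}Z\times_S\Sym^{n-i}_{/S}U$ as a variety over $\Sym^n_{/S}X$. The cleanest route is to descend from the ordered picture: decompose
\[
Y^n_{/S} \;=\; \bigsqcup_{A\subset\{1,\ldots,n\}}\, Z^{A}\times_S U^{\{1,\ldots,n\}\setminus A}
\]
as a disjoint union of locally closed $\mathfrak{S}_n$-stable loci, and observe that for $|A|=i$ the stabilizer is $\mathfrak{S}_i\times\mathfrak{S}_{n-i}$; taking quotients identifies the corresponding orbit in $\Sym^n_{/S}Y$ with $\Sym^i_{/S}Z\times_S\Sym^{n-i}_{/S}U$, and the composition with $\Sym^n_{/S}Y\to\Sym^n_{/S}X$ factors through $\sigma_{i,n-i}$ by construction. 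Once this geometric identification is in hand, the rest is bookkeeping in the Grothendieck ring. No deeper input is required; the whole argument is a carefully packaged version of the Cauchy-product identity for Kapranov-type zeta functions.
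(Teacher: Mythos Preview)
The paper does not supply a proof of this proposition; it is stated with a citation to \cite{bilu:motiviceulerproducts}, Lemma 3.5.1.2. Your sketch is the standard argument and is essentially correct, so there is nothing substantive to compare.

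Two small points of imprecision are worth tightening. First, in the descent step you write the decomposition $Y^n_{/S}=\bigsqcup_{A}Z^{A}\times_S U^{A^c}$ as a union of ``$\mathfrak{S}_n$-stable loci,'' but the individual pieces are not $\mathfrak{S}_n$-stable; rather the union over all $A$ with $|A|=i$ is. Your next sentence (identifying the stabilizer of a single piece as $\mathfrak{S}_i\times\mathfrak{S}_{n-i}$) shows you have the right picture, so this is only a wording issue: the quotient of that $\mathfrak{S}_n$-orbit is then $(Z^i\times_S U^{n-i})/(\mathfrak{S}_i\times\mathfrak{S}_{n-i})=\Sym^i_{/S}Z\times_S\Sym^{n-i}_{/S}U$, exactly as you want. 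Second, for existence you define the map on the free abelian group on \emph{quasi-projective} classes and check the quasi-projective scissor relations; to conclude you implicitly use that $K_0(\Var/X)$ admits a presentation with quasi-projective generators and quasi-projective scissor relations. This is true (any variety stratifies into affines, and any two such stratifications admit a common affine refinement), but it is worth stating explicitly rather than leaving it inside the uniqueness paragraph.
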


\begin{definition}\label{def:sympower} For every $a\in K_0(\Var/X)$, we 
denote by $$\Sym_{X/S}^n(a)\in K_0(\Var/\Sym_{/S}^n X)$$ the $n$-th component of $\Sym_{X/S}(a)$. 
More generally, for a generalized partition $\mu = (m_i)_{i\in I}$, we write
\[ \Sym^\mu_{/S} X := \prod_{i\in I} \Sym^{m_i}_{/S} X \]
(where the product is taken relatively to~$S$), and
\[ \Sym_{X/S}^\mu(a) := \boxtimes_{i\in I} \Sym^{m_i}_{X/S}(a) \in K_0(\Var/\Sym^\mu_{/S} X). \] 
\end{definition}

\begin{example}\label{example:sympower} 
We claim that for $a \in K_0(\Var/X)$,  
\begin{equation}\label{equation.negative-symmetric-power} \Sym_{X/S}^n (-a) = \sum_{\substack{\mu \in \calQ \\|\mu| = n}}(-1)^{||\mu||} \Sym_{X/S}^\mu(a), \end{equation}
where $\calQ$ is the set of partitions defined in \ref{subsect:partitions}, and  $\Sym_{X/S}^\mu(a)$ is viewed as an element of $K_0\left(\Var/\Sym^{|\mu|}_{/S} X\right)$ by composing with the natural map $\Sym^\mu_{/S} X \rightarrow \Sym^{|\mu|}_{/S} X.$
To verify this, we first observe
\begin{align*} \Sym_{X/S}(-a) &= \frac{1}{\Sym_{X/S}(a)} \\
& = \frac{1}{1 + \left(\Sym_{X/S}(a)-1\right)} \\
& = \sum_{k=0}^\infty (-1)^k (\Sym_{X/S}(a) - 1)^k.
\end{align*}
The result then follows from writing 
\[ \Sym_{X/S}(a)-1=(0, a, \Sym_{X/S}^2(a), \Sym_{X/S}^3(a), \ldots) \]
and expanding in each index.  
\end{example}

\begin{remark} If $Y\to Z$ is a radicial surjective morphism between varieties over~$X$, then, computing on geometric points, we find that for any $m$ the induced map 
\[ \Sym^m_{/S} Y\to \Sym^m_{/S} Z \]
is also radicial surjective. Thus, $\Sym^m_{X/S}$ induces a functor
\[ (\Var/X)_{\RS} \rightarrow (\Var/\Sym_{/S}^m X)_{\RS}. \]
In particular, the group morphism~$\Sym_{X/S}$ of proposition \ref{prop:sympower} descends to a morphism
$$\Sym_{X/S}:\widetilde{K}_0(\Var/X) \to \widetilde{K}_0(\Var/\Sym_{/S}^{\bullet}X)^1,$$
where the right-hand side is defined in the obvious way using $\widetilde{K}_0$ everywhere in place of $K_0$. 
\end{remark}
\subsubsection{Symmetric products and motivic Euler products}

Let $I$ be an abelian semigroup, and let $\mu = (m_i)_{i\in I}$ be a generalized partition. Let $\mathscr{A} = (a_i)_{i\in I}$ be a family of classes in $K_0(\Var/X)$. Using definition \ref{def:sympower}, we have the following:

\begin{definition}\label{symproddef} Let $\mu\in \calP(I)$ be a generalized partition. 
\begin{enumerate} \item 
We define $\Sym_{X/S}^{\mu} (\mathscr{A}):=\boxtimes_{i\in I}\Sym_{X/S}^{m_i}(a_i) \in K_0\left(\Var/\Sym_{/S}^{\mu}X\right)$. 
\item We further define  $$\Conf^{\mu}_{X/S}(\mathscr{A}) := \left(\Sym^\mu_{X/S}(\mathscr{A})\right)_{*,X/S}\in K_0\left(\Var/\Conf^{\mu}_{/S}(X)\right),$$
where the subscript ``$*, X/S$" indicates application of the restriction morphism
\begin{equation} \label{morphism.complementdiagonal}K_0\left(\Var/\Sym_{/S}^\mu X\right)\to K_0\left(\Var/\Conf^{\mu}_{/S}(X)\right)\end{equation}
induced by the inclusion $\Conf^{\mu}_{/S} (X) \to \Sym^\mu_{/S} X$. If we fix some $a\in K_0(\Var/X)$ and take $a_i=a$ for all $i \in I$, then we write  
\[ \Conf^{\mu}_{X/S}(a) := \Conf^{\mu}_{X/S}(\mathscr{A}). \]
\end{enumerate}
\end{definition}

\begin{remark} In general, for an element $a$ of a Grothendieck ring of the form $K_0\left(\Var / \Sym^{\mu}_{/S}X \right)$, we will denote by $(a)_{*,X/S}$ or $a|_{\Conf^{\mu}_{/S}(X)}$ the image of $a$ in the Grothendieck ring $K_0\left(\Var/\Conf^{\mu}_{/S}(X)\right)$ via the restriction morphism~(\ref{morphism.complementdiagonal}). 
\end{remark}

\begin{remark} Our notation is consistent with the one introduced in section \ref{sect.confspacesdefinition}, in the sense that if we start with a family $\mathscr{X} = (X_i)_{i\in I}$ of varieties over $X$ and take $\mathscr{A} = ([X_{i}])_{i\in I}$ to be the family of their classes in $K_0(\Var/X)$, then the class of $\Conf^{\mu}_{X/S}(\mathscr{X})$ in $K_0(\Var/\Conf^{\mu}_{/S}(X))$ is $\Conf^{\mu}_{X/S}(\mathscr{A})$. In particular, one can think of the construction $\Conf^\mu_{X/S}$ on $K_0(\Var/X)$ as extending the notion of a labeled configuration space by moving from varieties over $X$ to classes in $K_0(\Var/X)$ as spaces of labels.   
\end{remark}

\begin{definition}[Motivic Euler product] For $\mathscr{A} = (a_i)_{i\in I}$ a family of classes in $K_0(\Var/X)$, we denote
$$\prod_{x\in X/S} \left( 1 + \sum_{i\in I} a_{i,x} t_i\right) := \sum_{\mu\in \mathcal{P}(I)}\Conf^{\mu}_{X/S}(\mathscr{A}) \mathbf{t}^{\mu}\in K_0(\Var/S)[[(t_i)_{i\in I}]],$$
where $ \mathbf{t}^{\mu} = \prod_{i\in I}t_i^{m_i}$, and we consider $\Conf^{\mu}_{X/S}(\mathscr{A})$ as a class in $K_0(\Var/S)$ by applying the forgetful morphism $K_0\left(\Var/\Conf^ {\mu}_{/S}(X)\right) \rightarrow K_0(\Var/S)$. 
\end{definition}

As shown in \cite[Sections 3.8 and 3.9]{bilu:motiviceulerproducts}, this notion of motivic Euler product satisfies many properties that one may wish for: it is multiplicative, it is compatible with finite products, and we can take double products.


\begin{example} \label{example:kapranov-zeta}
For a quasi-projective variety $X/S$, the Kapranov zeta function is defined by
\[ Z_{X/S}^{\Kap}(t) = \sum_{n\geq 0} [\Sym^n_{/S}X]t^n\in 1+t K_0(\Var/S)[[t]].\]
We explain how this can be expressed as a motivic Euler product. From the definition, we compute
\begin{align*} \prod_{x \in X} \frac{1}{1-t} &= \prod_{x \in X} (1 + t + t^2 + \ldots) \\
&= \sum_{\mu \in \calQ} C^\mu_{X/S}([X]) t^{\sum \mu} \\
&= \sum_{\mu \in \calQ} [C^\mu_{/S}(X)] t^{\sum \mu} \\
&= \sum_{n \geq 0} \left[ \bigsqcup_{\substack{ \mu \in \calQ \\ \sum \mu=n }} C^\mu_{/S}(X) \right ] t^n. 
\end{align*}
In characteristic zero, the disjoint union in the $n$th term can be identified with the decomposition of $\Sym^n_{/S} X$  by multiplicity partitions, and we obtain
\begin{equation}\label{equation:euler-kapranov} \prod_{x \in X} \frac{1}{1-t} = \sum_{n\geq 0} [\Sym_{/S}^n X] t^n = Z_{X/S}^\Kap(t). \end{equation}
In positive characteristic, the natural map from the multiplicity partition to the symmetric power is radicial surjective, but not always a piecewise isomorphism (cf., e.g., \cite[Remark at end of section 4]{ekedahl:geometric-invariant}). Thus, in positive characteristic (\ref{equation:euler-kapranov}) only holds after passing to the quotient $\widetilde{K}_0(\Var/S)$.
\end{example}


\subsubsection{Motivic Euler products with other constant coefficients}
By \cite[Section 3.10]{bilu:motiviceulerproducts}, we can also allow motivic Euler products to have a finite number of factors with constant term not equal to one. For later reference, we  give here a formula for this in a special case sufficient for our purposes: we assume $X$ to be a variety with a quasi-finite morphism to a variety~$S$ with fibers of size $m$, and we consider a family of elements $(a_i)_{i\geq 0}$ of $K_0(\Var/X)$. Then
\begin{equation}\label{equation:eulerproductconstantcoef}\prod_{x\in X/S}\left(a_{0,x} + a_{1,x}t_1 + a_{2,x}t_2 + \ldots \right) = \sum_{\substack{\mu = (m_i)_{i\geq 0}\\ \sum m_i = m}}\left(\prod_{i\geq 0}\Sym^{m_i}_{X/S}(a_{i})\right)_{*,X/S} \mathbf{t}^{\mu}.\end{equation}
where $\mathbf{t}^\mu = \prod_{i\geq 1}t_i^{m_i}$ (in particular, $\mathbf{t}^\mu$ does not depend on $m_0$). 
\subsection{Coefficients in localized and completed Grothendieck rings}
By \cite{bilu:motiviceulerproducts}, lemma 3.7.0.2, the above constructions can also be carried out in the localized Grothendieck ring $\M_X$, in such  a way that for every $a\in K_0(\Var/X)$ and any integers $n,m$ with $n\geq 0$,
\begin{equation}\label{eq:grothringlocalisation} \Sym^n_{X/S}(a\LL^{m})= \Sym^{n}_{X/S}(a)\LL^{nm} \textrm { in } \calM_X. 
\end{equation}

\begin{example}\label{example:labeled-conf-localization}
Invoking (\ref{eq:grothringlocalisation}) and working on decompositions of $X$, we find that for any constructible $g: X \rightarrow \bbZ$,
\[ \Sym^n_{X/S}(a\LL^{g})= \Sym^{n}_{X/S}(a)\Sym^n( \bbL^{g} ) \textrm{ and } \Sym^n_{X/S}(\bbL^{-g}) = \left(\Sym^n_{X/S}(\bbL^{g})\right)^{-1}  \]
in  $\calM_{\Sym^n_{/S} X}$. Applying this to the definition of $\Conf^\mu_{X/S}$, we find that, 
\[ \Conf^\mu_{X/S}( a \bbL^g ) = \Conf^\mu_{X/S}(a) \Conf^\mu_{X/S}(\bbL^g) \textrm{ and } \Conf^\mu_{X/S}(\bbL^{-g}) = \left(\Conf^\mu_{X/S}(\bbL^{g})\right)^{-1}  \]
in $\calM_{\Conf^\mu_{/S}(X)}$. When $g: X \rightarrow \bbZ_{\geq 0}$, the first identity holds without inverting~$\bbL$. 
\end{example}

We can also extend our constructions to the completion $\widehat{\M}_X$. Indeed, note first that for an element $c\in \Fil^n\M_X$, we have, for every integer $i\geq 1$, $\Sym_{X/S}^i(c)\in \Fil^{in}\M_{\Sym_{/S}^iX}$. Now, for any  $a\in \Fil^0\widehat{\M}_X$ and any integer $n\geq 1$, we may write $a= b + c$ where $b\in \Fil^0\M_X$ and $c\in \Fil^n\widehat{\M}_X$, and define  $$\Sym_{X/S}^m(a) = \Sym_{X/S}^m(b)\  (\mathrm{mod}{\ \Fil^n\widehat{\M}_{\Sym_{X/S}^mX}}).$$ This can easily be seen to not depend on the choice of $b$ and $c$, and doing this for arbitrarily large $n$ gives a well-defined element $\Sym_{X/S}^m(a)\in \widehat{\M}_{\Sym_{/S}^mX}$. If $a\not\in \Fil^0\widehat{\M}_X$, we can reduce to the previous case by multiplying by an appropriate power of $\LL$.

We can then prove that the map $\Sym_{X/S}$ from proposition \ref{prop:sympower} extends to a group morphism
$$\Sym_{X/S}: \widehat{\M}_X\to \widehat{\M}_{\Sym^{\bullet}_{/S}X}^1$$
where $\widehat{\M}_{\Sym^{\bullet}X}^1$ is defined analogously to $K_0(\Var/\Sym^{\bullet}_{/S}X)^1$. 

\subsection{An important Euler product}
The following formula for a special case of a motivic Euler product will be important for us going forward. It is an immediate consequence of the formula for $\Sym^n_{X/S}(-a)$ in Example \ref{example:sympower}:
\begin{proposition}\label{proposition:fundamental-euler-product}
Let $a \in \calM_X$. Then,
\begin{align*} \prod_{x \in X/S} \left( 1 - a_x t \right) & = \sum_{\mu\in \calQ}(-1)^{||\mu||} \cdot \Conf^{\mu}_{X/S}(a) \cdot t^{|\mu|} \textrm{ in } \calM_S[[t]].
\end{align*}
If $a \in K_0(\Var/X)$, then the identity holds already in $K_0(\Var/S)[[t]].$ 
\end{proposition}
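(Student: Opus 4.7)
The strategy is simply to unwind the definition of the motivic Euler product in the one-variable case and then apply the explicit formula for $\Sym^n_{X/S}(-a)$ obtained in Example \ref{example:sympower}.

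First I would specialize the definition: taking $I=\{1\}$ and the family $\mathscr{A}=(a_1)$ with $a_1=-a\in\calM_X$, the general definition of a motivic Euler product gives
\[
\prod_{x\in X/S}(1-a_x t) \;=\; \sum_{n\geq 0} \Conf^n_{X/S}(-a)\,t^n
\]
in $\calM_S[[t]]$, where by definition $\Conf^n_{X/S}(-a) = \bigl(\Sym^n_{X/S}(-a)\bigr)_{*,X/S}$ (and the class is viewed in $\calM_S$ via the forgetful morphism to $S$).

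Next I would substitute the expression from Example \ref{example:sympower},
\[
\Sym_{X/S}^n(-a) \;=\; \sum_{\substack{\mu\in\calQ\\|\mu|=n}}(-1)^{||\mu||}\,\Sym_{X/S}^\mu(a)\qquad\text{in }\calM_{\Sym^n_{/S}X},
\]
where $\Sym^\mu_{X/S}(a)$ is pushed to $\Sym^n_{/S}X$ via the natural map $\Sym^\mu_{/S}X\to\Sym^n_{/S}X$. The key compatibility is that this natural map sends the open $\Conf^\mu_{/S}(X)$ into $\Conf^n_{/S}(X)$, and in fact $\Conf^\mu_{/S}(X)$ is precisely the preimage of $\Conf^n_{/S}(X)$ in $\Sym^\mu_{/S}X$. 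Hence applying the restriction morphism $(-)_{*,X/S}\colon K_0(\Var/\Sym^n_{/S}X)\to K_0(\Var/\Conf^n_{/S}(X))$ (suitably localized/completed) term by term yields
\[
\Conf^n_{X/S}(-a) \;=\; \sum_{\substack{\mu\in\calQ\\|\mu|=n}}(-1)^{||\mu||}\,\Conf^\mu_{X/S}(a)
\]
viewed in $\calM_{\Conf^n_{/S}(X)}$.

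Finally I would pass to $\calM_S$ via the forgetful morphism and reassemble, grouping the resulting double sum over $n\geq 0$ and $\mu\in\calQ$ with $|\mu|=n$ into a single sum over $\mu\in\calQ$, with $t^n=t^{|\mu|}$. This gives the desired identity in $\calM_S[[t]]$. The very last sentence of the proposition---that the identity already holds in $K_0(\Var/S)[[t]]$ when $a\in K_0(\Var/S)$---is immediate from the same argument, since Example \ref{example:sympower} and the restriction morphism are defined without inverting $\bbL$ in that case. There is essentially no obstacle here: the content has already been packed into Example \ref{example:sympower} and into the geometric compatibility $\Conf^\mu_{/S}(X) = \Conf^n_{/S}(X)\times_{\Sym^n_{/S}X}\Sym^\mu_{/S}X$; the proof is a bookkeeping exercise organizing partitions by their total size.
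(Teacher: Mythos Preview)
Your proposal is correct and follows exactly the paper's approach: the paper simply states that the proposition is an immediate consequence of the formula for $\Sym^n_{X/S}(-a)$ in Example~\ref{example:sympower}, and you have spelled out the bookkeeping (specializing the Euler-product definition to $I=\{1\}$, applying the restriction $(-)_{*,X/S}$ using $\Conf^\mu_{/S}(X)=\Sym^\mu_{/S}X\times_{\Sym^n_{/S}X}\Conf^n_{/S}(X)$, and regrouping by $|\mu|$) that makes that ``immediate'' precise.
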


\begin{example}\label{example.kapranov-zeta-inverse}
If $a=1 \in K_0(\Var/X)$, we obtain 
\[ \prod_{x\in X/S}(1-t) = \sum_{\mu\in \calQ}(-1)^{||\mu||} \cdot [\Conf^{\mu}_{/S}(X)] \cdot t^{|\mu|} \]
in $K_0(\Var/S)[[t]]$. By multiplicativity of motivic Euler products, we have 
\[ \prod_{x\in X/S}(1-t) = \left(\prod_{x\in X/S}\frac{1}{1-t}\right)^{-1}\]
and thus, by Example \ref{example:kapranov-zeta}, in characteristic zero we obtain
\begin{equation}\label{equation.inverse-zeta-formula} \left( Z_{X/S}^\Kap(t) \right)^{-1} = \sum_{\mu\in \calQ}(-1)^{||\mu||} \cdot [\Conf^{\mu}_{/S}(X)] \cdot t^{|\mu|} \end{equation}
in $K_0(\Var/S)[[t]]$. In positive characteristic, we obtain this formula after passing to $\widetilde{K_0}(\Var/S)[[t]]$. The formula (\ref{equation.inverse-zeta-formula}) was first established by Vakil-Wood \cite[Proposition 3.7]{vakil-wood:discriminants}.
\end{example}

We highlight the following special case of Proposition \ref{proposition:fundamental-euler-product}: 
\begin{corollary}\label{corollary:fundamental-euler-product-coh-sheaf}
Let $Z/X$, and let $\calF$ be a coherent sheaf on $X$. Then, in $\calM_S$,
\begin{align*}  \prod_{x\in X/S} \left(1-\left(\frac{[Z]}{[\bbV(\calF)]}\right)_x t \right)& = \sum_{\mu\in \calQ}(-1)^{||\mu||} \cdot \Conf^{\mu}_{X/S}\left(\frac{[Z]}{[\bbV(\calF)]} \right) \cdot t^{|\mu|} \\
&= \sum_{\mu\in \calQ}(-1)^{||\mu||} \cdot \frac{[\Conf^{\mu}_{X/S}(Z)]}{[\Conf^{\mu}_{X/S}\bbV(\calF)]} \cdot t^{|\mu|} \\
 &= \sum_{\mu\in \calQ}(-1)^{||\mu||} \cdot \frac{[\Conf^{\mu}_{X/S}(Z)]}{[\bbV(\Conf^{\mu}_{X/S}(\calF))]}  \cdot t^{|\mu|}, \end{align*}
where on the last two lines the division in each term takes place in $\calM_{C^\mu_{/S}(X)}$ before applying the forgetful map to $\calM_S$. 
\end{corollary}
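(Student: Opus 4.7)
The plan is to read off the three equalities in sequence by specializing Proposition~\ref{proposition:fundamental-euler-product} to the class $a := [Z]/[\bbV(\calF)] \in \calM_X$, and then using the functorial properties of $\Conf^\mu_{X/S}$ established earlier (in particular Example~\ref{example:labeled-conf-localization} and Lemma~\ref{lemma:compatibility-conf-realization}).

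First, I would apply Proposition~\ref{proposition:fundamental-euler-product} directly to $a = [Z]/[\bbV(\calF)] \in \calM_X$ to obtain the first equality
\[ \prod_{x \in X/S} \left(1 - \left(\tfrac{[Z]}{[\bbV(\calF)]}\right)_x t\right) = \sum_{\mu \in \calQ} (-1)^{||\mu||} \Conf^\mu_{X/S}\!\left(\tfrac{[Z]}{[\bbV(\calF)]}\right) t^{|\mu|} \textrm{ in } \calM_S[[t]]. \]
This is purely a matter of invoking the proposition with the specified class, so there is nothing to prove here beyond noting that the motivic Euler product construction extends to $\calM_X$-coefficients as discussed in the section on localized coefficients.

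For the second equality, I would rewrite the class $[Z]/[\bbV(\calF)]$ inside $\Conf^\mu_{X/S}$ as a product of $[Z]$ and an inverse of $[\bbV(\calF)]$. By Theorem~\ref{theorem:V-structure}, $\bbV(\calF) \cong \bbA^{\dim \calF}$ in $(\Var/X)_{X-\PW}$, so $[\bbV(\calF)] = \bbL^{\dim \calF}$ where $\dim \calF$ is the constructible function $x \mapsto \dim \calF|_x$. In particular, by Example~\ref{example:labeled-conf-localization},
\[ \Conf^\mu_{X/S}\!\left(\tfrac{[Z]}{[\bbV(\calF)]}\right) = \Conf^\mu_{X/S}([Z] \cdot \bbL^{-\dim \calF}) = \Conf^\mu_{X/S}([Z]) \cdot \left(\Conf^\mu_{X/S}(\bbL^{\dim \calF})\right)^{-1} \]
in $\calM_{\Conf^\mu_{/S}(X)}$. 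Now $\Conf^\mu_{X/S}([Z]) = [\Conf^\mu_{X/S}(Z)]$ by Definition~\ref{symproddef} (applied to the constant family $\mathscr{A} = ([Z])_i$), and similarly $\Conf^\mu_{X/S}(\bbL^{\dim\calF}) = [\Conf^\mu_{X/S}(\bbV(\calF))]$ where now $\bbV(\calF)$ is regarded as an object of $(\Var/X)_{X-\PW}$ (again using Theorem~\ref{theorem:V-structure}). This yields the second equality after applying the forgetful map to $\calM_S$.

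For the third equality, I would apply Lemma~\ref{lemma:compatibility-conf-realization}, which gives the natural identification
\[ \Conf^\mu_{X/S}(\bbV(\calF)) \cong \bbV(\Conf^\mu_{X/S}(\calF)) \]
as objects of $(\Var/\Conf^\mu_{/S}(X))_{\Conf^\mu_{/S}(X)-\PW}$, and hence as classes in the corresponding (localized) Grothendieck ring. Substituting this identification into the denominator of the second line produces the third line. I do not anticipate any serious obstacle: the content is entirely contained in the referenced results, and the only mild subtlety is the bookkeeping with constructible rank functions needed to interpret $[\bbV(\calF)] = \bbL^{\dim \calF}$ in $\calM_X$ and to ensure that the multiplicative/inverse identities of Example~\ref{example:labeled-conf-localization} apply.
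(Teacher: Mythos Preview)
Your proposal is correct and follows essentially the same route as the paper's proof: the first equality is Proposition~\ref{proposition:fundamental-euler-product}, the second comes from Theorem~\ref{theorem:V-structure} (to write $[\bbV(\calF)]=\bbL^{\dim\calF}$) together with Example~\ref{example:labeled-conf-localization}, and the third is Lemma~\ref{lemma:compatibility-conf-realization}. Your write-up simply spells out a bit more explicitly the bookkeeping that the paper leaves implicit.
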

\begin{proof}
By Theorem \ref{theorem:V-structure}, $[\bbV(\calF)]=\bbL^{\dim \calF}$. Thus, the second equality follows from Example \ref{example:labeled-conf-localization}, and the last equality follows from Lemma \ref{lemma:compatibility-conf-realization}.
\end{proof}

\subsection{Convergence and evaluation at $\LL^{-N}$}
\begin{definition} Let $F(t) = \sum_{i\geq 0} a_it^i\in \widehat{\mathcal{M}}_S[[t]]$ be a power series. We define its radius of convergence to be 
$$\sigma_F = \limsup_{i} \frac{\dim_{/S} a_i}{i}.$$
For every integer $N>\sigma_F$, we then say that $F$ converges at $t = \LL^{-N}$, and we may evaluate $F$ at $\LL^{-N}$, to obtain an element $F(\LL^{-N})\in \widehat{\M}_S$. 
\end{definition}

\begin{notation}\label{notation:evaluation} If $F(0) = 1$ and the motivic Euler product $P(t) = \prod_{x\in X/S}F_x(t)$ converges at $t = \LL^{-N}$, we write $$\left. \prod_{x\in X/S}F_x(t)\right|_{t = \LL^{-N}}\ \ \ \text{or}\ \ \ \prod_{x\in X/S}F_x(\LL^{-N})$$
for the value $P(\LL^{-N})$. 
\end{notation}

\begin{example}\label{example:convergence} Let $Z$ be a variety over $X$, $r$ an integer, and consider the motivic Euler product 
$$\prod_{x\in X/S}(1 - [Z]_x\LL^{-m}t).$$
By Proposition \ref{proposition:fundamental-euler-product} and property (\ref{eq:grothringlocalisation}), we have 
$$\prod_{x\in X/S}(1 - [Z]_x\LL^{-m}t) =  \sum_{\mu\in \calQ}(-1)^{||\mu||} \Conf^{\mu}_{X/S}(Z) \LL^{-m|\mu|}t^{|\mu|}$$
so that the dimension of the coefficient of degree $k$ is bounded by 
$k (\dim_{/S} Z-m)$. In particular, the radius of convergence of this motivic Euler product is bounded by $\dim_{/S} Z - m$, and it can be evaluated for $t = \LL^{-\dim Z+m -1}$. 
\end{example}
\begin{remark} One can formulate very general convergence criteria for motivic Euler products, as is done in \cite{bilu:thesis}. However, in this paper we will only need \ref{example:convergence}.
\end{remark}
We now turn to the case of series with several indeterminates, and will deal with evaluating one of them at $\LL^{-N}$. If $F$ is a power series in the variables $t, u_1,u_2,\ldots$ with coefficients in $\widehat{\M}_X$, we may write it uniquely in the form
\begin{equation}\label{eq:seriesint}F(t, (u_i)_{i\geq 1}) = \sum_{\mu\in \calQ_0} F_{\mu}(t)\mathbf{u}^{\mu}\in \widehat{\M}_X[[t, (u_i)_{i\geq 1}]].\end{equation}
Then, for any integer $N\leq \inf_{\mu} \sigma_{F_{\mu}}$, we say that $F$ converges at $t = \LL^{-N}$, and we may evaluate it to obtain an element $\left.F(t, (u_i)_{i\geq 1})\right|_{t = \LL^{-N}} = F(\LL^{-N}, (u_i)_{i\in I})\in \widehat{\M}_X[[(u_i)_{i\geq 1}]].$

We now check that evaluating Euler products with factors of this form works as expected, that is, when evaluating the Euler product one in fact obtains the Euler product where all factors have been evaluated. We start by treating the case when every monomial where $t$ occurs contains also some other indeterminate, so that evaluating $t$ doesn't change the constant terms of the factors.

\begin{proposition} \label{prop:evaluationwith constantcoef1}Keep the above notation and assume that $F_0 = 1$ and that $N$ is an integer such that $N \leq \inf_{\mu} \sigma_{F_{\mu}}.$ Then the Euler product 
$$\prod_{x\in X/S} F_{x}(t, (u_i)_{i\geq 1})$$
converges at $t = \LL^{-N}$, and in $\widehat{\M}_S$, we have
$$\left.\left(\prod_{x\in X/S} F_x(t, (u_i)_{i\geq 1})\right)\right|_{t = \LL^{-N}} = \prod_{x\in X/S} F_x(\LL^{-N},(u_i)_{i\geq 1}).$$
\end{proposition}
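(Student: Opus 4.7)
The plan is to verify the identity coefficient by coefficient in the $u_i$. Fix $\sigma \in \calQ_0$ and consider the $\mathbf{u}^\sigma$-coefficient on each side. Since only finitely many terms of each Euler product contribute to any fixed $\mathbf{u}$-monomial, both coefficients reduce to finite sums, and it suffices to match them.

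First I expand the LHS using the definition of a motivic Euler product: writing $F_x(t, u) = 1 + \sum_{\mu \neq 0} F_{\mu, x}(t)\,\mathbf{u}^\mu$, one has
\[\prod_{x \in X/S} F_x(t, u) = \sum_\nu \Conf^\nu_{X/S}\bigl((F_\mu(t))_{\mu \neq 0}\bigr)\,\mathbf{u}^{\Sigma \nu},\]
where $\nu = (m_\mu)_{\mu \neq 0}$ ranges over generalized partitions and $\Sigma \nu := \sum_\mu m_\mu \mu$. Only finitely many $\nu$ satisfy $\Sigma \nu = \sigma$, and the analogous expansion applied to the RHS yields the formula with each $F_\mu(t)$ replaced by $F_\mu(\bbL^{-N})$. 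It thus suffices to prove, for each such $\nu$, that $\Conf^\nu_{X/S}((F_\mu(t))_\mu) \in \widehat{\calM}_S[[t]]$ converges at $t = \bbL^{-N}$ and evaluates there to $\Conf^\nu_{X/S}((F_\mu(\bbL^{-N}))_\mu)$.

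For convergence, use that $\Sym_{X/S}$ is a group morphism together with the analogue of (\ref{eq:grothringlocalisation}) for the formal variable $t$, which gives $\Sym^n_{X/S}(at^k) = \Sym^n_{X/S}(a)\,t^{nk}$. Expanding $F_\mu(t) = \sum_i a_i^{(\mu)} t^i$ and applying $\Sym_{X/S}$, one obtains the $t$-adically convergent identity
\[\Sym_{X/S}(F_\mu(t)) = \prod_i \Sym_{X/S}(a_i^{(\mu)} t^i)\]
in $\widehat{\calM}^1_{\Sym^\bullet_{/S} X}[[t]]$. The $m_\mu$-th component has $t^k$-coefficient equal to a finite sum of terms $\boxtimes_i \Sym^{n_i}_{X/S}(a_i^{(\mu)})$ with $\sum n_i = m_\mu$ and $\sum i n_i = k$. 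The dimension estimate $\dim_{/S}\Sym^{n_i}_{X/S}(a_i^{(\mu)}) \leq n_i \dim_{/S} a_i^{(\mu)}$ bounds the $S$-dimension of this coefficient by roughly $m_\mu \sigma_{F_\mu} k$, yielding convergence at $t = \bbL^{-N}$ whenever $N \leq \sigma_{F_\mu}$. Restricting to the diagonal complement as required by the $\Conf^\nu_{X/S}$ construction preserves convergence.

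For the evaluation itself, I truncate each $F_\mu(t)$ modulo $t^K$ and modulo $\Fil^{-M}\widehat{\calM}_X$ for large $K, M$. The above formulas become finite, and direct substitution $t = \bbL^{-N}$ commutes with $\Sym$ and $\Conf$ by the identity $\Sym^n_{X/S}(a\bbL^k) = \Sym^n_{X/S}(a)\bbL^{nk}$ recalled in (\ref{eq:grothringlocalisation}). Passing to the limit using continuity of evaluation on convergent series in the dimension topology finishes the proof. The main obstacle is to control the two limit processes simultaneously: one must ensure that the approximation by truncating in $t$ (needed for the infinite-product expansion of $\Sym_{X/S}(F_\mu(t))$) and the approximation by reducing modulo $\Fil^{-M}$ (needed for evaluation at $\bbL^{-N}$) can be interchanged. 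Once that bookkeeping is in place the algebraic equalities are all immediate consequences of the group morphism property of $\Sym_{X/S}$ and its compatibility with powers of $\bbL$.
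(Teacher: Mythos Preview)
Your approach is essentially the same as the paper's: both sides are expanded coefficient-by-coefficient in the $\mathbf{u}$-variables, and the match ultimately rests on the identity $\Sym^{n}_{X/S}(a\bbL^{k}) = \Sym^{n}_{X/S}(a)\,\bbL^{nk}$ from (\ref{eq:grothringlocalisation}). The only organizational difference is that the paper indexes the Euler product from the outset by the product set $(\calQ_0\setminus\{0\})\times\bbZ_{\geq 0}$, writing $F_x(t,u)=1+\sum_{(\mu,i)} f_{\mu,i,x}\,t^{i}\mathbf{u}^{\mu}$ with coefficients $f_{\mu,i}\in\widehat{\calM}_X$; it then expands directly and identifies the $\mathbf{u}^{\nu}$-coefficient on each side as the same finite sum of products $\bigl(\prod_{(\mu,i)}\Sym^{n_{\mu,i}}_{X/S}f_{\mu,i}\bigr)_*\bbL^{-N\sum n_{\mu,i}i}$. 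This avoids ever needing to make sense of $\Sym^{n}_{X/S}(F_\mu(t))$ for a power-series argument, and hence dispenses with your truncation/limit bookkeeping: all sums are already finite once $\nu$ is fixed. Your route through $\Sym^{n}_{X/S}(at^{k})=\Sym^{n}_{X/S}(a)\,t^{nk}$ and a truncation argument is correct but slightly more roundabout; the paper's direct double-index expansion is the cleaner packaging of the same computation.

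One small slip: your dimension bound ``roughly $m_\mu\,\sigma_{F_\mu}\,k$'' for the $t^{k}$-coefficient of $\Sym^{m_\mu}_{X/S}(F_\mu(t))$ has a spurious factor of $m_\mu$. With $\sum_i i\,n_i=k$ and $\dim_{/S}a_i^{(\mu)}\lesssim i\,\sigma_{F_\mu}$, one gets a bound of order $k\,\sigma_{F_\mu}$, independent of $m_\mu$; this is what is needed for convergence at $t=\bbL^{-N}$.
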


\begin{proof} For every non-zero $\mu$, write $F_{\mu}(t) = \sum_{i\geq 0} f_{\mu,i}t^{i}$. Then, denoting by $I$ the semigroup $(\calQ_0\setminus \{0\})\times \bbZ_{\geq 0}$, we have 
\begin{eqnarray*}\prod_{x\in X/S} F_{x}(t, (u_{i})_{i\geq 1}) & = & \prod_{x\in X/S} \left(1 + \sum_{(\mu,i)\in I} f_{\mu, i, x} t^i \mathbf{u}^{\mu}\right) \\
& = & \sum_{(n_{\mu,i})_{\mu,i}\in \mathcal{P}(I)}\left( \prod_{(\mu,i)\in I}\Sym_{X/S}^{n_{\mu,i}}f_{\mu,i}\right)_{*}t^{\sum_{\mu,i}n_{\mu,i}i} \mathbf{u}^{\sum_{\mu,i}n_{\mu,i}\mu}.\\
& = &  \sum_{\nu\in \calQ_0}\left(\sum_{\substack{(n_{\mu,i})_{\mu,i}\in \mathcal{P}(I)\\ \nu = \sum_{\mu} (\sum_{i}n_{\mu,i})\mu}}\left(\prod_{(\mu,i)\in I}\Sym_{X/S}^{n_{\mu,i}}f_{\mu,i}\right)_{*}t^{\sum_{\mu,i}n_{\mu,i}i}\right)\mathbf{u}^{\nu}.
\end{eqnarray*}
On the other hand,
\begin{eqnarray*} \prod_{x\in X/S} F_{x}(\LL^{-N}, (u_i)_{i\geq 1}) & = & \prod_{x\in X/S} \left(1 + \sum_{\mu\in \calQ_0\setminus \{0\}} F_{\mu}(\LL^{-N})_x \mathbf{u}^{\mu}\right) \\
& = & \sum_{\nu\in \calQ_0} \left( \sum_{\substack{(n_{\mu})_{\mu}\\ \nu = \sum_{\mu}n_{\mu}\mu}}\left(\prod_{\mu} \Sym_{X/S}^{n_{\mu}}(F_{\mu}(\LL^{-N}))\right)_{*}\right) \mathbf{u}^{\nu}.\\
\end{eqnarray*}
For every $\mu$ and every $n_{\mu}$, we have
$$\Sym_{X/S}^{n_{\mu}}(F_{\mu}(\LL^{-N})) = \Sym_{X/S}^{n_{\mu}}\left(\sum_{i\geq 0}f_{\mu,i}\LL^{-Ni}\right) = \sum_{\substack{(n_{\mu,i})_{i\geq 0} \\ \sum_{i}n_{\mu,i} = n_{\mu}}}\prod_{i} \Sym_{X/S}^{n_{\mu,i}}(f_{\mu,i}\LL^{-Ni}).$$

We may conclude using the fact that $\Sym_{X/S}^{n_{\mu,i}}(f_{\mu,i}\LL^{-Ni}) = \Sym_{X/S}^{n_{\mu,i}}(f_{\mu,i}) \LL^{-in_{\mu,i}N}$ by (\ref{eq:grothringlocalisation}). 

\end{proof}
Assume now more generally that we are given a series $F(t, (u_i)_{i\geq 1})$ as in (\ref{eq:seriesint}), such that $F_{0}(t)$ is of the form $F_0(t) = 1 + tG_0(t)$ for some $G_0\in \widehat{\mathcal{M}}_X[[t]]$. Denoting $G_{\mu}(t) = \frac{F_{\mu}(t)}{1 + tG_0(t)}\in \widehat{\mathcal{M}}_X[[t]]$ we may then write
$$F(t, (u_i)_{i\geq 1}) = (1 + tG_0(t)) \left(1 + \sum_{\mu\in \calQ_0\setminus\{0\}}G_{\mu}(t) \mathbf{u}^{\mu}\right),$$
so that we reduce to cases treated previously: for every $n$ such that the motivic Euler product $\prod_{x\in X}(1 + tG_0(t))$ converges at~$\LL^{-N}$ and such that $N\leq \inf \sigma_{G_{\mu}}$, we may write, taking Euler products and evaluating at~$\LL^{-N}$, 
$$\left.\prod_{x\in X/S} F_x(t, (u_i)_{i\geq 1})\right|_{t=\LL^{-N}}$$
$$ = \prod_{x\in X/S} (1 + \LL^{-N}G_{0,x}(\LL^{-N}))\prod_{x\in X/S} \left(1 + \sum_{\mu\in \calQ_0\setminus \{0\}}G_{\mu,x}(\LL^{-N}) \mathbf{u}^{\mu}\right),$$
where we used notation \ref{notation:evaluation} on the first factor and proposition \ref{prop:evaluationwith constantcoef1} on the second.


\subsection{Monomial substitutions in motivic Euler products}\label{section:substitutions}  Let  $(a_{i,j})_{\substack{1\leq i \leq n\\ 1 \leq j \leq m}}$ be a family of non-negative integers such that for every $i$, $a_{i,j}$ is non-zero for at least one value of $j$, and let
$$F: \bbZ[[t_1,\ldots,t_n]] \to \bbZ[[s_1,\ldots,s_m]]
$$
be the monomial substitution $t_i \mapsto s_1^{a_{i,1}}\ldots s_m^{a_{i,m}}$. We also denote by $F$ the morphism it induces on any ring of power series in $t_1,\ldots,t_n$ with coefficients in a Grothendieck ring of varieties. We denote by $I = (\bbZ_{\geq 0})^{n}\setminus \{0\}$ and $J =   (\bbZ_{\geq 0})^{m}\setminus\{0\}$.
\begin{lemma}\label{lemma:monomialsubstitution} Let $\mathscr{X} = (X_{\i})_{\i\in I}$ be a family of elements of $\M_X$. Then
$$F\left(\prod_{x\in X/S} \left( 1 + \sum_{\i\in I} X_{\i,x} \t^{\i}\right)\right) = \prod_{x\in X/S} F\left( 1 + \sum_{\i\in I} X_{\i} \t^{\i}\right)_x.$$
\end{lemma}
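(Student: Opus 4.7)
The plan is to match coefficients of each monomial $\mathbf{s}^{\omega_0}$ on the two sides, using the combinatorics of the map $b : I \to J \cup \{0\}$ defined by $b(\i)_j := \sum_{k} i_k a_{k,j}$, so that $F(\t^{\i}) = \mathbf{s}^{b(\i)}$. The hypothesis that each row $(a_{i,j})_j$ has a positive entry gives $\sum_j b(\i)_j \geq |\i|$, so for each $\omega \in J$ the fiber $b^{-1}(\omega)$ is finite. In particular $Y_\omega := \sum_{\i \in b^{-1}(\omega)} X_\i$ is a well-defined element of $\calM_X$, and $F\bigl(1 + \sum_\i X_\i \t^\i\bigr) = 1 + \sum_{\omega \in J} Y_\omega \mathbf{s}^\omega$ has constant term $1$, so its motivic Euler product is defined.

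Expanding both sides using the definition of the motivic Euler product, the coefficient of $\mathbf{s}^{\omega_0}$ on the LHS is
\[ \sum_{\mu = (m_\i)_\i \in \calP(I) \colon \sum_\i m_\i b(\i) = \omega_0} \Conf^{\mu}_{X/S}(\mathscr{X}), \]
while the coefficient on the RHS is
\[ \sum_{\lambda = (l_\omega)_\omega \in \calP(J) \colon \sum_\omega l_\omega \omega = \omega_0} \Conf^{\lambda}_{X/S}\bigl((Y_\omega)_{\omega \in J}\bigr). \]

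The key step is to expand each $\Sym^{l_\omega}_{X/S}(Y_\omega)$ using that $\Sym_{X/S}$ is a group morphism into the Cauchy-product ring (Proposition \ref{prop:sympower}): this gives
\[ \Sym^{l_\omega}_{X/S}(Y_\omega) = \sum_{(m_\i)_{\i \in b^{-1}(\omega)} \colon \sum_\i m_\i = l_\omega} \boxtimes_{\i \in b^{-1}(\omega)} \Sym^{m_\i}_{X/S}(X_\i). \]
Taking external products over $\omega$ and then restricting to the complement of the diagonal expresses $\Conf^{\lambda}_{X/S}\bigl((Y_\omega)_\omega\bigr)$ as a sum of terms of the form $\Conf^{\mu}_{X/S}(\mathscr{X})$. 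Summing over $\lambda$ with $\sum_\omega l_\omega \omega = \omega_0$ reindexes the RHS coefficient via the bijection $\mu \leftrightarrow (\lambda, (m_\i)_\i)$ given by $l_\omega = \sum_{\i \in b^{-1}(\omega)} m_\i$, yielding exactly the LHS coefficient.

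The only slightly subtle point is tracking the restriction $(-)_{*,X/S}$: on the LHS one restricts from $\Sym^\mu_{/S}(X) = \prod_\i \Sym^{m_\i}_{/S}(X)$ while on the RHS one restricts from $\Sym^\lambda_{/S}(X) = \prod_\omega \Sym^{l_\omega}_{/S}(X)$, with $|\mu| = |\lambda|$. Both symmetric products map naturally to $\Sym^{|\mu|}_{/S}(X)$ by regrouping factors, and the restrictions to $\Conf^{|\mu|}_{/S}(X)$ agree because in both descriptions ``$*, X/S$'' amounts to requiring all the underlying points of $X$ to be distinct. Once this compatibility is in place, the term-by-term matching above is immediate, proving the lemma.
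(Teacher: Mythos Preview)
Your proof is correct and follows essentially the same approach as the paper: both expand each side of the identity using the definition of the motivic Euler product, then use the group-morphism property of $\Sym_{X/S}$ to expand $\Sym^{l_\omega}_{X/S}\bigl(\sum_{\i \in b^{-1}(\omega)} X_{\i}\bigr)$ and reindex via the bijection $\mu \leftrightarrow (\lambda,(m_{\i})_{\i})$ determined by $l_\omega=\sum_{\i\in b^{-1}(\omega)} m_{\i}$ (the paper writes this as $\ell_{\i}=n_{\i\cdot A,\i}$). Your explicit check that the two restrictions $(-)_{*,X/S}$ agree---because both amount to pulling back along the natural maps to $\Sym^{|\mu|}_{/S}(X)$ and restricting to the ``all points distinct'' locus---is a point the paper leaves implicit but is worth spelling out.
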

\begin{proof} We denote by $A$ the matrix $(a_{i,j})_{i,j}$. For every line vector $\j = (j_1,\ldots,j_m)\in J$, we denote by $I_{\j}$ the set $\{\i = (i_1,\ldots,i_n)\in I,\ \i \cdot A = \j\}$. 

We first expand the left-hand side:
\begin{eqnarray*}F\left(\prod_{x\in X/S} \left( 1 + \sum X_{\i,x} \t^{\i}\right)\right)& = & F\left( \sum_{\lambda = (\ell_{\i})_{\i}} \Sym^{\lambda}_{X/S}(\mathscr{X}) \prod_{\i} (t^{\i})^{\ell_{\i}}\right)\\
& = & \sum_{\lambda = (\ell_{\i})_{\i}} \Sym^{\lambda}_{X/S}(\mathscr{X})\prod_{\i}\left(\prod_{k=1}^n (s_1^{a_{k,1}}\ldots s_{m}^{a_{k,m}})^{i_k}\right)^{\ell_{\i}} \\
& = & \sum_{\lambda = (\ell_{\i})_{\i}} \Sym^{\lambda}_{X/S}(\mathscr{X}) \prod_{j=1}^m s_j^{\sum_{\i, k} a_{k,j}i_k \ell_{\i}}\\
& = & \sum_{\k= (k_1,\ldots,k_m)} \s^{\k} \left(\sum_{\substack{(\ell_{\i})_{\i} \\ \sum_{\i} \ell_{\i} \i\cdot A = \k}} \left(\prod_{\i}\Sym^{\ell_{\i}}_{X/S}(X_{\i}) \right)_{*}\right)
\end{eqnarray*}

On the other hand,
\begin{eqnarray*}\prod_{x\in X/S} F\left( 1 + \sum_{\i\in I} X_{\i} \t^{\i}\right)_x& = & \prod_{x\in X/S} \left( 1 + \sum_{\i\in I} X_{\i,x} \prod_{k=1}^n s_1^{a_{k,1}i_k}\ldots s_m^{a_{k,m}i_k}\right)\\
& = & \prod_{x\in X/S} \left( 1 + \sum_{\i\in I} X_{\i,x}\prod_{j=1}^m s_j^{\sum_{k=1}^n a_{k,j} i_k }\right)\\
& = & \prod_{x\in X/S} \left( 1 + \sum_{\j\in J}\left( \sum_{\substack{\i \\ \i \cdot A = \j}} X_{\i,x}\right) \s^{\j}\right)\\
& = & \sum_{\nu  = (n_{\j})_{\j\in J}}\left( \prod_{\j} \Sym_{X/S}^{n_{\j}}\left(\sum_{\i\cdot A = \j} X_{\i}\right)\right)_{*}\prod_{\j} \s^{n_{\j}\, \j}\\
& = & \sum_{\k = (k_1,\ldots,k_m)} \s^{\k} \left( \sum_{\substack{(n_{\j})_{\j}\\ \sum_{\j}n_{\j}\, \j =\k}} \left(\prod_{\j} \Sym_{X/S}^{n_{\j}}\left(\sum_{\i\cdot A = \j} X_{\i}\right)\right)_{*}\right)\\
& = &  \sum_{\k = (k_1,\ldots,k_m)} \s^{\k} \left( \sum_{\substack{(n_{\j})_{\j}\\ \sum_{\j}n_{\j}\j =\k}} \left(\prod_{\j} \sum_{\substack{(n_{\j,\i})_{\i\in I_{\j}}\\ \sum n_{\j,\i} = n_{\j}}} \prod_{\i\in I_{\j}} \Sym_{X/S}^{n_{\j,\i}}(X_{\i})\right)_* \right).\\
\end{eqnarray*}
This is seen to be equal to the left-hand side by putting $\ell_{\i}:= n_{\i \cdot A,\i}$ for every $\i$.
\end{proof}
\begin{remark} According to \cite[Section 3.8, Property 4]{bilu:thesis}, we may in fact more generally make substitutions of the form $t_i \mapsto \LL^{n_i}s_1^{a_{i,1}}\ldots s_m^{a_{i,m}}$ where  $n_i\in \bbZ$. This is specific to affine spaces and relies on property (\ref{eq:grothringlocalisation}).
\end{remark}
\begin{remark} Motivic Euler products do not allow for non-monomial substitutions (e.g., such that an indeterminate is sent to a polynomial which is not a single monomial). As an easy counterexample, consider the series
$$\prod_{x\in X}( 1+ s_1 + s_2)  = \sum_{n_1,n_2} \left(\Sym^{n_1}(X)\times \Sym^{n_2}(X)\right)_*s_1^{n_1}s_2^{n_2} \in K_0(\Var/K)[[s_1,s_2]]
$$

The coefficient of $s_1s_2$ in this series is $(X^2)_{*}$ (that is, the complement of the diagonal in $X^2$).
On the other hand, 
$$\prod_{x\in X}(1 + t) = \sum_{n\geq 0} (\Sym^n(X))_{*}t^n,$$
so that if we substitute $t = s_1 + s_2$, we get that the coefficient of $s_1s_2$ will be $2(\Sym^2(X))_*$. This is in general not equal to $(X^2)_*$ in $K_0(\Var/K)$: for example, if $X = \LL$, we have
$(X^2)_* = \LL^2 - \LL$, and $2(\Sym^2(X))_* = 2(\Sym^2X - X) = 2(\LL^2 - \LL)$. 

In the same way, Euler products are usually not compatible with the operation $t\mapsto -t$, which can easily be seen by comparing the series
$$\prod_{x\in \bbA^1}(1-t) = Z_{\bbA^1}^{\Kap}(t)^{-1} = 1-\LL t$$
and $$\prod_{x\in \bbA^1}(1+t) = Z_{\bbA^1}^{\Kap}(t)Z_{\bbA^1}^{\Kap}(t^2)^{-1} = \frac{1-\LL t^2}{1-\LL t}.$$
\end{remark}


\section{Motivic inclusion-exclusion}\label{sect.motivic-inclusion-exclusion}
In this section, we give a general motivic inclusion-exclusion principle (Theorems \ref{theorem:mot-inc-exc-exact} and \ref{theorem:mot-inc-exc-approx} below) that will play a central role in the proofs of our main results.

\subsection{Inclusion-exclusion for finite sets}
\newcommand{\finset}{\mathrm{Fin.-Set}}

We begin by revisiting the classical inclusion-exclusion formula for finite sets from a categorical perspective; there is no mathematical necessity for this detour, but it will be helpful in guiding us to the motivic version. 

In its simplest form, inclusion-exclusion gives a way to compute the cardinality of a union of finite sets:  one starts with the naive guess given by adding up the cardinalities of each set, then iterates corrections for overcounting caused by overlap between the sets. The resulting statement is useful because it is often easier to count the elements of an intersection than the elements of a union. 
\begin{theorem}[Inclusion-exclusion of finite sets]
Let $Y$ be a finite set, and let $Y_i \subset Y,\, i \in \{1, \ldots, n\} $ be a collection of subsets. Then,
\begin{multline*}
\left| \bigcup_{i=1}^n Y_i \right| = \left( |Y_1| + |Y_2| + \ldots + |Y_n| \right)\\ - \left( |Y_1 \cap Y_2| + |Y_1 \cap Y_3| + \ldots +  |Y_{n-1} \cap Y_n| \right) \\ + \left( |Y_1 \cap Y_2 \cap Y_3| + |Y_1 \cap Y_2 \cap Y_4| + \ldots  \right) \\
\ldots
\\ + (-1)^n \left( |Y_1 \cap Y_2 \cap \ldots \cap Y_n|  \right) \\
= \sum_{k=1}^n (-1)^{k-1} \sum_{\substack{J \subset \{1, \ldots, n \} \\ |J|=k}} \left| \bigcap_{j \in J} Y_j \right|. 
\end{multline*}
\end{theorem}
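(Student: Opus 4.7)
The plan is to prove the identity by tracking the contribution of each element $y \in \bigcup_{i=1}^n Y_i$ to both sides. On the left-hand side every such $y$ contributes exactly $1$, so it suffices to verify the same for the right-hand side. Fix $y \in \bigcup_i Y_i$ and let $I(y) := \{i \in \{1,\ldots,n\} : y \in Y_i\}$, which is nonempty of some cardinality $m \geq 1$. For any subset $J \subseteq \{1,\ldots,n\}$ one has $y \in \bigcap_{j \in J} Y_j$ if and only if $J \subseteq I(y)$, so the total contribution of $y$ to the right-hand side is
$$\sum_{k=1}^n (-1)^{k-1} \#\{J \subseteq I(y) : |J| = k\} = \sum_{k=1}^m (-1)^{k-1} \binom{m}{k}.$$

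The main (and only nontrivial) step is then the binomial identity $\sum_{k=1}^m (-1)^{k-1} \binom{m}{k} = 1$ for $m \geq 1$. This follows from the binomial theorem applied to $(1-1)^m$:
$$0 = (1-1)^m = \sum_{k=0}^m (-1)^k \binom{m}{k} = 1 - \sum_{k=1}^m (-1)^{k-1} \binom{m}{k}.$$
Hence the contribution of each $y$ equals $1$, and summing over $y \in \bigcup_i Y_i$ yields the theorem. There is no genuine obstacle; the only input beyond bookkeeping is the vanishing of $(1-1)^m$.

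Alternatively, one can phrase the same argument via characteristic functions: write $\chi_{\bigcup_i Y_i} = 1 - \prod_{i=1}^n (1 - \chi_{Y_i})$, expand the product to obtain $\chi_{\bigcup_i Y_i} = \sum_{\emptyset \neq J \subseteq \{1,\ldots,n\}} (-1)^{|J|-1} \chi_{\bigcap_{j \in J} Y_j}$, and then sum over $y \in Y$. This formulation is essentially a repackaging of the element-counting proof, but I would prefer it for expository purposes because it is the version that most directly suggests a motivic analog: the product of characteristic functions corresponds to intersection, and the expansion over subsets $J$ is the combinatorial shadow of the inclusion-exclusion formulas that will appear in the Grothendieck ring.
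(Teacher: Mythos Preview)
Your proof is correct and aligns with the paper's treatment. The paper does not give a standalone proof of this classical statement but immediately reformulates it as a ``categorified'' version (applied to the map $\bigsqcup Y_i \to Y$) whose proof reduces fiberwise to the same binomial identity $\sum_k (-1)^{k-1}\binom{m}{k} = 1 - (1-1)^m = 1$ that you use; your alternative characteristic-function formulation $\chi_{\bigcup Y_i} = 1 - \prod(1-\chi_{Y_i})$ is precisely the de~Moivre identity the paper highlights in its probabilistic discussion as the template for the motivic analog.
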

 
Note that cardinality determines the class of a finite set in the Grothendieck ring of finite sets $K_0(\finset)=\bbZ$, so that we can view inclusion-exclusion as a statement in this Grothendieck ring. In that light, it admits the following refinement:

\begin{theorem}\label{theorem:cat-inc-exc-finite-set}[Categorified inclusion-exclusion of finite sets]
Let $f: X \rightarrow Y$ be a map of finite sets. Then, in $K_0(\finset/Y)$, 
\[ [f(X)] = \sum_{k\geq 1} (-1)^{k-1} [ \Conf^k_{/Y} (X)] \]
where $\Conf^{k}_{/Y}(X)$ denotes the configuration space of $k$ unordered elements of $X$ relatively to $Y$. 
\end{theorem}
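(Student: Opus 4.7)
The plan is to reduce to a pointwise (fiberwise) computation over $Y$, where the identity becomes the classical binomial identity $\sum_{k=1}^n (-1)^{k-1}\binom{n}{k} = 1$ for $n \geq 1$.

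First, I would observe that $K_0(\finset/Y)$ admits a particularly simple description: sending a finite set $Z \to Y$ to the function $y \mapsto |Z_y|$ defines a ring isomorphism $K_0(\finset/Y) \xrightarrow{\sim} \mathrm{Fun}(Y,\bbZ)$. Consequently, to prove the identity of classes it suffices to check it pointwise, i.e. to show for each $y \in Y$ that
\[ |f(X)_y| = \sum_{k \geq 1} (-1)^{k-1} |\Conf^k_{/Y}(X)_y|. \]

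Next, I would unwind the two sides at a fixed $y \in Y$. Set $n := |f^{-1}(y)|$. On the left, $f(X)_y$ is either a singleton (if $n \geq 1$) or empty (if $n = 0$), so its cardinality equals $\mathbf{1}_{n \geq 1}$. On the right, $\Conf^k_{/Y}(X)_y$ is the set of unordered $k$-element subsets of $f^{-1}(y)$, which has cardinality $\binom{n}{k}$ (and vanishes for $k > n$, so the sum is finite). The identity to prove is thus
\[ \mathbf{1}_{n \geq 1} = \sum_{k=1}^{n} (-1)^{k-1} \binom{n}{k}, \]
which is trivial for $n = 0$ and, for $n \geq 1$, follows immediately from $(1-1)^n = 0$ by moving the $k=0$ term to the other side.

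There is no real obstacle here; the only subtle point is the initial reduction, which requires noting that $K_0(\finset/Y)$ is detected by fiberwise cardinalities and that $\Conf^k_{/Y}$ indeed commutes with restriction to fibers over points of $Y$ (both follow from the fact that $Y$, being a finite set, decomposes as a disjoint union of its points and that symmetric powers/configuration spaces behave well under such decompositions). Once this is in hand the theorem is the binomial theorem. I would highlight that this simple proof is the conceptual template for the motivic generalization to come, where fiberwise reduction is unavailable and the identity must be established directly in the Grothendieck ring.
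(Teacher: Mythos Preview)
Your proof is correct and follows essentially the same approach as the paper: reduce to fibers via the isomorphism $K_0(\finset/Y)\cong\bbZ^Y$, then invoke the binomial identity, which the paper phrases as $\left.(1-(1-t)^{|X|})\right|_{t=1}$. One small remark on your closing comment: the paper's second proof of motivic inclusion-exclusion does in fact use a fiberwise reduction (via the injection $\widetilde{K}_0(\Var/Y)\hookrightarrow\prod_{y\in Y}\widetilde{K}_0(\Var/\kappa(y))$), so the template carries over more directly than you suggest.
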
 

To obtain the original inclusion-exclusion formula from the categorified inclusion-exclusion formula, one applies the categorified version to the natural map $\bigsqcup Y_i \rightarrow Y$, and then applies the forgetful map $K_0(\finset/Y) \rightarrow K_0(\finset)=\bbZ$. 

\begin{proof}[Proof of Theorem \ref{theorem:cat-inc-exc-finite-set}]
We first observe that it suffices to work over each $y \in Y$ individually, since the assignment 
\[ [X] \mapsto ([X_y])_{y\in Y} \] 
gives an isomorphism 
\[ K_0(\finset/Y) \xrightarrow{\sim} \prod_{y \in Y} K_0(\finset);\] in other words,
\[ K_0(\finset/Y)= \bbZ^{Y}. \] 
Thus, we may assume that $Y$ is a point, and work in $K_0(\finset)$. The statement then says that if $X$ is a finite set,
\[ \sum_k (-1)^{k-1} \binom{ |X|}{k} = \begin{cases} 1 & \textrm{ if } X \neq \emptyset \\ 0 & \textrm{ if } X = \emptyset. \end{cases} \]
and indeed this is the case: 
\[ \sum_k (-1)^{k-1} \binom{ |X|}{k} = \left. \left(1 - (1-t)^{|X|} \right)\right|_{t=1}. \]

\end{proof} 

\subsection{Motivic inclusion-exclusion: statements}

The statement of motivic in\-clu\-sion-exclusion is similar to the categorified inclusion-exclusion for finite sets, however, one must keep track of \emph{all} labeled configuration spaces:

\begin{theorem}[Motivic inclusion-exclusion: exact version]\label{theorem:mot-inc-exc-exact}
Let $f: X \rightarrow Y$ be a quasi-finite map of varieties over $K$. Then, in $K_0\left((\Var / Y)_\RS\right)$, 
\[ [f(X)] = \sum_{\mu \in \calQ\bs \{\emptyset\}}  (-1)^{||\mu|| - 1}  [\Conf^\mu_{/Y}(X) ], \]
or, equivalently,
\[ 1 - [f(X)] = \sum_{\mu \in \calQ} (-1)^{||\mu||}  [\Conf^\mu_{/Y}(X) ] \]
where the terms in the sums are identically zero for $|\mu|$ sufficiently large. 
\end{theorem}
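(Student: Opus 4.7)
The plan is to reduce the asserted identity to a combinatorial statement about finite sets by fiberwise analysis, using the motivic Euler product formalism. The core of the argument will lie in handling a single closed point of a variety correctly in the RS localization.

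First, I would invoke Lemma \ref{lemma:injection-relative-product}, which tells me that the natural map $\widetilde{K}_0(\Var/Y) \hookrightarrow \prod_{y \in Y} \widetilde{K}_0(\Var/\kappa(y))$ is injective. Both sides of the identity are compatible with base change to fibers: labeled configuration spaces commute with base change, and the constructible set $f(X)$ restricts to $f_y(X_y)$ over $y$. Hence it suffices to verify the identity after restricting to each $y \in Y$, and I reduce to $Y = \Spec \kappa$ for a field $\kappa$, with $X$ a finite $\kappa$-variety (since quasi-finite over a point means finite). In this situation $[f(X)] \in \widetilde{K}_0(\Var/\kappa)$ equals $1$ if $X \neq \emptyset$ and $0$ otherwise, so the identity to prove becomes
\[ \sum_{\mu \in \calQ}(-1)^{||\mu||}[\Conf^\mu(X)] \;=\; \begin{cases} 1 & X = \emptyset, \\ 0 & X \neq \emptyset, \end{cases} \]
in $\widetilde{K}_0(\Var/\kappa)$.

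Next, I would apply Proposition \ref{proposition:fundamental-euler-product} with $a = 1 \in \widetilde{K}_0(\Var/X)$ to identify the left-hand side as the $t=1$ evaluation of the motivic Euler product $\prod_{x \in X/\kappa}(1-t)$. Since $X$ is finite, the configuration spaces $\Conf^\mu(X)$ vanish for $|\mu|$ exceeding the geometric cardinality of $X$, so this Euler product is actually a polynomial in $t$ and its value at $t=1$ is unambiguous. Invoking multiplicativity of motivic Euler products over disjoint unions of the indexing variety and the decomposition $X = \bigsqcup_i \Spec L_i$ into closed points, I would write
\[ \prod_{x \in X/\kappa}(1-t) \;=\; \prod_i \prod_{x \in \Spec L_i/\kappa}(1-t). \]
Because the canonical map $\Spec L_i \to \Spec L_i^{\mathrm{sep}}$ (with $L_i^{\mathrm{sep}}$ the maximal separable subextension) is a universal homeomorphism and hence radicial surjective, in $\widetilde{K}_0$ I may replace each $L_i$ by $L_i^{\mathrm{sep}}$ and assume every $L_i/\kappa$ is finite separable.

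It then remains to show $\prod_{x \in \Spec L/\kappa}(1-t)|_{t=1} = 0$ in $\widetilde{K}_0(\Var/\kappa)$ for each finite separable extension $L/\kappa$ of degree $d \geq 1$. Using Proposition \ref{proposition:fundamental-euler-product} once more, this product is the polynomial $\sum_{\mu \in \calQ} (-1)^{||\mu||}[\Conf^\mu(\Spec L)] t^{|\mu|}$ of degree $d$; after base change to an algebraic closure $\bar\kappa$, $\Spec L$ splits as a disjoint union of $d$ trivial points and multiplicativity of Euler products over disjoint unions yields literally $(1-t)^d$, which vanishes at $t=1$ by the elementary inclusion-exclusion of Theorem \ref{theorem:cat-inc-exc-finite-set}. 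The hard part will be descending this vanishing from $\widetilde{K}_0(\Var/\bar\kappa)$ back to $\widetilde{K}_0(\Var/\kappa)$, which I expect to handle by extracting a Galois-equivariant matching between the even and odd configuration strata $\bigsqcup_{||\mu||\text{ even}} \Conf^\mu(\Spec L)$ and $\bigsqcup_{||\mu||\text{ odd}} \Conf^\mu(\Spec L)$, or by spreading the Galois descent through the explicit symmetric-power description of each factor. This step is precisely where the RS localization is indispensable: as explained in Remark \ref{remark:RSvsPW} and the accompanying discussion, inseparable residue field extensions obstruct the identity in $K_0$ and are the source of the gap in Vakil--Wood's argument in positive characteristic. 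Combining, every factor vanishes at $t=1$ when $X \neq \emptyset$, while the empty product equals $1$ when $X = \emptyset$, giving the desired identity.
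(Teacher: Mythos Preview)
Your overall architecture matches the paper's second proof: reduce to fibers via Lemma~\ref{lemma:injection-relative-product}, recognize the alternating sum as $(1-t)^{[X]}|_{t=1}$ via Proposition~\ref{proposition:fundamental-euler-product}, and then show this vanishes for nonempty finite $X$. The difference, and the gap, is in how you propose to carry out that vanishing.

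You factor through closed points, pass to the separable subextension, base change to $\bar\kappa$, and then flag the descent back to $\widetilde{K}_0(\Var/\kappa)$ as ``the hard part'' to be handled later by a Galois-equivariant matching. That descent is genuinely not automatic, and you have not supplied it; as written this is an incomplete step, not a routine one. More to the point, the detour is unnecessary. The paper's Lemma~\ref{lemma:finite-variety-mot-zeta-pole} proves the vanishing directly for \emph{any} nonempty finite $X/K'$ without decomposing into points or invoking Galois theory: if $n$ is the number of geometric points of $X$, then for each $\mu$ with $|\mu|<n$ the ``forget the extra $*$-labeled points'' map
\[
\Conf^{\mu\cdot *^{\,n-|\mu|}}(X)\;\longrightarrow\;\Conf^{\mu}(X)
\]
is radicial surjective (check on geometric points), so the two classes agree in $\widetilde{K}_0$. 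Splitting the sum $\sum_{|\mu|\le n}(-1)^{||\mu||}[\Conf^\mu X]$ into the ranges $|\mu|<n$ and $|\mu|=n$ and using this identification, everything telescopes to zero. This single RS identification is exactly the ``Galois-equivariant matching'' you were reaching for, but phrased intrinsically so that no descent is needed and no separability hypothesis enters. Replace your last two paragraphs with this argument and the proof is complete.
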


\begin{example}
We give two important examples to illustrate this Theorem and its relation with inclusion-exclusion for finite sets:
\begin{enumerate}
\item Suppose $Y$ is a finite set, and $X = \bigsqcup_{i=1}^n Y_i$ for subsets $Y_i \subset Y$. We may view these finite sets as varieties over $K$ in the natural way (i.e. each point is a copy of $\Spec K$). Then, the motivic inclusion-exclusion formula collapses to the classical inclusion-exclusion formula: for example, if $n=2$, then motivic inclusion-exclusion gives the following identity in $K_0(\Var/K)$:
\begin{align*}
 [Y_1 \cup Y_2] & = \underbrace{[\Conf^{1^1}_{/Y}(X)]}_{=[Y_1] + [Y_2]} + \underbrace{[\Conf^{1^2}_{/Y}(X)]}_{=[Y_1 \cap Y_2]} - \underbrace{[\Conf^{1^1 2^1}_{/Y}(X)]}_{=2 [Y_1 \cap Y_2]} \\
 &= [Y_1] + [Y_2] - [Y_1 \cap Y_2]
\end{align*}
For general $n$, a combinatorial proof of the collapse of terms can be deduced by applying \cite[Lemma 6.1.5]{howe:mrv2} when $\mu_1'=\ldots=\mu_k'=1^1$ for each $k \leq n$ in order to cancel the terms with $|\mu|=k$. 

\item For general maps of varieties, we do not see the same collapse to a simpler formula: for example, when $X=\Spec \bbF_{q^2}$ and $Y= \Spec \bbF_q$, we obtain
\[ [\Spec \bbF_q] = \underbrace{[\Conf^{1^1}_{/Y}(X)]}_{=[\Spec \bbF_{q^2}]} + \underbrace{[\Conf^{1^2}_{/Y}(X)]}_{=[\Spec \bbF_{q}]} - \underbrace{[\Conf^{1^1 2^1}_{/Y}(X)]}_{= [\Spec \bbF_{q^2}]}. \]
In particular, the terms with $|\mu|=2$ do not collapse here to give a single copy of $\Conf^{1^2}_{/Y}(X)$. 
\end{enumerate}
\end{example}

\begin{remark}\label{remark:mot-inc-exc-PWvRS}
One could ask whether motivic inclusion-exclusion holds already in $K_0(\Var/Y)$ (it does in characteristic zero, since then $K_0(\Var/Y)=K_0((\Var/Y)_\RS)$, however, it seems likely that this fails in positive characteristic. For example, if we apply it to the map $\Spec \bbF_2(t^{1/2}) \rightarrow \Spec \bbF_2(t)$, then motivic inclusion-exclusion has only one non-zero term, $\mu=1^1$, and thus if it held it would imply
\[ [\Spec \bbF_2(t^{1/2})] = [\Spec \bbF_2(t)] \textrm { in } K_0\left(\Var/ \bbF_2(t)\right). \]
We do not know whether this equality holds or not -- cf. Remark \ref{remark:RSvsPW}. \end{remark}

It will also be useful to consider the following approximate version of motivic inclusion-exclusion, which applies even when the map is not quasi-finite. To state it, we need the following notation: for any partition $\mu$, and non-negative integer~$k$, let $\Conf^{\mu,\geq k}_{/Y}(X)$ be the constructible subset of $\Conf^\mu_{/Y}(X)$ whose geometric points are described by giving a geometric point $y$ of $Y$ having at least $|\mu|+k$ pre-images in~$X$ and a $\mu$-configuration of points in the pre-image of $y$. Concretely, this is the constructible subset of $\Conf^{\mu}_{/Y}(X)$ obtained as the image of $\Conf^{\mu \cdot *^k}_{/Y}(X)$ under the map ``forget the $*$-labeled points". Noting that $\Conf^{\emptyset}_{/Y}(X)=Y$, we also write $Y^{\geq k}$ for the space $\Conf^{\emptyset, \geq k}_{/Y}(X)$. 

\begin{theorem}[Motivic inclusion-exclusion: approximate version]\label{theorem:mot-inc-exc-approx}
Let ${f:X \rightarrow Y}$ be a map of varieties over $K$. For $m>0$, in $K_0\left((\Var/Y)_\RS \right)$,
\[ [f(X)] - [Y^{\geq m}] = \sum_{\substack{\mu \in \calQ\bs \{\emptyset\} \\ |\mu| < m}} (-1)^{||\mu|| - 1} \left( [\Conf^\mu_{/Y}(X)] - [\Conf^{\mu,\geq m-|\mu|}_{/Y}(X)] \right), \]
or, equivalently,
\[ 1 - [f(X)] = \sum_{\substack{\mu \in \calQ \\ |\mu| < m}} (-1)^{||\mu||} \left( [\Conf^\mu_{/Y}(X)] - [\Conf^{\mu,\geq m-|\mu|}_{/Y}(X)] \right). \]
\end{theorem}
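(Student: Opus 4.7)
The plan is to reduce the approximate version to the exact version (Theorem \ref{theorem:mot-inc-exc-exact}) by stratifying $Y$ according to fiber size. The key observation is that $Y^{\geq m}$ is a constructible subset of $Y$ (it is the image of $\Conf^{*^m}_{/Y}(X)\to Y$ under ``forget the $*$-labeled points'', so Chevalley's theorem applies), which lets us split the problem along $Y = Y^{\geq m}\sqcup (Y\setminus Y^{\geq m})$.

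First, restrict $f$ to a map $f': X_{Y\setminus Y^{\geq m}} \to Y\setminus Y^{\geq m}$ of (constructible subsets viewed as) varieties. By construction every geometric fiber of $f'$ has strictly fewer than $m$ points, so $f'$ is quasi-finite and Theorem \ref{theorem:mot-inc-exc-exact} applies:
\[
1 - [f'(X_{Y\setminus Y^{\geq m}})] \;=\; \sum_{\mu \in \calQ} (-1)^{||\mu||}\,[\Conf^{\mu}_{/(Y\setminus Y^{\geq m})}(X_{Y\setminus Y^{\geq m}})],
\]
an identity in $\widetilde{K}_0(\Var/(Y\setminus Y^{\geq m}))$. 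Moreover, since a $\mu$-configuration requires $|\mu|$ distinct points in the fiber and every fiber has $< m$ points, the terms with $|\mu|\geq m$ already vanish, so the sum is naturally a finite sum over $\mu$ with $|\mu|<m$.

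Next, push this identity forward to $\widetilde{K}_0(\Var/Y)$ via the inclusion $Y\setminus Y^{\geq m}\hookrightarrow Y$ and make the following identifications of constructible subsets of their respective targets:
\begin{itemize}
\item $f'(X_{Y\setminus Y^{\geq m}}) = f(X)\setminus Y^{\geq m}$, because $Y^{\geq m}\subset f(X)$ as soon as $m\geq 1$;
\item $\Conf^{\mu}_{/(Y\setminus Y^{\geq m})}(X_{Y\setminus Y^{\geq m}}) = \Conf^{\mu}_{/Y}(X)\setminus \Conf^{\mu,\geq m-|\mu|}_{/Y}(X)$, since by definition a point of $\Conf^{\mu,\geq m-|\mu|}_{/Y}(X)$ is a $\mu$-configuration whose base point has at least $|\mu|+(m-|\mu|)=m$ preimages in $X$, i.e.\ lies over $Y^{\geq m}$;
\item the constant term $\mu=\emptyset$ contributes $1-[Y^{\geq m}]$ to the right-hand side, which, moved to the left, converts $1-[f'(X_{Y\setminus Y^{\geq m}})]$ into the desired $1-[f(X)]$ (equivalently $[f(X)]-[Y^{\geq m}]$ in the other form of the theorem).
\end{itemize}
Combining these steps gives exactly the claimed formula in $\widetilde{K}_0(\Var/Y)$.

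I do not expect a serious obstacle. The only delicate point is bookkeeping: verifying that the stratification, the restriction of constructible sets, and the pushforward along $Y\setminus Y^{\geq m}\hookrightarrow Y$ all interact as expected with the labeled configuration space functor --- but these are precisely the properties established in the preceding sections on constructible subsets in $(\Var/Y)_{\RS}$ (in particular the recognition principles on geometric points from Lemmas \ref{lemma:rec-princ-rs} and \ref{lemma:rec-princ-cons}), together with functoriality of $\Conf^{\mu}$ under base change. Once the set-level identifications above are made, the identity is an immediate consequence of the exact version.
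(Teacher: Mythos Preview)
Your proposal is correct and matches essentially line-for-line the paper's second derivation of Theorem \ref{theorem:mot-inc-exc-approx} (in \S\ref{subsec:second-proof-mot-inc-exc}), which restricts to $Y^{<m}=Y\setminus Y^{\geq m}$, applies the exact version Theorem \ref{theorem:mot-inc-exc-exact} to the quasi-finite map $X^{<m}\to Y^{<m}$, and then identifies the resulting configuration spaces with $\Conf^\mu_{/Y}(X)\setminus\Conf^{\mu,\geq m-|\mu|}_{/Y}(X)$ in $(\Var/Y)_\RS$. The paper also gives an independent first proof (\S\ref{subsection:mot-inc-exc-first-proof}) that establishes the approximate version directly via an iterated expansion using Lemma \ref{lemma:conf-iso-RS}, but your route is the one the authors themselves use to close the loop between the two theorems.
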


\begin{remark}
By applying the forgetful map, the identities in Theorems \ref{theorem:mot-inc-exc-exact} and \ref{theorem:mot-inc-exc-approx} also hold in $K_0(\Var/K)$ (or, more generally, if $f:X \rightarrow Y$ is a map of varieties over $S$, in $K_0(\Var/S)$). This is particularly useful for the approximate version, Theorem \ref{theorem:mot-inc-exc-approx}, which we will use in situations where the error terms can be shown to have small dimension over the base $S$ compared to the other terms, so that they become negligible in the dimension topology.  
\end{remark}

Theorem \ref{theorem:mot-inc-exc-exact} is a direct consequence of Theorem \ref{theorem:mot-inc-exc-approx} since for a quasi-finite map the error terms will vanish for $m$ sufficiently large. In the other direction, Theorem \ref{theorem:mot-inc-exc-approx} can be deduced from Theorem \ref{theorem:mot-inc-exc-exact} by restricting to the locus of points in $Y$ with fewer than $m$ pre-images under $f$; we will discuss this implication in more detail in \ref{subsec:second-proof-mot-inc-exc}. 

\subsection{First proof of motivic inclusion-exclusion}\label{subsection:mot-inc-exc-first-proof}

Motivic inclusion-exclusion was first used by Vakil-Wood in the proof
 of Theorem 1.1.3 of \cite{vakil-wood:discriminants}, where they applied it in the special case that $Y$ is the space of hypersurface sections of a fixed projective variety and $X$ is the space of hypersurface sections together with a marked singular point. The argument they give is a motivic analog of the proof of classical inclusion-exclusion of finite sets by starting with the naive guess and then correcting for overlap; in the motivic setting, however, there is less cancellation, and one must keep track of arbitrary configurations in the overlap. We reproduce their argument carefully here to obtain a proof of Theorem \ref{theorem:mot-inc-exc-approx}.

Before beginning the proof, we introduce one more piece of notation. For ${f:X \rightarrow Y}$ a map of varieties over $K$, we write 
\[ \Conf^{\mu, k}_{/Y}(X) = \Conf^{\mu, \geq k}_{/Y}(X) \backslash \Conf^{\mu, \geq k+1}_{/Y}(X), \]
the constructible subset of $\Conf^\mu_{/Y}(X)$ whose geometric points are described by giving a point $y \in Y$ having \emph{exactly} $|\mu|+k$ pre-images in $X$ and a $|\mu|$-configuration of points in the pre-image of $y$. In particular, by looking at geometric points we find
\begin{lemma}\label{lemma:conf-iso-RS} The natural map $\Conf^{\mu\cdot *^k,0}_{/Y}(X) \rightarrow \Conf^{\mu, k}_{/Y}(X)$ is an isomorphism in $(\Var/Y)_\RS$.   
\end{lemma}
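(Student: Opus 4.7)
The plan is to reduce the statement to the geometric-points recognition principle of Lemma \ref{lemma:rec-princ-rs}: a morphism in $(\Var/Y)_\RS$ is an isomorphism if and only if it induces a bijection on $L$-points for every algebraically closed field $L/K$. So the task splits into (i) producing the natural forgetful map as an honest morphism in $\Var/Y$ between the two constructible subsets, and (ii) checking bijectivity of the resulting map of $L$-points.

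For (i), I would start from the forgetful morphism $\Conf^{\mu \cdot *^k}_{/Y}(X) \to \Conf^\mu_{/Y}(X)$ at the level of the full labeled configuration spaces, induced at the level of the defining symmetric-power products by projecting away the factor corresponding to the $*$-labeled points. I would then verify that this morphism carries the constructible locus $\Conf^{\mu\cdot *^k, 0}_{/Y}(X)$ into the constructible locus $\Conf^{\mu, k}_{/Y}(X)$: on geometric points (using Remark \ref{remark:geometric-points-of-confspaces} and Lemma \ref{lemma:geometric-points-image}) an element of $\Conf^{\mu\cdot *^k, 0}_{/Y}(X)$ is a point $y\in Y$ with exactly $|\mu|+k$ preimages together with a $\mu\cdot *^k$-configuration using all of them, and its image is a $\mu$-configuration of $|\mu|$ of those $|\mu|+k$ preimages, which is precisely a geometric point of $\Conf^{\mu, k}_{/Y}(X)$. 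Combined with Lemma \ref{lemma:rec-princ-cons}, this determines a well-defined morphism in $\Var/Y$.

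For (ii), I would use the same description of $L$-points to exhibit the inverse set-theoretically. Given $(y,c)\in \Conf^{\mu,k}_{/Y}(X)(L)$, with $c$ a $\mu$-configuration of $|\mu|$ points in the $(|\mu|+k)$-element fiber $f^{-1}(y)(L)$, I would lift it by labeling the remaining $k$ points of the fiber with $*$; this lift exists and is unique because the $*$-labels are unordered and the complementary subset of $f^{-1}(y)(L)$ is uniquely determined. Bijectivity on $L$-points then follows for every algebraically closed $L/K$, and Lemma \ref{lemma:rec-princ-rs} concludes that the forgetful map is an isomorphism in $(\Var/Y)_\RS$. I do not foresee any substantive obstacle: the only thing to be careful about is that the constructible-subset structures on both sides really match the geometric-point descriptions used, but this is immediate from Lemma \ref{lemma:geometric-points-image}. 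The conceptual point is that Lemma \ref{lemma:rec-princ-rs} lets us bypass constructing a scheme-theoretic inverse (which would anyway fail to be a morphism in positive characteristic, exactly the kind of phenomenon motivating the passage to $(\Var/Y)_\RS$).
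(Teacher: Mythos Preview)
Your proposal is correct and matches the paper's approach exactly: the paper introduces the lemma with the phrase ``by looking at geometric points we find'' and gives no further argument, so the intended proof is precisely the one you outline---restrict the forgetful map to the two constructible loci, check bijectivity on $L$-points for algebraically closed $L$ via the unique complementary $*$-labeling, and invoke Lemma~\ref{lemma:rec-princ-rs}. Your write-up simply fills in the details the paper leaves implicit.
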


\begin{remark} \label{remark:vakil-wood-error}
The map in Lemma \ref{lemma:conf-iso-RS} is not in general a piecewise isomorphism if the map $f:X\rightarrow Y$ induces inseparable residue field extensions. This leads to a gap in the positive characteristic case of Vakil-Wood's proof of Theorem 1.1.3 in \cite{vakil-wood:discriminants}: to obtain equation (3.1) of \cite{vakil-wood:discriminants} they tacitly assume that
\[ [ \Conf^{\mu \cdot *^k,0}_{/Y}(X)]  = [\Conf^{\mu,k}_{/Y}(X)] \textrm{ in } K_0(\Var/K). \]
Even in their specific setup this is not clearly the case, as the map $X \rightarrow Y$ in question can involve inseparable extensions: for example, one case they consider has $K=\bbF_2$, $Y$ the affine space $H^0(\bbP^1, O(2))$, and $X \subset \bbP^1 \times Y$ the set of $(x, F)$ such that $V(F)$ is singular at $x$. Then, the $\bbF_2(t)$-point $u^2 - t v^2$ of $Y$ has a single pre-image in $X$ with residue field extension $\bbF_2(t^{1/2})/\bbF_2(t)$, and at this point motivic inclusion-exclusion in the regular/piecewise Grothendieck ring could fail as described in Remark \ref{remark:mot-inc-exc-PWvRS}. This gap can be fixed by passing to the quotient $K_0\left( (\Var/K)_\RS \right).$ 
\end{remark}

\begin{proof}[First proof of Theorem \ref{theorem:mot-inc-exc-approx}]
To keep our formulas clean, we will write $C^\bullet$ for $\Conf^\bullet_{/Y}(X)$. We first observe that, for any $\mu$ and $k \in \bbZ_{\geq 1}$, 
\[ C^\mu = C^{\mu,0} \sqcup C^{\mu,1} \sqcup \ldots \sqcup C^{\mu, k-1} \sqcup C^{\mu, \geq k}. \]
Applying Lemma \ref{lemma:conf-iso-RS}, we obtain 
\begin{equation}\label{eqn:Cmuexpression} [C^\mu ] =  [C^{\mu,0}] + [C^{\mu \cdot *,0}] + \ldots + [C^{\mu \cdot *^{k-1},0}] + [C^{\mu, \geq k}] \textrm{ in } K_0\left( (\Var/Y)_\RS \right). \end{equation}
We then apply (\ref{eqn:Cmuexpression}) iteratively to replace terms of the form $[C^{\mu,0}]$ with $C^{\mu}$ and terms involving larger partitions:
\begin{multline} 1 - [f(X)/Y] = [C^{\emptyset, 0}] \\
= [C^{\emptyset}] - ([C^{*,0}] + [C^{*^2,0}] + \ldots + [C^{*^{m-1},0}] + [C^{\emptyset, \geq m}]) 
 \\
= [C^\emptyset] - (([C^{*}] - ([C^{* \cdot \star, 0}] + \ldots + [C^{* \cdot \star^{m-2},0}] + [C^{*, \geq m-1}]) ) + \\ ([C^{*^2}] - ([C^{*^2 \cdot \star, 0}] + \ldots + [C^{*^2 \cdot \star^{m-3},0}] + [C^{*^2, \geq m-2}]) ) + \ldots ) \\
 = \ldots = \sum_{\mu \in \calQ, |\mu| < m} (-1)^{||\mu||} \left( [C^\mu] - [C^{\mu,\geq m-|\mu|}] \right). 
\end{multline}
Solving for $[f(X)/Y]$, we obtain the result. 

\end{proof}

As observed earlier, Theorem \ref{theorem:mot-inc-exc-approx} easily  implies Theorem \ref{theorem:mot-inc-exc-exact}, so that we have now also proven the latter result. 

\subsection{Second proof of motivic inclusion-exclusion.}\label{subsec:second-proof-mot-inc-exc} We now give a proof of motivic inclusion-exclusion modeled on the proof of the categorified inclusion-exclusion for finite sets, Theorem \ref{theorem:cat-inc-exc-finite-set}, given above. In particular, motivic Euler products intervene in this proof (through motivic powers), which helps to explain why there is a connection between motivic Euler products and motivic inclusion-exclusion in the proofs of our main theorems. 

The following lemma plays a key role: 
\begin{lemma}\label{lemma:finite-variety-mot-zeta-pole}
Let $X \neq \emptyset $ be a finite variety over a field $K'$. Then
\[ \left.(1-t)^{[X]}\right|_{t=1} = 0 \] 
in $K_0\left( (\Var/K')_\RS \right)$. 
\end{lemma}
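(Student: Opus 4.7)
The plan is to read $(1-t)^{[X]}$ as the motivic Euler product $\prod_{x \in X/K'}(1-t)$, expand it via Proposition \ref{proposition:fundamental-euler-product}, and then match its value at $t=1$ with the right-hand side of motivic inclusion-exclusion applied to the structural morphism $X \to \Spec K'$.

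More concretely, I would first apply Proposition \ref{proposition:fundamental-euler-product} with $a = 1 \in K_0(\Var/X)$ and $S = \Spec K'$ to obtain
\[ \prod_{x \in X/K'}(1-t) \;=\; \sum_{\mu \in \calQ}(-1)^{||\mu||}\,[\Conf^\mu_{/K'}(X)]\, t^{|\mu|} \]
in $K_0(\Var/K')[[t]]$, and hence, after pushing through the quotient, also in $\widetilde{K}_0(\Var/K')[[t]] = K_0((\Var/K')_\RS)[[t]]$. Since $X$ is a finite variety, it has only finitely many geometric points, and $\Conf^\mu_{/K'}(X)$ is empty as soon as $|\mu|$ exceeds that number. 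The right-hand side is therefore actually a polynomial in $t$, so specializing at $t=1$ makes unambiguous sense and yields
\[ \left. (1-t)^{[X]} \right|_{t=1} \;=\; \sum_{\mu \in \calQ}(-1)^{||\mu||}\,[\Conf^\mu_{/K'}(X)] \qquad \textrm{in } K_0((\Var/K')_\RS). \]

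The key step is then to recognize the right-hand side as one side of motivic inclusion-exclusion. Since $X$ is finite, the structural morphism $f : X \to \Spec K'$ is quasi-finite, so Theorem \ref{theorem:mot-inc-exc-exact} applies and yields
\[ 1 - [f(X)] \;=\; \sum_{\mu \in \calQ}(-1)^{||\mu||}\,[\Conf^\mu_{/K'}(X)] \]
in the same ring. Combining the two identities gives $(1-t)^{[X]}|_{t=1} = 1 - [f(X)]$, and since $X \neq \emptyset$ the constructible image $f(X) \subset \Spec K'$ is all of $\Spec K'$, so $[f(X)] = 1$ and the value is $0$.

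There is no real obstacle once Proposition \ref{proposition:fundamental-euler-product} and Theorem \ref{theorem:mot-inc-exc-exact} are in hand: the lemma is essentially a bookkeeping argument matching two expansions. The one subtlety worth noting is that the argument genuinely uses the radicial-surjective localization $\widetilde{K}_0$ rather than the ordinary Grothendieck ring $K_0$, which is precisely why Theorem \ref{theorem:mot-inc-exc-exact} is formulated in $(\Var/Y)_\RS$; this is what lets the proof go through uniformly in all characteristics, in particular when $X \to \Spec K'$ involves inseparable residue field extensions.
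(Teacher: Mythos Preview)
Your argument is valid once Theorem~\ref{theorem:mot-inc-exc-exact} is available, but it reverses the logical dependency the paper intends. In the paper, Lemma~\ref{lemma:finite-variety-mot-zeta-pole} is proved \emph{directly}, by a combinatorial cancellation: letting $n = \#X(L)$ for any algebraically closed $L/K'$, one has $\Conf^\mu(X) = \emptyset$ for $|\mu| > n$, and for $|\mu| < n$ the forgetful map $\Conf^{\mu \cdot *^{\,n-|\mu|}}(X) \to \Conf^\mu(X)$ is radicial surjective, hence an isomorphism in $(\Var/K')_\RS$. The bijection $\mu \mapsto \mu \cdot *^{\,n-|\mu|}$ between partitions with $|\mu| < n$ and those with $|\mu| = n$ then pairs each term in the expansion $\sum_{\mu}(-1)^{||\mu||}[\Conf^\mu(X)]$ with one of opposite sign, and everything cancels.

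The reason for doing it this way is structural: Lemma~\ref{lemma:finite-variety-mot-zeta-pole} is the engine of the \emph{second} proof of Theorem~\ref{theorem:mot-inc-exc-exact} in \S\ref{subsec:second-proof-mot-inc-exc}; your argument, which invokes Theorem~\ref{theorem:mot-inc-exc-exact} to prove the lemma, would make that second proof circular. If one is content with the first proof of Theorem~\ref{theorem:mot-inc-exc-exact} (given in \S\ref{subsection:mot-inc-exc-first-proof}), your derivation is perfectly fine and economical. What is lost is the conceptual payoff the paper is after: the direct argument exhibits motivic inclusion-exclusion as essentially equivalent to the statement that the motivic zeta function of a nonempty zero-dimensional variety has a pole at~$1$.
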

\begin{proof}
There is an integer $n>0$ such that for any algebraically closed field $L/K'$, $\# X(L) = n$ -- indeed, if we write 
\[ X=\bigsqcup \Spec K_i \] for a finite set of finite extensions $K_i/K'$, then $n$ is the sum of the separable degrees of $K_i/K'$. In particular, by computing on geometric points, we find
\begin{enumerate}
\item For $|\mu| > n$, $\Conf^\mu X=\emptyset$.
\item For $|\mu| < n$, the natural map 
\[ \Conf^{\mu \cdot *^{n-|\mu|}} (X) \rightarrow \Conf^{\mu}(X) \]
is radicial surjective. 
\end{enumerate}

Thus, in $K_0\left( (\Var/K')_\RS \right)$, 
\begin{align*} 
\left.(1-t)^{[X]}\right|_{t=1} & = \sum_{\mu \in \calQ, \; |\mu|\leq n} (-1)^{||\mu||} [\Conf^{\mu} X] \\
& = \sum_{\mu \in \calQ, \; |\mu|< n} (-1)^{||\mu||} [\Conf^{\mu} X] +\sum_{\mu \in \calQ, \; |\mu|= n} (-1)^{||\mu||} [\Conf^{\mu} X]  \\
& = \sum_{\mu \in \calQ, \; |\mu| < n} (-1)^{||\mu||} \left( [\Conf^\mu X] - [\Conf^{\mu \cdot *^{n-|\mu|}} X] \right) \\
& = 0. \end{align*}
\end{proof}

\begin{remark} Lemma \ref{lemma:finite-variety-mot-zeta-pole} can be read as saying that the motivic zeta function of a non-empty zero-dimensional variety always has a pole at $1$. 
\end{remark}

\begin{remark}
If $X$ is finite \'{e}tale (or even if a single $K_i$ in the decomposition $X=\bigsqcup_i \Spec K_i$ is separable over $K'$), then the argument for Lemma \ref{lemma:finite-variety-mot-zeta-pole} can be modified to show $(1-t)^{[X]}|_{t=1} = 0$ in $K_0(\Var/K')$. For $X=\Spec K_1$ for $K_1/K'$ inseparable, it is not known whether this identity holds; it again boils down to the question of equality between a field and a totally inseparable extension in the Grothendieck ring (cf. Remarks \ref{remark:vakil-wood-error}, \ref{remark:mot-inc-exc-PWvRS}, and \ref{remark:RSvsPW}.) 
\end{remark}

\begin{proof}[Proof of Theorem \ref{theorem:mot-inc-exc-exact}]
By Lemma \ref{lemma:injection-relative-product} and compatibility with base change for configuration spaces and images, it suffices to consider the case where the base is a field, $K'$. Then, $X$ quasi-finite is either equal to the empty set or non-empty and finite over $K'$, and the desired statement is that 
\[ \sum_{\mu \in \calQ\bs\{0\}} (-1)^{||\mu||-1} [\Conf^{\mu} (X)] = \begin{cases} 1 \textrm { if } X \neq \emptyset \\
0 \textrm{ if } X=\emptyset \end{cases} \textrm{ in } K_0\left( (\Var / K')_\RS \right). \]
The left-hand side is equal to $1 - (1-t)^{[X]}|_{t=1}$. We have $(1-t)^{[\emptyset]}=(1-t)^0=1$, so the desired result holds if $X=\emptyset.$ If $X$ is non-empty, then the desired result holds by Lemma \ref{lemma:finite-variety-mot-zeta-pole}. 
\end{proof}

\begin{remark}
It should be possible to axiomatize the properties satisfied by the categories $(\Var/K)_\RS$ and $\finset$ and then show that any such category admits a theory of Euler products and an inclusion-exclusion principle which are related as above. We hope to return to this question in future work.
\end{remark}

It remains to deduce Theorem \ref{theorem:mot-inc-exc-approx} from Theorem \ref{theorem:mot-inc-exc-exact} in order for this proof to stand as a complete alternative to the proof of motivic inclusion-exclusion offered in \ref{subsection:mot-inc-exc-first-proof}. 

\begin{proof}[Proof that Theorem \ref{theorem:mot-inc-exc-exact} implies Theorem \ref{theorem:mot-inc-exc-approx}]
Given $f:X \rightarrow Y$, consider the constructible subset $Y^{<m}= Y \backslash Y^{\geq m}$ whose geometric points are those with fewer than $m$ pre-images under $f$. We write $X^{<m} := X \times_Y Y^{<m}$ (which makes sense in $(\Var/K)_\RS$ ), and $f^{<m}: X^{<m} \rightarrow Y^{<m}$. Then, Theorem \ref{theorem:mot-inc-exc-exact} implies
that 
\begin{equation}\label{eqn:approx-inc-exc}
[f^{<m}(X^{<m})] = \sum_{\mu \in \calQ, |\mu|<m} (-1)^{||\mu||-1} [\Conf^\mu_{/Y^{<m}}(X^{<m})]. \end{equation}
This identity holds in $K_0(\Var/Y^{<m})$, and thus also in $K_0(\Var/Y)$. On the other hand, we find
\[ [f^{<m}(X^{<m})]=[f(X)]-[Y^{\geq m}] \]
and the natural map 
\[ \Conf^\mu_{/Y^{<m}}(X^{<m}) \rightarrow \Conf^\mu_{/Y}(X) \]
induces an isomorphism in $(\Var/Y)_\RS$ 
\[ \Conf^\mu_{/Y^{<m}}(X^{<m}) \rightarrow \Conf^\mu_{/Y}(X) \backslash \Conf^{\mu, \geq m-|\mu|} _{/Y}(X). \]
Plugging into (\ref{eqn:approx-inc-exc}), we obtain the desired formula:
\[ [f(X)]-[Y^{\geq m}] = \sum_{\mu \in \calQ, |\mu|<m} (-1)^{||\mu||-1} \left( [\Conf^\mu_{/Y}(X)] - [\Conf^{\mu, \geq m-|\mu|} _{/Y}(X)] \right). \]

\end{proof}

\section{Independent events in motivic probability} \label{section.indepedent-events}

In this section we develop some basic formalism for studying independence in motivic probability theory. The fundamental question we want to make precise (and then answer) is, given an algebraic family of $m$-wise (e.g. pair-wise, triple-wise, etc.) independent events, how can we approximate the motivic probability of their union? Before delving into the motivic setting, we give a brief overview of the classical phenomenon we are hoping to imitate. We only develop the minimum amount of material on motivic probability needed to treat the problem at hand with conceptual clarity; other foundational issues will be treated in more detail in \cite{bilu-howe:motivic-random-variables}.

\subsection{The underlying probabilistic mechanism}
\subsubsection{Crash course in probability}
Recall that a (classical) probability space is a measure space $\Omega$ equipped with a measure $\nu$ such that $\nu(\Omega)=1$. Measurable sets of $\Omega$ are called \emph{events}, and the probability of an event $E \subset \Omega$ is $\nu(E)$. Measurable functions are called random variables, and given a random variable $X$, its expectation is defined to be
\[ \bbE [ X ] := \int_\Omega X d\nu. \]
A family of events $E_1$, \ldots, $E_n$ are \emph{independent} if, for any $I \subset \{ 1, \ldots, n \}$, 
\begin{equation}\label{eq:independence} \nu \left( \bigcap_{i \in I} E_i \right) = \prod_{i \in I} \nu(E_i). \end{equation}
 They are said to be \emph{$m$-wise independent} if (\ref{eq:independence}) is true for all $I$  with $|I|\leq m$. 

\subsubsection{Unions and independence}
Let $(\Omega, \nu)$ be a probability space, and suppose $E_1, \ldots, E_n$ are events. If the $E_i$ are independent, then we can compute $\nu( \bigcup_{i=1}^n E_i )$ in terms of the $\nu(E_i)$: by de Moivre,
\[ \bigcup_{i=1}^n E_i = \left( \bigcap_{i=1}^n E_i^c \right )^c \]
and, since the complementary events $E_i^c$ are also independent,
\begin{align}\label{eqn:union-ind-formula} \nu\left(\bigcup_{i=1}^n E_i\right) &= 1 - \prod_{i=1}^n \nu(E_i^c) \\ 
 & = 1 - \prod_{i=1}^n (1-\nu(E_i)). \end{align}

\subsubsection{Categorification}
What if the $E_i$ are not independent? We can still obtain a similar formula by categorifying de Moivre's theorem: for an event $E \subset \Omega$, we consider the indicator random variable 
\[ X_{E}(\omega) = \begin{cases} 1 & \textrm { if } \omega \in E \\
							0 & \textrm{ if } \omega \not\in E. \end{cases} \]
We then have:
\begin{enumerate}
\item $\bbE[X_E]=\nu(E)$,
\item $X_{E} \cdot X_{E'} = X_{E \cap E'}$, and
\item $X_{E^c} = 1 - X_E$. 
\end{enumerate}
Using these rules, de Moivre can be written as:
\begin{equation}\label{eqn:deMoivre-categorified}
X_{\bigcup E_i}  = 1 - \prod_{i=1}^n (1- X_{E_i})
\end{equation}
If the $E_i$ are independent, then taking expectation (which commutes with multiplication for independent random variables), we find 
\begin{eqnarray*} \nu\left(\bigcup_{i=1}^n E_i\right)=\bbE\left[X_{\bigcup_{i=1}^n E_i}\right] &=& 1 - \bbE\left[ \prod_{i=1}^n (1-X_{E_i}) \right] \\&=& 1- \prod_{i=1}^n \bbE[(1-X_{E_i})]\\
&=& 1 - \prod_{i=1}^n \left( 1- \nu(E_i) \right),\end{eqnarray*}
and thus we recover (\ref{eqn:union-ind-formula}) as a specialization of (\ref{eqn:deMoivre-categorified}).   

\subsubsection{Inclusion-exclusion formula}
In general, when the $E_i$ are not necessarily independent, we can still expand (\ref{eqn:deMoivre-categorified}) and take the expectation of each term to obtain the inclusion-exclusion formula 
\begin{equation}\label{eqn:inclusion-exclusion-measure} \nu\left( \bigcup_{i=1}^n E_i \right) = \sum_{\substack{I \subset \{1, \ldots, n\},\\I \neq \emptyset}} (-1)^{|I|-1} \nu \left(  \bigcap_{i \in I} E_i \right). 
\end{equation}

\subsubsection{$m$-wise independence}
We are particularly interested in the situation where the $E_i$ are not necessarily independent, but are $m$-wise independent for some $m$ (e.g. pair-wise independent, triple-wise independent...). Then, we will have a nice product formula for the terms in the inclusion-exclusion formula (\ref{eqn:inclusion-exclusion-measure}) with $|I| \leq m$, and we can treat the rest as an error term to be estimated in comparing $\nu\left( \bigcup_{i=1}^n E_i \right)$ with $\prod_{i=1}^n \nu(E_i)$. A convenient way to keep track of the information provided by $m$-wise independence is to introduce a dummy variable $t$ and consider the polynomial
\[ 1 - \prod_{i=1}^n (1 - X_{E_i}t). \]
If we evaluate at $t=1$, we obtain $X_{\bigcup_{i=1}^n E_i}$; thus, if we take the expectation coefficient by coefficient, we obtain a polynomial
\[ 1 - \bbE\left[\prod_{i=1}^n (1 - X_{E_i}t)\right] \]
whose value at $t=1$ is $\nu\left(\bigcup_{i=1}^n E_i\right).$ On the other hand, if the $E_i$ are $m$-wise independent, then we have the following equality (where the subscript $\leq m$ denotes truncation of the polynomial after the term $t^m$):
\begin{align}\nonumber \left( 1 - \bbE\left[ \prod_{i=1}^n (1 - X_{E_i}t) \right] \right)_{\leq m} & = \left( 1 - \prod_{i=1}^n \bbE\left[1-X_{E_i} t \right] \right)_{\leq m}  \\ 
\label{eqn:k-wise-ind-poly-classical} & = \left( 1 - \prod_{i=1}^n (1-\nu(E_i) t) \right)_{\leq m} \end{align}
i.e., these polynomials have the same coefficients for $ t, t^2, \ldots, t^m$. Thus, in this case the error in the approximation 
\begin{equation} \label{eqn:independence-approx} \nu\left(\bigcup_{i=1}^n E_i\right) \approx 1 - \prod_{i=1}^n (1-\nu(E_i)) \end{equation}
is bounded by the sum of the absolute values of the higher order coefficients of 
\[ 1 - \bbE\left[\prod_{i=1}^n (1 - X_{E_i}t)\right] \textrm{ and } 1 - \prod_{i=1}^n (1-\nu(E_i) t). \]
In particular, if other information allows us to show this sum is small, then (\ref{eqn:independence-approx}) is a good approximation. 

\begin{remark} A crucial point for us is that, although we first built these polynomials using product formulas, \emph{the error in the final approximation has a simple expression in terms of the coefficients}. In the motivic setting, we will be taking the union of an infinite family of events parameterized by a variety, and will have no reasonable interpretation of what it means to form the corresponding products except via a formal description of the coefficients of the power series. However, since we will be using approximations obtained from $m$-wise independence, this is exactly what we need! 
\end{remark}

\subsection{A motivic analog}
\subsubsection{Crash course in motivic probability}
Let $\Omega$ be a variety over $S$, and assume $[\Omega]$ is invertible in $\calM_S.$ We consider the ring $K_0(\Var/\Omega)$ of \emph{motivic random variables} (cf. also \cite{howe:mrv1, howe:mrv2}) over $\Omega$, equipped with the expectation function
\[ \bbE: K_0(\Var/\Omega) \rightarrow \calM_S \]
given by forgetting the structure morphism and dividing by $[\Omega/S]$, so that, e.g., 
\[ \bbE\left[ F/\Omega\right] = \frac{[F/S]}{[\Omega/S]}. \]
The pair $(\Omega, \bbE)$ is a \emph{motivic probability space}. 

An \emph{event} on $\Omega$ is a constructible subset $A \subset \Omega$, and the corresponding indicator random variable is $[A] \in K_0(\Var/\Omega)$.

\subsubsection{Families and linearity of expectation}
A \emph{family of motivic random variables} on $\Omega$, parameterized by $J/S$, is an element of $K_0(\Var/ J \times_S \Omega)$. A \emph{family of events} on $\Omega$, parameterized by a variety $J/S$, is a constructible subset $\Phi \subset J \times_S \Omega$. The corresponding family of indicator random variables is $[\Phi] \in K_0(\Var/ J \times_S \Omega).$ 

There is a natural fiberwise expectation map
\begin{align*} \bbE_{J}: K_0(\Var/J \times_S \Omega) & \rightarrow K_0(\Var/J) \\
[F/J \times_S \Omega] & \mapsto \frac{[F/J]}{[J \times_S \Omega / J]} \in \calM_J.
\end{align*}
The map $\bbE_{J}$ should be thought of as taking a family of random variables parameterized by $J$ to the function on $J$ that sends a point to the expectation of the random variable parameterized by that point. 

In this context, we will write $\int_J$ for both of the natural forgetful maps
\[ K_0(\Var/J) \rightarrow K_0(\Var/S) \textrm{ and } K_0(\Var/J \times_S \Omega) \rightarrow K_0(\Var/\Omega), \]
which are morphisms of $K_0(\Var/S)$-modules. 
The map $\int_J$ should be viewed as summing up values of a function on $J$ or summing up random variables in a family parameterized by $J$. We have the following linearity of expectation:
\begin{equation}\label{equation.linearity-of-expectation} \bbE\left[ \int_J a \right] = \int_J \bbE_{J}[a]. \end{equation}

\subsubsection{Independence in algebraic families}

\begin{definition}
A family of motivic random variables $X$ on $\Omega$, parameterized by $J/S$, is \emph{strongly $m$-wise independent} if, for all $\mu\in \calQ$ such that $|\mu| \leq m$, 
\[ \bbE_{\Conf^\mu_{/S} (J)}\left[ \Conf^\mu_{J \times_S \Omega / \Omega}(X)\right] = \Conf^{\mu}_{J/S} \left ( \bbE_{J}[X] \right)\]
in $\calM_{\Conf^{\mu}_{/S}(J)}$. 
\end{definition}

\begin{remark}\label{remark:linearity-of-expectation-m-wise}
Invoking linearity of expectation as in (\ref{equation.linearity-of-expectation}), we find that if $X$ is strongly $m$-wise independent, then
\[ \bbE\left [ \int_{C^\mu_{/S}(J)} \Conf^\mu_{J \times_S \Omega / \Omega}(X) \right] = \int_{C^\mu_{/S}(J)} C^\mu_{J/S}( \bbE_J[X] ) \textrm{ in }\calM_S. \]
For this kind of identity, we will usually drop the integral signs indicating the application of a forgetful map between Grothendieck rings. This will not cause any ambiguity because our notation for expectation functions indicates where the argument should be taken to live. Thus, we write simply,
\[ \bbE\left [ \Conf^\mu_{J \times_S \Omega / \Omega}(X) \right] =  C^\mu_{J/S}( \bbE_J[X] ) \textrm{ in }\calM_S. \] 
\end{remark}

\begin{remark} If we base change to a single configuration, strong $m$-wise independence implies that the density of the intersection of the events attached to the points in the configuration is is equal to the product of their individual densities, as one would expect from the terminology. However, the full statement is stronger than just requiring this independence for every set of distinct points, as it implies some coherence of the punctual $m$-wise independence as we algebraically vary the points we are considering -- thus the adjective ``strongly" in the terminology strongly $m$-wise independent. 
\end{remark}

\subsubsection{Generating functions and $m$-wise independence}
Applying the following theorem to the indicator random variable $[\Phi]$ for a family of events $\Phi$ gives a motivic analog of (\ref{eqn:k-wise-ind-poly-classical}):
\begin{theorem}\label{theorem:strongly-independent-truncation}
If $X$ is a strongly $m$-wise independent family of motivic random variables on $\Omega$, parameterized by $J$, then 
\begin{equation}\label{equation.strongly-independent-truncation}  \left( \bbE\left[ \prod_{j \in J \times_S \Omega / \Omega} (1 - X_j t) \right] \right)_{\leq m} = \left( \prod_{j \in J/S}  (1 - \bbE[X_j] t) \right)_{\leq m} \end{equation}
as polynomials with coefficients in $\calM_S$. Note that here $\bbE[X_j]$, to be interpreted as a family over $J$ in the motivic Euler product, really means $(\bbE_J[X])_j$.
\end{theorem}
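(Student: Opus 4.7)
The plan is to reduce the identity to matching coefficients of $t^k$ for $0 \leq k \leq m$ on both sides, and then to observe that this matching is essentially a direct consequence of the definition of strong $m$-wise independence once one expands the motivic Euler products.

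First I would expand each side using Proposition \ref{proposition:fundamental-euler-product}. Applied to $X \in K_0(\Var/J\times_S \Omega)$ with base $\Omega$, this gives
\[ \prod_{j \in J\times_S \Omega/\Omega} (1 - X_j t) = \sum_{\mu \in \calQ} (-1)^{||\mu||}\, \Conf^{\mu}_{J\times_S \Omega/\Omega}(X)\, t^{|\mu|} \]
in $\calM_\Omega[[t]]$. Taking expectation termwise (that is, applying $\bbE$ to each coefficient) and truncating at degree $m$, the left hand side becomes
\[ \left( \bbE\left[ \prod_{j \in J \times_S \Omega / \Omega} (1 - X_j t) \right] \right)_{\leq m} = \sum_{\substack{\mu \in \calQ \\ |\mu| \leq m}} (-1)^{||\mu||}\, \bbE\!\left[ \Conf^{\mu}_{J\times_S \Omega/\Omega}(X) \right] t^{|\mu|}. \]
Applying the same proposition to $\bbE_J[X] \in \calM_J$ with base $S$ yields
\[ \left( \prod_{j \in J/S} (1 - \bbE[X_j] t) \right)_{\leq m} = \sum_{\substack{\mu \in \calQ \\ |\mu| \leq m}} (-1)^{||\mu||}\, \Conf^{\mu}_{J/S}\!\left( \bbE_J[X] \right) t^{|\mu|}. \]

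Comparing coefficients, it then suffices to establish, for each $\mu \in \calQ$ with $|\mu| \leq m$, the equality
\[ \bbE\!\left[ \Conf^{\mu}_{J\times_S \Omega/\Omega}(X) \right] = \Conf^{\mu}_{J/S}\!\left( \bbE_J[X] \right) \quad \text{in } \calM_S. \]
But this is precisely the content of Remark \ref{remark:linearity-of-expectation-m-wise}, which packages the definition of strong $m$-wise independence together with linearity of expectation (i.e.\ the forgetful map $\int_{\Conf^\mu_{/S}(J)}$ commutes with $\bbE$). Explicitly, the strong $m$-wise independence hypothesis gives the identity already at the level of $\calM_{\Conf^\mu_{/S}(J)}$, and applying the forgetful map $\int_{\Conf^\mu_{/S}(J)}$ descends it to $\calM_S$.

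I do not expect any serious obstacle here: the theorem is essentially a formal consequence of Proposition \ref{proposition:fundamental-euler-product} and the definition of strong $m$-wise independence, plus the commutativity of $\bbE$ with the forgetful maps that build the motivic Euler product from its coefficients. The only bookkeeping care needed is to ensure that the expectation is taken coefficient by coefficient (so that it makes sense even though the ``product'' has no independent meaning away from the coefficients), and to track that the coefficient of $t^{|\mu|}$ of the motivic Euler product $\prod_{j \in J\times_S \Omega/\Omega}(1-X_jt)$ is indeed the labeled configuration class $(-1)^{||\mu||}\Conf^\mu_{J\times_S\Omega/\Omega}(X)$ — something that holds by definition once one identifies $X$ with its class in the relative Grothendieck ring over $J\times_S \Omega$.
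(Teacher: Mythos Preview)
Your proposal is correct and follows essentially the same approach as the paper's own proof: expand both sides via Proposition~\ref{proposition:fundamental-euler-product}, then match the coefficient of $t^k$ for each $k\leq m$ using strong $m$-wise independence together with linearity of expectation as in Remark~\ref{remark:linearity-of-expectation-m-wise}. The only difference is cosmetic---the paper isolates a single coefficient $t^k$ at a time while you write out the full truncated sums---but the logical content is identical.
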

\begin{proof}
By Proposition \ref{proposition:fundamental-euler-product}, the coefficient of $t^k$ on the left side of (\ref{equation.strongly-independent-truncation}) is 
\[ \bbE\left[ \sum_{\substack{\mu \in \calQ \\ |\mu|=k}} (-1)^{||\mu||} \left[\Conf^\mu_{J \times_S \Omega / \Omega} (X) \right] \right] = \sum_{\substack{\mu \in \calQ \\ |\mu|=k}} (-1)^{||\mu||} \bbE\left[ \Conf^\mu_{J \times_S \Omega / \Omega} (X) \right].  \] 
Thus, for $k \leq m$, invoking strong $m$-wise independence and linearity of expectation as in Remark \ref{remark:linearity-of-expectation-m-wise} allows us to simplify the coefficient of $t^k$ on the left side of (\ref{equation.strongly-independent-truncation}) to 
\begin{equation}\label{equation.strongly-ind-trunc-proof-intermediate}\sum_{\substack{\mu \in \calQ\\ |\mu|=k}} (-1)^{||\mu||}\Conf^{\mu}_{J/S}(\bbE_J([X])). \end{equation}
Invoking Proposition \ref{proposition:fundamental-euler-product} again shows that (\ref{equation.strongly-ind-trunc-proof-intermediate}) is equal to the coefficient of~$t^k$ on the right-hand side of (\ref{equation.strongly-independent-truncation}), and we conclude.  
\end{proof}
\begin{remark}
Another notion of independence and its connection with moment generating functions is used in \cite{howe:mrv1,howe:mrv2}. We will give a more complete study of independence and moment generating functions for motivic random variables in \cite{bilu-howe:motivic-random-variables}, encompassing the results presented here and in \cite{howe:mrv1,howe:mrv2}.
\end{remark}

\section{Motivic probability problems from coherent sheaves} \label{section.motivic-prob-coherent-sheaves}
In this section we apply the formalism of the previous section to answer some natural motivic probability problems on global sections of coherent sheaves. 

\subsection{The problems}

Let $f: X \rightarrow S$ be a map of varieties, and let $\calB$ be a coherent sheaf on $X$. Let $\calA$ be a coherent sheaf on $S$ equipped with a map $\phi: \calA \to f_* \calB$. 

We can use the map $\phi$ to cut out a space of sections of $\calA$ by imposing local conditions along the fibers of $\calB$. Concretely, let $T$ be a constructible subset of $\bbV(\calB)$. Then, for each geometric point $x: \Spec L \rightarrow X$, the fiber $T_x$ is a constructible subset of the affine space 
\[ \bbV(\calB)|_x = \bbA(\calB|_x) \cong \bbA_{L}^{\dim \calB|_x}. \]
We view this set $T_x$ as the set of conditions on $\bbV(\calA)$. In particular, given a geometric point $s$ of $S$ and a geometric point in $X$ lying above $s$, we say that $a \in \calA|_s$ \emph{satisfies~$T$ at $x$} if the image of $a$ under the natural map
\begin{equation}\label{eq:mapphix}\phi_x: \calA|_s \xrightarrow{\phi} \Gamma(X_s, \calB|_{X_s}) \rightarrow \calB|_{x} \end{equation}
is in $T_x$. We say that $a$:
\begin{enumerate}
\item \emph{satisfies $T$ everywhere} if it satisfies $T$ at all such~$x$.
\item \emph{satisfies $T$ somewhere} if it satisfies $T$ at some $x$.
\item \emph{satisfies $T$ nowhere} or \emph{avoids $T$} if it does not satisfy $T$ at any $x$.
\end{enumerate}

\begin{example}
If $S=\Spec K$, $X=\bbP^1_K$, $\calB=O(d)$, and $\calA=\Gamma(\bbP^1_K, O(d))$, the space of degree $d$ homogeneous polynomials in two variables, then we can consider the condition $T \subset \bbV(\calO(d))$ such that 
\begin{enumerate}
\item $T_{\infty}=\{0\}$,
\item $T_{0}=\bbA(\calO(d)|_0) \bs \{0 \}$, 
\item and $T_{x}=\bbA(\calO(d)|_x)$ for all $x \neq 0, \infty$. 
\end{enumerate}
Then, the set of sections satisfying $T$ everywhere is the set of degree $d$ homogeneous polynomials in two variables which vanish at $\infty$ and do not vanish at $0$.  
\end{example}

\newcommand{\nw}{\mathrm{nowhere}}
\newcommand{\ew}{\mathrm{everywhere}}
\newcommand{\sw}{\mathrm{somewhere}}

We may consider the set $\bbV(\calA)^{T-\sw}$ of $a \in \bbV(\calA)$ such that the corresponding section satisfies $T$ somewhere. Similarly, we may consider analogous sets for nowhere and everywhere, but these are all related in the obvious way by complements (denoted with a superscript $c$):
\[ \bbV(\calA)^{T-\ew} = \bbV(\calA)^{T^c-\nw} = \left(\bbV(\calA)^{T^c-\sw}\right)^c. \]
Thus, in the following discussion we will focus on the ``somewhere" condition. 

The set $\bbV(\calA)^{T-\sw}$ is constructible: indeed, if we consider the geometric realization of the map $\phi_X: f^*\calA \rightarrow \calB$ adjoint to $\phi$,  
\[ \phi_X: \bbV(\calA) \times_S X \rightarrow \bbV(\calB) \]
and we write $\pi_\calA$ for the projection $\bbV(\calA) \times_S X \rightarrow \bbV(\calA)$, then
\[ \bbV(\calA)^{T-\sw} = \pi_\calA( \phi_X^{-1}(T) ) \]
By Chevalley's theorem, we conclude that $\bbV(\calA)^{T-\sw}$ is constructible. 
 
Thus, $\bbV(\calA)^{T-\sw}$ is an event on the motivic probability space $\bbV(\calA)$. 
 
\begin{question}\label{question:coherent-sheaf-motivic-problem} What is the motivic probability 
\[ \bbE[[\bbV(\calA)^{T-\sw}]] = \frac{[\bbV(\calA)^{T-\sw}]}{[\bbV(\calA)]} \] 
in $\calM_S$?
\end{question}

\subsection{An approximate answer}

To answer this question, one should think of $\bbV(\calA)^{T-\sw}$ as being the union of the family $\phi_X^{-1}(T)$ of events ``satisfies $T$ at $x$'' over $x$ in $X$. If we fix a geometric point $x$, then, as long as the map $\phi_x$ in (\ref{eq:mapphix}) is surjective, the motivic density of sections satisfying $T$ at $x$ will be 
\begin{equation}\label{eq:expectedmotivicdensity} \frac{[T_x]}{[\bbV(\calB)_x]}=\frac{[T_x]}{\bbL^{\dim \calB|_x}}. \end{equation}
Moreover, if $\phi_{x_1}, \ldots, \phi_{x_k}$ are jointly surjective, then these events will behave as if they are independent: the motivic density of sections satisfying $T$ at each of the distinct points $x_1, x_2, \ldots, x_k$ will be the product of the densities for each point. If such joint surjectivity holds for any choice of $x_1, \ldots, x_k$ for $k \leq m$, we say that $\phi$ is $m$-generating.  We will see that when $\phi$ is $m$-generating, the family of events is $m$-wise independent, and we can try to carry out the strategy outlined in the classical case for computing the density of the union. More precisely, under a natural condition on~$T$ that allows us to control the error terms, we will show:

\begin{theorem}\label{theorem:main-theorem-coherent} If $T$ is $M$-admissible (to be defined below) and $\phi$ is $m$-generating, then, in $\widehat{\calM}_S$, 
\[ 1 - \frac{[\bbV(\calA)^{T-\sw}]}{[\bbV(\calA)]} = \left.\prod_{x \in X/S}\left( 1-\frac{[T]_x}{[\bbV(\calB)]_x} t \right) \right|_{t=1} \mod \Fil_{-m + M} . \]
\end{theorem}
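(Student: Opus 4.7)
The plan is to combine strong $m$-wise independence of the indicator family for $\Phi := \phi_X^{-1}(T)$ with approximate motivic inclusion-exclusion, reducing both sides of the theorem to a common truncated polynomial identity in $t$, and then to bound the remaining tails using $M$-admissibility.

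First I will establish strong $m$-wise independence for the family of random variables $[\Phi] \in K_0(\Var/X\times_S \bbV(\calA))$ viewed over $\bbV(\calA)$ with parameter variety $X$. For a partition $\mu$ with $|\mu| \le m$, consider the joint evaluation map
\[ \mathrm{ev}^\mu \colon \bbV(\calA) \times_S \Conf^\mu_{/S}(X) \longrightarrow \Conf^\mu_{X/S}(\bbV(\calB)) = \bbV\bigl(\Conf^\mu_{X/S}(\calB)\bigr), \]
where the identification is Lemma \ref{lemma:compatibility-conf-realization}. The $m$-generating hypothesis translates precisely into surjectivity of the underlying map of coherent sheaves $f_\mu^*\calA \to \Conf^\mu_{X/S}(\calB)$ (where $f_\mu$ denotes the structure morphism $\Conf^\mu_{/S}(X)\to S$) on fibers above each geometric point, and hence surjectivity as sheaves. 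Corollary \ref{corollary:presentation-of-surjection} then identifies $(\mathrm{ev}^\mu)^{-1}(\Conf^\mu_{X/S}(T))$, which unwinds to $\Conf^\mu_{X\times_S\bbV(\calA)/\bbV(\calA)}([\Phi])$, with a trivial affine bundle over $\Conf^\mu_{X/S}(T)$; dividing by $[\bbV(\calA)]$ yields
\[ \bbE_{\Conf^\mu_{/S}(X)}\bigl[\Conf^\mu_{X\times_S\bbV(\calA)/\bbV(\calA)}([\Phi])\bigr] = \Conf^\mu_{X/S}\!\left(\tfrac{[T]}{[\bbV(\calB)]}\right). \]
By Theorem \ref{theorem:strongly-independent-truncation} this gives the polynomial identity
\[ \left(\sum_{\mu}(-1)^{||\mu||}\bbE\bigl[\Conf^\mu_{X\times_S\bbV(\calA)/\bbV(\calA)}([\Phi])\bigr]\,t^{|\mu|}\right)_{\!\le m} = \left(\prod_{x\in X/S}\bigl(1-\tfrac{[T]_x}{[\bbV(\calB)]_x}t\bigr)\right)_{\!\le m}. \]

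Next I evaluate both sides at $t=1$. Applying the approximate motivic inclusion-exclusion (Theorem \ref{theorem:mot-inc-exc-approx}) to $\Phi\to\bbV(\calA)$ at threshold $m+1$ and dividing by $[\bbV(\calA)]$ rewrites
\[ 1 - \frac{[\bbV(\calA)^{T-\sw}]}{[\bbV(\calA)]} = \sum_{|\mu|\le m}(-1)^{||\mu||}\bbE\bigl[\Conf^\mu_{X\times_S\bbV(\calA)/\bbV(\calA)}([\Phi])\bigr] + \mathrm{ErrInc}, \]
where $\mathrm{ErrInc}$ collects the contributions $[\Conf^{\mu,\ge m+1-|\mu|}_{/\bbV(\calA)}(\Phi)]/[\bbV(\calA)]$ for $|\mu|\le m$. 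Substituting the polynomial identity turns the main sum into the degree-$\le m$ truncation of the motivic Euler product at $t=1$, so it differs from the full Euler product at $t=1$ only by $\mathrm{ErrInc}$ and the tail $\mathrm{ErrEul} := \sum_{|\mu|> m}(-1)^{||\mu||}\Conf^\mu_{X/S}([T]/[\bbV(\calB)])$. The theorem therefore reduces to showing $\mathrm{ErrInc}, \mathrm{ErrEul}\in \Fil_{-m+M}$.

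The main obstacle is this last tail estimate, and it is where the forthcoming definition of $M$-admissibility enters. Admissibility will be formulated so that $[T]/[\bbV(\calB)]\in\calM_X$ has dimension at most $-\dim_{/S}(X)-1$ off a defect locus whose total contribution is controlled by the integer $M$; combined with $\dim_{/S}\Conf^\mu_{/S}(X)=|\mu|\dim_{/S}(X)$ and the dimension-fibration bookkeeping of Lemma \ref{lemma:dimension-fibration} across the forgetful map $\Conf^\mu_{/S}(X)\to S$, each term of $\mathrm{ErrEul}$ with $|\mu|>m$ should have dimension at most $M-|\mu|<M-m$, giving $\mathrm{ErrEul}\in\Fil_{-m+M}$. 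The same codimension estimates applied to the fibers of $\Phi\to\bbV(\calA)$, together with the $m$-generating hypothesis forcing fibers of size $\ge m+1$ to impose codimension $\ge m+1$ on $\bbV(\calA)$ away from the defect locus, will give the analogous bound for $\mathrm{ErrInc}$. The real work is to execute these two dimension estimates uniformly, tracking the exceptional defect locus contributing the $+M$ term and the codimension gain driving the $-m$ term so that the two error pools land in the same filtration level.
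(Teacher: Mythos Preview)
Your proposal is correct and follows essentially the same route as the paper: establish strong $m$-wise independence of $[\phi_X^{-1}(T)]$ via surjectivity of $f_\mu^*\calA\to\Conf^\mu_{X/S}(\calB)$ and Corollary~\ref{corollary:presentation-of-surjection} (the paper's Lemma~\ref{lemma:coherent-strongly-m-wise-independent}), invoke Theorem~\ref{theorem:strongly-independent-truncation}, apply approximate motivic inclusion-exclusion, and bound the two error pools using $M$-admissibility (the paper's Lemmas~\ref{lemma:M-admissible-euler-product} and~\ref{lemma:admissible-inc-exc-error-terms}). The only cosmetic difference is that you truncate inclusion-exclusion at threshold $m+1$ (summing over $|\mu|\le m$) whereas the paper truncates at $m$ (summing over $|\mu|<m$); your choice is harmless since the extra error terms $\Conf^{\mu,\ge 1}_{/\bbV(\calA)}(\Phi)$ with $|\mu|=m$ are contained in $\Conf^{\mu}_{/\bbV(\calA)}(\Phi)$, whose dimension is already controlled by the $m$-generating hypothesis.
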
 

The rest of this section is devoted to the proof of this theorem. In \ref{subsection:mgeneration} we introduce the notion of $m$-generating map, and prove in Lemma \ref{lemma:coherent-strongly-m-wise-independent} that if $\phi$ is $m$-generating, then the family of events $\phi_X^{-1}(T)$ is strongly $m$-wise independent with the expected motivic density (\ref{eq:expectedmotivicdensity}) relatively to $X$. The notion of admissibility and its consequences are studied in \ref{subsect:admissibility}. In particular, Lemma  \ref{lemma:M-admissible-euler-product} shows that  the motivic Euler product in the statement of theorem \ref{theorem:main-theorem-coherent} converges, and is equal modulo $\Fil_{-m+M}$ to its truncation up to degree $m$. We apply the approximate version of  motivic inclusion-exclusion in $(\ref{eqn:inc-exc-app})$ to estimate $1 - \frac{[\bbV(\calA)^{T-\sw}]}{[\bbV(\calA)]}$. The error terms are bounded appropriately in Lemma \ref{lemma:admissible-inc-exc-error-terms} thanks to the admissibility condition. For the other terms, we use strong $m$-wise independence in the form of Theorem \ref{theorem:strongly-independent-truncation}, together with the value of the motivic density, to rewrite them as the aforementioned truncation of the Euler product in the right-hand side of the statement of Theorem \ref{theorem:main-theorem-coherent}. 
\subsubsection{$m$-generation and independence} \label{subsection:mgeneration}

Recall that given a geometric point $x: \Spec L \rightarrow X$ with image $s \in S(L)$, we wrote $\phi_x$ for the composition
\[ \calA|_s \xrightarrow{\phi} (f_*\calB)|_s \rightarrow \Gamma(X_s, \calB|_{X_s}) \rightarrow \calB|_x. \] 

\begin{definition}
The map $\phi$ is $m$-generating if, for any algebraically closed $L/K$, any $s \in S(L)$, and any distinct $x_1, \ldots, x_k \in X_s(L)$, $k \leq m$, the map
\[ \calA|_s \xrightarrow{ \left( \phi_{x_1}, \ldots, \phi_{x_k} \right) } \bigoplus_{i=1}^k \calB_{x_i} \]
is surjective.  

Equivalently, $\phi$ is $m$-generating if for any$\mu\in \calQ$ such that  $|\mu| \leq m$, the map
\[ \phi_{\mu}: f_\mu^* \calA \rightarrow \Conf_{X/S}^\mu(\calB) \]
is a surjective map of coherent sheaves on $\Conf^{\mu}_{/S}(X)$. 
\end{definition}

\begin{lemma}\label{lemma:coherent-strongly-m-wise-independent}
If $\phi$ is $m$-generating for $m \geq 1$, then, for any constructible subset $T \subset \bbV(\calB)$, viewing $\phi_X^{-1}(T)$ as a family of events on $\bbV(\calA)$ parameterized by~$X$, we have
\[ \bbE_X \left[ \phi_X^{-1}(T) \right] = \frac{[T/X]}{[\bbV(\calB)/X]} \in \calM_X \]
and $\phi_X^{-1}(T)$ is strongly $m$-wise independent. 
\end{lemma}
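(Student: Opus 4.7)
The plan is to deduce both claims simultaneously from a single application of Corollary \ref{corollary:presentation-of-surjection} to the surjection of coherent sheaves provided by the $m$-generation hypothesis. For every $\mu\in \calQ$ with $|\mu|\leq m$, the equivalent formulation of $m$-generation gives a surjection $\phi_\mu \colon f_\mu^*\calA \to \Conf^\mu_{X/S}(\calB)$ of coherent sheaves on $\Conf^\mu_{/S}(X)$. Applying the geometric realization functor $\bbV$, and using Lemma \ref{lemma:compatibility-conf-realization} to identify $\bbV(f_\mu^*\calA) \cong \Conf^\mu_{/S}(X)\times_S \bbV(\calA)$ and $\bbV(\Conf^\mu_{X/S}(\calB))\cong \Conf^\mu_{X/S}(\bbV(\calB))$, yields a map
\[ \bbV(\phi_\mu)\colon \Conf^\mu_{/S}(X)\times_S \bbV(\calA) \longrightarrow \Conf^\mu_{X/S}(\bbV(\calB)). \]

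The bridge between this setup and the statement is the identification of constructible subsets of $\Conf^\mu_{/S}(X)\times_S\bbV(\calA)$,
\[ \bbV(\phi_\mu)^{-1}\left(\Conf^\mu_{X/S}(T)\right) = \Conf^\mu_{(X\times_S\bbV(\calA))/\bbV(\calA)}(\phi_X^{-1}(T)). \]
By Lemma \ref{lemma:rec-princ-cons}, this may be verified on geometric points: both sides describe pairs $((x_1,\ldots,x_n),a)$ where $(x_1,\ldots,x_n)$ is a $\mu$-configuration in some fiber $X_s$ and $a\in\bbV(\calA)|_s$ satisfies $\phi_{x_i}(a)\in T|_{x_i}$ for every $i$, i.e.\ $a$ satisfies $T$ at each $x_i$.

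Granting this identification, Corollary \ref{corollary:presentation-of-surjection} applied to the surjection $\phi_\mu$ and the constructible set $\Conf^\mu_{X/S}(T)$ gives, in $\calM_{\Conf^\mu_{/S}(X)}$,
\[ [\Conf^\mu_{(X\times_S\bbV(\calA))/\bbV(\calA)}(\phi_X^{-1}(T))] = \frac{[\Conf^\mu_{/S}(X)\times_S\bbV(\calA)]}{[\Conf^\mu_{X/S}(\bbV(\calB))]}\cdot [\Conf^\mu_{X/S}(T)]. \]
Taking the fiberwise expectation $\bbE_{\Conf^\mu_{/S}(X)}$ (i.e.\ dividing by $[\Conf^\mu_{/S}(X)\times_S\bbV(\calA)/\Conf^\mu_{/S}(X)]$) collapses the right-hand side to $[\Conf^\mu_{X/S}(T)]/[\Conf^\mu_{X/S}(\bbV(\calB))]$. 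For $\mu=1^1$, valid since $m\geq 1$, this is the first claim $\bbE_X[\phi_X^{-1}(T)] = [T/X]/[\bbV(\calB)/X]$. For general $|\mu|\leq m$, combining $[\bbV(\calB)]=\bbL^{\dim\calB}$ from Theorem \ref{theorem:V-structure} with Example \ref{example:labeled-conf-localization} rewrites this quotient as $\Conf^\mu_{X/S}([T]/[\bbV(\calB)]) = \Conf^\mu_{X/S}(\bbE_X[\phi_X^{-1}(T)])$, yielding strong $m$-wise independence. The only real work is bookkeeping: verifying the geometric-point identification above and keeping track of which Grothendieck ring each identity inhabits.
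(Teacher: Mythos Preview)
Your proof is correct and follows essentially the same route as the paper: identify $\bbV(\phi_\mu)^{-1}(\Conf^\mu_{X/S}(T))$ with $\Conf^\mu_{X\times_S\bbV(\calA)/\bbV(\calA)}(\phi_X^{-1}(T))$ by comparing geometric points via Lemma~\ref{lemma:rec-princ-cons}, then apply Corollary~\ref{corollary:presentation-of-surjection} to the surjection $\phi_\mu$ and divide. The only cosmetic difference is that the paper packages the rewriting $[\Conf^\mu_{X/S}(T)]/[\Conf^\mu_{X/S}(\bbV(\calB))]=\Conf^\mu_{X/S}([T]/[\bbV(\calB)])$ directly into the computation, whereas you cite Theorem~\ref{theorem:V-structure} and Example~\ref{example:labeled-conf-localization} explicitly for this step; either way the justification is the same.
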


\begin{proof} Let $\mu\in \calQ$ be such that $|\mu|\leq m$. We note that because $\phi_X^{-1}(T)$ is a constructible subset of $X \times_S \bbV(\calA)$, we have that
\[ \Conf^\mu_{X \times_S \bbV(\calA)/\bbV(\calA)} (\phi_X^{-1}(T)) = \Conf^{\mu}_{/\bbV(\calA)} (\phi_X^{-1}(T)). \]
We now give another characterization of the constructible subset $\Conf^\mu_{/ \bbV(\calA)}(\phi_X^{-1}(T) )$ inside $\Conf^{\mu}_{/S}(X) \times_S \bbV(\calA)$. Taking the geometric realization of $\phi_\mu$ and invoking Lemma \ref{lemma:compatibility-conf-realization}, we obtain a map
\[ \phi_\mu: \Conf^{\mu}_{/S}(X) \times_S \bbV(\calA) \rightarrow \bbV(\Conf^{\mu}_{X/S}(\calB)) = \Conf^\mu_{X/S}( \bbV(\calB) ) \]
in $\left(\Var/C^{\mu}_{/S}(X)\right)_{C^{\mu}_{/S}(X)-\PW}$. 
Then, comparing geometric points and using Lemma \ref{lemma:rec-princ-cons}, we find
\[ \Conf^\mu_{/ \bbV(\calA)}(\phi_X^{-1}(T)  ) = \phi_\mu^{-1} \left( \Conf^{\mu}_{X/S}(T) \right) \]
as constructible subsets of $\Conf^{\mu}_{/S}(X) \times_S \bbV(\calA) $.

By Corollary \ref{corollary:presentation-of-surjection}, 
\begin{align*} \left[\phi_\mu^{-1} \left( \Conf^{\mu}_{X/S}(T) \right)\right] & = \frac{\left[\Conf^{\mu}_{/S}(X) \times_S \bbV(\calA)\right]}{\left[\Conf^\mu_{X/S}( \bbV(\calB) )\right]} \left[ \Conf^{\mu}_{X/S}(T) \right] \\
&= \left[\Conf^{\mu}_{/S}(X) \times_S \bbV(\calA)\right] \cdot \Conf^\mu_{X/S} \left( \frac{[T/X]}{[\bbV(\calB)/X]} \right)
\end{align*}
in $\calM_{\Conf^{\mu}_{/S}(X)}$. Dividing, we obtain
\begin{align*}
\frac{
\left[\Conf^\mu_{/ \bbV(\calA)}(\phi_X^{-1}(T)  )\right] } {[ \Conf^{\mu}_{/S}(X) \times_S \bbV(\calA) ] } = \Conf^\mu_{X/S} \left( \frac{[T/X]}{[\bbV(\calB)/X]} \right)
\end{align*}
in $\calM_{\Conf^{\mu}_{/S}(X)}$. For $\mu = 1^1$, this gives us the first statement of the lemma. Now, for general $\mu$ such that $|\mu|\leq m$, the left-hand side is 
\[ \bbE_{\Conf^{\mu}_{/S}(X)}\left[ \Conf^{\mu}_{/\bbV(\calA)}(\phi_X^{-1}(T)) \right] \]
and using the expression we obtained for $\bbE_X[\phi^{-1}_X(T)]$, the right-hand side is 
\[ \Conf^{\mu}_{X/S} \left( \bbE_X [ \phi_X^{-1}(T) ] \right),\]
thus we conclude. 
\end{proof}

\subsubsection{Admissibility} \label{subsect:admissibility}
\newcommand{\codim}{\mathrm{codim}}

We now introduce the notion of an admissible condition, which will allow us to control the error terms in Theorem \ref{theorem:main-theorem-coherent}. 

Qualitatively, a condition $T$ is admissible if the dimension of $T_x$ is large only along a low-dimensional subvariety of $X$. To turn this into a useful definition, we need a precise relationship between these dimensions: when $S$ is a point, we will say that $T$ is admissible if, after removing a finite subvariety, $X$ can be decomposed into locally closed subvarieties $X_i$ such that for each $i$ and each $x \in X_i$ the codimension of $T_x$ in $\bbV(\calB)_x$ is at least one more than the dimension of $X_{i}$ at $x$. 

For general $S$, we will extend this definition in the natural way. For applications, it will also be important to remember the size of the finite set we remove; thus, in full generality we obtain:

\begin{definition} \label{def:m-admissible}
A condition $T \subset \bbV(\calB)$ is $M$-\emph{admissible} if
there exists a constructible $A \subset X$ such that $A/S$ is quasi-finite of degree bounded by $M$, and a piecewise isomorphism $X' \rightarrow X\bs A$ such that for every $x \in X'$,
\[ \dim T_x \leq \dim \calB|_x - (\dim_x X'_{f(x)} + 1). \]
\end{definition}

\begin{example}\label{example:admissibility-codimension} In this example we explain how this definition of $M$-admissible reduces in a special case to the simpler definition given in the context of Theorem \ref{theorem.taylorconditions} (in Definition \ref{def:intro-m-admissible} of the introduction). Suppose $K$ is a field, $X/K$ is equidimensional of dimension $n$, and $\calB$ is locally free of rank $r$; then, $\bbV(\calB)$ is equidimensional of dimension $n+r$. We claim that $T \subset \bbV(\calB)$ is $M$-admissible in the sense of Definition \ref{def:m-admissible} if and only if there exists a finite subscheme $A \subset X$ of degree $\leq M$ over $K$ such that $T|_{X\bs A}$ has codimension at least $n+1$ in $\bbV(\calB)|_{X\bs A}$.

By replacing $X$ with $X\bs A$ we may assume $M=0$ (going either direction). If $T$ is 0-admissible in the sense of Definition \ref{def:m-admissible}, then we obtain a piecewise isomorphism $X'\rightarrow X$ such that for every $x \in X'$, 
\[ \dim T_x \leq r - (\dim_x X'_{f(x)} + 1). \]
We may write $X'$ as a finite disjoint union of irreducible locally closed subvarieties $X_i \subset X$ -- then it suffices to show that $T|_{X_i}$ has codimension at least $n+1$ in $\bbV(\calB)|_{X\bs A}$. We still have that for all $x \in X_i$. 
\[ \dim T_x \leq r - (\dim_x {X_i}_{f(x)}+1 ), \]
thus we conclude that $T_{X_i}$ has dimension $\leq r - 1$, and thus codimension $n+1$ in $\bbV(\calB)|_{X\bs A}$. 

To go the other direction, we can stratify $X$ according to the dimension of fibers of $T$, and this stratification will witness admissibility in the sense of Definition \ref{def:m-admissible} (otherwise there will be an entire stratum where the condition on $\dim T_x$ fails, and this will contradict the hypothesis on the codimension). 
\end{example}

\begin{lemma}\label{lemma:M-admissible-euler-product}
If $T$ is $M$-admissible, then 
\[ \prod_{x \in X/S} \left( 1 - \frac{[T]_x}{[\bbV(\calB)]_x} \right) \]
converges at $t=1$ in $\widehat{\calM}_S$. More precisely, for each $k$ the coefficient of $t^k$ is in $\Fil_{-k + M}$, so in particular, for any $m>0$,  
\[ \left.\left(\prod_{x \in X/S} \left( 1 - \frac{[T]_x}{[\bbV(\calB)]_x} t \right) 
\right)\right|_{t=1}  \equiv \left. \left( \prod_{x \in X/S} \left( 1 - \frac{[T]_x}{[\bbV(\calB)]} \right) \right)_{\leq m} \right|_{t=1} \mod \Fil_{-m + M} \]  
\end{lemma}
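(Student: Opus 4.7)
The plan. First, I would expand the Euler product coefficient-wise. By Proposition \ref{proposition:fundamental-euler-product}, applied to $a = \frac{[T]}{[\bbV(\calB)]} \in \calM_X$ (using Theorem \ref{theorem:V-structure} together with Example \ref{example:labeled-conf-localization} and Lemma \ref{lemma:compatibility-conf-realization} to rewrite the denominator as in Corollary \ref{corollary:fundamental-euler-product-coh-sheaf}), one has
\[
\prod_{x \in X/S}\left(1 - \frac{[T]_x}{[\bbV(\calB)]_x}\, t\right) = \sum_{\mu \in \calQ} (-1)^{\|\mu\|}\, \frac{[\Conf^{\mu}_{X/S}(T)]}{[\bbV(\Conf^{\mu}_{X/S}\calB)]}\, t^{|\mu|}
\]
in $\calM_S[[t]]$, the quotient being formed in $\calM_{\Conf^{\mu}_{/S}(X)}$ before forgetting to $\calM_S$. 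It then suffices to prove that for every $\mu \in \calQ$ with $|\mu| = k$, the coefficient $\frac{[\Conf^{\mu}_{X/S}(T)]}{[\bbV(\Conf^{\mu}_{X/S}\calB)]}$ lies in $\Fil_{-k+M}\calM_S$: the coefficient of $t^k$ in the series then lies in $\Fil_{-k+M}$, evaluation at $t = 1$ converges in $\widehat{\calM}_S$, and the tail past degree $m$ is in $\Fil_{-(m+1)+M} \subseteq \Fil_{-m+M}$.

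Second, I would stratify via admissibility. Fix $A \subset X$ and the piecewise isomorphism $X' \to X \setminus A$ supplied by Definition \ref{def:m-admissible}. The decomposition $X = A \sqcup (X \setminus A)$ induces a decomposition
\[
\Conf^{\mu}_{/S}(X) = \bigsqcup_{\mu' + \mu'' = \mu} \Conf^{\mu'}_{/S}(A) \times_S \Conf^{\mu''}_{/S}(X \setminus A),
\]
and on each piece both $\Conf^{\mu}_{X/S}\calB$ and $\Conf^{\mu}_{X/S}(T)$ restrict to external products of their $A$- and $(X\setminus A)$-counterparts. Since $A/S$ is quasi-finite of degree $\leq M$, only strata with $j := |\mu'| \leq \min(k, M)$ are non-empty, so the coefficient in question is a finite sum of products of an $A$-factor with an $X'$-factor.

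Third, I would bound the dimension of each factor separately. The $A$-factor lies in $\Fil_0\calM_S$ because $\Conf^{\mu'}_{/S}(A)/S$ has dimension zero and the numerator sits inside the denominator. For the $X'$-factor, I would apply Lemma \ref{lemma:dimension-fibration} with $Y = \Conf^{\mu''}_{/S}(X')$, total space $\Conf^{\mu''}_{X/S}(T|_{X'})$, and
\[
g(y) = \dim \Conf^{\mu''}_{X/S}(\calB|_{X'})(y) - (k-j).
\]
At a configuration $y = (x_1, \ldots, x_{k-j}) \in Y$ over $s \in S$, summing the pointwise admissibility bound $\dim T_{x_i} \leq \dim \calB|_{x_i} - \dim_{x_i} X'_s - 1$ over the $k-j$ points gives exactly $\dim \Conf^{\mu''}_{X/S}(T|_{X'})_y \leq g(y) - \dim_y Y_s$, so Lemma \ref{lemma:dimension-fibration} produces a class in $\Fil_0\calM_S$ which, combined with the $-(k-j)$ shift built into $g$, places the $X'$-factor in $\Fil_{-(k-j)}\calM_S$. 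Multiplying the two factors yields $\Fil_{-(k-j)} \subseteq \Fil_{-k+M}$, and summing over the finitely many strata and finitely many $\mu$ with $|\mu| = k$ preserves this bound.

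The main technical obstacle will be setting up the stratification of $\Conf^{\mu}_{/S}(X)$ so that the labeled symmetric-power sheaf $\Conf^{\mu}_{X/S}\calB$ and the labeled configuration variety $\Conf^{\mu}_{X/S}(T)$ genuinely decompose as external products on each stratum; this requires working carefully in the piecewise framework of Section \ref{section.grothendieck-ring} and tracking the combinatorics of partition sums $\mu' + \mu'' = \mu$. Once this splitting is in place, the combination of Lemma \ref{lemma:dimension-fibration} with the summed admissibility inequality delivers the estimate mechanically.
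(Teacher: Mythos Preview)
Your proposal is correct and follows essentially the same argument as the paper. The only organizational difference is that the paper invokes multiplicativity of motivic Euler products to factor
\[
\prod_{x\in X/S}\Big(1-\tfrac{[T]_x}{[\bbV(\calB)]_x}t\Big)=\Big(\prod_{x\in A/S}\cdots\Big)\cdot\Big(\prod_{x\in X'/S}\cdots\Big),
\]
first establishes the $M=0$ case over $X'$ by exactly your Lemma~\ref{lemma:dimension-fibration} computation, and then multiplies by the finite-degree polynomial over $A$; you instead decompose each $\Conf^{\mu}_{/S}(X)$ directly into strata indexed by $\mu'+\mu''=\mu$. These are the same computation viewed at different levels: the stratification you write down is precisely what underlies the multiplicativity property, so what you flag as ``the main technical obstacle'' is already packaged in the paper's reference to \cite{bilu:motiviceulerproducts}. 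Using the Euler-product factorization saves you from redoing that combinatorics by hand.
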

\begin{proof}

We first show the result when $M=0$.  By Corollary \ref{corollary:fundamental-euler-product-coh-sheaf}, the coefficient of~$t^k$ is a sum of terms of the form 
\[ \frac{ [\Conf^\mu_{X/S} ( T )] }{ [\bbV( \Conf^\mu_{X/S}(\calB) )] } = [\Conf^\mu_{X/S} ( T )] \cdot \bbL^{- \dim \Conf^\mu_{X/S}(\calB)} \]
over all $\mu \in \calQ$ such that $|\mu|=k$. We examine each of these terms:

Above a geometric point corresponding to the configuration $c=(x_1, \ldots, x_k)$ in~$X_s$, we find, by admissibility, 
\begin{align*} \dim \Conf^{\mu}_{X/S}(T)_c &= \sum_{i=1}^k \dim T_{x_i}\\
& \leq \sum_{i=1}^k\left( \dim \calB|_{x_i}  - (\dim_{x_i} X'_{s} + 1)\right)\\ & \leq \dim \Conf^\mu_{X/S}(\calB)|_c - \dim_{c} \left(C^{\mu}_{/S}(X)_s\right) - k. \end{align*}

Thus, by invoking Lemma \ref{lemma:dimension-fibration} with 
\[ g(c)=\dim \Conf^\mu_{X/S}(\calB)|_c - k, \] we find this term is in $\Fil_{-k}$, as desired.   

In general, we may take $A$ and $X'$ witnessing the $M$-admissibility of $T$, then factor the product as
\[ \left(\prod_{x \in A/S} \ldots \right) \cdot \left(\prod_{x \in X'/S}\ldots\right). \] 
$T|_{X'}$ is 0-admissible, so the previous result applies to the right-hand product. On the other hand, for the product over $A$, we find all the coefficients are in $\Fil^0$: indeed, 
\[ \dim \Conf^{\mu}_{A/S}(T|_A/A)_c \leq \dim \Conf^\mu_{X/S}(\calB)|_c,  \]
and since $\Conf^{\mu}_{A/S}$ is of relative dimension zero over $S$, we can again apply Lemma \ref{lemma:dimension-fibration} to conclude that the corresponding term is in in $\Fil_0$. 
Because the fiber size is bounded by $M$, only coefficients up to $t^M$ are non-zero. We conclude that in the full product, the coefficient of $t^k$ is congruent to zero mod $\Fil_{-k + M}$. In particular, the radius of convergence is at most $-1$, and the series can be evaluated at $t = 1$. 

\end{proof}

\subsubsection{Estimating the error terms}
We can now prove Theorem \ref{theorem:main-theorem-coherent}. First, we observe that by approximate motivic inclusion-exclusion (Theorem \ref{theorem:mot-inc-exc-approx}),

\begin{align} \label{eqn:inc-exc-app} 1 - [\bbV(\calA)^{T-\sw}] = & \sum_{\substack{\mu \in \calQ \\ |\mu| < m}} (-1)^{||\mu||} \left[\Conf^\mu_{/\bbV(\calA)}( \phi_X^{-1}(T))\right] \\
\nonumber & - \sum_{\substack{\mu \in \calQ \\ |\mu| < m}} (-1)^{||\mu||} \left[\Conf^{\mu,\geq m-|\mu|}_{/\bbV(\calA)}(\phi_X^{-1}(T))\right].
\end{align} 

We will eliminate the error terms appearing on the second line of (\ref{eqn:inc-exc-app}) using
\begin{lemma}\label{lemma:admissible-inc-exc-error-terms} For $|\mu| < m$, 
\[ \frac{\left[\Conf^{\mu,\geq m-|\mu|}_{/\bbV(\calA)}(\phi_X^{-1}(T))\right]}{[\bbV(\calA)]} \in \Fil_{-m + M}. \]
\end{lemma}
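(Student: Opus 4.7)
The plan is to bound $\dim_{/S}$ of the numerator and invoke the definition of the dimension filtration. First, since $\Conf^{\mu,\geq m-|\mu|}_{/\bbV(\calA)}(\phi_X^{-1}(T))$ is the image of $\Conf^{\mu \cdot *^{m-|\mu|}}_{/\bbV(\calA)}(\phi_X^{-1}(T))$ under the forgetful map discarding the $*$-labels, and $\dim_{/S}$ cannot increase under images, it suffices to show
\[ \dim_{/S} \Conf^{\mu \cdot *^{m-|\mu|}}_{/\bbV(\calA)}(\phi_X^{-1}(T)) \leq \dim_{/S} \bbV(\calA) - m + M. \]

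I would then estimate this dimension by analyzing the forgetful map
\[ F \colon \Conf^{\mu \cdot *^{m-|\mu|}}_{/\bbV(\calA)}(\phi_X^{-1}(T)) \longrightarrow \Conf^{\mu \cdot *^{m-|\mu|}}_{/S}(X),\quad (a,(x_1,\ldots,x_m)) \mapsto (x_1,\ldots,x_m). \]
Over a geometric configuration $c=(x_1,\ldots,x_m)$ above $s \in S$, the fiber of $F$ is $\bigcap_i \phi_{x_i}^{-1}(T_{x_i}) \subseteq \bbV(\calA)|_s$. Since $c$ consists of $m$ distinct points and $\phi$ is $m$-generating, the map $(\phi_{x_1},\ldots,\phi_{x_m}) \colon \calA|_s \twoheadrightarrow \bigoplus_i \calB|_{x_i}$ is surjective, so the fiber of $F$ at $c$ has dimension exactly $\dim \calA|_s - \sum_i \dim \calB|_{x_i} + \sum_i \dim T_{x_i}$.

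Next, I would stratify the base by how many of the $m$ points land in $A$: for each $I \subseteq \{1,\ldots,m\}$, let $\Sigma_I$ be the constructible locally closed subset of configurations with $x_i \in A$ iff $i \in I$. Since $A/S$ is quasi-finite of degree at most $M$, the stratum is empty for $|I| > M$, and at a point $c \in \Sigma_I$ the local dimension of $(\Sigma_I)_s$ is $\sum_{i \notin I} \dim_{x_i} X_s$. For $i \notin I$, transporting the admissibility bound across the piecewise isomorphism $X' \to X \setminus A$ gives $\dim T_{x_i} \leq \dim \calB|_{x_i} - \dim_{x_i} X_s - 1$, while for $i \in I$ we only need the trivial $\dim T_{x_i} \leq \dim \calB|_{x_i}$. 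Combining with the fiber-dimension formula above yields
\[ \dim_{/S} F^{-1}(\Sigma_I) \leq \sum_{i \notin I} \dim_{x_i} X_s + \dim \calA|_s - \sum_{i \notin I} (\dim_{x_i} X_s + 1) = \dim_{/S} \bbV(\calA) - (m - |I|), \]
which, using $|I| \leq M$, is at most $\dim_{/S} \bbV(\calA) - m + M$. Taking the supremum over the finitely many nonempty $\Sigma_I$ and dividing by $[\bbV(\calA)]$ places the quotient in $\Fil_{-m + M}$, as desired. (Lemma \ref{lemma:dimension-fibration} can be used to package the stratified dimension estimate cleanly into a statement in $\calM_S$.)

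The main obstacle I anticipate is bookkeeping: one must verify that transporting the admissibility estimate across the piecewise isomorphism $X' \to X \setminus A$ preserves the relevant local fibre-dimensions $\dim_{x_i} X_s$, and that the stratification of $\Conf^{\mu \cdot *^{m-|\mu|}}_{/S}(X)$ by intersections with the only-constructible set $A$ is sufficiently fine for the fibre-dimension estimates to pass to $\dim_{/S}$. Both are ultimately formal consequences of the invariance of $\dim_{/S}$ under piecewise isomorphisms and the quasi-finiteness of $A/S$.
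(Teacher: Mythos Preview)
Your proposal is correct and follows essentially the same strategy as the paper: pass to the full configuration space $\Conf^{\mu\cdot *^{m-|\mu|}}_{/\bbV(\calA)}(\phi_X^{-1}(T))$ (whose image is the error term), use $m$-generation to identify its fibers over configurations via Corollary~\ref{corollary:presentation-of-surjection}, and then split configurations according to how many of the $m$ points lie in $A$ versus $X\setminus A$. The paper phrases the last step as ``decomposing configuration spaces into products according to points in $A$ and $X'$,'' which is exactly your stratification by subsets $I$.

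The only point to be careful about, which you flag yourself, is that the admissibility inequality reads $\dim T_x \le \dim\calB|_x - (\dim_x X'_{f(x)}+1)$ with $X'$ rather than $X$, and in general $\dim_x X'_s$ can be strictly smaller than $\dim_x X_s$; so the cancellation in your displayed inequality is not literally between the same quantities. The paper handles this by replacing $X\setminus A$ with $X'$ outright before forming configuration spaces (a piecewise isomorphism, hence harmless for $\dim_{/S}$), and then invoking the method of Lemma~\ref{lemma:dimension-fibration}. Your remark that this is ``ultimately formal'' is right, but the cleanest fix is exactly that substitution.
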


We finish the proof of Theorem \ref{theorem:main-theorem-coherent} before returning to prove Lemma \ref{lemma:admissible-inc-exc-error-terms}: Invoking Lemma \ref{lemma:admissible-inc-exc-error-terms} to kill the error terms on the right hand side of (\ref{eqn:inc-exc-app}), we find
\begin{multline*}
\frac{1 - [\bbV(\calA)^{T-\sw}]}{\bbV(\calA)} =\\
\left. \left(\bbE\left[ \prod_{x \in X \times_S \bbV(\calA)/\bbV(\calA)} \left(1 - [\phi^{-1}_X(T)]_x\right) \right] \right)_{\leq m} \right|_{t=1} \mod \Fil_{-m + M}.\end{multline*}
By Lemma \ref{lemma:coherent-strongly-m-wise-independent} and Theorem \ref{theorem:strongly-independent-truncation}, we can rewrite this as
\[ \frac{1 - [\bbV(\calA)^{T-\sw}]}{\bbV(\calA)} = \left.  \prod_{x \in X} \left(1-\frac{[T]_x}{[\bbV(\calB)]_x} t \right)_{\leq m} \right|_{t=1} \mod \Fil_{-m + M}. \]
And, by Lemma \ref{lemma:M-admissible-euler-product}, we obtain
\[ \frac{1 - [\bbV(\calA)^{T-\sw}]}{\bbV(\calA)} = \left. \prod_{x \in X} \left(1-\frac{[T]_x}{[\bbV(\calB)]_x} t \right) \right|_{t=1} \mod \Fil_{-m + M},\]
proving Theorem \ref{theorem:main-theorem-coherent}. 

\begin{proof}[Proof of Lemma \ref{lemma:admissible-inc-exc-error-terms}]
Arguing as in the proof of Lemma \ref{lemma:coherent-strongly-m-wise-independent}, we find that for any $|\mu| \leq m$, we have, in $\left(\Var/C^{\mu}_{/S}(X)\right)_{\PW},$
\begin{align*} \Conf^{\mu}_{/\bbV(\calA)}(\phi_X^{-1}(T)) & = \phi_\mu^{-1} \left( \Conf^{\mu}_{X/S}(T) \right)\\
&=\bbA^{\dim f_\mu^* \calA - \dim \Conf^\mu_{X/S} \calB} \times_{\Conf^\mu_{/S}(X)} \Conf^\mu_{X/S}(T) 
\end{align*}

We assume first that $T$ is 0-admissible, and replace $X$ with a piecewise decomposition $X'$ witnessing the admissibility. Computing above $c \in \Conf^{\mu}_{/S}(X)_s$ as in the proof of Lemma \ref{lemma:M-admissible-euler-product}, we find
\[ \dim \Conf^{\mu}_{/\bbV(\calA)}(\phi_X^{-1}(T))_c \leq \dim f_\mu^* \calA - \dim_{c} \Conf^\mu_{/S}(X)_s - |\mu|. \]
 Decomposing $S$ we may assume $\dim f_\mu^* \calA = f_\mu^*\dim \calA=r$ is constant. Using the method of the proof of Lemma \ref{lemma:dimension-fibration}, we can thus decompose $\Conf^{\mu}_{/\bbV(\calA)}(\phi_X^{-1}(T))$ into irreducible subvarieties such that each subvariety has dimension $\leq r - |\mu|$ over $S$.  

In particular, taking $|\mu|<m$ we may apply this computation to $\mu \cdot *^{m-|\mu|}$. Thus, $\Conf^{\mu, \geq m-\mu}_{/\bbV(\calA)}(\phi_X^{-1}(T))$  is the image of a disjoint union of irreducible varieties of dimension $\leq r - m$ over $S$, thus  
\[ \dim_{/S} \Conf^{\mu, \geq m-\mu}_{/\bbV(\calA)}(\phi_X^{-1}(T)) \leq r - m.\]
Dividing by $[\bbV(\calA)]$ gives an element of $\Fil_{-m}$, completing the proof in the 0-admissible case. 

When $T$ is $M$-admissible for $M > 0$, we argue similarly, first decomposing configuration spaces into products according to points in $A$ and $X'$ witnessing the admissibility.

\end{proof}

\subsection{Taylor conditions}

As a special case of Theorem \ref{theorem:main-theorem-coherent}, we obtain:

\begin{theorem}\label{theorem.general}
Let $f:X \rightarrow S$ be a map of varieties, let $\calF$ be coherent sheaf on~$S$, and let $\calG \rightarrow f_*\calF$ be an $r$-infinitesimally $m$-generating set of global sections. If $T$ is a constructible subset of $\calP^r_{X/S}(\calF)$ such that $T^c$ is $M$-admissible, viewed as a local condition on $\calG$ via $\Taylor^r: f^*\calG \rightarrow \calP^r_{X/S}(\calF)$, then 
\[ \frac{ [\bbV(\calG)^{T-\ew} ]}{[\bbV(\calG)]} = \left. \prod_{x \in X/S} \left(1 - \frac{[T^c]_x}{[\bbV(\calP^r_{X/S}(\calF))]_x} t \right)\right|_{t=1} \mod \Fil_{-m + M} \]
\end{theorem}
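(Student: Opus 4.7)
The plan is to obtain Theorem \ref{theorem.general} as a direct specialization of Theorem \ref{theorem:main-theorem-coherent}, applied to the complementary condition $T^c$, after matching the Taylor setup with the abstract setup of Section \ref{section.motivic-prob-coherent-sheaves}.

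First I would instantiate the hypotheses of Theorem \ref{theorem:main-theorem-coherent} by taking $\calA := \calG$ on $S$, $\calB := \calP^r_{X/S}(\calF)$ on $X$, and $\phi: \calG \to f_* \calP^r_{X/S}(\calF)$ equal to the composition of the given map $\calG \to f_*\calF$ with $f_* d^r_{X/S,\calF}$, where $d^r_{X/S,\calF}$ is the expansion map on principal parts. Under the $(f^*,f_*)$-adjunction, this $\phi$ corresponds to the Taylor expansion map $\Taylor^r_{X/S}: f^*\calG \to \calP^r_{X/S}(\calF)$ appearing in the hypothesis, so the notion ``section of $\calG$ satisfies a given pointwise condition at $x$'' on either side agrees.

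The only real bookkeeping is that the two genericity conditions coincide. Unwinding definitions, $\calG \to f_*\calF$ being $r$-infinitesimally $m$-generating says exactly that for every $\mu \in \calQ$ with $|\mu| \leq m$ the map
\[ f_\mu^*\calG \to \calP^{r,\mu}_{X/S}(\calF) = \Conf^\mu_{X/S}(\calP^r_{X/S}(\calF)) \]
is surjective, which by construction is the map $\phi_\mu$ that appears in the $m$-generating condition on $\phi$ (here I would invoke Lemma \ref{lemma:compatibility-conf-realization} to identify $\Conf^\mu_{X/S} \circ \bbV$ with $\bbV \circ \Conf^\mu_{X/S}$ after geometric realization). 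The $M$-admissibility hypothesis on $T^c$ is a hypothesis on both sides. Applying Theorem \ref{theorem:main-theorem-coherent} to the constructible subset $T^c$ therefore gives
\[ 1 - \frac{[\bbV(\calG)^{T^c-\sw}]}{[\bbV(\calG)]} \equiv \left.\prod_{x\in X/S}\left(1 - \frac{[T^c]_x}{[\bbV(\calP^r_{X/S}(\calF))]_x}\, t\right)\right|_{t=1} \mod \Fil_{-m+M}. \]

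To conclude, I would invoke the complementation identity: a section of $\calG$ lies in $\bbV(\calG)^{T-\ew}$ iff its Taylor expansion meets $T$ at every geometric point of the corresponding fiber of $X/S$, iff it fails to meet $T^c$ at any such point, iff it does not lie in $\bbV(\calG)^{T^c-\sw}$. Hence these constructible subsets are complementary in $\bbV(\calG)$, so $[\bbV(\calG)^{T-\ew}]/[\bbV(\calG)] = 1 - [\bbV(\calG)^{T^c-\sw}]/[\bbV(\calG)]$, and substitution into the previous display yields the theorem. I do not anticipate any genuine obstacle: the entire probabilistic content sits in Theorem \ref{theorem:main-theorem-coherent}, and what remains is the translation between the principal-parts language (via the $(f^*,f_*)$-adjunction and Lemma \ref{lemma:compatibility-conf-realization}) and the abstract framework for coherent-sheaf motivic probability problems.
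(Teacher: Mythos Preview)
Your proposal is correct and follows essentially the same approach as the paper: apply Theorem \ref{theorem:main-theorem-coherent} with $\calA=\calG$, $\calB=\calP^r_{X/S}(\calF)$, and $\phi=\Taylor^r$, observe that $r$-infinitesimally $m$-generating is precisely the $m$-generating condition on $\phi$, and then use the complementation identity $\bbV(\calG)^{T-\ew}=\bbV(\calG)\setminus\bbV(\calG)^{T^c-\sw}$. The paper's proof is slightly terser (it does not explicitly invoke Lemma \ref{lemma:compatibility-conf-realization} at this point, since the identification $\Conf^\mu_{X/S}(\calB)=\calP^{r,\mu}_{X/S}(\calF)$ is already at the level of coherent sheaves), but the content is the same.
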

\begin{proof}
We apply Theorem \ref{theorem:main-theorem-coherent} with $\calA=\calG$, $\calB=\calP^r_{X/S}(\calF)$, and $\phi = \Taylor^r$; the map $\phi$ is $m$-generating by definition of $r$-infinitesimally $m$-generating. We obtain
\begin{align*} \frac{ [\bbV(\calG)^{T-\ew} ]}{[\bbV(\calG)]} & = 1 - \frac{[\bbV(\calG)^{T^c-\sw}]}{[\bbV(\calG)]} \\
&= \left.\prod_{x \in X/S} \left(1 - \frac{[T^c]_x}{[\bbV(\calP^r_{X/S}(\calF))]_x} t \right)\right|_{t=1} \mod \Fil_{-m + M}. \end{align*}

\end{proof}

We obtain Theorem \ref{maintheorem.general} from Theorem \ref{theorem.general} by using Example \ref{example:inf-m-gen-coh-sheaf} to control how large $d$ needs to be for $f_*\calF(d)$ to be $r$-infinitesimally $m$-generating. Theorem~\ref{theorem.taylorconditions} follows from Theorem \ref{maintheorem.general} after taking into account Example \ref{example:admissibility-codimension} to compare the two notions of admissibility and using Example \ref{example:surj-projective-space} to obtain a precise value of~$\epsilon$.

\section{Applications}\label{sect.applications}

In this section we present some applications of Theorem \ref{maintheorem.general} (and Theorem \ref{theorem.general}) to concrete problems in motivic statistics. Our goal is not to give an exhaustive list, but rather to discuss some interesting examples that illustrate how the theorem can be used in practice. Many of our examples are motivic analogs of results in arithmetic statistics that can be found in the literature, and it seems likely that for any natural\footnote{of course one can cook up examples where Poonen's general theorem applies but the Taylor conditions are not constructible, but we claim these are not natural!} statement in arithmetic statistics that can be proven via Poonen's sieving method, our Theorem~\ref{maintheorem.general} (or Theorem \ref{theorem.general}) can be used to give a motivic analog. 

\subsection{Hypersurfaces transverse to a quasi-projective variety}

The following theorem computes the asymptotic motivic density of hypersurfaces transverse to a fixed quasi-projective variety with isolated singularities. It is a straightforward application of Theorem \ref{theorem.taylorconditions} and the Euler product for the Kapranov zeta function (as in Example \ref{example:kapranov-zeta}). 

\begin{theorem}
Let ${X \subset \bbP^n_\bbC}$ be an irreducible locally closed subvariety with isolated singularities. Let $U_d$ be the constructible subset of 
\[ V_d:=H^0(\bbP^n_{\bbC}, \calO(d)) \]
consisting of sections that intersect $X$ transversely. Then, 
\[ \lim_{d \rightarrow \infty} \frac{[U_d]}{[V_d]} = \left(Z^{\Kap}_{X_\mathrm{smooth}}\left(\LL^{-(\dim X+1)}\right) \right)^{-1} \cdot (1-\bbL^{-1})^{\# X_\mathrm{sing}},\]
where we have written $X_{\mathrm{sing}}$ for the singular locus of $X$, a finite collection of points, and $X_{\mathrm{smooth}}=X-X_{\mathrm{sing}}$ for the smooth locus. 
\end{theorem}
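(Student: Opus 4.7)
The strategy is to apply Theorem \ref{theorem.taylorconditions} with $r = 1$ to a $1$-infinitesimal Taylor condition $T_d \subset \calP^1_d$ cut out fiberwise to encode transversality to $X$. Concretely, let $T_{d,x} := \calP^1_d|_x$ for $x \notin X$; let $T_{d,x}$ be the set of $1$-jets $(f, df)$ with $f \neq 0$ or $df|_{T_xX} \neq 0$ for $x \in X_{\mathrm{smooth}}$; and let $T_{d,x}$ be the set of $1$-jets with $f \neq 0$ for $x \in X_{\mathrm{sing}}$. Since the tangent distribution to $X_{\mathrm{smooth}}$ varies algebraically, $T_d$ is a constructible subset of $\calP^1_d$, and the standard local characterization of transversality (compare Example \ref{example.poonen-taylor-conditions}) gives $V_d^{T_d\text{-everywhere}} = U_d$.

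To verify admissibility of the complement, note that $T_d^c$ is empty over $\bbP^n \setminus X$ and has fibers of dimension $n - \dim X$ above $x \in X_{\mathrm{smooth}}$, reflecting the $\dim X + 1$ linear conditions $f = 0,\ df|_{T_xX} = 0$. Hence $T_d^c|_{\bbP^n \setminus X_{\mathrm{sing}}}$ has total dimension $n$, i.e.\ codimension $n + 1$ in $\calP^1_d|_{\bbP^n \setminus X_{\mathrm{sing}}}$, so $T_d^c$ is $(\#X_{\mathrm{sing}})$-admissible \emph{uniformly in $d$}. Moreover $\calP^1_d$ is locally free of rank $n + 1$ for $d \geq 1$, so the local densities $[T_d^c]_x/[\calP^1_d]_x$ are independent of $d$: they equal $\bbL^{-(\dim X + 1)}$ on $X_{\mathrm{smooth}}$, $\bbL^{-1}$ on $X_{\mathrm{sing}}$, and $0$ on $\bbP^n \setminus X$. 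Theorem \ref{theorem.taylorconditions} then yields
\[ \frac{[U_d]}{[V_d]} = \left.\prod_{x \in \bbP^n}\!\left(1 - \frac{[T_d^c]_x}{[\calP^1_d]_x}\, t\right)\right|_{t = 1} + O\!\left(\bbL^{-d/2}\right), \]
and the error term tends to $0$ in $\widehat{\calM}_\bbC$ as $d \to \infty$.

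It remains to identify the limiting Euler product. By multiplicativity of motivic Euler products along the locally closed decomposition $\bbP^n = X_{\mathrm{smooth}} \sqcup X_{\mathrm{sing}} \sqcup (\bbP^n \setminus X)$, and since the factor over $\bbP^n \setminus X$ is trivially $1$, the product decomposes as
\[ \prod_{x \in X_{\mathrm{smooth}}}\!\!\left(1 - \bbL^{-(\dim X + 1)} t\right) \cdot \prod_{x \in X_{\mathrm{sing}}}\!\!\left(1 - \bbL^{-1} t\right). \]
Expanding each via Proposition \ref{proposition:fundamental-euler-product} and invoking the identity $\Sym^n(a \bbL^{-m}) = \Sym^n(a) \bbL^{-nm}$ from Example \ref{example:labeled-conf-localization}, the first factor matches the monomial-substituted inverse Kapranov identity of Example \ref{example.kapranov-zeta-inverse}: it equals $Z^{\Kap}_{X_{\mathrm{smooth}}}(\bbL^{-(\dim X + 1)} t)^{-1}$, which at $t = 1$ is $Z^{\Kap}_{X_{\mathrm{smooth}}}(\bbL^{-(\dim X + 1)})^{-1}$. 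The second factor is a finite product over the $\#X_{\mathrm{sing}}$ reduced geometric points and evaluates at $t = 1$ to $(1 - \bbL^{-1})^{\#X_{\mathrm{sing}}}$, completing the identification.

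The only conceptual subtlety, and the point requiring the most care, is the choice of $T_d$ at singular points: because transversality cannot be formulated via tangent spaces when $T_xX$ is ill-defined, the correct condition is non-containment of $V(F)$, a codimension-one condition of density $1 - \bbL^{-1}$---precisely the source of the $(1 - \bbL^{-1})^{\#X_{\mathrm{sing}}}$ factor in the final answer.
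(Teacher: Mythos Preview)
Your proof is correct and follows precisely the approach the paper sketches: the paper's own argument is simply the one-line remark that the result is ``a straightforward application of Theorem~\ref{theorem.taylorconditions} and the Euler product for the Kapranov zeta function (as in Example~\ref{example:kapranov-zeta}),'' and you have supplied exactly those details---the fiberwise description of the transversality Taylor condition, the verification of $(\#X_{\mathrm{sing}})$-admissibility of the complement, and the factorization of the resulting Euler product into the inverse Kapranov zeta over $X_{\mathrm{smooth}}$ and the finite product over $X_{\mathrm{sing}}$.
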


\subsection{Smooth complete intersections in $\bbP^n$}
Let $K$ be a field. The following theorem gives the motivic analog of a point-counting result of Bucur and Kedlaya \cite{bucur-kedlaya:complete-intersections} on smooth intersections in $\bbP^n_K$. 

\begin{theorem}\label{theorem.complete-intersection}
Fix $1\leq k \leq n$ and, for $\underline{d}=(d_1, \ldots, d_k)$, let $U_{\underline{d}}$ be the constructible subset of $V_{\underline{d}}=H^0(\bbP^n_K, \calO(d_1) \oplus \ldots \oplus \calO(d_k))$ consisting of $k$-tuples of hypersurfaces intersecting transversely (i.e. cutting out a smooth variety of dimension $n-k$). Define 
$$L(n,k): = \prod_{j=0}^{k-1}(1 - \LL^{-n + j})\in K_0(\Var/K).$$
Then, 
$$ \lim_{\min(d_i)\rightarrow \infty} \frac{[U_{\underline{d}}]}{[V_{\underline{d}}]} =\\ \left.\prod_{x\in \bbP^n}\left(1 - \LL^{-k}(1-L(n,k)) \cdot t\right)\right|_{t=1} \textrm{ in } \widehat{\widetilde{\calM}}_K$$ 
\end{theorem}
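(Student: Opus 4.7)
The plan is to deduce the result from Theorem \ref{theorem.general} applied to $f \colon X = \bbP^n_K \to \Spec K$, the coherent sheaf $\calF = \calO(d_1)\oplus\cdots\oplus\calO(d_k)$ on $\bbP^n$, the generating sheaf $\calG = f_*\calF$ (so that $\bbV(\calG) = V_{\underline{d}}$), and $r = 1$. The relevant Taylor condition $T \subset \bbV(\calP^1_{X/K}(\calF))$ I will use is defined geometric-point-wise: for a geometric point $x$, the fiber $T_x$ consists of those 1-jets $(F_1,\ldots,F_k)$ such that either some $F_i$ is nonzero at $x$, or all the $F_i(x)$ vanish and the differentials $dF_1|_x,\ldots,dF_k|_x$ are linearly independent in the $n$-dimensional cotangent space. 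A $k$-tuple defines a smooth complete intersection of the expected dimension precisely when $T$ holds at every geometric point, so $U_{\underline{d}} = \bbV(\calG)^{T\text{-everywhere}}$; it remains to check admissibility and generation hypotheses and to compute the local density.

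For admissibility of $T^c$, the fiber $\bbV(\calP^1_{X/K}(\calF))|_x$ has dimension $k(n+1)$, while $T^c_x$ consists of 1-jets whose values all vanish (codimension $k$) and whose $k \times n$ Jacobian has rank $<k$, cutting out a determinantal subvariety of codimension $n-k+1$ inside the space of $k \times n$ matrices (by Porteous). Thus $T^c_x$ has codimension $n+1$ uniformly in $x$ and in $\underline{d}$, so by Example \ref{example:admissibility-codimension}, $T^c$ is $0$-admissible. For the density, the locus of rank-$k$ matrices inside the space of $k \times n$ matrices has class $\prod_{j=0}^{k-1}(\bbL^n - \bbL^j) = \bbL^{nk}\, L(n,k)$, obtained by successively choosing each row outside the span of the previous ones, so
\[
\frac{[T^c]_x}{[\bbV(\calP^1_{X/K}(\calF))]_x} \;=\; \bbL^{-k}\bigl(1 - L(n,k)\bigr),
\]
independent of $x$ and of $\underline{d}$.

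Finally, for the generating hypothesis, Example \ref{example:surj-projective-space} gives that each $f_*\calO(d_i)$ is $1$-infinitesimally $m$-generating whenever $d_i \geq 2m-1$, and since $\calP^1_{X/K}$ and $\Conf^\mu_{X/S}$ both commute with direct sums, $\calG$ is then $1$-infinitesimally $m$-generating whenever $\min(d_i) \geq 2m-1$. Theorem \ref{theorem.general} then yields, in $\widehat{\widetilde{\calM}}_K$,
\[
\frac{[U_{\underline{d}}]}{[V_{\underline{d}}]} \;\equiv\; \left.\prod_{x \in \bbP^n}\bigl(1 - \bbL^{-k}(1-L(n,k))\,t\bigr)\right|_{t=1} \pmod{\Fil_{-m}}
\]
whenever $\min(d_i) \geq 2m-1$, and sending $m \to \infty$ produces the desired limit. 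The only substantive computation is in the middle paragraph: one must set up the Taylor condition so that transversality is genuinely a pointwise condition, verify the uniform codimension $n+1$ bound on the bad locus (combining the vanishing codimension with the Porteous estimate for the determinantal locus), and correctly compute the motivic class of the rank-$k$ stratum in the matrix space; the rest is a direct application of the general framework.
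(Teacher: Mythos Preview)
Your proposal is correct and follows essentially the same approach as the paper: both apply Theorem \ref{theorem.general} with $r=1$ to the direct sum $\calO(d_1)\oplus\cdots\oplus\calO(d_k)$, identify $T^c$ as the locus where all values vanish and the gradients are linearly dependent, verify that this has codimension $n+1$ (hence $0$-admissible), compute the local density $\bbL^{-k}(1-L(n,k))$, and use Example \ref{example:surj-projective-space} componentwise to get the required $m$-generation. The only cosmetic differences are that you phrase the codimension count via Porteous and compute the class of full-rank matrices by successive row choices, whereas the paper writes the dimension of the dependent locus directly as $(n+1)(k-1)$; these give the same answer.
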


\begin{remark} Using numerical computations, Bucur and Kedlaya have observed that the corresponding arithmetic Euler product does not appear to be a rational number when $n \geq k \geq 2$; if this holds, then the Euler product cannot have a finite formula in terms of special values of zeta functions. Nevertheless, one could have used the power structure \cite{glm:power-structure} on the Grothendieck ring of varieties to express the motivic probability in Theorem \ref{theorem.complete-intersection}; however, just as in the proof of the main theorem on motivic random variables in \cite{howe:mrv2}, the power structure alone does not seem to be a sufficiently versatile tool to prove Theorem \ref{theorem.complete-intersection} without the intervention of motivic Euler products in computations. 
\end{remark} 

\begin{proof} For any $\underline{d}$, we consider the Taylor condition $$T\subset \calP^1(\calO(d_1) \oplus \ldots \oplus \calO(d_k)) = \calP^1(\calO(d_1)) \oplus \ldots \oplus \calP^1(\calO(d_k))$$  given by asking that for a $k$-tuple of sections $(f_1,\ldots,f_k)$, at any point, either $f_1,\ldots,f_k$ do not all vanish, or they all vanish and their gradients are linearly independent. Then, for any point $x$, $T^c_x$ corresponds to those $k$-tuples where $f_1,\ldots,f_k$ all vanish at $x$, and with linearly dependent gradients. 

We first check that $T^c$ is 0-admissible. The set of $k$-tuples of linearly dependent vectors in a vector space of dimension $n$ is of dimension $(n+1)(k-1)$. Thus, for every $x$, the set $T^c_x$ is of codimension $(n+1)k - (n+1)(k-1) = n+1$ in the corresponding fiber of the bundle of principal parts. 

We compute
\begin{multline*}\frac{[T^c]_x}{[\calP^1(\calO(d_1) \oplus \ldots \oplus \calO(d_k))]_x} = \LL^{-k}\times \frac{\LL^{nk}-(\LL^{n}-1)(\LL^{n} - \LL) \ldots (\LL^n - \LL^{k-1})}{\LL^{nk}}\\
= \LL^{-k} (1-L(n, k)).
\end{multline*}

By example \ref{example:surj-projective-space},  for any $m$, $\Gamma(\bbP^n_K, \calO(d_i))$ is 1-infinitesimally $m$-generating for $d_i$ sufficiently large. We may deduce from this that  $\Gamma(\bbP^n_K, \oplus_{i=1}^k\calO(d_i))$ is 1-infinitesimally $m$-generating when $\min(d_i)$ is sufficiently large, and we may apply Theorem \ref{theorem.general} to conclude. 
\end{proof}

\subsection{Hypersurfaces with exactly $m$ singular points}
Let $Y$ be a quasi-pro\-jec\-tive variety over $K$. Following \cite{vakil-wood:discriminants}, for every integer $n\geq 0$, denote by $\Sym^n_{[m]}(Y)$ the locally closed subset of $\Sym^{n}(Y)$ consisting of unordered $n$-tuples of points supported on exactly $m$ distinct geometric points. Denote by $$Z_{Y}^{\Kap, [m]} (t) = \sum_{n\geq 0} \Sym^{n}_{[m]}(Y)t^n\in K_0(\Var_{K})[[t]]$$
the corresponding generating function. 
In \cite{vakil-wood:discriminants}, Vakil and Wood prove the following theorem (modulo the correction of working in the modified Grothendieck ring):
\begin{theorem}[Vakil-Wood]\label{theorem.VWsingularpoints} Let $Y$ be a smooth projective variety of pure dimension $n>0$, with an ample line bundle $L$. Denote by $U_d^{[m]}$ the subset of $V_d = H^0(Y, L^d)$ of sections singular at exactly $m$ distinct geometric points. Then
$$\lim_{d\to \infty} \frac{\left[U_d^{[m]}\right]}{[V_d]} = \frac{Z_{Y}^{\Kap, [m]}(\LL^{-(n+1)})}{Z_{Y}^{\Kap}(\LL^{-(n+1)})} \textrm{ in } \widehat{\widetilde{\M}}_K$$
\end{theorem}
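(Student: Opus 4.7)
The plan is to mimic Gunther's \cite{gunther:random-hypersurfaces} proof of the arithmetic analog, decomposing $U_d^{[m]}$ by singular locus and applying Theorem \ref{maintheorem.general} over the configuration space. Since the singular locus of a section in $U_d^{[m]}$ is a reduced $0$-dimensional subscheme of $Y$ of length $m$, there is a constructible map $U_d^{[m]} \to \Conf^m(Y)$, so I set $S := \Conf^m(Y)$, $X := Y \times_K S$ with projection $f : X \to S$, $\calL := L \boxtimes \calO_S$, and $D \subset X$ the universal incidence subscheme (quasi-finite of degree $m$ over $S$). I then define the $1$-infinitesimal Taylor condition $T \subset \bbV(\calP^1_{X/S}(\calL^d))$ enforcing ``$F$ singular at $y$'' when $(y,c) \in D$ and ``$F$ non-singular at $y$'' when $(y,c) \in X \setminus D$, so that the forgetful pushforward of $\bbV(f_*\calL^d)^{T\textrm{-everywhere}}$ from $S$ to $\Spec K$ is exactly $U_d^{[m]}$, and $[U_d^{[m]}]/[V_d]$ becomes the pushforward of the relative motivic probability of satisfying $T$ everywhere.

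The hypotheses of Theorem \ref{maintheorem.general} are readily verified: $f_*\calL^d$ is $1$-infinitesimally $m$-generating for $d$ large (Example \ref{example:inf-m-gen-coh-sheaf}), and $T^c$ is $m$-admissible because ``singular'' cuts out codimension $n+1$ in the rank-$(n+1)$ principal-parts bundle $\calP^1_{X/S}(\calL^d)$ off of $D$, while $D$ itself is quasi-finite of degree $m$ over $S$ (cf.\ Example \ref{example:admissibility-codimension}). Writing $u := \LL^{-(n+1)}$, Theorem \ref{maintheorem.general} then gives, asymptotically in $d$,
\[
\frac{[\bbV(f_*\calL^d)^{T\textrm{-everywhere}}]}{[\bbV(f_*\calL^d)]} \;\longrightarrow\; \left. \prod_{(y,c) \in X/S}\left(1 - \alpha_{(y,c)} t\right) \right|_{t=1}
\]
in $\widehat{\widetilde{\calM}}_S$, where the local density $\alpha$ equals $u$ on $X \setminus D$ and $1-u$ on $D$.

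The simplification of this Euler product is best carried out fiberwise, appealing to Lemma \ref{lemma:injection-relative-product}: at each geometric point $\bar c = \{p_1, \ldots, p_m\} \in \Conf^m(Y)(\bar K)$, the Euler product factors by multiplicativity as $\prod_{y \in \bar c}(1 - (1-u)t) \cdot \prod_{y \in Y_{\bar K} \setminus \bar c}(1 - ut)$, whose value at $t=1$ is $u^m \cdot Z_{Y_{\bar K} \setminus \bar c}^{\Kap}(u)^{-1}$ (the finite $\bar c$-product being literal, the other factor evaluated via Example \ref{example.kapranov-zeta-inverse}). Using Kapranov-zeta multiplicativity $Z_{Y_{\bar K}}^{\Kap}(u) = Z_{Y_{\bar K} \setminus \bar c}^{\Kap}(u) \cdot Z_{\bar c}^{\Kap}(u)$ with $Z_{\bar c}^{\Kap}(u) = (1-u)^{-m}$, the integrand simplifies to $(u/(1-u))^m Z_Y^{\Kap}(u)^{-1}$, independent of $\bar c$. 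Pushing forward to $\Spec K$ then yields $[\Conf^m(Y)] \cdot (u/(1-u))^m \cdot Z_Y^{\Kap}(u)^{-1}$.

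To finish, I would match $[\Conf^m(Y)] \cdot (u/(1-u))^m = Z_Y^{\Kap, [m]}(u)$. The support-size stratification of $\Sym^\bullet Y$ gives the generating-function identity $\sum_m v^m Z_Y^{\Kap, [m]}(u) = \prod_{y \in Y}(1 + v \cdot u/(1-u))$, and by Proposition \ref{proposition:fundamental-euler-product} the coefficient of $v^m$ in the motivic Euler product on the right equals $\Conf^m_{Y/K}(u/(1-u)) = [\Conf^m(Y)] \cdot (u/(1-u))^m$, using that $u/(1-u)$ is a constant in $K_0(\Var/K)[[u]]$ so that the labeled-configuration class pushes forward to $[\Conf^m(Y)]$ times the $m$-th power of the constant. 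The main obstacle is the bookkeeping of motivic Euler products in positive characteristic: the fiberwise reduction via Lemma \ref{lemma:injection-relative-product} and the multiplicativity identity for $Z^{\Kap}$ must be handled in $\widehat{\widetilde{\calM}}_S$ rather than $\widehat{\calM}_S$, in order to avoid the obstructions to motivic inclusion-exclusion described in Remark \ref{remark:vakil-wood-error}.
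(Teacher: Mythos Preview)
Your setup over $S=\Conf^m(Y)$ and the application of Theorem~\ref{maintheorem.general} match the paper's proof exactly, and the verification of $m$-admissibility is correct. The gap is in your simplification of the resulting relative Euler product. The fiberwise reduction via Lemma~\ref{lemma:injection-relative-product} is not justified here: that lemma concerns the uncompleted ring, and more importantly, the fact that the geometric fibers of a class in $\widehat{\widetilde{\calM}}_S$ all equal a fixed constant $c$ does \emph{not} imply that the class is the pullback of~$c$. Concretely, your claimed identity $Z_Y^{\Kap,[m]}(u)=[\Conf^m(Y)]\cdot(u/(1-u))^m$ is false: for $m=2$ the coefficient of $u^3$ on the right is $2[\Conf^2 Y]$, while on the left it is $[\Sym^3_{[2]}Y]=[\Conf^{1^12^1}(Y)]=[\PConf^2 Y]$, and already for $Y=\bbA^1$ one has $[\PConf^2\bbA^1]=\LL^2-\LL\neq 2(\LL^2-\LL)$. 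The error is the step ``the labeled-configuration class pushes forward to $[\Conf^m(Y)]$ times the $m$-th power of the constant'': this holds for labels of the form $\LL^k$ (equation~(\ref{eq:grothringlocalisation})) but fails for general elements such as $u/(1-u)$, since $\Sym^m$ is not additive.

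The paper avoids this by working globally: multiplicativity of motivic Euler products factors the product over $X/S$ as $\prod_{x\in X/S}(1-\LL^{-(n+1)}t)\cdot\prod_{x\in\bc_m/S}\frac{1-(1-\LL^{-(n+1)})t}{1-\LL^{-(n+1)}t}$, and since $X=Y\times S$ the first factor is the pullback of $Z_Y^{\Kap}(\LL^{-(n+1)}t)^{-1}$, which can be extracted from $\int_S$ by $K_0(\Var/K)$-linearity. The remaining factor $\int_S\prod_{y\in\bc_m/S}\frac{t}{1-t}$ is then identified with $Z_Y^{\Kap,[m]}(t)$ \emph{directly} via formula~(\ref{equation.Kapranovzetam}), which unpacks the Euler product over the degree-$m$ universal configuration using~(\ref{equation:eulerproductconstantcoef}) --- crucially, this produces the various $[\Conf^\mu_{/\Conf^m Y}(\bc_m)]$ for $|\mu|=m$, not a single multiple of $[\Conf^m Y]$.
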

The existence of this result moreover prompted them to conjecture the arithmetic analog of the latter, which was later proved by Gunther \cite{gunther:random-hypersurfaces}. While the proof of Vakil and Wood's result in \cite{vakil-wood:discriminants} is quite lengthy and involves some intricate combinatorics, Gunther's proof of the arithmetic analog is surprisingly simple: for every possible configuration of $m$ singular geometric points, he sums the corresponding limit in Poonen's theorem with Taylor conditions, where the imposed Taylor condition is to be singular at the $m$ chosen points and smooth elsewhere. Since we are working over finite fields, the resulting sum is finite and gives the desired limit.

In the motivic setting, there is no longer a finite set of possible configurations, so we cannot simply add up the Euler products. Nonetheless, there is a natural way to integrate over all possible configurations: we may take the relative Euler product, which gives a power series with coefficients in the relative Grothendieck ring $K_0(\Var / \Conf^m(Y))$; to integrate, we simply forget the map to $\Conf^m(Y)$, to get a power series with coefficients in $K_0(\Var/K)$. 

In this section, using this idea and theorem \ref{theorem.general}, we will give a proof of theorem \ref{theorem.VWsingularpoints} closer to the spirit of Gunther's proof. 
 We start by giving a formula for $Z_{Y}^{\Kap, [m]} (t)$ in terms of motivic Euler products. Given a variety $S$ and a power series ${f \in  K_0(\Var / S)[[t]]}$, we write $\int_S f$ for the power series in $K_0(\Var / K)[[t]]$ given by forgetting the structure map to $S$ for each coefficient. We write 
\[ \bc_m = \{(y, \bc)\in Y \times \Conf^m Y,\ y\in |\bc|\}  \]
for the universal configuration of $m$-points over $\Conf^m Y$. 
\begin{example} We have
\begin{equation}\label{equation.Kapranovzetam}Z_{Y}^{\Kap, [m]} (t) = \int_{\Conf^m Y} \prod_{y\in \bc_m/\Conf^{m} Y} \frac{t}{1-t},\end{equation}
in $K_0(\Var/K)$, since
$$\prod_{y\in \bc_m/\Conf^m Y}(t + t^2 + \ldots ) = \sum_{\substack{\mu = (m_i)_{i\geq 1}\\ \sum m_i = m}} \Conf^{\mu}_{/\Conf^m Y} (\bc_m) t^{\sum \mu}$$
by (\ref{equation:eulerproductconstantcoef}), so that, for every given configuration $\bc$ of $m$ geometric points of $Y$ and for every integer $n$, the right-hand side of (\ref{equation.Kapranovzetam}) sums over all effective zero-cycles of degree $n$ on $Y$ supported on exactly these $m$ points.
\end{example}

We now turn to our proof of theorem \ref{theorem.VWsingularpoints}.
Put $S = \Conf^{m} Y$ and  $X = Y\times \Conf^{m} Y$, viewed as a variety over $S$ via the second projection. We denote by $\tilde{L}$ the pullback of the line bundle $L$ to~$X$. 
In terms of bundles of principal parts, we have $\calP^1_{/S}(\tilde{L}^d) = \mathrm{pr}_1^* \calP^1(L^d)$. We define the Taylor condition $\mathcal{T} \subset \calP^1_{/S}(\tilde{L}^d)$ by :
$$\mathcal{T}_{|\bc_m} = \text{zero section of}\  \calP^1_{/S}(\tilde{L}^d)_{|\bc_m},$$
and $$\mathcal{T}_{|X-\bc_m} = \text{complement of zero section in}\  \calP^1_{/S}(\tilde{L}^d)_{|X-\bc_m}.$$
We clearly have that $\mathcal{T}^c$ is $m$-admissible. Applying Theorem \ref{maintheorem.general}, we deduce that, in $K_0(\Var/S)$, 
\begin{equation}\label{eq:theoremBoverconfspace} \lim_{d \rightarrow \infty} \frac{[\mathrm{pr}_{2,*}(\tilde{L}^d)^{\mathcal{T}-\text{everywhere}}]}{[\mathrm{pr}_{2,*}(\tilde{L}^d)]} =\left. \prod_{x\in X/S}\left(1-\frac{[\mathcal{T}^c]_x}{[P^1(\tilde{L}^d)]_x}t\right)\right|_{t = 1}\end{equation}
On the left-hand side of (\ref{eq:theoremBoverconfspace}), we have $\mathrm{pr}_{2,*}(\tilde{L}^d) = H^0(X,L^d)\times_K S,$ and 
$$\mathrm{pr}_{2,*}(\tilde{L}^d)^{\mathcal{T}-\text{everywhere}} = \{(f,\bc)\in H^0(Y,L^d)\times S,\ \text{$f$ is singular exactly at  $|\bc|$}\},$$
so that after applying the forgetful map to $K_0(\Var/K)$, this limit is the one we are looking for. 
The Euler product on the right-hand side of (\ref{eq:theoremBoverconfspace}) may be rewritten
$$\prod_{x\in X-\bc_m/S} \left( 1-\LL^{-(n+1)}t\right) \prod_{x\in \bc_m/S} (1- (1-\LL^{-(n+1)})t).$$
$$ = \prod_{x\in X/S} (1-\LL^{-(n+1)}t)\prod_{x\in \bc_m/S}\frac{(1- (1-\LL^{-(n+1)})t)}{(1 - \LL^{-(n+1)}t)}.$$
Applying $\int_S$ (which is a morphism of $K_0(\Var/K)$-modules on each coefficient), we get
$$Z_{Y}^{\Kap}(\LL^{-(n+1)}t)^{-1} \int_S \prod_{x\in \bc_m/S}\frac{(1- (1-\LL^{-(n+1)})t)}{(1 - \LL^{-(n+1)}t)}.$$
The latter evaluated at $t = 1$ is equal to $$Z_{Y}^{\Kap}(\LL^{-(n+1)})^{-1}\left.\int_{\Conf^m Y} \prod_{y\in \bc_m/\Conf^{m} Y} \frac{t}{1-t}\right|_{t = \LL^{-(n+1)}},$$
which, using (\ref{equation.Kapranovzetam}), gives us the result. 

\begin{remark} This method allows us to compute the limiting probability of the following more general problem: given a partition $\tau$ and subsets $U, T\subset \calP^{s}(L^d)$ such that $U^c$ is admissible, what is the motivic density of the constructible set
$V_{U, T,d}^{\tau}$ of pairs $$(\bc, F) \in \Conf^{\tau} Y \times V_d  .$$
such that the Taylor expansion of $F$ lies in $T$ at the points of $|\bc|$ and in $U$ at all other points? Here one should think of $U$ as describing the ambient condition, and $T$ as describing a special condition only required along a finite set of points. 
\end{remark}

\subsection{Smooth hypersurfaces containing a subvariety}
\newcommand{\calI}{\mathcal{I}}
Let $V \subset \bbP^n_K$ be an arbitrary subvariety with ideal sheaf $\calI$. Applying Theorem \ref{maintheorem.general} with $X=\bbP^n_K$, $\calL=\calO(1)$, and $\calF=\calI$, we obtain a motivic analog of \cite[Corollary 1.4]{gunther:random-hypersurfaces} (see also \cite{wutz:thesis}); we leave the details to the interested reader. 

\subsection{Semi-ample Bertini}
The following theorem is a motivic analog of the semi-ample Bertini theorem of Erman and Wood \cite{erman-wood:semiample-bertini}.  
\newcommand{\calS}{\mathcal{S}}

\begin{theorem} Let $X/\bbC$ be a projective variety equipped with a globally generated line bundle $\calS$, and consider the induced map
\[ f_{\calS}: X \rightarrow \bbP( H^0(X, \calS)^* ). \]
Let $\calL$ be an ample line bundle on $X$, let $U_{n,d}$ denote the constructible subset of $V_{n,d} = H^0(X, \calL^n \otimes \calS^d)$ consisting of sections with smooth vanishing locus, and let $T_{n,d} \subset V_{n,d} \times \bbP( H^0(X, \calS)^* )$ 
be the constructible subset consisting of pairs $(a, s)$ such that the vanishing locus of $a$ is smooth along the fiber $f_{\calS}^{-1}(s)$. Then, for any fixed sufficiently large $n$, 
\[ \lim_{d \rightarrow \infty} \frac{[U_{n,d}]}{[V_{n,d}]} =\left. \lim_{d \rightarrow \infty} \prod_{x\in \bbP( H^0(X, \calS)^* )} \left( 1 - \frac{[T_{n,d}^c]_x}{[V_{n,d}]}t \right)\right|_{t=1}. \]
\end{theorem}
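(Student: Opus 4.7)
We plan to apply Theorem~\ref{theorem:main-theorem-coherent} with the following data: base $S = \Spec\,\bbC$, total space $\overline{X} := f_\calS(X) \subset \bbP(H^0(X,\calS)^*)$ equipped with its structural morphism to $\Spec\,\bbC$, coherent sheaf $\calA = V_{n,d}$ on $\Spec\,\bbC$ (i.e.\ a $\bbC$-vector space), and $\calB = (f_\calS)_*\,\calP^1_{X/\bbC}(\calL^n\otimes\calS^d)$ on $\overline{X}$. The natural absolute $1$-jet expansion induces $\phi\colon V_{n,d}\to\Gamma(\overline{X},\calB) = H^0\!\bigl(X,\calP^1_{X/\bbC}(\calL^n\otimes\calS^d)\bigr)$. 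Define the Taylor condition $T\subset\bbV(\calB)$ by declaring its fiber $T_x$ (for $x\in\overline{X}$) to consist of those elements whose induced pointwise $1$-jet in $\calP^1_{X/\bbC}(\calL^n\otimes\calS^d)|_y$ at each closed $y\in f_\calS^{-1}(x)$ cuts out a smooth subscheme. Then $\bbV(\calA)^{T\text{-}\ew} = U_{n,d}$, since $V(a)$ is smooth iff it is smooth along every fiber of $f_\calS$; moreover $T_{n,d}^c$ is supported above $\overline{X}$, so the Euler product in the statement coincides with the one produced by Theorem~\ref{theorem:main-theorem-coherent}.

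We next verify the two hypotheses of Theorem~\ref{theorem:main-theorem-coherent}: that $\phi$ is $m$-generating and that $T^c$ is $M$-admissible. For $m$-generation, fix $n$ large enough that $R^{>0}(f_\calS)_*\calP^1_{X/\bbC}(\calL^n)=0$ (relative Serre vanishing, valid since $\calL$ is ample on $X$ hence relatively ample for $f_\calS$) and that $(f_\calS)_*\calP^1_{X/\bbC}(\calL^n)$ surjects pointwise onto $\calP^1_{X/\bbC}(\calL^n)|_y$ for every closed $y\in X$. Using the projection formula together with $\calS^d = f_\calS^*\calO_{\overline{X}}(d)$, we obtain $\calB \cong \mathcal{E}\otimes\calO_{\overline{X}}(d)$ for a coherent sheaf $\mathcal{E}$ on $\overline{X}$ that is independent of $d$. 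For any prescribed $m$, once $d$ is sufficiently large that $\calO_{\overline{X}}(d)$ separates any $m$ distinct points of $\overline{X}$ and kills the higher cohomology of $\mathcal{E}$ with appropriate twists, the global sections $V_{n,d} = H^0(\overline{X},\mathcal{E}\otimes\calO_{\overline{X}}(d))$ surject onto $\bigoplus_{i=1}^k \calB|_{x_i}$ for any distinct $x_1,\dots,x_k\in\overline{X}$ with $k\le m$. The case $k=1$ gives, in particular, $[T^c]_x/[\bbV(\calB)]_x = [T_{n,d}^c]_x/[V_{n,d}]$, matching the local factors in the two Euler products.

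For $M$-admissibility of $T^c$, observe that outside a finite ``bad'' locus of $\overline{X}$ — comprising the image of the singular locus of $X$, the critical locus of $f_\calS$, the loci where fiber dimension of $f_\calS$ jumps, and where base change between $\calB|_x$ and the global $1$-jets along $f_\calS^{-1}(x)$ fails, all depending only on $X$, $\calL$, $\calS$ and not on $d$ — the complementary condition $T^c_x$ is the incidence locus of those elements whose associated section and differential both vanish at some point of $f_\calS^{-1}(x)$. This has codimension at least $(1 + \dim X) - \dim f_\calS^{-1}(x) = 1 + \dim\overline{X}$ in $\bbV(\calB)_x$, as required; thus $T^c$ is $M$-admissible with $M$ uniformly bounded in $d$.

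Combining the ingredients, Theorem~\ref{theorem:main-theorem-coherent} yields
\[ \frac{[U_{n,d}]}{[V_{n,d}]} \;\equiv\; \left.\prod_{x\in\overline{X}/\bbC}\!\left(1 - \frac{[T_{n,d}^c]_x}{[V_{n,d}]}\,t\right)\right|_{t=1} \pmod{\Fil_{-m+M}}, \]
valid once $d$ is sufficiently large, with $m$ tending to $\infty$ as $d\to\infty$ (for $n$ fixed large). Passing to the limit $d\to\infty$ delivers the asserted identity. The main obstacle is the uniform control required in the last two steps: the $m$-generation property must be achievable for arbitrarily large $m$ by increasing $d$ alone (keeping $n$ fixed), while the admissibility constant $M$ must remain bounded independently of $d$. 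Both rely on a careful analysis of how tensoring with $\calS^d$ and pushing forward along $f_\calS$ interact with the principal-parts sheaf $\calP^1_{X/\bbC}(\calL^n)$.
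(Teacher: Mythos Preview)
Your overall strategy—pushing forward along $f_\calS$ and applying the coherent-sheaf main theorem on (the image in) $\bbP(H^0(X,\calS)^*)$—is the same as the paper's. However, there is a genuine gap in your $m$-generation argument.

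You set $\calB = (f_\calS)_*\calP^1_{X/\bbC}(\calL^n\otimes\calS^d)$ and then claim the projection formula gives $\calB\cong\mathcal{E}\otimes\calO_{\overline X}(d)$ with $\mathcal{E}$ independent of $d$. This would require $\calP^1_X(\calL^n\otimes f_\calS^*\calO(d))\cong\calP^1_X(\calL^n)\otimes f_\calS^*\calO(d)$, which is false in general: since $\calP^1_X(\calF)=p_{1*}p_2^*\calF$ for the two projections $p_i\colon\Delta^{(1)}\to X$, the two sides differ by $p_1^*\calS^d$ versus $p_2^*\calS^d$ on $\Delta^{(1)}$, and these do not agree unless $f_\calS$ kills first-order data. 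Without this factorization you cannot invoke the ``twist by ample gives $m$-generation for large $d$'' mechanism, and no alternative argument is supplied. This is exactly the ``main obstacle'' you flag in your final paragraph, and it is not resolved.

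The paper avoids this by reversing the order of operations: it pushes forward the line bundle \emph{first}, setting $\calM=(f_\calS)_*\calL^n$, so that the projection formula (now legitimately) gives $(f_\calS)_*(\calL^n\otimes\calS^d)=\calM(d)$. It then applies Theorem~\ref{maintheorem.general} on $\bbP$ to the sheaf $\calM$ twisted by the ample $\calO(1)$, with the Taylor condition living in $\calP^1_{\bbP}(\calM(d))$ rather than in the pushforward of $\calP^1_X$. This places the problem exactly in the form ``coherent sheaf twisted by ample line bundle'' that Theorem~\ref{maintheorem.general} is designed for, so $m$-generation for large $d$ comes for free from Example~\ref{example:inf-m-gen-coh-sheaf}. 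The key observation that makes this choice of target sheaf legitimate is that smoothness of $V(a)$ along the fibre $f_\calS^{-1}(s)$ is already determined by the image of $a$ in $\calP^1_{\bbP}(\calM(d))|_s$, i.e.\ by the restriction of $a$ to the first infinitesimal neighbourhood of the fibre. Your admissibility sketch, by contrast, is essentially the same as the paper's and is fine.
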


\begin{remark} If $\calS$ is not the trivial bundle then the sequence of Euler products on the right-hand side stabilizes for $d$ sufficiently large. If $\calS$ is the trivial bundle, then the statement is a tautology as for each $d$ the $d$th terms of both sides agree. 
\end{remark}

\begin{proof}
The strategy of the proof is to invoke Theorem \ref{maintheorem.general} after pushing forward the entire situation to $\bbP := \bbP(H^0(X,\calS)^*)$ (in particular, because the pushforward of a line bundle is in general just a coherent sheaf and not a vector bundle, it is crucial that we allowed this generality in Theorem \ref{maintheorem.general}). Smoothness along the fiber is remembered by the first sheaf of principal parts after the pushforward, because the fiber of this sheaf at a point is given by sections over the first infinitesimal neighborhood. The only subtlety is then to verify that the complementary Taylor condition of being singular along the fiber is admissible, which requires passing to a sufficiently large power of $\calL$. 

We write $\bbP := \bbP(H^0(X,\calS)^*)$ and $\calM = f_{\calS *} \calL^n$, a coherent sheaf on $\bbP$. Then, by the projection formula, 
\[ f_{\calS *}(\calL^n \otimes \calS^d) = \calM(d), \]
so that we have
\[ H^0(\bbP, \calM(d)) = H^0( \bbP, f_{\calS *}(\calL^n \otimes \calS^d) ) = H^0 (X, \calL^n \otimes \calS^d) = V_{n,d}. \]

 Theorem \ref{maintheorem.general} applied to $\calM$ implies that for any Taylor condition $T$ on global sections of $\calM(d)$ with admissible complement, we have
$$\frac{[H^0(\bbP, \calM(d))^{T-\text{everywhere}}]}{[H^0(\bbP,\calM(d))]}  = \left.\prod_{x\in \bbP} \left(1 - \frac{[T^c]_x}{[\bbV(\calP^1(\calM(d)))]_x} t\right)\right|_{t = 1} + O(\bbL^{-\epsilon d})$$
The idea is to deduce the result from this by applying it to a well-chosen Taylor condition $T$.

 Take $n$ large enough so that $\calL^n$ is 1-infinitesimally 1-generating. Then, since~$\calS$ is globally generated, by Remark \ref{remark:tensorbygloballygenerated} we have that $\calL^n\otimes \calS^d$ is 1-infinitesimally 1-generating for any~$d>0$, so the Taylor expansion map
$$\mathrm{Taylor}: H^0(X, \calL^n\otimes \calS^d)\times X\to \calP^1(\calL^n\otimes \calS^d)$$
is surjective. We consider the Taylor condition $T'\subset \calP^1(\calL^n\otimes \calS^d)$ given by the complement of the zero section. Note that via the map
$$\id\times f_{\calS}: V_{n,d}\times X\to V_{n,d}\times \bbP,$$
we have
$$T_{n,d}^c = (\id\times f_{\calS})(\mathrm{Taylor}^{-1}(T'^c)).$$
By definition, we have $\dim \mathrm{Taylor}^{-1}(T')_x = \dim V_{n,d} - \dim X-1$ for any $x\in X$. 
This allows us to bound the codimension of $T_{n,d}^c$.  Indeed, up to stratifying $f_{\calS}$ into irreducible pieces with fibers of constant dimension, assume all of the fibers of $f_{\calS}$ have the same dimension. Then we have
$$\dim (T_{n,d}^c)_{|s} \leq \dim T'^c_{x}+ \dim f_{\calS}^{-1}(s) = \dim V_{n,d} - (\dim X + 1) + \dim f_{\calS}^{-1}(s),$$
so that
$$\codim (T_{n,d}^c)_{|s} \geq \dim X +1 - \dim f_{\calS}^{-1}(s) = \dim \bbP + 1$$
On the other hand, $V_{n,d}\times \bbP$ may be rewritten as
$H^0(\bbP, \calM(d))\times \bbP$, and we have the Taylor expansion map
$$\mathrm{Taylor}: H^0(\bbP, \calM(d))\times \bbP \to \bbV(\calP^1(\calM(d))),$$
which is surjective for $d$ sufficiently large. Consider the image $T$ of $T_{n,d}$. Because smoothness along the fiber is remembered by $\calP^1(\calM(d))$, we have $\mathrm{Taylor}^{-1}(T^c) = T_{n,d}^c$. This shows that $T^c$ and $T_{n,d}^c$ have the same codimension, which proves admissibility of $T^c$. Moreover, by Corollary \ref{corollary:presentation-of-surjection}, this gives us the equality
$$\frac{[T_{n,d}^c]}{[V_{n,d}]} = \frac{[T^c]}{[\bbV(\calP^1(\calM(d)))]}$$
in $\M_{\bbP}$ which allows us to conclude.

\end{proof}

\subsection{Surjections onto vector bundles}

The following application of Theorem~\ref{maintheorem.general} was communicated to us by Benjamin Fayyazuddin Ljungberg; a special case is used in his proof \cite{ljungberg:thesis} of instances of the motivic Tamagawa number conjecture. 

\begin{theorem}
\newcommand{\calV}{\mathcal{V}}
Let $f:X \rightarrow S$ be a smooth projective curve, and let $\calV$ be a vector bundle of rank $n$ on $X$. Let $U_d/S$ be the open subset of surjections from $\calO(-d)^{n+1}$ onto $\calV$ inside the space of all maps
\[ V_d:=\bbV \left(f_* \mathcal{H}om(\calO(-d)^{n+1}, \calV)\right). \]
Then, in $\widehat{\widetilde{\calM}}_S$, 
\[ \lim_{d\rightarrow \infty} \frac{[U_d]}{[V_d]}=Z_{X/S}(n+1) \cdot \ldots \cdot Z_{X/S}(2) \]
where $Z_{X/S}(k)$ denotes $Z_{X/S}^{\Kap}(\LL^{-k})$. 
\end{theorem}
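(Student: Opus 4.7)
The plan is to apply Theorem~\ref{theorem:main-theorem-coherent} with $\calA = f_*\calB$, where $\calB := \mathcal{H}om(\calO(-d)^{n+1}, \calV) \cong \calV(d)^{n+1}$ on $X$, and $\phi: \calA \to f_*\calB$ the identity map (so $\bbV(\calA) = V_d$). The condition $T \subset \bbV(\calB)$ is the closed subvariety whose fiber over $x \in X$ is the locus of non-surjective linear maps $k(x)^{n+1} \to \calV|_x$. Then $\bbV(\calA)^{T-\mathrm{sw}}$ is precisely the complement of $U_d$ in $V_d$, so Theorem~\ref{theorem:main-theorem-coherent} will express $[U_d]/[V_d]$ as a motivic Euler product up to a $O(\bbL^{-\epsilon d})$ error that vanishes in the limit.

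The hypotheses are verified as follows. At each $x$, $\bbV(\calB)|_x$ is an affine space of dimension $n(n+1)$ and $T|_x$ is the determinantal locus of rank-$\leq n-1$ maps, which has codimension $2 = \dim X/S + 1$; by Example~\ref{example:admissibility-codimension}, $T$ is $0$-admissible. For $m$-generation (for any fixed $m$): since $\calV$ is a vector bundle on the smooth curve $X$ and $\calO_X(1)$ is relatively ample, Example~\ref{example:inf-m-gen-coh-sheaf} with $r=0$ shows that $f^*f_*\calB \to \calB$ is jointly surjective on the fibers over any $m$ distinct geometric points of $X$ lying over a common point of $S$, once $d$ is sufficiently large. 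The local density is then computed by the standard Grothendieck-ring analog of the finite-field count of surjections $k^{n+1} \twoheadrightarrow k^n$: stratifying by the $1$-dimensional kernel (a point of $\bbP^n$, contributing $[\bbP^n]$) and by an isomorphism $k^{n+1}/\ker \xrightarrow{\sim} \calV|_x$ (a $\mathrm{GL}_n$-torsor, contributing $\prod_{i=0}^{n-1}(\bbL^n - \bbL^i)$), one obtains
\[ \frac{[T]_x}{[\bbV(\calB)]_x} = 1 - \prod_{k=2}^{n+1}(1 - \bbL^{-k}). \]

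Applying Theorem~\ref{theorem:main-theorem-coherent} and passing to the limit yields
\[ \lim_{d \to \infty} \frac{[U_d]}{[V_d]} = \left. \prod_{x \in X/S} \Bigl(1 - \bigl(1 - \prod_{k=2}^{n+1}(1 - \bbL^{-k})\bigr)\, t \Bigr) \right|_{t=1}. \]
The main step, which is also the principal obstacle, is to identify this evaluated Euler product with the stated product of Kapranov-zeta values. The strategy is to combine multiplicativity of motivic Euler products in the integrand (\cite[Section~3.8]{bilu:thesis}) with the Kapranov-zeta identities of Example~\ref{example:kapranov-zeta} and Example~\ref{example.kapranov-zeta-inverse}, namely $\prod_{x \in X/S} (1/(1-t))|_{t=\bbL^{-k}} = Z_{X/S}^{\Kap}(\bbL^{-k}) = Z_{X/S}(k)$, together with the monomial-substitution rule $t \mapsto \bbL^{-k}t$ of Lemma~\ref{lemma:monomialsubstitution}. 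The delicate point is that $1 - q_x t$ does \emph{not} split into a product of linear factors $\prod_{k}(1 - \bbL^{-k}t)$ as a polynomial in $t$ (only the evaluations at $t=1$ agree), so the commutation of the Euler product with the factorization of $1 - q_x = \prod_{k=2}^{n+1}(1 - \bbL^{-k})$ must be carried out at the level of the already-evaluated Euler product, reinterpreting $\prod_{x \in X/S}(1 - q_x t)|_{t=1}$ as $\prod_{x \in X/S}(1-q_x)$ and commuting the outer product past the finite inner product of constants in $\widehat{\calM}_X$ using the multiplicativity and evaluation machinery of \cite{bilu:motiviceulerproducts}. Executing this commutation rigorously is the heart of the argument, after which the identification with the product $Z_{X/S}(n+1) \cdots Z_{X/S}(2)$ (up to reconciling the Euler-product sign convention with the chosen definition of $Z_{X/S}(k)$) is immediate.
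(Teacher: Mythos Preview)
Your setup matches the paper's: you apply Theorem~\ref{theorem:main-theorem-coherent} (equivalently Theorem~\ref{maintheorem.general} with $r=0$) to the sheaf $\mathcal{H}om(\calO(-d)^{n+1},\calV)$, verify $0$-admissibility via the codimension-$2$ determinantal locus, check $m$-generation via Example~\ref{example:inf-m-gen-coh-sheaf}, and compute the local density $[T]_x/[\bbV(\calB)]_x = 1 - \prod_{k=2}^{n+1}(1-\bbL^{-k})$. All of this is correct and essentially identical to the paper.

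The gap is in your final paragraph. You are right that the identification
\[
\left.\prod_{x\in X/S}\Bigl(1-\bigl(1-\textstyle\prod_{k=2}^{n+1}(1-\bbL^{-k})\bigr)t\Bigr)\right|_{t=1}
\;=\;
\prod_{k=2}^{n+1}\left.\prod_{x\in X/S}(1-\bbL^{-k}t_k)\right|_{t_k=1}
\]
is the heart of the argument, but your proposed mechanism---``reinterpreting $\prod_x(1-q_xt)|_{t=1}$ as $\prod_x(1-q_x)$ and commuting the outer product past the finite inner product of constants''---is not a move that the theory of motivic Euler products supports. There is no general principle allowing you to factor the integrand of an evaluated Euler product post hoc; as the paper's remark after Lemma~\ref{lemma:monomialsubstitution} emphasizes, only \emph{monomial} substitutions are compatible with Euler products, and the factorization $1-q_x t \leadsto \prod_k(1-\bbL^{-k}t)$ is not monomial in $t$.

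The paper's device, which you are missing, is to introduce a single parent series in auxiliary variables $s_I$ indexed by nonempty subsets $I\subset\{1,\dots,n\}$, namely
\[
1+\sum_{\emptyset\neq I}(-1)^{|I|}\,\bbL^{-\sum_{i\in I}(i+1)}\,s_I,
\]
from which \emph{both} expressions arise by genuine monomial substitutions: $s_I\mapsto\prod_{i\in I}t_i$ gives $\prod_i(1-\bbL^{-(i+1)}t_i)$, while $s_I\mapsto t$ gives $1-(1-\prod_i(1-\bbL^{-(i+1)}))t$. The Euler product of the parent series converges when all $s_I=1$, so Lemma~\ref{lemma:monomialsubstitution} applied twice gives the desired equality. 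This is the step you need to supply.
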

\begin{proof}

We write
\[ H_d = \calO(d) \otimes \calO^{n+1} \otimes \mathcal{V} = (\calO(-d)^{n+1})^* \otimes \mathcal{V} = \underline{\mathrm{Hom}} ( \calO(-d)^{n+1}, \mathcal{V}) \]
Writing $H_d$ also for the total space, we define $T_d$ to be open subset given by surjections in each fiber, which we view as a 0-infinitesimal Taylor condition on $H_d$. An equivalent definition of $T_d$ is as the locus where the dual map is an injection, and using this description, by Noetherian induction we find that in $K_0(\Var / X)$,
\[ [T_d/X] = (\bbL^{n+1} - 1) \cdot \ldots \cdot (\bbL^{n+1} - \bbL^{n-1}). \]
As $[H_d/X] = \bbL^{n(n+1)}$, we obtain
\[ \frac{[T_d/X]}{[H_d/X]}= (1 - \bbL^{-(n+1)})\cdot \ldots \cdot (1- \bbL^{-2}). \]
We write $\mathrm{GS}_d$ for the space of global surjections, i.e. the constructible subset of the total space of $f_* H_d$ satisfying the condition $T_d$ at each point of $X$. Because the complement of $T_d$ in $H_d$ has codimension $2$ (relative over $S$), Theorem \ref{maintheorem.general} gives that in $\widehat{\widetilde{\calM}}_S$,
\[ \lim_{d \rightarrow \infty} \frac{[\mathrm{GS}_d]}{[f_*H_d]} = \left. \prod_{x \in X / S} \left( 1 - (1 - (1 - \bbL^{-(n+1)})\cdot \ldots \cdot (1- \bbL^{-2})) t\right)\right|_{t=1} \]
We show now that the value on the right-hand side is equal to the product of motivic zeta values 
\[ Z_{X/S}(n+1) \cdot \ldots \cdot Z_{X/S}(2). \]
This product of zeta values can be expressed using motivic Euler products as
\begin{multline} \left.\prod_{x \in X/S}(1 - \bbL^{-(n+1)}t_1)\right|_{t_1=1} \cdot \ldots \cdot \left.\prod_{x \in X/S}(1 - \bbL^{-2}t_n)\right|_{t_n=1} = \\ \left. \left( \prod_{x \in X/S} (1 - \bbL^{-(n+1)}t_1) \cdot \ldots \cdot (1 - \bbL^{-2}t_n) \right)\right|_{t_1=1,\ldots,t_n=1}.
\end{multline}
Now, by expanding the products, we find that both \[ (1 - \bbL^{-(n+1)}t_1) \cdot \ldots \cdot (1 - \bbL^{-2}t_n)  \textrm{ and }  1 - (1 - (1 - \bbL^{-(n+1)})\cdot \ldots \cdot (1- \bbL^{-2})) t \]
can be obtained via different monomial substitutions from a single power series in variables $s_I$ for $I$ a non-empty subset of $\{1, \ldots, n\}$. The Euler product for this power series converges when all of the $s_I$ are evaluated at $1$, and thus, by applying Lemma \ref{lemma:monomialsubstitution} and evaluating all variables to $1$, we find that
\[  \left.\prod_{x \in X / S}  \left(1 - (1 - (1 - \bbL^{-(n+1)})\cdot \ldots \cdot (1- \bbL^{-2})) t\right)\right|_{t=1} = Z_{X/S}(n+1) \cdot \ldots Z_{X/S}(2). \]

\end{proof}

\section{An analytic unification of arithmetic and motivic statistics}\label{sect.conjecture}
\newcommand{\bW}{\mathbf{W}}
In this section, we explain how Poonen's finite field Bertini theorem and Vakil-Wood's motivic analog (in the Grothendieck ring of varieties over a finite field) can be unified by a single analytic conjecture about Hasse-Weil zeta functions. This discussion can be extended to a unification of Poonen's Bertini theorem with Taylor coefficients and our Theorem \ref{theorem.taylorconditions}, however, as the conjecture is already interesting in the simpler case, we constrain ourselves to just a few remarks about the generalization at the end of this section (cf. Remark \ref{remark.general-unification} below). 

The key observation is that Poonen's and Vakil-Wood's results can both be interpreted as giving the convergence of a sequence of renormalized zeta functions, but for two different topologies on the space of rational functions. In Poonen's theorem, the convergence is for the Witt topology, where a rational function is considered to be small if the first $N$ coefficients of its Taylor expansion at $0$ are small. In Vakil-Wood's theorem, the convergence is for the weight topology, where a rational function is considered to be small if its poles and zeroes are all large. 

The Witt topology and the weight topology are incompatible, but they admit a natural common refinement, which we call the Hadamard topology, and our unifying conjecture (Conjecture \ref{conj.zeta-convergence-general} below) then states that the sequence of zeta functions converges in the Hadamard topology. The Hadamard topology is closely related to the topology of uniform convergence, and in the case of $\bbP^n$ our conjecture has a particularly nice form:

\begin{conjecture}\label{conj.zeta-convergence-Pn}
Let $U_d \subset V_d := \Gamma(\bbP^n, \calO(d))$ be the space of smooth hypersurface sections of degree $d$ in $\bbP^n$. Then, 
\[ \lim_{d\rightarrow \infty} Z_{U_d/\bbF_q}\left(q^{-\dim V_d} t\right) = Z_{\mathrm{GL}_{n+1}/\bbF_q}\left(q^{-(n+1)^2}t\right) \]
in the Hadamard topology. In particular, the sequence of meromorphic functions 
\[ \frac{ Z_{U_d/\bbF_q}(q^{-\dim V_d} t) }{Z_{\mathrm{GL}_{n+1}/\bbF_q}(q^{-(n+1)^2}t)} \]
converges uniformly on compact sets in $\bbC$ to the constant function $1$. 
\end{conjecture}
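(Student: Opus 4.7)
The plan is to convert the analytic statement into a statement about Frobenius eigenvalues via the Grothendieck--Lefschetz trace formula. Writing
\[ Z_{U_d/\bbF_q}(t) = \prod_{i=0}^{2\dim U_d} \det\!\left(1 - \mathrm{Frob} \cdot t \,\big|\, H^i_c(U_{d,\overline{\bbF_q}}, \bbQ_\ell)\right)^{(-1)^{i+1}}, \]
the renormalization $t \mapsto q^{-\dim V_d} t$ sends each eigenvalue $\alpha$ on $H^i_c$ to $q^{-\dim V_d}\alpha$, which has absolute value at most $q^{-\dim V_d + i/2}$ by Deligne's Weil II. Similarly decompose $Z_{\mathrm{GL}_{n+1}/\bbF_q}(q^{-(n+1)^2}t)$. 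The first step is to reformulate uniform convergence on compact sets as an assertion about the two factored products: all but finitely many factors of each must, on a fixed compact $K \subset \bbC$, be uniformly close to $1$, while the finitely many remaining factors match up.

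The second step uses the two existing convergence results as input. Applying point-counting $\bbF_{q^k}$-by-$\bbF_{q^k}$ to Vakil--Wood's motivic theorem (suitably transported to $\widehat{\widetilde{\calM}}_{\bbF_q}$) yields, for every fixed $k$, convergence of $\#U_d(\bbF_{q^k})/q^{k\dim V_d}$ to $\#\mathrm{GL}_{n+1}(\bbF_{q^k})/q^{k(n+1)^2}$, which is equivalent to coefficient-wise (Witt-topology) convergence of the logarithmic derivatives of the normalized zeta functions. Poonen's Bertini theorem provides the same information by a different route. Together, these give convergence of every Taylor coefficient at $t=0$, which pins down the limit in the Witt topology.

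The third step is to use $\ell$-adic cohomological stability for complements of discriminants to identify, for each fixed $i$, the Frobenius eigenvalues on $H^i_c(U_d)$ with those predicted by $\mathrm{GL}_{n+1}$ in the stable range (after appropriate weight normalization). Concretely, one expects that for $d$ large relative to $i$, the weight-$w$ part of $H^i_c(U_d)$ matches that of an infinite-dimensional limit whose zeta factor is precisely the corresponding factor of $Z_{\mathrm{GL}_{n+1}}(q^{-(n+1)^2}t)$. This is where one would invoke, or need to prove, strong quantitative versions of Vakil--Wood in the weight filtration, refining the dimension filtration.

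The main obstacle is precisely the gap between Witt-topology convergence (which Vakil--Wood and Poonen both supply) and Hadamard-topology convergence (which entails uniform control over the meromorphic extension). To close this gap, one needs a \emph{uniform} bound, independent of $d$ after renormalization, on the $\ell$-adic Betti numbers $\dim H^i_c(U_d, \bbQ_\ell)$ weighted by $q^{(i/2-\dim V_d)\cdot j}$ for each weight $j$; combined with Deligne's purity, such bounds would guarantee that the tail contribution of high-degree cohomology to the product is uniformly negligible on any compact $K \subset \bbC$. Establishing these uniform bounds on Betti numbers of smooth hypersurface complements — a question of independent interest related to Vassiliev-type stabilization and bounds on complexity of cohomology à la Katz — is the genuine hard kernel of the conjecture, and explains why the authors state the result as a conjecture rather than a theorem.
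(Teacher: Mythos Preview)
The statement is a \emph{conjecture}, and the paper does not prove it in general. The only case the paper verifies is $n=1$ (Example~\ref{example:P1-conjecture}), and the argument there is entirely different from your outline: one computes directly in $K_0(\Var/\bbF_q)$ that $[\Conf^d \bbP^1] = \bbL^d - \bbL^{d-2}$ for $d \geq 3$, whence $[U_d]/\bbL^{\dim V_d} = [\GL_2]/\bbL^4$ is literally constant for $d \geq 3$. The zeta functions then agree on the nose, so convergence in any topology is trivial. Your proposal, by contrast, is a strategy for the general case via $\ell$-adic cohomological stability and uniform Betti-number bounds; this is in the spirit of the paper's Remark following the conjecture (invoking Tommasi's stabilization result), and you correctly identify the missing ingredient as quantitative control on unstable cohomology.

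One point in your Step~2 deserves correction. You write that applying point-counting to Vakil--Wood's motivic theorem yields Witt-topology convergence. This is not what the paper asserts, and it is not clear it is true: the point-counting map $\calM_{\bbF_q} \to \bbR$ is \emph{not} continuous for the dimension topology (there is no a priori bound on $\#X(\bbF_q)$ in terms of $\dim X$ alone without Weil-type input, and even then one gets weight-topology continuity, not Witt). The paper is explicit that Vakil--Wood gives weight-topology convergence (Theorem~\ref{theorem.vakil-wood-revisited}) while Poonen gives Witt-topology convergence (Theorem~\ref{theorem.poonen-revisited}), and that these are incompatible. Your later steps do not actually rely on this conflation, so the overall shape of the strategy survives, but the division of labor between the two inputs should be stated as the paper does.
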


\begin{example}\label{example:P1-conjecture} Conjecture \ref{conj.zeta-convergence-Pn} holds for $n=1$. Indeed, in this case, the sequence in the Grothendieck ring is constant with the correct value after $d=3$. To see this, first note that for $d \geq 2$, in $K_0(\Var/\bbF_q)$,
\[ [\Conf^d \bbA^1] = \bbL^d - \bbL^{d-1} \]
(cf. \cite[Proposition 3]{ellenberg:AWSnotes}, credited there to Mike Zieve; the proof presented there is for point counts, but in fact it lifts to the Grothendieck ring, as claimed in Exercise~12 of the same notes) and thus, for $d \geq 3$, 
\[ [\Conf^d \bbP^1] = [\Conf^d \bbA^1] + [\Conf^{d-1} \bbA^1] = \bbL^d - \bbL^{d-2}, \]
and, still for $d \geq 3$,
\[ \frac{[U_d]}{[V_d]} = \bbL^{-(d+1)}(\bbL - 1)[\Conf^d \bbP^1]= (1 - \bbL^{-1})(1 - \bbL^{-2}). \]
On the other hand,
\[ [\mathrm{GL}_2]=(\bbL^2 - 1)(\bbL^2 - \bbL), \]
and we conclude that for $d\geq 3$, 
\[ \frac{[U_d]}{\bbL^{\dim V_d}} =  \frac{ [\mathrm{GL}_2] }{\bbL^4}. \]
Passing to zeta functions, we obtain the desired identity (when passing to zeta functions, dividing by $\bbL$ corresponds to a change of variable by $t \mapsto q^{-1}t$, as we shall see below).   
\end{example}

\begin{remark}Tommasi \cite{tommasi:stable-cohomology-hypersurfaces} has shown for any $n$ that the cohomology of $U_d/\mathbb{C}$ stabilizes to the cohomology of $\mathrm{GL}_{n+1}$ (which is included via the orbit map of any point). A corresponding result in \'{e}tale cohomology over $\mathbb{F}_q$ combined with sufficient control over the size of unstable cohomology would imply Conjecture \ref{conj.zeta-convergence-Pn}. More generally, one can hope to attack the full Conjecture \ref{conj.zeta-convergence-general} by an analysis of the Vassiliev spectral sequence. 
\end{remark}

The fact that the limiting value in Conjecture \ref{conj.zeta-convergence-Pn} is the zeta function of a variety is a peculiarity of this special case. When $\bbP^n$ is replaced with an arbitrary smooth projective $X/\bbF_q$, the limiting value is the inverse of a special value of the Kapranov zeta function which is not, in general, a rational function (here the Kapranov zeta function must be thought of as a power series whose coefficients are rational functions with the Witt ring structure -- thus this special value is given by the evaluation of a power series with zeta function coefficients at a zeta function!). Nonetheless, the Kapranov zeta function is known to converge at this special value for both the Witt topology and the weight topology, and we show that it also converges in the Hadamard topology so that a general conjecture can be formulated.  

Here something nice occurs: the completion of the space of rational functions for the Witt topology is a space of power series centered at 0, but with no condition on the radius of convergence, whereas the completion for the weight topology is a space of formal divisors. In both cases, we have no natural interpretation of an arbitrary element of the completion as a function.  The completion for the Hadamard topology, on the other hand, is naturally identified with the space of meromorphic functions $f$ on $\bbC$ such that $f(0)=1$ and such that $f$ can be represented as the quotient of two entire functions of genus 0. In particular, special values of the Kapranov zeta function are naturally meromorphic functions on $\bbC$, and thus in the general conjecture we can still interpret the convergence analytically as a statement about uniform convergence on compact sets of a sequence of functions.     

\subsection{The Witt ring structure on rational functions}
\newcommand{\calR}{\mathcal{R}}
We now explain how to construct a ring structure on the set $\calR_1$ of complex rational functions $f$ with $f(0)=1$ by identifying this set of rational functions with a Grothendieck ring: let $\mathbf{Rep}_{\bbZ}$ be the the category of pairs $(V, \rho)$ where $V$ is a finite dimensional complex vector space and $\rho$ is a representation of $\bbZ$ on $V$ (to give $\rho$ it is equivalent to give the automorphism $\rho(1)$ of $V$). The characteristic power series of a linear map induces an injective map 
\begin{align*}
K_0(\mathbf{Rep}_{\bbZ}) & \rightarrow 1 + t \bbC[[t]] \\
[(V, \rho)] & \mapsto \frac{1}{\det(1-t \rho(1))} 
\end{align*}
with image $\calR_1$. The induced addition law on $\calR_1$ is multiplication of power series, called Witt addition; the induced multiplication law is called Witt multiplication, and the set $\calR_1$ equipped with this ring structure is also known as the rational Witt ring of $\bbC$. 

We note that $K_0(\mathbf{Rep}_{\bbZ})$ is also naturally isomorphic to the group ring $\bbZ[\bbC^\times]$, where the class $[a]$ in the group ring is matched with the class of the 1-dimensional representation with $\rho(1)$ given by multiplication by $a$. At the level of rational functions, this isomorphism is just the factorization of a rational function into its linear factors (which are uniquely determined because we require the value at $0$ to be $1$). 

\begin{remark}
The symmetric monoidal structure on $\mathbf{Rep}_{\bbZ}$ equips the Grothendieck ring with the structure of a $\lambda$-ring with $\sigma$-operations induced by symmetric powers and $\lambda$-operations induced by exterior powers. Under the isomorphism with~$\bbZ[\bbC^\times]$, the $\sigma$ and $\lambda$-operations are group homomorphisms $$\sigma_s, \lambda_s: K_0(\mathbf{Rep}_{\bbZ})\to 1 + sK_0(\mathbf{Rep}_{\bbZ})[[s]]$$ determined by
\[ \sigma_s( [a] )= 1 + \sigma_1([a])s + \sigma_2([a])s^2 + \ldots = 1 + [a]s+ [a^2] s^2 + \ldots \]
and 
\[ \lambda_s( [a] ) = 1 + \lambda_1([a])s + \lambda_2([a])s^2 + \ldots = 1 + [a]s. \]
Recall that in a $\lambda$-ring the $\sigma$ and $\lambda$-operations are related by $\sigma_s(x)\cdot \lambda_{-s}(x)= 1$, so that one can be deduced from the other. 
\end{remark}

\subsection{Topologies on rational functions}
\newcommand{\calW}{\mathcal{W}}
In this section we describe three natural topologies on $\calR_1$. 

\subsubsection{The Witt topology} \label{subsubsect:Witt-topology} There is a natural injective map 
\[ \calR_1 \rightarrow 1+t\bbC[[t]] \]
given by taking the power series expansion at zero. The Witt topology on $\calR_1$ is induced by the product topology on the coefficients of
\[ 1 + t\bbC[[t]] = \bbC^{\bbN}, \]
and $\calR_1$ is dense when viewed as a subset of $1+t\bbC[[t]]$ so that the completion of $\calR_1$ for the Witt topology is naturally identified with 
\[ 1 + t\bbC[[t]]. \]
The addition, multiplication, and lambda-ring structure are all continuous for the Witt topology, so that they extend to $1+ t\bbC[[t]]$ which is thus a complete topological $\lambda$-ring. With this ring structure, it is also known as the big Witt ring $W(\bbC)$. 

Instead of taking the power-series expansion of a rational function $f$, one could instead take the power-series expansion of $d\log f$, and we would obtain the same topology. In fact, $d\log f$ gives a bijection 
\[ 1 + t\bbC[[t]] \rightarrow t \bbC[[t]] = \bbC^{\bbN} \]
that is an isomorphism of topological rings when $\bbC^{\bbN}$ on the right is equipped with the product topology and ring structure. The coefficients of $d \log f$ are also called the ghost coordinates on the big Witt ring. 

Using this observation, we can also describe the Witt topology in terms of $\bbZ[\bbC^\times]=\calR_1.$ It is induced by the family of semi-norms $|| \cdot ||_j,\, j=1,2, \ldots$ 
\[ \left|\left| \sum_a k_a [a] \right|\right|_j = \left| \sum_a k_a a^j \right|. \]
Indeed, this follows from the above discussion and the computation
\[ d\log \left(\prod_{a}(1-ta)^{-k_a}\right) = \sum_{j=1}^\infty \left(\sum_a k_a a^j\right) t^{j-1}. \]
We note that there is no natural description of the completion of $\bbZ[\bbC^\times]$ for the Witt topology in terms of divisors on $\bbC^\times$. 

\subsubsection{The weight topology}
In the weight topology, a basis of open neighborhoods of $f \in \calR_1$ is given by, for each $r>0$, the set of all rational functions $g$ with the same zeroes and poles as $f$ on the ball $|t| \leq r$. In particular, a sequence converges if and only if on every bounded set, the zeroes and poles eventually stabilize. 

Viewed as the group ring $\bbZ[\bbC^\times]$, a basis of open neighborhoods of zero is given by the set of all finite sums $\sum_{a \in \bbC^\times} k_a [a]$ supported on the closed ball of radius $r$ around $0 \in \bbC$ (here we are using that $[a]$, as a rational function, has a pole at $a^{-1}$), and a basis of open neighborhoods at any other point is given by translation. 

The completion of $\bbZ[\bbC^\times]$ for the weight topology can be described as the set of formal sums 
\[ \sum_{a \in \bbC^\times} k_a [a] \]
whose support is a discrete subset of $\bbC$ and whose set of accumulation points in $\bbC \sqcup \infty$ is contained in $\{0\}$. The addition, multiplication, and lambda-ring structure are all continuous for the weight topology, and extend in the obvious way to these formal sums. 

We note that there is no natural description of the completion of $\calR_1$ for the weight topology in terms of the power series expansion at $0$. 

\subsubsection{The Hadamard topology}

The Hadamard topology is most simply described under the isomorphism $\calR_1 \rightarrow \bbZ[\bbC^\times]$, where it is the topology induced by the norm 
\[ \left|\left| \sum k_a [a] \right|\right|= \sum |k_a||a|. \]
It is easy to see the Witt and weight topologies on $\calR_1$ are not comparable (i.e., neither is finer than the other). However:
\begin{lemma} The Hadamard topology refines both the Witt and weight topology. 
\end{lemma}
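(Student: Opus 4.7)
The plan is to exploit a feature that is easy to miss: elements of $\calR_1$ correspond under the identification $\calR_1 \cong \bbZ[\bbC^\times]$ to \emph{integer}-coefficient formal sums, so in the Hadamard norm every non-zero term contributes a multiplicand $|k_a|\geq 1$. This integrality will allow us to trade a single Hadamard estimate for both of the geometric and analytic controls encoded by the weight and Witt topologies, respectively.

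First I would establish the key support estimate: if $g = \sum_a k_a [a] \in \bbZ[\bbC^\times]$ with $||g||_H = \sum_a |k_a|\,|a| \leq \epsilon$, then every $a$ in the support of $g$ satisfies $|a|\leq \epsilon/|k_a| \leq \epsilon$, because $k_a$ is a non-zero integer. This is an immediate consequence of integrality, but it is the whole game.

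Given this estimate, refinement over the weight topology is essentially tautological: weight convergence $g_n \to 0$ is the statement that for each $r>0$ the support of $g_n$ eventually lies in $\{|a|\leq r\}$, and the support estimate shows this is implied by Hadamard convergence with $\epsilon = r$. For the Witt topology I would use the equivalent description via the ghost coordinates $p_j(g) = \sum_a k_a a^j$ (this was already recorded in \ref{subsubsect:Witt-topology}). For each fixed $j\geq 1$ and any $g$ with $||g||_H \leq \epsilon$ one has the bound
\[
|p_j(g)| \;\leq\; \sum_a |k_a|\,|a|^j \;\leq\; \left(\max_{a\in \mathrm{supp}\,g}|a|\right)^{j-1}\!\cdot\, ||g||_H \;\leq\; \epsilon^{j-1}\cdot \epsilon \;=\; \epsilon^{j},
\]
using the support estimate in the middle inequality. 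Thus each Witt semi-norm $||\cdot||_j$ is dominated by a power of the Hadamard norm, which shows that Hadamard convergence implies Witt convergence.

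I do not expect any real obstacle: the whole argument hinges on the innocent-looking integrality of the coefficients $k_a$. The one subtle point worth flagging is that the inequality $|p_j| \leq \epsilon^j$ would fail badly if one allowed real or complex coefficients (one could make $||g||_H$ small by making $|k_a|$ tiny while leaving $|a|$ large, and then $p_j$ would blow up), so the proof relies crucially on working in $\bbZ[\bbC^\times]$ rather than $\bbR[\bbC^\times]$ or $\bbC[\bbC^\times]$. This is consistent with the fact that the completions of $\calR_1$ for the Witt and weight topologies are much larger than that for the Hadamard topology — only the latter retains the analytic meaning of a genus-$0$ meromorphic function.
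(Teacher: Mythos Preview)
Your proof is correct and follows essentially the same route as the paper: both arguments rest on the support estimate (any $a$ in the support of $g$ has $|a|\le\|g\|$, by integrality of the coefficients), use it directly for the weight topology, and use it to bound the ghost semi-norms $\|\cdot\|_j$ for the Witt topology. Your write-up is in fact more explicit than the paper's, which simply asserts that each $\|\cdot\|_j$ is continuous for $\|\cdot\|$ without spelling out the inequality $|p_j(g)|\le\epsilon^j$ or noting that this step genuinely requires the coefficients to be integers.
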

\begin{proof}
Each of the semi-norms $|| \cdot ||_j$ defining the Witt topology is continuous for the norm $|| \cdot ||$, and thus the Hadamard topology refines the Witt topology. 

To compare with the weight topology, it suffices to observe that if $f=\sum k_a [a]$ is supported inside the closed ball of radius $r$, then so is any $g$ with $||f - g|| < r$. 
\end{proof}

We define the Hadamard-Witt ring $\calW$ to be the completion of $\bbZ[\bbC^\times]$ for the norm $||\cdot||$.  It can be identified with the set of discretely supported divisors
\[ \sum_{a \in \bbC^\times} k_a [a] \] 
such that $\sum_{a \in \bbC^\times}  |k_a| |a| < \infty$.
It is an elementary computation to check that the multiplication and $\sigma$ (or $\lambda$) operations are continuous, so that they extend to $\calW$ which is thus a complete topological $\lambda$-ring. 

\newcommand{\calH}{\mathcal{H}}
A \emph{Hadamard} function is a meromorphic function $f$ on $\bbC$ such that $f$ can be written as a quotient $f=\frac{g}{h}$ where $g$ and $h$ are both entire functions of genus zero. In the next lemma, we extend the identification of $\bbZ[\bbC^\times]$ with $\calR_1$ to an identification of $\calW$ with the set $\calH_1$ of Hadamard functions $f$ such that $f(0)=1$. 

\begin{lemma}
If $\sum_{a \in \bbC^\times} k_a [a] \in \calW$ and 
\[ \sum_{a \in \bbC^\times} k_a [a] = \sum_{a \in \bbC^\times} k_a^+ [a] + \sum_{a \in \bbC^\times} k_a^- [a]\]
is the unique decomposition with $k_a^+ \geq 0$ and $k_a^- \leq 0$, then the infinite products 
\[ \prod_a \left(\frac{1}{1-ta}\right)^{k_a^-} \textrm{ and } \prod_a \left(\frac{1}{1-ta}\right)^{-k_a^+} \]
converge uniformly on compacts to entire functions of genus zero $f^-$ and $f^+$. Furthermore, the map
\[ \sum_{a \in \bbC^\times} k_a[a] \mapsto \frac{f^-}{f^+} \]
induces a bijection $\calW \rightarrow \calH_1$ extending the bijection $\bbZ[\bbC^\times] \rightarrow \calR_1$. 
\end{lemma}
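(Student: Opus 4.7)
The plan is to reduce everything to the classical theory of Weierstrass products and Hadamard's factorization theorem for entire functions of finite genus. Recall that an entire function $F$ is of genus $0$ if and only if it admits a representation $F(z) = C z^m \prod_n(1 - z/\alpha_n)$ with $\sum_n 1/|\alpha_n| < \infty$, and Hadamard's theorem says that any such $F$ has this representation with the $\alpha_n$ its nonzero zeros (with multiplicity) and $m$ the order of vanishing at $0$. We will use this both for the forward direction (convergence of the products $f^\pm$) and for surjectivity (extracting the divisor from a Hadamard function).

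First I would handle the convergence and well-definedness of the map $\calW \to \calH_1$. Enumerate the support of $k_a^+$ (with multiplicity) as $\{a_n\}$ and the support of $k_a^-$ (with multiplicity) as $\{b_n\}$; the summability $\sum |k_a||a| < \infty$ defining $\calW$ gives $\sum |a_n| < \infty$ and $\sum |b_n| < \infty$. Using $|\log(1-ta_n)| \leq 2|ta_n|$ whenever $|ta_n| \leq 1/2$, which holds uniformly for all but finitely many $n$ on any compact $K \subset \bbC$, the series $\sum \log(1 - ta_n)$ converges uniformly on $K$, so the products defining $f^+$ and $f^-$ converge uniformly on compacts to entire functions. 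Their zero sets $\{1/a_n\}$ and $\{1/b_n\}$ satisfy the genus-$0$ condition $\sum 1/|1/a_n| = \sum |a_n| < \infty$, so $f^\pm$ are entire of genus $0$, and since $f^\pm(0) = 1$ we get a well-defined $f^-/f^+ \in \calH_1$. Specializing to finite divisors, the products become finite and reproduce the rational-function assignment $[a] \mapsto 1/(1-ta)$, giving the extension of $\bbZ[\bbC^\times] \to \calR_1$.

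For bijectivity, injectivity follows by reading off the divisor from the zero/pole structure. The key observation is that our decomposition $k_a = k_a^+ + k_a^-$ has the supports of $k^+$ and $k^-$ disjoint, so the zeros of $f^-$ and $f^+$ (as multi-sets) are disjoint; therefore no cancellation occurs in the quotient, and the multiplicity of the zero (resp.\ pole) of $f^-/f^+$ at $1/a$ equals $-k_a^-$ (resp.\ $k_a^+$), recovering $k_a$. For surjectivity, take $f \in \calH_1$ and write $f = g/h$ with $g, h$ entire of genus $0$; after dividing out any common factor of $z^m$ we may assume $g(0), h(0) \neq 0$, and we normalize $g(0) = h(0) = 1$. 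Hadamard's factorization then gives
\[ g(z) = \prod_n (1 - z/\beta_n), \qquad h(z) = \prod_n (1 - z/\gamma_n), \]
with $\sum 1/|\beta_n| < \infty$ and $\sum 1/|\gamma_n| < \infty$. Setting $\beta_n = 1/a_n^-$, $\gamma_n = 1/a_n^+$, the divisor $D = -\sum [a_n^-] + \sum [a_n^+]$, after collecting multiplicities and cancelling overlaps between $\{a_n^-\}$ and $\{a_n^+\}$ to form the reduced signed divisor, lies in $\calW$ because each sub-series is summable, and by construction it maps to $f$.

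The main obstacle in writing this out carefully is handling the cancellation in the surjectivity argument: a priori the decomposition $f = g/h$ is not unique (one may multiply numerator and denominator by any entire function of genus zero with no zeros, i.e.\ by constants only, but common zeros $\beta_n = \gamma_m$ can occur). This is resolved by observing that the zero/pole divisor $D(f)$ of $f$ on $\bbC^\times$ is canonically defined and coincides with the reduced divisor obtained from any such decomposition, and its positive and negative parts still satisfy the genus-$0$ summability (as sub-multisets of $\{\beta_n\}$ and $\{\gamma_n\}$, respectively). Once this canonicity is established, injectivity of the zero/pole assignment on $\calH_1$ and the explicit formula for $f^\pm$ in terms of $D(f)$ together yield the claimed bijection, and continuity of multiplication and $\lambda$-operations then extends the $\lambda$-ring structure from $\bbZ[\bbC^\times]$ to $\calW$ as asserted.
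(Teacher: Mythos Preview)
Your proof is correct and follows exactly the route the paper intends: the paper's proof is the single sentence ``This is an immediate consequence of the Hadamard factorization theorem in the case of genus zero entire functions,'' and you have simply unpacked that sentence into its constituent steps (Weierstrass convergence for the forward map, reading off divisors for injectivity, Hadamard factorization for surjectivity). The only superfluous item is your final remark about extending the $\lambda$-ring structure, which is not part of this lemma's statement.
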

\begin{proof}
This is an immediate consequence of the Hadamard factorization theorem in the case of genus zero entire functions. 
\end{proof}

Because the Hadamard topology refines the Witt and weight topologies, there are natural maps between the completions, and these maps have natural function-theoretic interpretations: 
\begin{enumerate} 
\item The map from the Hadamard completion to the Witt completion is given by taking $f(t) \in \calH_1$ to its power series at 0. 
\item The map from the Hadamard completion to the weight completion is given by taking $f(t) \in \calH_1$ to the divisor of $\frac{1}{f(t^{-1})}$ (here the inverses are a consequence of the various normalizations we have chosen). 
\end{enumerate}

\subsection{Hasse-Weil zeta functions}
For $X/\bbF_q$ a variety, the Hasse-Weil zeta function is 
\[ Z_X(t) := \prod_{x \in |X|} \frac{1}{1-t^{\deg x}} \in 1 + t\bbZ[[t]] \]
where $|X|$ denotes the set of closed points of $X$. 

By the rationality of the zeta function, we may view this instead as a map to~$\calR_1$. In fact, it extends to a map of (pre-)$\lambda$-rings
\begin{equation} \label{eq:zetamap}K_0(\Var / \bbF_q) \rightarrow \calR_1, \end{equation}
where $\calR_1$ is equipped with the Witt ring structure.

\begin{remark} The most natural way to see that this map respects the ring and (pre-)$\lambda$ structures is to realize $1 + t \bbZ[[t]]$ with the Witt ring structure as the Grothendieck ring of almost-finite cyclic sets\footnote{An almost-finite cyclic set, or an admissible $\bbZ$-set, is a set $A$ equipped with an action of $\bbZ$ such that each point has non-trivial stabilizer and, for each $n>0$, the set of fixed points $A^{n\bbZ}$ is finite.}; the zeta function is then just induced by the natural functor
\[ X \mapsto X(\overline{\bbF_q}) \actsonl \mathrm{Frob}_q^{\bbZ}. \]
This observation is due originally to Witt\footnote{We thank Lars Hesselholt for pointing out to us that Witt was aware of this construction of the big Witt vectors.} and was first elaborated in print by Dress and Siebeneicher \cite{dress-siebeneicher:burnsidering}. We learned of this construction from a survey of Ramachandran~\cite{ramachandran:wittring}.
\end{remark}

Because $[q]=\frac{1}{1-tq}$ is invertible in $\calR_1$ (with inverse $[q^{-1}]=\frac{1}{1-tq^{-1}}$), we find that (\ref{eq:zetamap}) extends to a morphism of (pre-)$\lambda$-rings
\begin{equation}\label{eqn.zeta-map-L-inverted} \calM_{\bbF_q} \rightarrow \calR_1. \end{equation}

We make two important observations:
\begin{enumerate}
\item For $X/\bbF_q$ a variety, $d\log Z_X(t) = \sum_{k \geq 1} \#X(\bbF_{q^k}) t^{k-1}$, so that the $k$th ghost coordinate of $Z_X(t)$ is the number of points in $X(\bbF_{q^k})$. We will use this to interpret Poonen's results on convergence of point counts as a statement about convergence of zeta functions in the Witt topology.
\item It is a consequence of the Weil conjectures that the map \ref{eqn.zeta-map-L-inverted} is continuous when the left-hand side is equipped with the dimension topology and the right-hand side is equipped with the weight topology. In particular, convergence of a sequence of classes in the Grothendieck ring in the dimension filtration implies convergence of the corresponding sequence of zeta functions in the weight topology on $\calR_1$. We will use this to interpret Vakil-Wood's result in the Grothendieck ring as a statement about convergence of zeta functions in the weight topology
\end{enumerate}

\begin{remark}
If Grothendieck's standard conjectures hold, then the natural map from the Grothendieck ring of Chow motives over $\bbF_q$ to $\calR_1$ is injective. In particular, the zeta map (\ref{eqn.zeta-map-L-inverted}) should be thought of as retaining much of the essential geometric information. 
\end{remark}

\subsection{The Kapranov zeta function}

The zeta function appearing in Poonen's stabilization is the Hasse-Weil zeta function, while the zeta function appearing in Vakil-Wood's stabilization is the Kapranov zeta function: for $X/\bbF_q$ a variety,
 \[ Z^{\mathrm{Kap}}_X(s) = 1 + [X]s + [\Sym^2 X]s^2 + \ldots \in 1 + s K_0(\Var/\bbF_q)[[s]]. \]
Passing from $K_0(\Var/\bbF_q)$ to $\calR_1$ via the zeta map, the Kapranov zeta function becomes
\begin{equation}\label{eqn.kap-zeta-series} Z^{\mathrm{Kap}}_X(s) = 1 + Z_X(t) s + Z_{\Sym^2 X}(t) s^2 + \ldots \in 1 + s\calR_1[[s]]. \end{equation}
From now on, we will consider only this version of the Kapranov zeta function. 

Because the map (\ref{eqn.zeta-map-L-inverted}) is a map of (pre-)$\lambda$ rings, we also have
\[ Z^{\mathrm{Kap}}_X(s) = \sigma_s(Z_X(t)).\]
It follows from the formula for the $\sigma$-operations on $\calR_1$ that for \emph{any} rational function $f \in \calR_1$, if we factorize 
\[ f=\frac{ \prod (1-t a_i)}{\prod(1-t b_i)}, \]
then 
\begin{equation}\label{eqn.sigma-rational} \sigma_s(f)= \frac{\prod( 1- s[a_i])}{\prod (1-s[b_i])} \end{equation}
and, in particular, the (specialization to $\calR_1$ of) the Kapranov zeta function is a rational function (suitably interpreted). 

\subsubsection{Special values}
For $X/\bbF_q$ a variety, we find that if $a \in \bbC^\times$ is not a pole of $Z_X(t)$, then the Kapranov zeta function has a well-defined value in $\calW$ at $[a]$ given by evaluating the expression \ref{eqn.sigma-rational} as a rational function at $[a]$ (indeed, the only thing to check is that for $x \in \bbC^\times,\, x \neq 1$, $1-[x]$ has an inverse in $\calW$). 

\begin{remark} When $|a| < q^{-\dim X}$, the series expression (\ref{eqn.kap-zeta-series}) also converges at $[a]$. In particular, for such an $a$, we find that in $\calH_1$,
\[ Z_X^{\Kap}([a]) = \prod_{k=1}^\infty Z_{\Sym^k X}(a^k t) \]
where convergence of the infinite product should be interpreted as saying that the numerators and denominators of the finite products converge uniformly on compacts to entire functions. 
\end{remark}

Recall that the map (\ref{eqn.zeta-map-L-inverted}) is continuous when $\calM_{\bbF_q}$ is equipped with the dimension topology and $\calW=\calH_1$ with the weight topology. Thus, the following result is a consequence of the motivic Bertini theorem of Vakil-Wood (suitably modified for positive characteristic, cf. Remark~\ref{remark:vakil-wood-error}), which takes place in the completed Grothendieck ring $\widehat{\widetilde{\calM}}_{\bbF_q}$, and the observation that, after passing to zeta functions, special values of the Kapranov zeta function are contained in 
\[ \calW =\calH_1 \subset 1 + t\bbC[[t]].\]

\begin{theorem}[Vakil-Wood]\label{theorem.vakil-wood-revisited} Let $X/\bbF_q$ be a smooth projective variety equipped with an ample line bundle $\calL$, and let $U_d \subset \Gamma(X, \calL^d)$ be the open subvariety of smooth sections. Then, in the weight topology on $\calH_1$, 
\begin{align*} \lim_{d \rightarrow \infty} Z_{U_d}( q^{-\dim U_d} t)&  = \left( Z_X^\Kap\left([q^{-(\dim X +1)}]\right) \right)^{-1} \\
& = \left(\prod_{k=1}^\infty Z_{\Sym^k X}\left(q^{-k(\dim X + 1)} t\right)\right)^{-1}. 
\end{align*}
\end{theorem}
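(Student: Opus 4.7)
The plan is to deduce this theorem directly from Vakil-Wood's theorem (Theorem \ref{theorem.vakil-wood}, modified to work over $\bbF_q$ as described in Remark \ref{remark:vakil-wood-error}) by pushing that equality forward along the zeta map. Concretely, the motivic Bertini theorem, formulated in $\widehat{\widetilde{\calM}}_{\bbF_q}$ (positive characteristic version, working modulo radicial surjective morphisms), states that
\[ \lim_{d\to\infty} \frac{[U_d]}{[V_d]} = \left( Z_X^{\Kap}(\bbL^{-(\dim X+1)})\right)^{-1} \]
for the dimension topology. First I would observe that the zeta map (\ref{eqn.zeta-map-L-inverted}) factors through $\widetilde{\calM}_{\bbF_q}$ (because $X \to Y$ radicial surjective induces $X(\bbF_{q^k}) = Y(\bbF_{q^k})$ for all $k$) and extends by continuity to a $\lambda$-ring homomorphism $\widehat{\widetilde{\calM}}_{\bbF_q} \to \calH_1$, when the target is equipped with the weight topology. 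This continuity follows from the Weil conjectures: the zeros and poles of $Z_Y(t)$ for $Y$ a variety of dimension $n$ lie in $\{|t|\geq q^{-n}\}$, so the norm of $Z_Y(t) - 1$ (interpreted as the divisor $\sum k_a[a]$) in the weight topology is bounded by a function of $\dim Y$, and $\bbL \mapsto [q]$ allows this to be translated into a continuity statement for the dimension filtration.

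Next I would track what happens to each side under this map. Because $[V_d] = \bbL^{\dim V_d}$, and under the zeta map $\bbL \mapsto [q]$, Witt multiplication by $[q^{-\dim V_d}]$ on the power-series side is precisely the substitution $t \mapsto q^{-\dim V_d} t$. Thus $[U_d]/[V_d] \mapsto Z_{U_d}(q^{-\dim V_d} t) = Z_{U_d}(q^{-\dim U_d} t)$ since $\dim U_d = \dim V_d$. On the right-hand side, using that the zeta map is a morphism of $\lambda$-rings and applying it coefficient-wise to the Kapranov zeta function yields $Z_X^{\Kap}([q^{-(\dim X + 1)}])^{-1}$, where the evaluation is the one described in \S 10.4 (the series defining the Kapranov zeta function converges at $[q^{-(\dim X+1)}]$ since $|q^{-(\dim X+1)}|<q^{-\dim X}$, so the special value exists and lies in $\calH_1$). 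This establishes the first equality in $\calH_1$ for the weight topology.

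For the second equality, I would use the remark that when the series defining $Z_X^{\Kap}(s)$ converges at $s=[a]$ in $\calH_1$ (which it does for $|a|<q^{-\dim X}$), one has
\[ Z_X^{\Kap}([a]) = \prod_{k=1}^{\infty} Z_{\Sym^k X}(a^k t),\]
with convergence as described (uniform convergence of the numerators and denominators of the finite partial products to entire functions of genus zero). Applying this with $a=q^{-(\dim X+1)}$ converts the first expression into the infinite product in the statement, and taking the reciprocal in $\calH_1$ yields the claimed formula.

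The main obstacle is verifying cleanly that the zeta map extends continuously to the completion with respect to these two topologies in a way that commutes with all the $\lambda$-structure operations involved in the definition of the Kapranov zeta function (so that the limit of the ratios is indeed the image of the limit of the motivic ratios). This amounts to a careful bookkeeping exercise using the Weil conjectures and the description of the Hadamard topology as the common refinement of the Witt and weight topologies; once done, the rest is formal substitution, and the only genuine geometric input is Vakil-Wood's theorem itself.
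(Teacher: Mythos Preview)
Your proposal is correct and matches the paper's approach exactly: the paper deduces the theorem in a single paragraph preceding the statement, noting that it is a consequence of Vakil--Wood's motivic Bertini theorem in $\widehat{\widetilde{\calM}}_{\bbF_q}$, the continuity of the zeta map (\ref{eqn.zeta-map-L-inverted}) for the dimension and weight topologies (via the Weil conjectures), and the fact that the relevant special value of the Kapranov zeta function lands in $\calH_1$. Your write-up simply fleshes out these steps with the explicit identifications (e.g.\ $[U_d]/\bbL^{\dim V_d}\mapsto Z_{U_d}(q^{-\dim V_d}t)$ and the factorization through $\widetilde{\calM}_{\bbF_q}$); the only minor imprecision is that the continuous extension of the zeta map naturally targets the weight completion of $\calR_1$ rather than $\calH_1$ itself, though the particular limit does lie in $\calH_1$ as you note.
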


\subsubsection{Kapranov zeta and point-counting}
As can be verified either from the series or rational function expansion of $Z_X^\Kap(s)$, applying the $k$th ghost coordinate map~$g_k$ (recall that, as explained in \ref{subsubsect:Witt-topology}, this is the ring homomorphism which sends a rational function $f(t)$ to the coefficient of $t^{k-1}$ in the power series expansion at 0 of $d\log f$) to each coefficient, we obtain
\[ g_k(Z_X^\Kap(s))=Z_{X_{\bbF_{q^k}}}(s). \]
Thus, one way to think of the Kapranov zeta function of $X/\bbF_q$ is as working simultaneously with the Hasse-Weil zeta functions of the base changes of $X$ to all finite extensions of $\bbF_q$. 

Applying this to special values, we find that for $a$ not a pole of $Z_X(t)$ (and recalling that the ghost vector map $g_k$ extends in the obvious way to $\calW$)
\[ g_k(Z_X^\Kap(a)) = Z_{X_{\bbF_{q^k}}}(a^k). \]

Putting everything together, we obtain the following reinterpretation of Poonen's Bertini theorem, which is identical to Theorem \ref{theorem.vakil-wood-revisited} except with the weight topology replaced by the Witt topology. 

\begin{theorem}[Poonen]\label{theorem.poonen-revisited} Let $X/\bbF_q$ be a smooth projective variety equipped with an ample line bundle $\calL$, and let $U_d \subset \Gamma(X, \calL^d)$ be the open subvariety of smooth sections. Then, in the Witt topology on $\calH_1$, 
\begin{align*} \lim_{d \rightarrow \infty} Z_{U_d}( q^{-\dim U_d} t)&  = \left( Z_X^\Kap\left([q^{-(\dim X +1)}]\right) \right)^{-1} \\
& = \left(\prod_{k=1}^\infty Z_{\Sym^k X}\left(q^{-k(\dim X + 1)} t\right)\right)^{-1}. 
\end{align*}
\end{theorem}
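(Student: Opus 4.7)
The plan is to reduce the claimed convergence in the Witt topology directly to Poonen's original (point-counting) Bertini theorem, applied over every finite extension $\bbF_{q^k}$ of $\bbF_q$. First I would recall that the Witt topology on $\calH_1 \subset 1 + t\bbC[[t]]$ is, by construction, the topology of coordinate-wise convergence in the ghost coordinates $g_k$ (equivalently, the coefficients of $d\log$). Thus it suffices to show that for every fixed $k \geq 1$,
\[ \lim_{d \to \infty} g_k\bigl( Z_{U_d}(q^{-\dim U_d}\, t) \bigr) \;=\; g_k\!\left( Z_X^{\Kap}([q^{-(\dim X+1)}])^{-1} \right). \]

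Next I would compute both sides of this ghost-coordinate identity explicitly. For the left-hand side, using the general formula $g_k(f(ct)) = c^k g_k(f(t))$ (a one-line computation from the definition of $d\log$) together with $g_k(Z_{U_d}(t)) = \# U_d(\bbF_{q^k})$, I obtain
\[ g_k\bigl( Z_{U_d}(q^{-\dim U_d}\, t) \bigr) \;=\; \frac{\# U_d(\bbF_{q^k})}{q^{k\,\dim U_d}} \;=\; \frac{\# U_d(\bbF_{q^k})}{\# V_d(\bbF_{q^k})}, \]
where $V_d = \Gamma(X,\calL^d)$ (for $d$ large, $U_d$ is a dense open in the affine space $V_d$, so $\dim U_d = \dim V_d$). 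For the right-hand side, I would use the formula recalled in the excerpt, namely $g_k\bigl(Z_X^{\Kap}([a])\bigr) = Z_{X_{\bbF_{q^k}}}(a^k)$, together with multiplicativity of $g_k$, to obtain
\[ g_k\!\left( Z_X^{\Kap}([q^{-(\dim X+1)}])^{-1} \right) \;=\; Z_{X_{\bbF_{q^k}}}\bigl(q^{-k(\dim X+1)}\bigr)^{-1} \;=\; \zeta_{X_{\bbF_{q^k}}}(\dim X+1)^{-1}. \]

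At this point, the desired ghost-coordinate identity reads
\[ \lim_{d \to \infty} \frac{\# U_d(\bbF_{q^k})}{\# V_d(\bbF_{q^k})} \;=\; \zeta_{X_{\bbF_{q^k}}}(\dim X+1)^{-1}, \]
which is precisely the statement of Poonen's Bertini theorem applied to the smooth projective variety $X_{\bbF_{q^k}} \subset \bbP^n_{\bbF_{q^k}}$ (embedded by a sufficiently large power of $\calL$, or more intrinsically, via the version of Poonen's theorem for sections of powers of an ample line bundle). The base change is legitimate because smoothness of a hypersurface section is geometric: the open subvariety $U_d$ cut out by the smoothness condition commutes with extension of scalars, so $(U_d)_{\bbF_{q^k}}$ is exactly the analogous smooth locus over $\bbF_{q^k}$, and $V_d$ base changes to the analogous space of sections on $X_{\bbF_{q^k}}$.

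The main conceptual obstacle is really just the bookkeeping: one has to check that the prefactor $q^{-\dim U_d}$ inside the zeta function translates under $g_k$ to the prefactor $q^{-k\dim U_d}$ that matches the normalization $\# V_d(\bbF_{q^k}) = q^{k\dim V_d}$, and similarly that the special value $[q^{-(\dim X+1)}]$ in the Witt ring sends under $g_k$ to $q^{-k(\dim X+1)}$ so as to match the exponent $\dim X_{\bbF_{q^k}} + 1 = \dim X + 1$ required by Poonen's theorem. Once these normalizations are tracked carefully, the theorem follows with no further input beyond Poonen's result.
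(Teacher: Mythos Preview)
Your proposal is correct and follows essentially the same approach as the paper. The paper does not give a formal proof but presents the theorem as an immediate ``reinterpretation'' of Poonen's Bertini theorem after the preceding discussion, which records exactly the two facts you use: that the $k$th ghost coordinate of $Z_Y(t)$ is $\#Y(\bbF_{q^k})$, and that $g_k\bigl(Z_X^{\Kap}([a])\bigr) = Z_{X_{\bbF_{q^k}}}(a^k)$. Your write-up simply makes explicit the normalization checks the paper leaves implicit.
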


\subsection{The general conjecture}

Given Theorems \ref{theorem.vakil-wood-revisited} and \ref{theorem.poonen-revisited}, it is natural to conjecture that the convergence in fact occurs in the Hadamard topology:

\begin{conjecture}\label{conj.zeta-convergence-general} Let $X/\bbF_q$ be a smooth projective variety equipped with an ample line bundle $\calL$, and let $U_d \subset \Gamma(X, \calL^d)$ be the open subvariety of smooth sections. Then, in the Hadamard topology on $\calH_1$, 
\begin{align*} \lim_{d \rightarrow \infty} Z_{U_d}( q^{-\dim U_d} t)&  = \left( Z_X^\Kap\left([q^{-(\dim X +1)}]\right) \right)^{-1} \\
& = \left(\prod_{k=1}^\infty Z_{\Sym^k X}\left(q^{-k(\dim X + 1)} t\right)\right)^{-1}. 
\end{align*}
\end{conjecture}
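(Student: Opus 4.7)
The plan is to attack this via $\ell$-adic cohomology, upgrading the Witt convergence of Theorem~\ref{theorem.poonen-revisited} and the weight convergence of Theorem~\ref{theorem.vakil-wood-revisited} to Hadamard convergence along the lines suggested in the excerpt's remark on Tommasi. The Grothendieck--Lefschetz trace formula factors $Z_{U_d}(q^{-\dim U_d}t)$ as an alternating product over $i$ of characteristic polynomials of Frobenius on $H^i_c(U_d, \bbQ_\ell)$, evaluated at $q^{-\dim U_d}t$. After this renormalization, the reciprocal roots $\alpha / q^{\dim U_d}$ of these factors have absolute value at most $q^{(i-2\dim U_d)/2}$ by Deligne's Weil II, so the Hadamard norm of the difference between $Z_{U_d}(q^{-\dim U_d}t)$ and its conjectural limit $Z_\infty(t)$ is controlled by a weighted sum $\sum_i \bigl(\dim H^i_c(U_d) + h^i_\infty\bigr) q^{(i - 2\dim U_d)/2}$, plus errors in the range where the Frobenius representations must be shown to actually match.

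The first main step is therefore to establish an \'etale analog of Tommasi's cohomological stability theorem: for each fixed codegree $j$ there should exist $d_0(j)$ so that for $d \geq d_0(j)$ the Galois representation $H^{2\dim U_d - j}_c(U_d, \bbQ_\ell)(\dim U_d)$ is independent of $d$ and agrees with the corresponding graded piece of a stable limit object whose zeta function is $\bigl(Z_X^\Kap([q^{-(\dim X+1)}])\bigr)^{-1}$. The natural tool is the Vassiliev spectral sequence associated to the discriminant stratification $V_d \setminus U_d$, whose $E_1$-page is built from cohomology of labeled configuration spaces on $X$ --- precisely the combinatorial data that enters our motivic inclusion-exclusion proof of Theorem~\ref{maintheorem.general}. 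Matching the stable $E_\infty$-page to the conjectural limit should then be essentially formal, since on the motivic side the same configuration spaces assemble into the Euler product expression for $(Z_X^\Kap)^{-1}$ already studied in Example~\ref{example.kapranov-zeta-inverse}.

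The hard part, and the reason this remains a conjecture rather than a theorem, is controlling the unstable cohomology well enough to get Hadamard rather than merely Witt or weight convergence. One needs a uniform, at worst polynomial in $d$, bound on $\dim H^i_c(U_d)$ across the full range of degrees, so that the tail contribution $\sum_{i \leq 2\dim U_d - c d} \dim H^i_c(U_d) \cdot q^{(i - 2\dim U_d)/2}$ is dominated by the exponential factor $q^{-c d/2}$ and decays to zero. Such uniform bounds are known in some adjacent settings, but a general statement across all $X$ and $\calL$ appears to require either a finer analysis of the Vassiliev spectral sequence or new input --- perhaps from Hodge theory, Frobenius-weight purity, or the $\mathrm{GL}_{n+1}$-equivariance present in Conjecture~\ref{conj.zeta-convergence-Pn}. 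Before attacking the general case I would first verify the strategy on $n = 1$, where Example~\ref{example:P1-conjecture} gives stability in $K_0(\Var/\bbF_q)$ already at $d \geq 3$ and Hadamard convergence is immediate, and then on $\bbP^n$ with $n \geq 2$, using the large linear group action as a combinatorial guide to separate the stable and unstable parts of the Vassiliev spectral sequence.
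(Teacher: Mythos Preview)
The statement in question is a \emph{conjecture}, not a theorem: the paper does not provide a proof of Conjecture~\ref{conj.zeta-convergence-general}, and you correctly recognize this (``the reason this remains a conjecture rather than a theorem''). There is therefore no proof in the paper to compare your proposal against.

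That said, your outlined strategy is well-aligned with the paper's own suggestions for how the conjecture might be attacked. The paper's remark following Conjecture~\ref{conj.zeta-convergence-Pn} already points to Tommasi's stable cohomology result and to the Vassiliev spectral sequence as the natural route, and explicitly notes that ``sufficient control over the size of unstable cohomology'' is the missing ingredient --- exactly the gap you identify as the hard part. Your plan to first verify the strategy on $\bbP^1$ (where the paper already establishes the conjecture in Example~\ref{example:P1-conjecture}) and on curves more generally (where the paper notes the Abel-Jacobi map suffices) is also consistent with what the paper records as known. So your proposal is not a proof, but it is a reasonable and honest research plan that matches the paper's own indications; neither you nor the paper claims more.
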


\newcommand{\GL}{\mathrm{GL}}
\begin{remark}\label{remark:conj-general-pn}
When $X=\bbP^n$, we explain how to evaluate the special value of the Kapranov zeta function appearing in \ref{conj.zeta-convergence-general} to obtain the simpler form given in Conjecture \ref{conj.zeta-convergence-Pn}:
Note that 
\[ Z_{\bbP^n}(t)=\frac{1}{(1-t)(1-qt)\cdot \ldots \cdot (1-q^nt)} \]
and thus
\[ Z^{\Kap}_{\bbP^n}(s)=\frac{1}{(1-s)(1-[q]s)\cdot \ldots \cdot (1-[q^n]s)}. \]
Evaluating, we obtain
\[ \left(Z_{\bbP^n}^\Kap([q^{-(n+1)}])\right)^{-1} = (1-[q^{-(n+1)}])(1-[q^{-n}])\cdot \ldots \cdot (1-[q^{-1}]). \]
On the other hand, in $K_0(\Var /\bbF_q)$, 
\[ [\GL_{n+1}]=(\bbL^{n+1}-1)(\bbL^{n+1} - \bbL) \ldots  (\bbL^{n+1} - \bbL^{n}) \]
and thus 
\[ [\GL_{n+1}]\bbL^{-(n+1)^2} = (1-\bbL^{-n+1})(1- \bbL^{-n}) \ldots (1 - \bbL^{-1}). \]
And, finally, passing through the map (\ref{eqn.zeta-map-L-inverted})
\begin{eqnarray*} Z_{\GL_{n+1}}(q^{-(n+1)^2} t)&=& (1-[q^{-(n+1)}])(1- [q^{-n}]) \ldots (1 - [q^{-1}]) \\&=& \left(Z_{\bbP^n}^\Kap([q^{-(n+1)}])\right)^{-1}. \end{eqnarray*}
\end{remark}

\begin{example}
Combining Example \ref{example:P1-conjecture} and Remark \ref{remark:conj-general-pn}, we find that Conjecture \ref{conj.zeta-convergence-general} holds when $X=\bbP^1$. In fact, whenever $X$ is a curve, a straightforward argument with the Abel-Jacobi map can be used to prove Conjecture \ref{conj.zeta-convergence-general} (we thank Ronno Das for a discussion that led to this observation). Unlike the case of $\bbP^1$, when $X$ has genus at least $1$ the inverse special value of the Kapranov zeta function appearing will no longer be a rational function, and the stabilization occurs only in the limit, not at a finite step. 
\end{example}

\begin{remark}\label{remark.general-unification}
As indicated in the introduction to this section, it is also possible to give a more general conjecture unifying Poonen's theorem with general Taylor conditions and our Theorem \ref{theorem.taylorconditions}. It is interesting to do so: for example, along the way, one finds that over finite fields motivic Euler products have a nice interpretation in terms of classical Euler products after passing to the ghost coordinates of their zeta functions. However, for the sake of brevity, we leave the details to future work, as they would take us too far afield here.
\end{remark}

\bibliography{references}
\bibliographystyle{plain}
\end{document}